\theoremstyle{plain}
\newtheorem{claim}{Claim}
\newtheorem{lemma}{Lemma}[section]
\newtheorem{prop}[lemma]{Proposition}
\newtheorem{theorem}[lemma]{Theorem}
\newtheorem{cor}[lemma]{Corollary}
\newtheorem{definition}[lemma] {Definition}
\newtheorem{conj}[lemma]{Conjecture}
\newtheorem*{theorem*}{Theorem}
\newtheorem*{conj*}{Conjecture}
\DeclareMathOperator*{\Bigconnect}{{\mbox{\larger[2]{\#}}}}
\renewcommand{\ker}{{{\text {ker} \ }}}
\renewcommand{\epsilon}{{\varepsilon}}
\title{L-space intervals for Graph Manifolds and Cables}
\author{Sarah Dean Rasmussen}
\email{S.Rasmussen@dpmms.cam.ac.uk}
\address{DPMMS\\ University of Cambridge\\ UK}
\keywords{graph manifold, taut foliation, L-space, Heegaard Floer}
\thanks{The author was supported by EPSRC grant EP/M000648/1}
\begin{document}
\begin{abstract}
We present a graph manifold analog of the Jankins-Neumann 
classification of Seifert fibered spaces over $S^2\mkern-3mu$
admitting taut foliations,
providing a finite recursive formula to compute the
L-space Dehn-filling interval for any graph manifold
with torus boundary.
As an application of a generalization of this result to Floer simple
manifolds, 
we compute the L-space interval
for any cable of a Floer simple knot complement in
a closed three-manifold in terms of the original L-space interval,
recovering a result of Hedden and Hom as a special case.
\end{abstract}
\maketitle

\section{Introduction}

In the late 1990's, Thurston showed 
\cite{Thurstonfolact, CalegariDunfieldact}
that any taut foliation on an atoroidal three-manifold $M$ makes
$\pi_1(M)$ act faithfully on the circle.
This result came almost two decades after
Eisenbud, Hirsch, and Neumann \cite{EHN}
encountered a complementary phenomenon:
they proved 
that an oriented three-manifold $M$ Seifert fibered over $S^2$
admits a co-oriented foliation transverse to the fiber
if and only if $\pi_1(M)$ admits a representation in
$\widetilde{\mathrm{Homeo}_+}S^1$
sending $\phi \mkern-2mu\mapsto\mkern-2mu \mathrm{sh}(1)$, where
$\widetilde{\mathrm{Homeo}_+}S^1$ is the
universal cover of the group of 
orientation-preserving homeomorphisms of the circle, 
$\phi$ is the regular fiber class,
and $\mathrm{sh}(s) \mkern-1.5mu:\mkern-1.5mu t \mapsto t\mkern-2mu+\mkern-2mu s$ for any 
$s \mkern-2mu\in\mkern-2mu {\mathbb{R}}$,
making $\widetilde{\mathrm{Homeo}_+}\mkern-1muS^1$ the centralizer of
$\mathrm{sh}(1)$ in $\mathrm{Homeo}_+{\mathbb{R}}$.

\subsection{Jankins-Neumann Classification}
Inspired by this observation,
Jankins and Neumann 
used Poincar{\'e}'s ``rotation number'' invariant
to generalize the criterion of \cite{EHN} to a more local
representation-theoretic condition in terms of
meridians of exceptional fibers.  This new formulation of the problem,
in addition to a correct conjecture that the 
necessary representation-theoretic conditions could be met in
$\widetilde{\mathrm{Homeo}_+}S^1$
if and only if they could also be met in a smooth Lie subgroup thereof,
allowed them to work out a complete, explicit classification \cite{JankinsNeumann},
which they proved in all but one special case, later proven by Naimi \cite{Naimi}.

\begin{theorem}[\cite{JankinsNeumann, Naimi}]
For $n>1$, the manifold $M_{S^2}(y_0; y_1, \ldots, y_n)$ 
Seifert fibered over $S^2$
admits a co-oriented taut foliation if and only if
$\mkern.8mu0 = y_-\mkern-2.5mu = y_+\mkern-2.5mu$ or 
$\mkern.8mu0 \in \left<y_+, y_-\right>$, where
\begin{equation*}
y_-
:=
\max_{k>0}
-\mfrac{1}{k}\mkern-1.8mu\left(\mkern2mu
1
+ {{\textstyle{\sum\limits_{i=0}^n  \left\lfloor y_i k \right\rfloor}}} \right),
\;\;\;\;\;
y_+ 
:=
\min_{k>0}
-\mfrac{1}{k}\mkern-1.8mu\left(\mkern-3mu
-1
+ {{\textstyle{\sum\limits_{i=0}^n   \left\lceil y_i k \right\rceil}}} \right).
\end{equation*}
\end{theorem}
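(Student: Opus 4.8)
The plan is to route the foliation question through two classical dictionaries into an arithmetic realization problem for circle homeomorphisms, and then to solve that problem; this is, over two papers, the argument of \cite{JankinsNeumann, Naimi}. First I would reduce to foliations transverse to the fibration: combining \cite{EHN} with the structure theory of taut foliations on Seifert fibered spaces over $S^2$ -- by which, up to isotopy, such a foliation is horizontal (transverse to the fibers) or vertical, the vertical case forcing a situation in which the relevant representation exists anyway -- the manifold $M_{S^2}(y_0;y_1,\dots,y_n)$ carries a co-oriented taut foliation if and only if $\pi_1(M)$ admits a representation $\rho\colon\pi_1(M)\to\widetilde{\mathrm{Homeo}_+}S^1$ with $\rho(\phi)=\sh(1)$, $\phi$ the regular fiber class.

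Next I would recast the existence of $\rho$ as a realization problem for rotation numbers. In the standard Seifert presentation $\pi_1(M)=\langle x_0,\dots,x_n,\phi\mid[x_i,\phi]=1,\ x_i^{\alpha_i}\phi^{\beta_i}=1,\ x_0x_1\cdots x_n=1\rangle$ with $y_i=\beta_i/\alpha_i$, the constraint $\rho(\phi)=\sh(1)$ forces $\rho(x_i)^{\alpha_i}=\sh(-\beta_i)$; since $\sh(-\beta_i)$ is central, this says precisely that $\rho(x_i)$ is conjugate in $\widetilde{\mathrm{Homeo}_+}S^1$ to the rigid rotation $\sh(-y_i)$ (so that $\widetilde\rot(\rho(x_i))=-y_i$), and conversely any such conjugates are permissible. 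Hence $\rho$ exists if and only if there are $g_0,\dots,g_n\in\widetilde{\mathrm{Homeo}_+}S^1$, each conjugate to $\sh(-y_i)$, with $g_0g_1\cdots g_n=\mathrm{id}$.

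To realize the $g_i$ when the arithmetic condition holds, I would follow \cite{JankinsNeumann} and work inside a finite cyclic cover of $\mathrm{PSL}_2\R$ sitting in $\widetilde{\mathrm{Homeo}_+}S^1$: there the conjugates of rigid rotations are exactly the elliptic elements, and the classical Milnor--Wood / Euler-number analysis of orbifold-group representations into $\mathrm{PSL}_2\R$ pins down which products of elliptics of prescribed rotation numbers equal a prescribed central element; a short computation matches the resulting condition with ``$0=y_-=y_+$ or $0\in\langle y_+,y_-\rangle$'', the floors, ceilings, and cover degree $k$ entering through the passage to the $k$-fold cyclic cover of $S^1$. For the converse one uses that $\widetilde\rot$ is a homogeneous quasimorphism of defect $1$: applied to $g_0\cdots g_n=\mathrm{id}$ and to its images on every $k$-fold cover, it yields the bounds on $\tfrac1k\sum_i\lfloor y_ik\rfloor$ and $\tfrac1k\sum_i\lceil y_ik\rceil$ that define $y_-$ and $y_+$.

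The main obstacle is the sharpness of this last step at the boundary: showing that the endpoints $y_\pm$ themselves are genuinely excluded, equivalently that no circle action lying outside every cover of $\mathrm{PSL}_2\R$ can realize a borderline configuration. Jankins and Neumann reduced the classification to precisely this statement and conjectured it; Naimi \cite{Naimi} established it by a direct dynamical analysis of the minimal sets and invariant measures of the putative action. The remainder is careful bookkeeping of the floor/ceiling estimates across all scales $k$.
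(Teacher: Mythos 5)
Your outline follows the classical Jankins--Neumann--Naimi argument, which the paper cites but does not re-prove: reduce (via EHN and the structure theory of taut foliations on Seifert fibered rational homology spheres) to a lifted circle-action realization problem, translate through Poincar\'e's rotation-number invariant to an arithmetic condition, realize that condition inside some $\widetilde{PSL}_k(2,{\mathbb{R}})$ when it holds, and close the residual non-realizability case with Naimi. Section~\ref{s: Foliations on Seifert fibered spaces} of the paper recounts exactly this machinery ($J$-realizability, the $\widetilde{PSL}_k$ computation, Naimi's role, and the later Calegari--Walker rederivation), so in broad outline you and the cited proof agree.

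Where your sketch contains a genuine gap is the necessity direction. The defect-one quasimorphism bound on $\widetilde{\mathrm{rot}}$ applied to $g_0\cdots g_n=\mathrm{id}$ yields only $\bigl|\sum_i\widetilde{\mathrm{rot}}(g_i)\bigr|\le n$, far weaker than the floor/ceiling bounds, indexed by all $k$, that enter the definitions of $y_\pm$; in particular it produces no $\lfloor\cdot\rfloor$ or $\lceil\cdot\rceil$, and it cannot see the distinction between rational and irrational slopes that makes the problem delicate. Passing to ``$k$-fold covers'' does not repair this: $\widetilde{\mathrm{rot}}$ is homogeneous, so replacing each $g_i$ by $g_i^k$ merely rescales rotation numbers, and $\prod_i g_i^k\neq\mathrm{id}$ in general since the $g_i$ need not commute. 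The sharp floor/ceiling formula is the content of Jankins and Neumann's explicit maximal-rotation-number computation inside $\widetilde{PSL}_k(2,{\mathbb{R}})$, equation~(\ref{eq: max rot equation, PSLk}) in the paper, combined with their conjecture---established in the last case by Naimi, and rederived dynamically by Calegari--Walker without appeal to $\widetilde{PSL}_k$---that realizability in $\widetilde{\mathrm{Homeo}_+}S^1$ never exceeds what is already realizable in some $\widetilde{PSL}_k$. The passage from the soft quasimorphism bound to that sharp arithmetic formula is where the real work of the theorem lives, and it is precisely what your proposal elides.
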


\vspace{-.15cm}
{\noindent{(In the above, and henceforth in this paper, we always regard $k$ as an integer, writing $k>0$ as shorthand for the restriction $k \in \mathbb{Z}_{>0}$.)}} 

\vspace{.1cm}
Since then, the Jankins-Neumann-Naimi classification 
has served as a Rosetta stone for certain
{\em a priori} unrelated properties.
\begin{theorem}[\cite{EHN, JankinsNeumann, Naimi, 
ziggurat, OSGen, LiscaMatic, LSIII, BGW, lslope}]
If $M$ is a closed oriented Seifert fibered space, then the following are equivalent:
\begin{enumerate}
\item[(1a)]
$\pi_1(M)$ admits a non-trivial representation in $\mathrm{Homeo}_+{\mathbb{R}}$.
\item[(1b)]
$\pi_1(M)$ is left-orderable.
\item[(2)]
$M$ admits a co-oriented $C^{\mkern1mu 0}\mkern-2mu$ taut foliation.
\item[(3)]
$M$ has non-trivial Heegaard Floer homology, {\em{i.e.}}, $M$ fails to be an L-space.
\end{enumerate}
\end{theorem}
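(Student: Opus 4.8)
The plan is to route all four conditions through a single intermediate property of $M$ --- an explicit arithmetic condition on its Seifert invariants, equivalently the existence of a representation $\pi_1(M)\to\widetilde{\mathrm{Homeo}_+}S^1$ sending the regular fiber class to $\mathrm{sh}(1)$ --- and then to invoke the Jankins--Neumann--Naimi theorem stated above. I would first clear away the degenerate cases. If $b_1(M)>0$, then $M$ is not an L-space (L-spaces being rational homology spheres by definition), $M$ admits a taut foliation by Gabai, $\pi_1(M)$ surjects onto $\Z$ and hence acts nontrivially on $\R$, and $\pi_1(M)$ is left-orderable since $M$ is either $S^2\times S^1$ (group $\Z$) or irreducible with positive first Betti number (Boyer--Rolfsen--Wiest); so all four conditions hold. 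If $\pi_1(M)$ is finite, then $M$ is a spherical space form, hence an L-space by Ozsv\'ath--Szab\'o; a nontrivial finite group has no orientation-preserving action on $\R$ and is not left-orderable, so (1a) and (1b) fail; and a taut foliation would lift to the spherical universal cover, which is impossible by Novikov, so (2) fails. This leaves the principal case: $M$ a Seifert fibered rational homology sphere with infinite $\pi_1$.

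In the principal case with base orbifold $S^2$, write $M = M_{S^2}(y_0;y_1,\ldots,y_n)$ with $n>1$ (the remaining base orbifolds --- chiefly $\mathbb{RP}^2$, plus a few small exceptional ones --- I would treat separately as in \cite{BGW}). The equivalence of (2) with ``$0=y_-=y_+$ or $0\in\left<y_+,y_-\right>$'' is then the Jankins--Neumann--Naimi theorem above \cite{JankinsNeumann,Naimi} (whose proof uses the Eisenbud--Hirsch--Neumann correspondence \cite{EHN} between transverse foliations of Seifert bundles and representations in $\widetilde{\mathrm{Homeo}_+}S^1$). The equivalence of (2) with (3) for Seifert fibered spaces is the theorem of Lisca--Stipsicz \cite{LSIII}: the soft half, that a taut foliation obstructs being an L-space, runs through Eliashberg--Thurston ($C^0$-approximation by a weakly symplectically fillable contact structure) and nonvanishing of the Ozsv\'ath--Szab\'o contact invariant \cite{OSGen,LiscaMatic}, which forces $\rank\widehat{HF}(M)>|H_1(M;\Z)|$; the converse runs through the Ozsv\'ath--Szab\'o plumbing calculus applied to the star-shaped tree presenting $-M$ (with at most one ``bad'' vertex), together with the elementary verification that the resulting combinatorial L-space criterion is the negation of the Jankins--Neumann condition.

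For (1a) and (1b), I would use the central extension $1\to\Z\to\pi_1(M)\to\pi_1^{\mathrm{orb}}\to 1$, with the regular fiber class generating the kernel. A rotation-number/lifting argument --- the circle-homeomorphism half of \cite{EHN}, developed for exactly this purpose in \cite{BGW} --- shows that $\pi_1(M)$ is left-orderable precisely when it admits a representation into $\widetilde{\mathrm{Homeo}_+}S^1$ carrying the fiber class to $\mathrm{sh}(1)$, and by the local meridian criterion this is again the arithmetic condition; this gives (1b) $\Leftrightarrow$ (2) and, via \cite{BGW}, identifies (1b) with (3). Finally, $M$ being irreducible with infinite $\pi_1$, any nontrivial action of $\pi_1(M)$ on $\R$ promotes to a faithful one, so (1a) $\Leftrightarrow$ (1b); the chain of equivalences then closes.

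The step I expect to be the genuine obstacle is, as in the history of the subject, the dictionary between the order- and representation-theoretic side --- (1a), (1b), (2) --- and the arithmetic side: this is the content of the Jankins--Neumann--Naimi classification, whose last case required Naimi's separate argument, together with the \cite{BGW} translation of left-orderability into the same floor/ceiling condition. A secondary nuisance is the on-the-nose matching of the Ozsv\'ath--Szab\'o plumbing L-space criterion with the $y_\pm$ formulas, which is routine but fiddly. The remaining ingredients --- Gabai, Eliashberg--Thurston, contact nonvanishing, Novikov, Boyer--Rolfsen--Wiest --- enter only off the shelf.
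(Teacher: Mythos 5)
The paper does not itself prove this theorem---it is a survey statement with references---but your outline tracks the literature, and the architecture (clearing $b_1(M)>0$ and $\pi_1(M)$ finite, then routing everything through the Jankins--Neumann arithmetic criterion on the Seifert data) is the right one. The genuine gap concerns regularity in $(2)\Rightarrow(3)$: the theorem is stated for $C^0$ taut foliations, but the Eliashberg--Thurston perturbation you invoke requires the foliation to be at least $C^2$. The paper emphasizes exactly this point and credits Kazez--Roberts \cite{KazezRobertsCzero} and, independently, Bowden \cite{Bowden} with extending the Ozsv\'ath--Szab\'o obstruction to $C^0$ foliations; as written, your argument establishes the implication only for $C^2$ foliations. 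One could bypass their theorem by first isotoping the given $C^0$ taut foliation on a Seifert fibered rational homology sphere to a horizontal one, so that the Jankins--Neumann--Naimi arithmetic criterion holds and a smooth horizontal foliation can be constructed, to which Eliashberg--Thurston then applies---but that in turn requires a $C^0$ horizontality theorem in the spirit of Brittenham, which you also do not cite. One of these inputs must be named explicitly.

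A smaller imprecision: in the principal case you write that ``any nontrivial action of $\pi_1(M)$ on $\mathbb{R}$ promotes to a faithful one.'' That is not a direct promotion. Rather, a nontrivial representation in $\mathrm{Homeo}_+{\mathbb{R}}$ gives a nontrivial homomorphism onto a countable subgroup of $\mathrm{Homeo}_+{\mathbb{R}}$, hence onto a left-orderable group; the Boyer--Rolfsen--Wiest theorem then upgrades this, for $M$ irreducible, to left-orderability of $\pi_1(M)$ itself, and a left order yields a faithful action. You already invoke BRW in the $b_1(M)>0$ case, so the fix is simply to apply it again here---this is exactly the chain that the paper's own post-theorem commentary describes for the equivalence $(1a)=(1b)$.
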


One often uses $(1a)$ as a proxy for $(1b)$, since
a result of Boyer, Wiest, and Rolfsen \cite[Theorem 1.1.1]{BRW},
combined with the well-known fact
\cite{LinnellLO} that the
set of countable left-orderable groups coincides
with the set of countable nontrivial subgroups of 
$\mathrm{Homeo}_+{\mathbb{R}}$, shows that 
$(1a) \!=\! (1b)$ for every prime compact oriented three-manifold.
Boyer, Gordon, and Watson have conjectured that 
$(1) \!=\! (3)$
for any prime compact oriented three-manifold \cite{BGW},
and quite recently, Kazez and Roberts \cite{KazezRobertsCzero},
and independently Bowden \cite{Bowden},
have extended a $C^2$ foliations result of 
Ozsv{\'a}th and Szab{\'o} \cite{OSGen} to show that
$(2) \!\Rightarrow\! (3)$ for any compact oriented three-manifold.
(For this reason, all foliations in this paper are assumed to be $C^0$ unless otherwise
stated.)

The implication $(3) \!\Rightarrow\! (2)$, however, is entirely more mysterious.
In particular, all known proofs 
\cite{LiscaMatic, LSIII, lslope}
that non-L-space oriented Seifert
fibered spaces admit co-oriented taut foliations rely on
an explicit comparison of sets of manifolds:
one works out the classification of Seifert fibered manifolds over
$S^2\!$ with non-trivial Heegaard Floer homology, and observes that this
classification coincides with the 
Jankins-Neumann-Naimi
classification of oriented
Seifert fibered spaces over $S^2\mkern-2mu$ admitting
co-oriented taut foliations.$\mkern-.7mu$
(The implication $\mkern-.5mu(3)\mkern-3.5mu\Rightarrow\mkern-4mu$~$(2)$ holds vacuously for closed oriented
Seifert fibered spaces with $b_1 \mkern-1.5mu>\mkern-1.5mu 0$, all of which
admit co-oriented taut foliations \cite{gabai}, and for oriented Seifert
fibered spaces over ${\mathbb{R}}{\mathbb{P}}^2\!$, all of which are L-spaces \cite{BGW}.)

\subsection{Graph Manifolds}

Boyer and Clay recently brought insight to this question by
introducing a relative version of the problem, 
studying the gluing behavior of properties $(1a)$, $(1b)$, and $(2)$
along the incompressible tori separating Seifert fibered components
of graph manifolds.  By showing that these three properties
glue in an identical manner along boundaries of JSJ components of
rational homology sphere graph manifolds, they were able to prove
the equivalence of these three properties for any closed
graph manifold \cite{BoyerClay}.
Boyer and Clay also conjectured that property $(3)$ should obey the same
gluing behavior.

In answer, Hanselman and Watson \cite{HanWat},
and independently J. Rasmussen and the author \cite{lslope}, 
were able to confirm this gluing conjecture
for a larger class of
three-manifolds with torus boundary, but subject to certain hypotheses,
which one can show are safe to remove in the case of graph manifolds.
The four of us \cite{HRRW} were therefore able to prove the following.
\begin{theorem}[\cite{HRRW}]
\label{thm: intro version of l = ntf}
A graph manifold is an L-space if and only if it fails
to admit a co-oriented taut foliation.
\end{theorem}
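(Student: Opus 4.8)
The plan is to treat the two implications separately. The implication ``admits a co-oriented taut foliation $\Rightarrow$ not an L-space'' is already available in full generality: by Kazez--Roberts \cite{KazezRobertsCzero} and independently Bowden \cite{Bowden}, extending the $C^2$ result of Ozsv\'ath--Szab\'o \cite{OSGen} to $C^0$, any compact oriented three-manifold carrying a co-oriented taut foliation fails to be an L-space, with no graph-manifold hypothesis needed. So everything reduces to the converse: a graph manifold $M$ that is not an L-space admits a co-oriented taut foliation. After disposing of the case $b_1(M)>0$, where Gabai \cite{gabai} supplies a taut foliation (and $M$ is certainly not an L-space), and recalling that graph manifolds are prime, I would assume $M$ is an irreducible rational homology sphere and pass to its JSJ decomposition $M = Y_1 \cup \cdots \cup Y_m$ along a family $\mathcal{T}$ of incompressible tori, whose pieces $Y_i$ are all Seifert fibered over surfaces with boundary.

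The core of the argument is an induction on the JSJ graph proving a \emph{relative} statement: for every graph manifold $Y$ with torus boundary, the set of filling slopes producing an L-space coincides with the set of boundary slopes realized by co-oriented taut foliations on $Y$. In the base case $Y$ is a single Seifert fibered piece, where the taut-foliation slopes are pinned down by the Eisenbud--Hirsch--Neumann transverse-foliation criterion \cite{EHN} together with the rotation-number meridian conditions of Jankins--Neumann and Naimi \cite{JankinsNeumann, Naimi}, while the L-space filling slopes are pinned down by the Seifert fibered L-space classification underlying the equivalence $(2)\Leftrightarrow(3)$ above; the two are seen to agree by exactly the floor/ceiling arithmetic over $k \in \Z_{>0}$ that forces $(2)\Leftrightarrow(3)$ in the closed Seifert case, its bordered refinement being the Seifert input to the L-space-interval formula of the present paper.

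For the inductive step I would peel off a leaf of the JSJ tree -- a Seifert piece glued along a single torus to a graph manifold with fewer pieces -- and invoke two gluing principles of matching combinatorial form. On the Heegaard Floer side, the L-space gluing theorem of Hanselman--Watson \cite{HanWat} and of J.\ Rasmussen and the author \cite{lslope} decides, for Floer simple pieces, whether $Y_1 \cup_\phi Y_2$ is an L-space purely in terms of the gluing slope $\phi$ and the two L-space filling intervals. On the foliation side, the gluing theorem of Boyer--Clay \cite{BoyerClay} -- the same machine by which they propagated left-orderability and taut foliations across JSJ tori of closed graph manifolds -- has the same form in terms of the foliation slope intervals. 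Since the base case identifies the two intervals on each Seifert piece and the two gluing rules are formally identical, the identification propagates up the tree to $M$ itself; combined with the finite recursive formula for the L-space Dehn-filling interval established in this paper, this simultaneously proves the theorem and makes both of its sides effectively computable.

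The step I expect to be the main obstacle is \emph{legitimacy of the gluing inputs} -- verifying that the hypotheses under which \cite{HanWat} and \cite{lslope} prove L-space gluing actually hold all the way up the graph-manifold recursion. Those hypotheses -- Floer simplicity of every intermediate piece, L-space intervals that are genuine intervals with rational (closed) endpoints, gluing slopes that avoid degenerate positions -- can fail for general three-manifolds, and even when they hold the endpoint behavior is delicate: an interval may collapse to a single slope, or an endpoint may be attained only as a $k\to\infty$ limit. This is the same boundary subtlety that made Naimi's supplement necessary in the original Jankins--Neumann program, and here it must be tracked on the L-space side and the foliation side at once and shown to match at every level of the recursion. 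Carrying this out uniformly over all graph manifolds -- rather than merely generically -- is, I expect, the crux of \cite{HRRW}.
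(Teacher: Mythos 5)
Your outline follows the same route as the paper: one direction is Kazez--Roberts/Bowden extending Ozsv\'ath--Szab\'o, the other is an induction over the JSJ tree proving a relative ``L/NTF-equivalence'' statement for graph manifolds with torus boundary, seeded by the Jankins--Neumann--Naimi versus Seifert-fibered-L-space comparison and propagated by gluing theorems on both sides. But the step you defer as ``the crux'' is essentially the entire content of the argument, and the paper resolves it with two specific devices that your plan does not supply. First, the open/closed endpoint mismatch is eliminated by phrasing the relative claim as $\mathcal{F}(Y) \amalg \mathcal{L}^{\circ}(Y) = \mathbb{P}(H_1(\partial Y))$ and detecting both sides with $\bar{N}$-fillings rather than Dehn fillings: $Y^{\bar{N}}(\mu)$ is an L-space iff $\mu \in \mathcal{L}^{\circ}(Y)$ (Proposition \ref{prop: L-space N-fillings}) and admits a co-oriented taut foliation iff $\mu \in \mathcal{F}(Y)$ (Proposition \ref{prop: boyer and clay's taut foliation slope vs N-filling}). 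This converts the delicate endpoint bookkeeping into a statement about closed manifolds, and the induction in the proof of Theorem \ref{thm: lntf equivalence} is organized entirely around such fillings of all-but-one boundary component. Second, the hypotheses of the L-space gluing theorem (Floer simplicity, nonempty intersection of interiors) are not verified up the tree --- they genuinely fail for graph manifolds with isolated L-space fillings --- but circumvented: in Proposition \ref{prop: gluing prop for LNTF equivalent manifolds}, when the covering condition $\varphi^{\mathbb{P}}_*(\mathcal{L}^{\circ}(Y_1)) \cup \mathcal{L}^{\circ}(Y_2) = \mathbb{P}(H_1(\partial Y_2))$ holds one quotes the Floer gluing theorem, and when it fails, L/NTF-equivalence of the two pieces produces boundary-transverse taut foliations with matching boundary slopes, Boyer--Clay glues them, and Kazez--Roberts/Bowden then forbids the L-space. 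The two gluing theorems are thus not invoked as ``formally identical'' parallels but in tandem, each covering exactly the regime where the other's hypotheses break down; no explicit computation of the L-space interval (Theorem \ref{thm: l space interval for graph manifolds}) is needed for this part of the argument.

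Two smaller corrections. Your relative statement says the L-space filling slopes ``coincide'' with the taut-foliation boundary slopes, and later that the two classifications ``agree''; they are complementary, not equal, and the precise form of the complementarity (interior of $\mathcal{L}$ against $\mathcal{F}$, closure against $\mathcal{F}^D$, with reducible non-prime fillings set aside in $\mathcal{R}(Y)$) is where the difficulty lives. Relatedly, intermediate Dehn fillings in the recursion can have compressible boundary or be non-prime (e.g.\ filling along the fiber slope $\infty$), which is why the paper tracks primality and handles compressible-boundary unions separately via Proposition \ref{prop: l-space gluing for compressible boundary}.
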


The current paper follows an independent trajectory
from the work of \cite{HRRW},
launched before the author joined the other collaboration.
Although the two papers overlap in one or two results,
including slightly variant proofs
of Theorem
\ref{thm: intro version of l = ntf}
and the below gluing criterion,
the main result of the current paper is the generalization of
the Jankins-Neumann classification formula to graph manifolds,
for which we now introduce some notation.

\begin{definition}
If $Y\!$ is a compact oriented three-manifold with torus boundary,
then the {\em{L-space interval}} of $Y$ is the space 
$\mathcal{L}(Y) \subset {\mathbb{P}}(H_1(\partial Y))$ of
L-space Dehn filling slopes of $Y\!$.
\end{definition}

We call $\mathcal{L}(Y)\mkern-.8mu$ an interval because if it contains more than
one point, then it is the intersection of ${\mathbb{P}}(H_1(\partial Y))$ with either
a closed interval in ${\mathbb{P}}(H_1(\partial Y;{\mathbb{R}}))$
or the complement of a single
point in ${\mathbb{P}}(H_1(\partial Y;{\mathbb{R}}))$.  
It therefore makes sense
to speak of the {\em{interior}} 
$\mathcal{L}^{\mkern-.4mu\circ}\mkern-1.5mu(Y)$ of $\mathcal{L}(Y)$.
If $\mathcal{L}^{\circ}(Y)$ is nonempty, we call $Y$ {\em{Floer simple}}.

\begin{prop}
\label{prop: intro gluing result for graph manifolds}
If $Y_1$ and $Y_2$ are non-solid-torus 
graph manifolds with torus boundary,
then the union $Y_1 \mkern-1mu\cup_{\varphi}\! Y_2$, with gluing map
$\varphi \mkern-1mu:\mkern-1mu \partial Y_1 \to -\partial Y_2$,
is an L-space if and only if
\begin{equation*}
\varphi_*^{{\mathbb{P}}}(\mathcal{L}^{\circ}\mkern-1.5mu(Y_1)) 
\cup \mathcal{L}^{\circ}\mkern-1.5mu(Y_2)
={\mathbb{P}}(H_1(\partial Y_2)).
\end{equation*}
\end{prop}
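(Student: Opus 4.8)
The plan is to prove Proposition~\ref{prop: intro gluing result for graph manifolds} by reducing it to the general gluing criterion for Floer simple manifolds that the paper establishes for a broader class of three-manifolds, and then verifying that non-solid-torus graph manifolds satisfy the hypotheses needed to invoke that criterion. First I would recall that gluing two compact oriented three-manifolds with torus boundary along $\varphi$ produces an L-space precisely when the Heegaard Floer homology of the union vanishes in a suitable sense; the standard tool is a surgery/bordered-Floer gluing formula (as in the work of Hanselman--Watson and J.~Rasmussen--the author cited in the introduction) expressing $\widehat{HF}(Y_1\cup_\varphi Y_2)$ in terms of the bordered invariants $\cfd(Y_1)$ and $\cfd(Y_2)$, or equivalently in terms of the ``immersed curve'' invariants. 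The key structural input is that for a Floer simple $Y$, the relevant bordered invariant is controlled by the L-space interval $\mathcal{L}(Y)$ and its interior $\mathcal{L}^\circ(Y)$: a slope $\gamma$ lies in $\mathcal{L}^\circ(Y)$ iff the $\gamma$-filling is an L-space whose bordered invariant ``absorbs'' without leaving extra generators.

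The second step is to show that non-solid-torus graph manifolds with torus boundary are Floer simple, so that $\mathcal{L}^\circ(Y_i)$ is nonempty and the interval formulation even makes sense. This follows by induction on the JSJ decomposition: a single Seifert fibered piece over a surface with boundary is Floer simple (this is classical, going back to the Seifert case underlying the Jankins--Neumann picture), and the gluing behavior propagates Floer simplicity across incompressible tori, excluding exactly the solid-torus degenerate case. I would cite the relevant structural results from earlier in the paper (and from \cite{lslope, HRRW}) rather than re-deriving them.

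The third step is the heart of the matter: translating ``$Y_1\cup_\varphi Y_2$ is an L-space'' into the stated union condition $\varphi_*^{\mathbb{P}}(\mathcal{L}^\circ(Y_1))\cup\mathcal{L}^\circ(Y_2)={\mathbb{P}}(H_1(\partial Y_2))$. The forward direction: if some slope $\gamma\in{\mathbb{P}}(H_1(\partial Y_2))$ lies in neither $\mathcal{L}^\circ(Y_2)$ nor $\varphi_*^{\mathbb{P}}(\mathcal{L}^\circ(Y_1))$, then both $\cfd(Y_1)$ (pushed forward by $\varphi$) and $\cfd(Y_2)$ have a generator ``pointing in the $\gamma$ direction'' that is not killed by the other side, producing a nonzero class in $\widehat{HF}(Y_1\cup_\varphi Y_2)$ beyond the minimum forced by $|H_1|$; hence the union is not an L-space. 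The converse: if the two interiors cover all of ${\mathbb{P}}(H_1(\partial Y_2))$, then every generator of one bordered invariant is paired off against the other, so the box tensor product has rank exactly $|H_1(Y_1\cup_\varphi Y_2)|$, i.e.\ an L-space. Concretely this is cleanest in the immersed-curve language: $\mathcal{L}^\circ(Y_i)$ is the set of slopes of lines that miss the immersed curve of $Y_i$ transversally-minimally, and the union condition says the two curves, superimposed on the torus, have no common ``escape direction,'' which is exactly the condition that their geometric intersection number equals the algebraic lower bound.

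The main obstacle I expect is making the pairing argument of the third step fully rigorous without simply citing a black box: one must control not just the rank but the precise structure of the bordered invariants of graph manifolds near their boundary slopes, in particular ruling out ``unstable chains'' that could contribute extra homology even when slopes match up. This is where Floer simplicity (rather than mere L-space-ness of some filling) is essential, and where the inductive structure over the JSJ decomposition does real work — one needs that the bordered invariant of a graph manifold is, up to the relevant equivalence, a finite collection of embedded curves determined by $\mathcal{L}(Y)$, with no higher-genus or immersed pathologies. I would isolate this as a lemma (proved by induction on JSJ complexity, feeding on the Seifert base case and the gluing step), and then the Proposition follows by a short combinatorial argument comparing covering conditions on ${\mathbb{P}}(H_1(\partial Y_2))$.
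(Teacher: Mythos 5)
Your plan has a fatal gap at step two: it is simply false that every non-solid-torus graph manifold with torus boundary is Floer simple. The paper's own introduction gives a concrete counterexample (a graph manifold $Y$ with $n_{\textsc{d}}=1$, $y_1^{\textsc{d}}=\tfrac{1}{3}$, $n_{\textsc{g}}=1$, and $\varphi_{1*}^{\mathbb{P}}(\mathcal{L}(Y_1))=[-\tfrac{1}{3},0]$, which satisfies $\mathcal{L}(Y)=\{0\}$, hence $\mathcal{L}^\circ(Y)=\emptyset$), and Proposition~\ref{prop: characterization of Floer simple graph manifolds} catalogues a whole zoo of $(\textsc{nfs})$ cases. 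Indeed the very point of stating Proposition~\ref{prop: intro gluing result for graph manifolds} is flagged by the remark immediately following it: ``In particular, Floer simplicity is not required.'' Once step two is abandoned, the rest of your argument has nothing to say about the genuinely new case: if some $Y_i$ has an isolated L-space filling or empty $\mathcal{L}(Y_i)$, the bordered-Floer/immersed-curve pairing criterion you are relying on (which is essentially Proposition~\ref{prop: L-space gluing thm from lslope}) either does not apply or requires the auxiliary hypothesis $\varphi_*^{\mathbb{P}}(\mathcal{L}^\circ(Y_1))\cap\mathcal{L}^\circ(Y_2)\neq\emptyset$, which fails precisely when a factor is not Floer simple.

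The paper closes this gap not with more Heegaard Floer theory but by going through foliations. It first establishes L/NTF-equivalence for all graph manifolds with torus boundary (Theorem~\ref{thm: lntf equivalence}), by an induction on the tree structure built from $\bar{N}$-fillings, using Boyer--Clay's slope-detection and gluing theorems as the engine. Then Proposition~\ref{prop: gluing prop for LNTF equivalent manifolds} proves the desired equivalence by arguing both sides: if the covering condition holds, the Floer gluing theorem applies (the two open intervals necessarily intersect, since neither can be all of $\mathbb{P}(H_1(\partial Y_2))$) and gives an L-space; if the covering condition fails, L/NTF-equivalence converts the failure into matching boundary foliation slopes on $Y_1$ and $Y_2$, which Boyer--Clay's foliation gluing theorem patches into a taut foliation on the union, forcing it not to be an L-space by Ozsv{\'a}th--Szab{\'o} plus Bowden/Kazez--Roberts. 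To repair your proposal you would need either to import this foliation-theoretic bridge or to prove a strictly stronger purely Floer-theoretic gluing theorem valid without Floer simplicity (as Hanselman--Watson do, at the cost of a more technical hypothesis); your present argument does neither.
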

\vspace{-.1cm}
{{\noindent{In particular, Floer simplicity is not required.}}}

A graph manifold $Y$ with torus boundary and $b_1(Y) \mkern-1.5mu > \mkern-1.8mu 1$ has 
$\mathcal{L}(Y) = \emptyset$.  If $b_1(Y) \mkern-1mu = \mkern-1mu 1$, then the graph for
$Y$ is a tree, and we choose to root this tree at the
JSJ component $\hat{Y}$ containing $\partial Y$.
Writing $Y_1, \ldots, Y_{n_{\textsc{g}}}$ for the
$n_{\textsc{g}}$ components of 
$Y \setminus (\hat{Y} \setminus (\partial \hat{Y} \setminus \partial Y))$,
we then regard $Y$ as the union
\begin{equation*}
\label{eq: presentation of graph manifold in intro}
Y = \hat{Y} \cup_{\boldsymbol{\varphi}} {{\textstyle{\coprod\limits_{i=1}^{n_{{\textsc{g}}}} Y_i}}},
\;\;\;\;
\varphi_i : \partial Y_i \to -\partial_i \hat{Y},
\end{equation*}
with 
$\hat{Y}$ Seifert fibered over an 
$n_{{\textsc{g}}}\mkern-1mu+\mkern-.5mu1$-punctured $S^2$ or ${\mathbb{R}}{\mathbb{P}}^2$.
Note that each $Y_i$ is again a graph manifold with torus boundary and $b_1\!=\!1$,
hence is endowed with its own tree graph rooted at the JSJ component containing $\partial Y_i$,
but with the height of this tree strictly less than the height of the tree for $Y$,
so that a recursive computation of $\mathcal{L}(Y)$ in terms of
the $\mathcal{L}(Y_i)$ is a finite process.

For any (necessarily toroidal) boundary component of an oriented Seifert fibered space,
we fix the reverse-oriented homology basis 
$(\tilde{f}\mkern-2.8mu, \mkern-2.2mu-\tilde{h})$,
where $\tilde{h}$ is the meridian of the excised regular fiber,
and $\tilde{f}$ is the lift dual to $\tilde{h}$ of the regular fiber class,
so that we can express any slope 
$r\mkern-.5mu\tilde{f} - s\tilde{h} \in {\mathbb{P}}(H_1(\partial Y))$
as $\frac{r}{s} \in {\mathbb{Q}} \cup \{\infty\}$.
For any $Y_i$ with nonempty $\mathcal{L}(Y_i)$, we then write
\begin{equation*}
\varphi_{i*}^{{\mathbb{P}}}(\mathcal{L}(Y_i)) =:
\begin{cases}
\mkern1mu[[y_{i-}^{\textsc{g}}, y_{i+}^{\textsc{g}}]]
& 
\mathcal{L}^{\circ}\mkern-1.5mu(Y_i) \neq \emptyset
   \\
\mkern-.5mu\{y_{i-}^{\textsc{g}}\} \mkern-1.5mu=\mkern-1.5mu \{y_{i+}^{\textsc{g}}\}
& 
\mathcal{L}^{\circ}\mkern-1.5mu(Y_i) = \emptyset,
\end{cases}
\end{equation*}
where we use the notation 
$[[y_-, y_+]] \mkern-1mu\subset\mkern-1mu {\mathbb{Q}} \mkern1mu\cup\mkern-1mu \{\infty\}$
to denote the L-space interval with left-hand endpoint $y_-$
and right-hand endpoint $y_+$,
since any L-space interval with nonempty interior
is uniquely specified by its endpoints.

We also write $y_0^{\textsc{d}}, \ldots, y_{n_{{\textsc{d}}}}^{\textsc{d}}$
for the Seifert data of $\hat{Y}$, so that
$\hat{Y}$ is the
complement of $n_{{\textsc{g}}}\!+\!1$ regular fibers in either 
$M_{S^2}(y_0^{\textsc{d}};y_1^{\textsc{d}}, \ldots, y_{n_{{\textsc{d}}}}^{\textsc{d}})$ or
$M_{{\mathbb{R}}{\mathbb{P}}^2}(y_0^{\textsc{d}};y_1^{\textsc{d}}, \ldots, y_{n_{{\textsc{d}}}}^{\textsc{d}})$,
depending on whether $\hat{Y}$ has orientable or non-orientable base.
(These $y_i^{\textsc{d}}$ can also be regarded as Dehn-filling slopes
in terms of the basis ($\tilde{f}_i^{\textsc{d}}, -\tilde{h}_i^{\textsc{d}})$
described above.
See Section~\ref{ss: Conventions for Seifert fibered spaces}
for notation and homology conventions for Seifert fibered spaces.)
We can now state our main result.

\begin{theorem}
\label{thm: intro version of JN graph}
Suppose that $Y$ is not a solid torus and that $\mathcal{L}(Y)$ is nonempty.  Then
\begin{equation*}
\mathcal{L}(Y) =
\begin{cases}
\left< -\infty, +\infty \right>
& 
\hat{Y}\;
\text{has non-orientable base}
   \\
[[y_-, y_+]]
& 
\hat{Y}\;
\text{has orientable base},\;
\mathcal{L}^{\circ}(Y) \neq \emptyset
    \\
\{y_-\} \mkern-1.5mu=\mkern-1.5mu \{y_+\}
&
\hat{Y}\;
\text{has orientable base},\;
\mathcal{L}^{\circ}(Y) = \emptyset,
\end{cases}
\end{equation*}
where
\begin{align*}
y_-
&:=\,
\max_{k > 0}
-\mfrac{1}{k}\mkern-2.5mu\left( 1
+ {{\textstyle{\sum\limits_{i=0}^{n_{{\textsc{d}}}}  \left\lfloor y^{\textsc{d}}_i k \right\rfloor}}}
+ {{\textstyle{\sum\limits_{i=1}^{n_{{\textsc{g}}}} \left(  \left\lceil y^{\textsc{g}}_{i+}k 
\right\rceil - 1 \right)}}}
\right),
\\ \nonumber
y_+ 
&:=\,
\min_{k > 0}
-\mfrac{1}{k}\mkern-2.5mu\left(\mkern-2mu -1
+ {{\textstyle{\sum\limits_{i=0}^{n_{{\textsc{d}}}} \left\lceil y^{\textsc{d}}_i k \right\rceil}}}
+ {{\textstyle{\sum\limits_{i=1}^{n_{{\textsc{g}}}} \left(  \left\lfloor y^{\textsc{g}}_{i-} k 
\right\rfloor + 1 \right)}}}
\right).
\end{align*}
\end{theorem}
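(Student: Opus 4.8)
The plan is to induct on the height of the JSJ tree of $Y$, rooted at $\hat{Y}$; the inductive hypothesis supplies the intervals $[[y_{i-}^{\textsc{g}}, y_{i+}^{\textsc{g}}]]$ (or points) for the strictly shorter trees $Y_i$, and the step is to compute $\mathcal{L}(Y)$ for $Y = \hat{Y} \cup_{\boldsymbol{\varphi}} \coprod_{i=1}^{n_{\textsc{g}}} Y_i$ from these together with the Seifert data $y_0^{\textsc{d}}, \ldots, y_{n_{\textsc{d}}}^{\textsc{d}}$ of $\hat{Y}$. The base case is a single Seifert piece ($Y = \hat{Y}$, $n_{\textsc{g}} = 0$): the L-space interval of the complement of a regular fiber in a Seifert fibered space over $S^2$ is the Jankins--Neumann interval, which one reads off from the known L-space classification of closed Seifert fibered spaces by Dehn filling $\partial Y$, while over $\mathbb{RP}^2$ the interval is everything, since such spaces are always L-spaces. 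More generally this identifies the L-space region of $\hat{Y}$, with all $n_{\textsc{g}}+1$ boundary tori present, with the set of slope tuples whose Seifert filling satisfies the $y_\pm$ criterion.

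For the inductive step I would glue the $Y_i$ onto $\partial_1\hat{Y}, \ldots, \partial_{n_{\textsc{g}}}\hat{Y}$ one at a time. Setting $Z^{(m)} := \hat{Y} \cup \coprod_{i=1}^m Y_i$, so that $Z^{(0)} = \hat{Y}$ and $Z^{(n_{\textsc{g}})} = Y$, I would apply the gluing criterion of Proposition~\ref{prop: intro gluing result for graph manifolds} (in its form for a manifold with several torus boundary components) to pass from $Z^{(m-1)}$ to $Z^{(m)}$: the effect is to replace the free variable on $\partial_m\hat{Y}$ by the demand that every slope in the complement of $\varphi_{m*}^{\mathbb{P}}(\mathcal{L}^{\circ}(Y_m)) = \langle y_{m-}^{\textsc{g}}, y_{m+}^{\textsc{g}}\rangle$ lie in the L-space region of $Z^{(m-1)}$. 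Carrying this through all $n_{\textsc{g}}$ steps, starting from the $y_\pm$ description of the region of $\hat{Y}$, reduces the problem to one elementary computation: the identity that imposing ``for all $\delta \notin \langle y_{i-}^{\textsc{g}}, y_{i+}^{\textsc{g}}\rangle$'' converts the exceptional-fiber contribution $\lfloor \delta k\rfloor$ into $\lceil y_{i+}^{\textsc{g}}k\rceil - 1$ inside the $\max_k$ defining $y_-$, and converts $\lceil \delta k\rceil$ into $\lfloor y_{i-}^{\textsc{g}}k\rfloor + 1$ inside the $\min_k$ defining $y_+$. The asymmetry between the two endpoints is precisely what appears in the statement, and reflects that the binding constraints come from the slopes immediately adjacent to each $\mathcal{L}(Y_i)$ on its two sides.

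The non-orientable-base case is shorter: Seifert fibered spaces over $\mathbb{RP}^2$ are L-spaces (cited in the introduction), so after filling $\partial Y$ the piece $\hat{Y}$ has full L-space region; since a factor with full interior absorbs any other factor in the gluing criterion, every rational-homology-sphere filling of $Y$ is an L-space, which gives $\mathcal{L}(Y) = \langle -\infty, +\infty\rangle$. Alongside this I would need a few preliminary facts: that the suprema and infima defining $y_\pm$ are attained, so the formula genuinely describes an interval; that the reverse-oriented basis $(\tilde{f}, -\tilde{h})$ and the coordinate $\tfrac{r}{s}$ transform correctly under the gluing maps $\varphi_i$; and that the degenerate cases ($\mathcal{L}^{\circ}(Y_i) = \emptyset$, single-point $\mathcal{L}(Y_i)$, $\pm\infty$ endpoints, and $y_- = y_+$ for $Y$ itself) are consistent with the uniform formula.

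I expect the main obstacle to lie in the inductive step, in two places: pinning down the L-space region of $\hat{Y}$ precisely enough, as a subset of a product of projective lines, that the ``covering'' reformulation of the gluing criterion can be iterated and is guaranteed to return a genuine interval in the $\partial Y$ variable at every stage; and the bookkeeping of the $\lfloor\cdot\rfloor$ and $\lceil\cdot\rceil$ terms together with the open/closed endpoint conventions, so that the output matches the stated $\max_k$/$\min_k$ formula on the nose rather than up to a boundary point.
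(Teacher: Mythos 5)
Your inductive-step architecture is not the one the paper uses, and where it differs it hits a genuine gap.

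You propose to form $Z^{(m)} := \hat{Y} \cup \coprod_{i=1}^m Y_i$, leaving the boundary tori $\partial_{m+1}\hat{Y}, \dots, \partial_{n_{\textsc{g}}}\hat{Y}, \partial Y$ open, and to pass from $Z^{(m-1)}$ to $Z^{(m)}$ by ``the gluing criterion of Proposition~\ref{prop: intro gluing result for graph manifolds} in its form for a manifold with several torus boundary components.'' No such form is available. Proposition~\ref{prop: intro gluing result for graph manifolds} (and the Propositions~\ref{prop: L-space gluing thm from lslope}, \ref{prop: gluing prop for LNTF equivalent manifolds} behind it) is strictly a statement about a union of two manifolds each with a single boundary torus, phrased in terms of their one-dimensional L-space intervals. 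Once $Z^{(m)}$ has several boundary tori you would need to know the full ``L-space region'' of $Z^{(m)}$ inside a product of projective lines, and there is no theorem asserting that this region is a product of intervals, that it is described by an endpoint formula, or that it transforms in the simple floor/ceiling way you sketch when you fill one boundary component. This is precisely the multi-boundary bordered gluing problem that the paper is designed to avoid.

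The paper's actual device is to keep the two-piece criterion throughout. For each candidate slope $y$ of $\partial Y$, it defines $\hat{Y}[y]$ to be the complement of $Y_{n_{\textsc{g}}}$ in the Dehn filling $Y(y)$, so that $\hat{Y}[y]$ has a \emph{single} torus boundary and is again a tree manifold, now with $n_{\textsc{g}}-1$ daughters and the same tree height. Writing $Y(y) = \hat{Y}[y] \cup_{\varphi_{n_{\textsc{g}}}} Y_{n_{\textsc{g}}}$ and applying Proposition~\ref{prop: gluing prop for LNTF equivalent manifolds} slope by slope gives the criterion for $y \in \mathcal{L}(Y)$ purely in terms of the intervals of $\hat{Y}[y]$ and $Y_{n_{\textsc{g}}}$, which are covered by a double induction (on tree height, and within a fixed height, on $n_{\textsc{g}}$); the floor/ceiling manipulations you anticipate then appear in Claims~\ref{claim: non-solid-torus Yhat case, ineqs imply ineqs}--\ref{claim: Yhat solid torus, y- y+ outside longs with equality sometimes}. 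Two further ingredients your outline omits are also load-bearing: one must first prove L/NTF-equivalence (Theorem~\ref{thm: lntf equivalence}) by its own parallel induction in order to have the gluing criterion without the Floer-simplicity or nonempty-intersection hypotheses, since for graph manifolds the daughters can have a single isolated L-space filling or empty $\mathcal{L}$; and one must prove Proposition~\ref{prop: characterization of Floer simple graph manifolds} in tandem with the interval formula, because the recursion needs to know at each stage which of the cases (Floer simple, isolated filling, empty) the pieces $\hat{Y}[y]$ and $Y_i$ fall into. Treating degenerate cases as afterthoughts, as your last paragraph suggests, does not work here; they drive the case structure of the entire induction, including the separate treatment of slopes $y$ for which $\hat{Y}[y]$ is a solid torus or has compressible boundary.
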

{\noindent{(In the above, we define $y_- := \infty$ or $y_+ := \infty$, respectively,
if any infinite terms appear as summands of $y_-$ or $y_+$, respectively.)}}

Whereas every oriented Seifert fibered space over the
disk or M{\"o}bius strip is Floer simple,
{\em{i.e.}} has $\mathcal{L}^{\circ}\mkern-3.7mu \neq \mkern-1mu \emptyset$,
the story for graph manifolds is more complicated.  Consider the following examples,
for all of which we take $\hat{Y}$ to have orientable base.
\begin{align*}
\mathcal{L}(Y) &= [-\infty, 96]:
&&
n_{{\textsc{d}}} \mkern-3mu=\mkern-2mu 3, \;\;
(y_1^{\textsc{d}}, y_2^{\textsc{d}}, y_3^{\textsc{d}})
= \left({{\textstyle{\frac{1}{3}}}}, \mkern-1mu -{{\textstyle{\frac{2}{5}}}},
{{\textstyle{\frac{3}{2}}}}\right);
  \nonumber \\
&&&
n_{{\textsc{g}}} \mkern-3mu=\mkern-2mu 2, \;\;
\varphi_{1*}^{{\mathbb{P}}}(\mathcal{L}(Y_1)) = [-100, +\infty],\;
\varphi_{2*}^{{\mathbb{P}}}(\mathcal{L}(Y_2)) =
\left[\left[{{\textstyle{\frac{2}{5}}}}, -20\right]\right].
  \nonumber \\
\mathcal{L}(Y) &= \{0\}:
&&
n_{{\textsc{d}}} \mkern-3mu=\mkern-2mu 1, \;\;
y_1^{\textsc{d}} = {{\textstyle{\frac{1}{3}}}};
    \\
&&&
n_{{\textsc{g}}} \mkern-3mu=\mkern-2mu 1, \;\;
\varphi_{1*}^{{\mathbb{P}}}(\mathcal{L}(Y_1)) = \left[-{{\textstyle{\frac{1}{3}}}}, 0\right].
 \nonumber \\
\mathcal{L}(Y) &= \emptyset:
&&
n_{{\textsc{d}}} \mkern-3mu=\mkern-2mu 3, \;\;
(y_1^{\textsc{d}}, y_2^{\textsc{d}}, y_3^{\textsc{d}})
= \left({{\textstyle{\frac{1}{3}}}}, \mkern-1mu -{{\textstyle{\frac{2}{5}}}},
{{\textstyle{\frac{3}{2}}}}\right);
  \nonumber \\
&&&
n_{{\textsc{g}}} \mkern-3mu=\mkern-2mu 2, \;\;
\varphi_{1*}^{{\mathbb{P}}}(\mathcal{L}(Y_1)) = [-100, +\infty],\;
\varphi_{2*}^{{\mathbb{P}}}(\mathcal{L}(Y_2)) = \left[-{{\textstyle{\frac{1}{3}}}}, 0\right].
\end{align*}

Above, we see examples in which
$Y$ is Floer simple, has an isolated L-space filling, 
or has empty L-space interval.
One cannot use
Theorem~\ref{thm: intro version of JN graph} without first knowing
which of these three cases occurs for $Y$.
We therefore provide 
Proposition~\ref{prop: characterization of Floer simple graph manifolds},
which lists explicit criteria for the
multiple mutually exclusive cases in which $Y$ is Floer simple
or in which $Y$ has an isolated L-space filling.
In the complement of these criteria, $\mathcal{L}(Y)$ is empty.

\vspace{.07cm}
In fact, the validity of 
Theorem~\ref{thm: l space interval for graph manifolds}
extends beyond the realm of graph manifolds.
\vspace{-.1cm}
\begin{prop}
\label{prop: intro version of JN graph theorem applicable for non graph}
Theorem 
\ref{thm: intro version of JN graph}
holds for any boundary incompressible Floer simple three-manifolds
$Y_1, \ldots, Y_{n_{{\textsc{g}}}}$, provided that
$Y$ satisfies the criteria in
Proposition~\ref{prop: characterization of Floer simple graph manifolds}
for $\mathcal{L}(Y)$ to be nonempty.
\end{prop}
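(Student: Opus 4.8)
The plan is to revisit the proof of Theorem~\ref{thm: intro version of JN graph} (equivalently, its full form Theorem~\ref{thm: l space interval for graph manifolds}) for graph manifolds and to pin down exactly where the hypothesis that each $Y_i$ is a graph manifold enters. That proof is an induction on the height of the rooted tree of $Y$. The base case $n_{\textsc{g}}=0$ is the relative Jankins--Neumann computation for a Seifert fibered space over the disk or M\"obius band, and the inductive step --- the only part relevant here --- establishes a \emph{combining formula}: given $\hat Y$ Seifert fibered over an $(n_{\textsc{g}}+1)$-punctured $S^2$ or ${\mathbb{R}}{\mathbb{P}}^2$ with Seifert data $y_0^{\textsc{d}},\ldots,y_{n_{\textsc{d}}}^{\textsc{d}}$, and given boundary-incompressible Floer simple pieces $Y_1,\ldots,Y_{n_{\textsc{g}}}$ glued to $n_{\textsc{g}}$ of its boundary components with $\varphi_{i*}^{{\mathbb{P}}}(\mathcal{L}(Y_i))=[[y_{i-}^{\textsc{g}},y_{i+}^{\textsc{g}}]]$, the L-space interval of the union $Y=\hat Y\cup_{\boldsymbol{\varphi}}\coprod_{i=1}^{n_{\textsc{g}}}Y_i$ is given by the floor-and-ceiling formula in the statement of Theorem~\ref{thm: intro version of JN graph}. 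I would begin by checking that this inductive step, read in isolation, is precisely the assertion of the present Proposition once one grants --- in place of the inductive hypothesis on tree height --- that the intervals $\varphi_{i*}^{{\mathbb{P}}}(\mathcal{L}(Y_i))$ are known and have nonempty interior (Floer simplicity of $Y_i$) and that $\mathcal{L}(Y)$ is nonempty (the stated proviso).

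The inductive step rests on two ingredients, and the task is to confirm that neither uses graph-manifold-ness of the $Y_i$ beyond what the Proposition already hypothesizes. First, the gluing criterion of Proposition~\ref{prop: intro gluing result for graph manifolds}, or rather its generalization to boundary-incompressible Floer simple three-manifolds from~\cite{HanWat, lslope} and~\cite{HRRW}, is used to reduce the statement that a Dehn filling $Y(\sigma)=\hat Y(\sigma)\cup_{\boldsymbol{\varphi}}\coprod_i Y_i$ is an L-space to a condition involving only the Seifert fibered manifold $\hat Y(\sigma)$ and the interiors $\mathcal{L}^{\circ}(Y_i)$. Second, a Jankins--Neumann-style computation on $\hat Y$ --- using the orderability and Floer data of its boundary tori --- converts that condition into the floor/ceiling formula; this computation is internal to the Seifert piece and never sees the topology of the $Y_i$. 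In both ingredients each $Y_i$ enters only as (a) a boundary-incompressible Floer simple manifold, so that the gluing criterion applies and $\varphi_{i*}^{{\mathbb{P}}}(\mathcal{L}(Y_i))$ is a genuine closed interval with nonempty interior, and (b) the pair of endpoints $y_{i\pm}^{\textsc{g}}$. Graph-manifold-ness is used in the original argument only to supply (a) automatically and, via the recursion, to compute the $\mathcal{L}(Y_i)$; so under the Proposition's hypotheses the step runs verbatim. The proviso that $Y$ meet the criteria of Proposition~\ref{prop: characterization of Floer simple graph manifolds} for $\mathcal{L}(Y)\neq\emptyset$ is needed because, absent a graph-manifold structure, we can no longer read off from the ambient topology which of the three cases (Floer simple, isolated L-space filling, empty) holds; but those criteria are stated purely in terms of $y_i^{\textsc{d}}$ and $y_{i\pm}^{\textsc{g}}$, hence remain meaningful, and assuming them places us exactly in the regime where the formula of Theorem~\ref{thm: intro version of JN graph} is valid.

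The main obstacle I anticipate is not a new idea but the bookkeeping around the gluings. When the pieces $Y_i$ are attached to $\hat Y(\sigma)$ one at a time, one must check that the generalized gluing criterion of~\cite{HanWat, lslope, HRRW} applies to each intermediate manifold --- in particular, that boundary-incompressibility survives each partial gluing, that no intermediate piece is forced into the ``isolated L-space filling'' situation that the Floer-simplicity hypotheses exist to avoid, and that the fiber-slope filling $\sigma=\infty$ is correctly accounted for by the ``$y_{\pm}:=\infty$'' convention. Since $\hat Y$ is Seifert fibered and the $Y_i$ are boundary-incompressible, non-solid-torus, Floer simple manifolds, these are the same routine verifications already carried out in the graph-manifold case, and once they are in place the remainder of the argument is word-for-word that of the proof of Theorem~\ref{thm: intro version of JN graph}.
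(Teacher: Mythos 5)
Your proposal is correct and takes essentially the same approach as the paper. The paper's own justification (Section~\ref{s: Observations}, preceding Corollary~\ref{cor: generalization of JN to Floer}) is terse: it observes that the only place the graph-manifold hypothesis on the $Y_i$ was used in the proof of Theorem~\ref{thm: l space interval for graph manifolds} was to invoke the strengthened gluing criterion of Proposition~\ref{prop: gluing prop for LNTF equivalent manifolds}, and that the combined gluing result of~\cite{HRRW}---which drops the nonempty-intersection hypothesis of Proposition~\ref{prop: L-space gluing thm from lslope} at the cost of requiring Floer simplicity---substitutes for it once Floer simplicity of the $Y_i$ is assumed; your proposal reconstructs exactly this observation and correctly identifies the remaining routine checks (boundary incompressibility and Floer simplicity of the intermediate unions, and the treatment of the $\infty$ filling) as the bookkeeping carried over unchanged from the graph-manifold argument.
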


\vspace{-.1cm}
{\noindent{One immediate application of this generalization
is the computation of L-space intervals for
cables of Floer simple knot complements.}}

\subsection{Cables}
The $(p,q)$-cable $Y^{(p,q)}\mkern-2mu\subset\mkern-2mu X$ of a knot complement 
$Y \mkern-2mu:=\mkern-2mu X \mkern-1.5mu\setminus\mkern-1.5mu \nu(K) \subset X$
is given by 
the knot complement $Y^{(p,q)} \mkern-3mu:=\mkern-2mu 
X \mkern-1.5mu\setminus\mkern-1.5mu \nu(K^{(p,q)})$,
where $K^{(p,q)}\mkern-1mu \subset X$ is the image of 
the $(p,q)$-torus knot embedded in the boundary of $Y\!$.
Since one can realize any cable of $Y \subset X$ by gluing 
an appropriate Seifert fibered space onto $Y\!$,
we can use the above generalization of
Theorem~\ref{thm: intro version of JN graph}
to prove the following result.
\begin{theorem}
\label{thm: intro cabling theorem for L-space complements}
Suppose that $p, q \in {\mathbb{Z}}$ with $p>1$ and $\gcd(p,q) = 1$,
and that $Y = X\setminus \nu(K)$ is a boundary incompressible Floer simple knot complement
in an L-space $X$, 
with L-space interval 
$\mathcal{L}(Y) = [[\frac{a_-}{b_-}, \frac{a_+}{b_+}]]$, written in
terms of the surgery basis $\mu, \lambda \in H_1(\partial Y)$ for $K$, with 
$\mu$ the meridian of $K$ and $\lambda$ a choice of longitude.
Then in terms of the surgery basis produced by cabling,
the $(p,q)$-cable $Y^{(p,q)} \mkern-2mu\subset\mkern-2mu X$ of 
$Y \mkern-2mu\subset\mkern-2mu X$
has L-space interval
\begin{equation*}
\mathcal{L}(Y^{(p,q)}) =
\begin{cases}
\{\infty\}
&\mkern15mu
\mfrac{a_-\mkern-2mu}{b_-\mkern-2mu} \in
\left[\mfrac{p^*}{q^*}, \infty\right],\;
\mfrac{a_+\mkern-2mu}{b_+\mkern-2mu} \in
\left[\mfrac{q-p^*}{p-q^*}, \mfrac{q}{p} \right> \cup \{\infty\}
     \\
[[1/y_+, 1/y_-]]
&\mkern15mu
\text{otherwise},
\end{cases}
\end{equation*}
where $p^*, q^* \in {\mathbb{Z}}$ are defined to satisfy
$pp^* - qq^* = 1$ with $0 < q^* < p$,
and where we define 
$y_- := \max_{k>0} y_-(k)$
and
$\mkern3mu y_+ := \min_{k>0} y_+(k)$, with
\begin{align*}
y_-(k)
:= \mfrac{1}{k}\mkern-3mu\left(
\left\lceil
\mkern-2mu\mfrac{\mkern1.5muq^*\mkern-3.5mu}{p}k\mkern-1.5mu
\right\rceil
-
\left\lceil y_{1+}k\mkern-1.5mu
\right\rceil
\right),
\;\;\;
y_+(k)
:= \mfrac{1}{k}\mkern-3mu\left(
\left\lfloor
\mkern-2mu\mfrac{\mkern1.5muq^*\mkern-3.5mu}{p}k\mkern-1.5mu
\right\rfloor
-
\left\lfloor
y_{1-}k\mkern-1.5mu
\right\rfloor
\right),
   \\
\text{and}\;\;\;\;
y_{1\pm} := \frac{a_{\pm}q^* - b_{\pm}p^*}{a_{\pm}p - b_{\pm}q}
=\mkern1mu \frac{\mkern.5muq^*\mkern-2.5mu}{p} 
\mkern-5mu\left(1 - \frac{b_{\pm}}{q^*(a_{\pm}p-b_{\pm}q)}\right).
\;\;\;\;\;\;
\end{align*}
\end{theorem}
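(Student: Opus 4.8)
The plan is to realize the cabled complement as a graph-manifold-type gluing and then feed this into the generalized Jankins--Neumann formula of Proposition~\ref{prop: intro version of JN graph theorem applicable for non graph}. Writing $X = Y \cup \nu(K)$, one has
$X \setminus \nu(K^{(p,q)}) = Y \cup \big(\nu(K)\setminus\nu(K^{(p,q)})\big) = Y \cup_{\varphi_1} C_{p,q}$,
where $C_{p,q}$ is the $(p,q)$-cable space, Seifert fibered over the annulus (a $2$-punctured $S^2$) with a single exceptional fiber of order $p$. Thus in the notation of the introduction we have $n_{\textsc{g}} = 1$, $Y_1 = Y$, and $\hat{Y} = C_{p,q}$, which has orientable base; so only the last two alternatives of Theorem~\ref{thm: intro version of JN graph}, together with the degeneration of $[[y_-,y_+]]$ to a single point, can occur. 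Moreover $\mathcal{L}(Y^{(p,q)})$ is never empty under the stated hypotheses, since the meridian filling of $Y^{(p,q)}$, in the cabling surgery basis, recovers the L-space $X$.

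Next I would record the arithmetic of this decomposition using the reverse-oriented Seifert basis $(\tilde f, -\tilde h)$. A short computation in $H_1$ of the torus shows: (i) on the boundary $\partial_1\hat{Y}$ glued to $Y$, the map $\varphi_1$ sends the surgery slope $\tfrac ab$ (in the basis $\mu,\lambda$) to the Seifert slope $\tfrac{aq^* - bp^*}{ap - bq}$; this change of basis has matrix $\left(\begin{smallmatrix} q^* & -p^* \\ p & -q\end{smallmatrix}\right)$, of determinant $pp^* - qq^* = 1$, hence is orientation-preserving and carries the endpoints of $\mathcal{L}(Y)$ to the stated values $y^{\textsc{g}}_{1\pm} = y_{1\pm}$; (ii) on the outer boundary $\partial_0\hat{Y} = \partial Y^{(p,q)}$ the meridian of $K^{(p,q)}$ is the meridian $\tilde h_0$ of the excised fiber and the cabling longitude is the fiber-framing $\tilde f_0$, so the ``surgery basis produced by cabling'' differs from $(\tilde f_0, -\tilde h_0)$ by the (orientation-reversing) reciprocal $\tfrac rs \mapsto \tfrac sr$; and (iii) the exceptional-fiber and Euler-number data of $C_{p,q}$ can be normalized to $(y^{\textsc{d}}_0, y^{\textsc{d}}_1) = (-1,\ \tfrac{p-q^*}{p})$, so that $n_{\textsc{d}} = 1$.

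I would then apply Proposition~\ref{prop: intro version of JN graph theorem applicable for non graph} --- legitimate because $Y$ is boundary incompressible and Floer simple and, in the non-degenerate case, $\mathcal{L}(Y^{(p,q)})$ meets the criteria of Proposition~\ref{prop: characterization of Floer simple graph manifolds} --- to get $\mathcal{L}(Y^{(p,q)}) = [[y^{\mathrm{JN}}_-,\, y^{\mathrm{JN}}_+]]$ in the basis $(\tilde f_0, -\tilde h_0)$, with $n_{\textsc{d}} = n_{\textsc{g}} = 1$ in the max/min formulas of Theorem~\ref{thm: intro version of JN graph}. The sums over the Seifert data then collapse: from $(y^{\textsc{d}}_0, y^{\textsc{d}}_1) = (-1,\tfrac{p-q^*}{p})$ and the integrality of $y^{\textsc{d}}_0$ one gets, for every $k>0$,
\[
\big\lceil y^{\textsc{d}}_0 k\big\rceil + \big\lceil y^{\textsc{d}}_1 k\big\rceil \;=\; \Big\lceil -\tfrac{q^*}{p}k\Big\rceil \;=\; -\Big\lfloor\tfrac{q^*}{p}k\Big\rfloor, \qquad \big\lfloor y^{\textsc{d}}_0 k\big\rfloor + \big\lfloor y^{\textsc{d}}_1 k\big\rfloor \;=\; -\Big\lceil\tfrac{q^*}{p}k\Big\rceil ,
\]
whence $y^{\mathrm{JN}}_- = \max_{k>0}\tfrac1k\big(\lceil\tfrac{q^*}{p}k\rceil - \lceil y_{1+}k\rceil\big) = y_-$ and $y^{\mathrm{JN}}_+ = \min_{k>0}\tfrac1k\big(\lfloor\tfrac{q^*}{p}k\rfloor - \lfloor y_{1-}k\rfloor\big) = y_+$, with the $\infty$-valued-summand convention matching that of the statement. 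Transporting $[[y_-,y_+]]$ through the reciprocal of step~(ii) turns it into $[[1/y_+, 1/y_-]]$ in the cabling surgery basis (the reciprocal is orientation-reversing, so left and right endpoints swap), which is the second case of the claim.

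It remains to pin down the degenerate case, and this is where I expect the real work. One must push the criteria of Proposition~\ref{prop: characterization of Floer simple graph manifolds} through the change of basis of step~(i): under $\varphi_{1*}$ one has $\tfrac{p^*}{q^*}\mapsto 0$, $\infty\mapsto\tfrac{q^*}{p}$, $\tfrac{q-p^*}{p-q^*}\mapsto 1$, $\tfrac qp\mapsto\infty$, so $\tfrac{a_-}{b_-}\in[\tfrac{p^*}{q^*},\infty]$ becomes $y^{\textsc{g}}_{1-}\in[0,\tfrac{q^*}{p}]$ and $\tfrac{a_+}{b_+}\in[\tfrac{q-p^*}{p-q^*},\tfrac qp\rangle\cup\{\infty\}$ becomes $y^{\textsc{g}}_{1+}\in[1,\infty\rangle\cup\{\tfrac{q^*}{p}\}$; one then checks that $\mathcal{L}(Y^{(p,q)})$ has empty interior precisely in this regime, and that a direct inspection of the max/min formulas there forces $y^{\mathrm{JN}}_- = y^{\mathrm{JN}}_+ = 0$, i.e. the single point $\{\infty\}$ in the cabling basis. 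The main obstacle throughout is not the two floor/ceiling identities above or the matrix computation for $y_{1\pm}$, which are routine once the data is fixed, but the bookkeeping --- keeping the several orientation conventions (the reverse-oriented Seifert basis, the gluing $\varphi_1:\partial Y_1\to-\partial_1\hat{Y}$, the reciprocal on $\partial_0\hat{Y}$, and the normalization of the Seifert data of $C_{p,q}$) mutually consistent, so that left endpoints map to left endpoints and the ceiling/floor pattern of Theorem~\ref{thm: intro version of JN graph} lines up with the one in the statement. As a final check one specializes to $X = S^3$ and $K$ an L-space knot, where $\mathcal{L}(Y) = [[2g(K)-1,\infty]]$, to recover the Hedden--Hom cabling formula.
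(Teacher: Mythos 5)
Your steps (i)--(iii) correctly reproduce the paper's set-up in Section~4: realizing $Y^{(p,q)}$ as a splice of $Y$ with the Seifert piece $\hat{Y}_{(-q^*\!,\,p)}$, identifying the Seifert/gluing data (your normalization $(y_0^{\textsc{d}},y_1^{\textsc{d}})=(-1,\tfrac{p-q^*}{p})$ is the same as the paper's $(0,-\tfrac{q^*}{p})$ up to the allowed integer reparameterization), simplifying the floor/ceiling sums to the stated $y_\pm(k)$, and accounting for the orientation-reversing reciprocal on the outer boundary. The observation that $\mathcal L(Y^{(p,q)})\neq\emptyset$ because the meridional filling recovers $X$ is also the paper's key reduction. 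So the non-degenerate case is handled as in the paper.

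The gap is in step (iv), which you correctly flag as where the work lies, but whose asserted resolution does not survive the ``direct inspection'' you promise. From $y_-=\max_{k>0}\tfrac1k\bigl(\lceil\tfrac{q^*}{p}k\rceil-\lceil y_{1+}k\rceil\bigr)$ one checks that $y_-=0$ exactly when $y_{1+}\in\bigl[\tfrac{q^*}{p},1\bigr]$. You have computed the boundary images correctly ($\tfrac{q-p^*}{p-q^*}\mapsto1$, $\tfrac qp\mapsto\infty$, $\infty\mapsto\tfrac{q^*}{p}$), but the $\varphi_{1*}$-\emph{preimage} of the closed arc $\bigl[\tfrac{q^*}{p},1\bigr]$ is the closed arc $\bigl[-\infty,\tfrac{q-p^*}{p-q^*}\bigr]$ (the arc \emph{through} $\infty$), not $\bigl[\tfrac{q-p^*}{p-q^*},\tfrac qp\bigr\rangle\cup\{\infty\}$: since $\varphi_{1*}(\tfrac qp)=\infty\notin[\tfrac{q^*}{p},1]$, the preimage must avoid $\tfrac qp$, whereas the half-open interval you identify approaches it. Concretely, with $p=2$, $q=3$ (so $q^*=1$, $p^*=2$), the slope $\tfrac{a_+}{b_+}=\tfrac54\in[1,\tfrac32\rangle$ gives $y_{1+}=\tfrac32$ and $y_-=-1\neq0$, while $\tfrac{a_+}{b_+}=0\notin[1,\tfrac32\rangle\cup\{\infty\}$ gives $y_{1+}=\tfrac23\in[\tfrac12,1]$ and $y_-=0$. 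So the ``regime'' you identify for the degenerate case is the wrong side of the circle, and the equivalence you assert there would fail the inspection. The specialization to Hedden--Hom cannot detect this, because there $\tfrac{a_+}{b_+}=\infty$ is a common endpoint of both arcs. You need to recompute the preimage before the degenerate case can be considered proved.
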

{\noindent{We also prove a slightly more general version of
Theorem~\ref{thm: intro cabling theorem for L-space complements}
which does not require that
$X$ be an L-space, and which
holds for any $\frac{p}{q} \in {\mathbb{Q}} \mkern-1mu\cup\mkern-3mu \{\infty\}$.}}

A brief application of
the theorem,
followed by an appropriate change of basis,
recovers the following result of 
Hedden \cite{Heddencableii} and Hom \cite{Homcable}.
\begin{cor}
Suppose $Y := S^3 \setminus \nu(K)$ is a boundary incompressible
Floer simple positive  knot complement in $S^3$.
If $p> 0$ and $\gcd(p,q)=1$, then in terms of the
conventional basis for knot complements in $S^3$,
$Y^{(p,q)}$ has L-space interval
\begin{equation}
\mathcal{L}(Y^{(p,q)}) = 
\begin{cases}
\{\infty\}
&2g(K) - 1 >
\frac{q}{p}
   \\
[\mkern.5mu pq \mkern-1.5mu-\mkern-1.5mu p 
\mkern-1.5mu-\mkern-1.5mu q \mkern-1.5mu+\mkern-1.5mu 2g(K)p , \;\infty\mkern.2mu]
&2g(K) - 1 \le
\frac{q}{p}.
\end{cases}
\end{equation}
\end{cor}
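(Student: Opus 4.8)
The plan is to deduce the corollary from Theorem~\ref{thm: intro cabling theorem for L-space complements} by specializing to $X = S^3$ and translating between the two bases for $H_1(\partial Y^{(p,q)})$. First I would recall (or invoke the known fact, e.g.\ from Hedden's work) that for a Floer simple \emph{positive} knot complement $Y = S^3 \setminus \nu(K)$ in the conventional Seifert-framed basis $(\mu,\lambda)$, the L-space interval is $\mathcal{L}(Y) = [\mkern.5mu 2g(K) - 1, \infty\mkern.2mu]$; in the surgery-basis coordinates $\frac{r}{s} = r\mu + s\lambda$ of the theorem this reads $\frac{a_-}{b_-} = \frac{2g(K)-1}{1}$ and $\frac{a_+}{b_+} = \frac{1}{0} = \infty$, so $a_- = 2g(K)-1$, $b_- = 1$, $a_+ = 1$, $b_+ = 0$. (One must be slightly careful about orientation conventions relating $[[\cdot,\cdot]]$ to $[\cdot,\cdot]$, but for a positive knot the interval is a genuine ray and no ambiguity survives.) Here $p > 0$ together with $\gcd(p,q)=1$, and the hypothesis $p > 1$ of the theorem is only needed to avoid the degenerate $p=1$ cable, which is an isotopy; I would treat $p=1$ separately as the trivial case or simply note it is vacuous for the stated genus inequality.

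Next I would substitute these values into the formulas of Theorem~\ref{thm: intro cabling theorem for L-space complements}. Plugging $b_+ = 0$ into $y_{1+} = \frac{a_+ q^* - b_+ p^*}{a_+ p - b_+ q}$ gives $y_{1+} = \frac{q^*}{p}$, so that $\left\lceil y_{1+} k \right\rceil = \left\lceil \frac{q^*}{p} k \right\rceil$ and hence $y_-(k) = \frac{1}{k}\!\left(\left\lceil \frac{q^*}{p}k\right\rceil - \left\lceil \frac{q^*}{p}k\right\rceil\right) = 0$ for every $k>0$; therefore $y_- = 0$ and $1/y_- = \infty$ identically. For the other endpoint, $y_{1-} = \frac{a_- q^* - b_- p^*}{a_- p - b_- q} = \frac{(2g(K)-1)q^* - p^*}{(2g(K)-1)p - q}$, and I would compute $y_+(k) = \frac{1}{k}\!\left(\left\lfloor \frac{q^*}{p}k\right\rfloor - \left\lfloor y_{1-}k\right\rfloor\right)$, minimize over $k > 0$ to get $1/y_+$, and then apply the change of basis from cabling coordinates back to the conventional $S^3$ basis for $Y^{(p,q)}$ — the cabling meridian is $p\mu + pq\lambda$-framed relative to the Seifert framing, so a slope $t$ in cabling coordinates corresponds to the slope $pq + t$-ish in conventional coordinates (I would pin down the exact affine relation, using that the $(p,q)$-cable of the unknot has the expected framing shift $pq$). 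Carrying this through should turn $1/y_+$ into $pq - p - q + 2g(K)p$, matching the claimed left endpoint.

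Finally I would verify the case dichotomy. The degenerate case $\mathcal{L}(Y^{(p,q)}) = \{\infty\}$ in the theorem triggers when $\frac{a_-}{b_-} \in \left[\frac{p^*}{q^*}, \infty\right]$ and $\frac{a_+}{b_+} \in \left[\frac{q - p^*}{p - q^*}, \frac{q}{p}\right> \cup \{\infty\}$; since here $\frac{a_+}{b_+} = \infty$ always lies in that second set, the condition reduces to $2g(K) - 1 = \frac{a_-}{b_-} \ge \frac{p^*}{q^*}$, and using $pp^* - qq^* = 1$ one checks $\frac{p^*}{q^*} = \frac{q}{p} + \frac{1}{pq^*} > \frac{q}{p}$, so this becomes exactly the boundary-case analysis matching the stated inequality $2g(K) - 1 > \frac{q}{p}$ (with the threshold value handled by the half-open bracket and an integrality check, since $2g(K)-1$ is an integer). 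In the complementary range $2g(K)-1 \le \frac{q}{p}$ one is in the ``otherwise'' branch $[[1/y_+, 1/y_-]] = [[pq - p - q + 2g(K)p, \infty]]$, which after the basis change is the claimed ray. I expect the main obstacle to be bookkeeping: getting every orientation and framing convention consistent so that $1/y_+$ lands precisely on $pq - p - q + 2g(K)p$ rather than off by the ubiquitous $\pm 1$ or $\pm pq$, and confirming that the threshold $2g(K)-1 = q/p$ (possible only when $p=1$, since $\gcd(p,q)=1$ forces $p \mid q$ there) is slotted into the correct case — essentially a careful but routine reconciliation of the two coordinate systems and a short argument that the min over $k$ is attained at $k=1$ (or at whatever $k$ makes the floor terms align), which I would do by the standard Stern–Brocot / mediant argument used for such $\min_{k>0}$ expressions.
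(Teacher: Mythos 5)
Your proposal is correct and follows essentially the same route as the paper: specialize the cabling theorem to $\mathcal{L}(Y)=[2g(K)-1,\infty]$ (so $y_{1+}=q^*/p$ and $y_-\equiv 0$), minimize $y_+(k)$ over $k$ (the paper does this by splitting on the sign of $q-Np$ and showing the minimum occurs at $k_+=q-Np$, giving $y_+=-1/(q-Np)$), and then shift by $+pq$ to pass from the cabling basis to the Seifert-framed basis, exactly as you describe. Your integrality argument identifying the threshold $2g(K)-1\ge p^*/q^*$ with $2g(K)-1>q/p$ is also the same observation the paper makes in the remark following the corollary.
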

{\noindent{By {\em{positive}}, we simply mean that $Y$ has an L-space Dehn filling $Y(N)$
for some $N> 0$.}}

Note that equating $pq-p-q+2g(K)p$ with $2g(K^{(p,q)})-1$
recovers the formula for the 
genus of the $(p,q)$-cable of $K \subset S^3\mkern-1mu$.
Note also that since $\frac{p^*}{q^*} - \frac{q}{p} = \frac{1}{pq^*}$,
with $\frac{p^*}{q^*}, \frac{q}{p} \notin {\mathbb{Z}}$, the domain
specified for Floer simple cables in the above corollary is equivalent to the condition 
$2g(K) - 1  \notin [\frac{p^*}{q^*}, \infty]$,
matching
Theorem~\ref{thm: intro cabling theorem for L-space complements}.

\subsection{Generalized Solid Tori}
A recent result of Gillespie \cite{Gillespietorus}
states that a compact oriented three manifold $Y$ with torus boundary
satisfies $\mathcal{L}(Y) = {\mathbb{P}}(H_1(\partial Y)) \setminus \{l\}$
if and only if $Y$ has genus 0 and an L-space filling,
where $l$ denotes the rational longitude of $Y$.
Such manifolds are called {\em{generalized solid tori}}
in \cite{lslope} and are of independent interest
\cite{BergeST, GabaiTorus, Cebanu, BBL, HanWat}.

Using the
version of 
Theorem~\ref{thm: intro cabling theorem for L-space complements}
that does not require $X$ to be an L-space,
along with some incremental results from the proof
of 
Theorem~\ref{thm: intro version of JN graph}, we are able to show the following.
\begin{theorem}
\label{thm: intro version of cabled generalized tori}
If $Y$ is a generalized solid torus,
then any cable of $Y \subset Y(l)$ is a generalized solid torus.
If $Y\!$ is a graph manifold with torus boundary, $b_1(Y) \mkern-2.5mu=\mkern-2.5mu 1$, and rational
longitude other than the regular fiber, then
$Y$ is a generalized solid torus if and only if
it is homeomorphic to an iterated
cable of the regular fiber complement in $S^1\mkern-2.5mu\times\mkern-.8mu S^2\mkern-4mu$.
\end{theorem}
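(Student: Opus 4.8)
The plan is to treat the two assertions separately, deriving the first from the generalized cabling theorem and then using it to anchor the induction in the second. For the first assertion, observe that if $Y$ is a generalized solid torus then $Y = Y(l)\setminus\nu(K)$ with $K$ the core of the filling solid torus, so the meridian $\mu$ of $K$ is precisely the rational longitude $l$, and $\mathcal{L}(Y) = \mathbb{P}(H_1(\partial Y))\setminus\{\mu\}$ is the set of all finite slopes in the surgery basis. This is exactly the degenerate, endpoint-coincident case of the input interval in Theorem~\ref{thm: intro cabling theorem for L-space complements}: setting $b_- = b_+ = 0$ in the formulas there gives $y_{1-} = y_{1+} = q^*/p$, hence $y_-(k) = y_+(k) = 0$ for every $k$, so $y_- = y_+ = 0$ and the cabled interval collapses to $[[1/y_+, 1/y_-]] = [[\infty,\infty]]$. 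Since $Y^{(p,q)}$ still has L-space fillings---fill the cable space along any slope turning it into a solid torus that induces on $\partial Y$ some L-space slope other than $l$---it is Floer simple, so this degenerate interval is in fact the complement of a single slope; that is, $Y^{(p,q)}$ is again a generalized solid torus. The one point needing care is that the stated form of Theorem~\ref{thm: intro cabling theorem for L-space complements} presupposes a nondegenerate interval, so I would either invoke the non-L-space-$X$ version with this boundary case spelled out or rerun the single step of its proof that produces the endpoints---this is where ``incremental results from the proof'' enter.

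With this in hand the \emph{``if'' direction} of the second assertion is immediate: the regular fiber complement $V\subset S^1\times S^2$ is a solid torus, hence a generalized solid torus whose rational longitude is its meridian $\mu_V$, distinct from the regular fiber slope $\lambda_V$; by the first assertion every iterated cable of $V$ is a generalized solid torus, and each is a graph manifold with torus boundary and $b_1 = 1$. A direct homology calculation---or an induction using that each cabling step replaces the outermost JSJ piece by a cable space---then shows that the rational longitude of such an iterate is never the regular fiber of its outermost Seifert piece.

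\emph{For the converse}, let $Y$ be a graph manifold with $b_1(Y) = 1$, rational longitude $l$ distinct from the regular fiber $\tilde f$ of the root Seifert piece $\hat Y$, and suppose $Y$ is a generalized solid torus; write $Y = \hat Y\cup_{\boldsymbol\varphi}\coprod_{i=1}^{n_{\textsc{g}}} Y_i$ and induct on the height of the JSJ tree. Since $l\ne\tilde f$ the slope omitted by $\mathcal{L}(Y)$ is finite, so Theorem~\ref{thm: intro version of JN graph} puts us in the orientable-base, nonempty-interior case $\mathcal{L}(Y) = [[y_-,y_+]]$ and forces $y_- = y_+$. I would then use the combinatorial inequalities behind the proof of Theorem~\ref{thm: intro version of JN graph}, which control $y_+ - y_-$ in terms of the cone-point defects of $\hat Y$ and of the quantities $\lceil y_{i+}^{\textsc{g}}k\rceil - \lfloor y_{i-}^{\textsc{g}}k\rfloor$, to show that $y_- = y_+$ can hold only when each $Y_i$ already satisfies $y_{i+}^{\textsc{g}} = y_{i-}^{\textsc{g}}$, i.e.\ is itself a generalized solid torus, and when $\hat Y$ is as simple as possible---$n_{\textsc{g}}\le 1$, with $\hat Y$ a solid torus, a $T^2\times I$, or a cable space according to its number of cone points. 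In each such configuration one checks that the glued slope $\varphi_{i*}(l)$ on $\partial_i\hat Y$ is transverse to the fiber of the root piece of $Y_i$, so the rational longitude of $Y_i$ is not that fiber and the inductive hypothesis applies; attaching the cable space $\hat Y$ to the iterated cable $Y_1$ (or, when $n_{\textsc{g}} = 0$, reading the Seifert-fibered-over-disk piece $\hat Y$ itself as an iterated cable of a solid torus) then exhibits $Y$ as an iterated cable of $V$.

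The expected main obstacle is the rigidity step in the converse: one must prove that $y_+ - y_- = 0$ is an \emph{extremal} condition for the recursion of Theorem~\ref{thm: intro version of JN graph}, so that introducing an extra cone point, an extra JSJ branch, or widening any $\mathcal{L}(Y_i)$ to a proper interval strictly separates $y_-$ from $y_+$, and then to enumerate exactly the configurations that saturate it. This is the graph-manifold counterpart of the combinatorial core of Jankins--Neumann, and the accounting---matching the extremal configurations against the cases of Proposition~\ref{prop: characterization of Floer simple graph manifolds} and keeping track of the several Seifert pieces that can realize ``solid torus / $T^2\times I$ / cable space''---is where the real work concentrates; by contrast, once the first assertion is available the cabling computation and the ``if'' direction are routine.
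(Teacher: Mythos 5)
Your strategy matches the paper's in both halves, but there are two substantive gaps. For the first assertion, the general cabling theorem you invoke (Theorem~\ref{thm: general cabling}, the non-L-space-$X$ version you correctly say you need) carries a boundary-incompressibility hypothesis, and a generalized solid torus need not be boundary incompressible---it can be a connected sum of a solid torus with lens spaces. Your argument applies the theorem directly without handling this case; the paper instead splits the proof, writing $Y^{(p,q)}$ in the compressible case as $(S^1\mkern-2mu\times\mkern-1mu D^2)^{(p,q)}\#(\text{lens spaces})$ and noting the left factor is a solid torus. (As a side point, ``$Y^{(p,q)}$ has L-space fillings, hence is Floer simple'' is not a valid inference in general---a manifold can have an isolated L-space filling---but this is moot here, since the formula already returns $[[1/y_+,1/y_-]]=\left<-\infty,+\infty\right>$, which is the complement of a single slope and so suffices by itself.)

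For the converse, you correctly identify the rigidity statement---that $y_-=y_+$ forces $n_{\textsc{g}}\le 1$, small $n_{\textsc{d}}$, and an integrality constraint---but explicitly stop short of establishing it. This is not a minor omission: it is the substance of the result. The paper proves exactly this in Claims~\ref{claim: y- and y+ outside long+ and long-} and~\ref{claim: Yhat solid torus, y- y+ outside longs with equality sometimes}, which yield the strict inequalities $y_- > -\sum y_i^{\textsc{d}}-\sum y_{i+}^{\textsc{g}}$ and $y_+ < -\sum y_i^{\textsc{d}}-\sum y_{i-}^{\textsc{g}}$ outside a short list of degenerate $(n_{\textsc{g}},n_{\textsc{d}})$ configurations, and then characterize exactly when equality can occur in those remaining cases (namely, when $y_1^{\textsc{d}}\in\mathbb{Z}$ or $y_1^{\textsc{d}}+y_{1\pm}^{\textsc{g}}\in\mathbb{Z}$), which is what drives the induction to the base case of the regular fiber complement in $S^1\mkern-2mu\times\mkern-1mu S^2$. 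Without carrying out that arithmetic your argument is an outline rather than a proof.
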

Similarly, for any class of manifolds for which the gluing result in
Proposition~\ref{prop: intro gluing result for graph manifolds}
holds without the requirement of Floer simplicity---such as graph manifolds---one
has the result that if $Y$ has an isolated L-space filling, {\em{i.e.}},
if $\mathcal{L}(Y) = \{\mu\}$ for some $\mu \in {\mathbb{P}}(H_1(\partial Y))$,
then any cable of $Y \subset Y(\mu)$ has $Y(\mu)$ as an isolated L-space filling.

\subsection{Floer simple knot complements}
Whereas the regular fiber complement in a rational homology sphere
Seifert fibered space could arguably be called the prototypical
Floer simple manifold, not every regular fiber complement in an
L-space graph manifold is Floer simple,
due to the existence of isolated L-space fillings.
However, the next best thing is true.

Given a closed graph manifold $X$, call an 
exceptional fiber $f_{\textsc{e}} \mkern-1.5mu\subset\mkern-2.5mu X$
{\em{invariantly exceptional}} if the JSJ component $\hat{Y} \subset X$
containing $f_{\textsc{e}}$ has more than one exceptional fiber.
Note that if $\hat{Y}$ has only one exceptional fiber,
then $\hat{Y}$ is either a lens space (if $X$ is Seifert fibered)
or a punctured solid torus.
Since the solid torus
has nonunique Seifert structure, one can show that if $X$ is not a lens space, 
it is homeomorphic
to a graph manifold $X'$ in which the image
$f_{\textsc{e}}'$ of $f_{\textsc{e}}$ is a regular fiber.
excluding this scenario allows us to show the following.

\begin{theorem}
\label{thm : intro exceptional fiber}
Every invariantly exceptional fiber complement 
in an L-space graph manifold is Floer simple.
\end{theorem}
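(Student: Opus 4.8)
The plan is to run everything through Theorem~\ref{thm: intro version of JN graph} and reduce the problem to a single statement about when the $L$-space interval of a graph manifold can degenerate to a point. Write $Y := X\setminus\nu(f_{\textsc{e}})$, let $\hat Y\subset X$ be the JSJ component containing $f_{\textsc{e}}$, and let $\hat Y' := \hat Y\setminus\nu(f_{\textsc{e}})$, the root JSJ component of $Y$; it is Seifert fibered over the base of $\hat Y$ with one additional puncture (the new component $\partial Y$), and because $\hat Y$ carries at least two exceptional fibers, $\hat Y'$ still carries at least one. First I would dispose of the easy cases. As $X$ is an $L$-space we have $b_1(Y)=1$, $Y$ is a graph manifold, and $X=Y(\mu)$ where $\mu$ is the meridian of $f_{\textsc{e}}$, so $\mu\in\mathcal{L}(Y)$ and $\mathcal{L}(Y)\neq\emptyset$. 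If $Y$ is a solid torus it is Floer simple; if $\hat Y'$ has non-orientable base then $\mathcal{L}(Y)=\langle-\infty,+\infty\rangle$ by Theorem~\ref{thm: intro version of JN graph}; and if $\hat Y=X$ is closed with orientable base then $Y$ is a Seifert fibered space over a disk with at least one exceptional fiber, hence Floer simple. This leaves the case in which $\hat Y'$ has orientable base, $Y$ is not a solid torus, and $n_{\textsc{g}}\ge 1$; there Theorem~\ref{thm: intro version of JN graph} reduces the problem to ruling out $\mathcal{L}(Y)=\{y_-\}=\{y_+\}$.

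Suppose then that $\mathcal{L}(Y)=\{y\}$; then $\mu=y$. The decisive feature of $\mu$ is its denominator. Since $f_{\textsc{e}}$ is an exceptional fiber of $\hat Y$ of some order $a\ge 2$, the meridian of the fibered solid torus $\nu(f_{\textsc{e}})$ satisfies $|\mu\cdot\tilde f|=a$, where $\tilde f$ is the regular fiber class on $\partial Y$; equivalently, in the reverse-oriented fiber basis $(\tilde f,-\tilde h)$ of Section~\ref{ss: Conventions for Seifert fibered spaces} the slope $\mu$ is a reduced fraction of denominator exactly $a$, so $\mu\notin\mathbb{Z}\cup\{\infty\}$. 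Thus the theorem follows from the \emph{Key Claim}: if a graph manifold with $b_1=1$ and torus boundary has $L$-space interval equal to a single point, and its root JSJ component carries an exceptional fiber, then that point lies in $\mathbb{Z}\cup\{\infty\}$ relative to the fiber basis of that component. Indeed, applying the Key Claim with $y=\mu$ (legitimate because $\hat Y'$ carries an exceptional fiber) contradicts $\mu\notin\mathbb{Z}\cup\{\infty\}$, so $\mathcal{L}(Y)$ cannot be a point and $Y$ is Floer simple.

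I expect the Key Claim to be the main obstacle, and I would extract it from the combinatorics already underlying Theorem~\ref{thm: intro version of JN graph} — it is essentially the isolated-filling part of Proposition~\ref{prop: characterization of Floer simple graph manifolds}. Writing $f_-(k),f_+(k)$ for the $k$-th terms of $y_-,y_+$, one has $f_-(1)\le y_-\le y_+\le f_+(1)$ with $f_-(1),f_+(1)\in\mathbb{Z}$, and
\[
f_+(k)-f_-(k)=-\tfrac1k\Bigl(-2+\textstyle\sum_{i=0}^{n_{\textsc{d}}}\bigl(\lceil y^{\textsc{d}}_i k\rceil-\lfloor y^{\textsc{d}}_i k\rfloor\bigr)+\sum_{i=1}^{n_{\textsc{g}}}\bigl(\lfloor y^{\textsc{g}}_{i-}k\rfloor-\lceil y^{\textsc{g}}_{i+}k\rceil+2\bigr)\Bigr),
\]
where an exceptional Seifert datum $y^{\textsc{d}}_i$ of order $\alpha_i\ge 2$ contributes $\lceil y^{\textsc{d}}_i k\rceil-\lfloor y^{\textsc{d}}_i k\rfloor=1$ whenever $\alpha_i\nmid k$; the task is to show that this extra slack, together with the hypothesis $\max_k f_-(k)=\min_k f_+(k)$, forces both extrema to be attained at $k=1$ — so that $y=f_-(1)\in\mathbb{Z}$ — unless an infinite summand forces $y=\infty$. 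The exceptional-fiber hypothesis is genuinely used: without it the claim is false, since the example above with $\mathcal{L}(Y)=\{0\}$ is a regular-fiber complement with a degenerate interval; this is exactly why ``invariantly exceptional,'' which guarantees that $\hat Y'$ still contains a fiber of order $\ge 2$ after $f_{\textsc{e}}$ is removed, is the right hypothesis. Should the direct estimate prove unwieldy, an alternative is to remove a second exceptional fiber of $\hat Y$ as well, producing a two-boundary graph manifold $Y^{\sharp}$ with $Y=Y^{\sharp}(\nu_1)$ for a denominator-$\ge 2$ slope $\nu_1$, and to argue from the $L$-space region of $Y^{\sharp}$ — whose root is Seifert fibered over an orientable planar base — that filling one boundary along such a slope cannot collapse the interval on $\partial Y$.
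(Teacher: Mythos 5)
Your argument is essentially the same as the paper's. You reduce to showing that a non-Floer-simple exceptional fiber complement $Y$ would have $\mathcal{L}(Y)=\{y_-\}=\{y_+\}$ with $y_\pm\in\mathbb{Z}$ (relative to the fiber basis of the root component), which contradicts the fact that the meridian of an exceptional fiber has denominator $\ge 2$; what you call the \emph{Key Claim} is precisely Proposition~\ref{prop: y- = y+ means integer} in the paper, proved there via Proposition~\ref{prop: k- and k+ are denominators of y- and y+} (the fact that $k_\pm$ is the denominator of $y_\pm$), which is exactly the route you sketch with the ``both extrema must occur at $k=1$'' estimate. The only difference is presentational: the paper splits cases as ``$X$ Seifert fibered'' versus ``$X$ has multiple JSJ components'' rather than your slightly longer list, and quarantines $y_\pm=\infty$ by noting $y_\pm\in\mathbb{Q}$ up front (since the meridional slope is finite), whereas you fold it into the Key Claim's conclusion $y\in\mathbb{Z}\cup\{\infty\}$ — but both dispose of that case the same way.
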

There are also Floer simple knot complements
traversing the graph structure of $X$.
\vspace{-.04cm}
\begin{prop}
\label{prop: intro incompressible floer simple knot complements graph}
If $X$ is an L-space graph manifold, then for every incompressible
torus $T \subset X$, there is a knot $K \subset X$ transversely
intersecting $T$ for which the complement $X \setminus \nu(K)$
is Floer simple.
\end{prop}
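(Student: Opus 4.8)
The plan is to produce $K$ by isotoping a knot whose complement is already known to be Floer simple so that it crosses $T$, invoking Theorem~\ref{thm : intro exceptional fiber} for the existence of the former. Since a lens space is atoroidal, the statement is vacuous unless $X$ is toroidal, so I may assume $X$ has an incompressible torus; then the JSJ graph of $X$ is a tree on at least one vertex. If this tree has more than one vertex, each leaf is a non-solid-torus Seifert piece with a single boundary torus, hence — as $b_1(X)=0$ forces its base to be planar — is fibered over $D^2$ or over the M\"obius band with at least two exceptional fibers; if the tree has a single vertex, incompressibility of $T$ forces $X$ to be Seifert fibered over a base orbifold carrying at least two exceptional fibers. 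After, if necessary, re-presenting the pieces with exactly one exceptional fiber as in the remark preceding Theorem~\ref{thm : intro exceptional fiber}, $X$ therefore contains an invariantly exceptional fiber $f_{\textsc{e}}$, and that theorem gives that $X\setminus\nu(f_{\textsc{e}})$ is Floer simple.

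Now choose a path $\delta$ in $X$ from a point of $f_{\textsc{e}}$ to $T$, with interior disjoint from $f_{\textsc{e}}$, meeting $T$ transversely, and extending slightly past $T$. Pushing a finger of $f_{\textsc{e}}$ along $\delta$ yields an ambiently isotopic knot $K$ that meets $T$ transversely in two points, so $K\pitchfork T\neq\emptyset$; since $K$ and $f_{\textsc{e}}$ are ambient isotopic, $X\setminus\nu(K)\cong X\setminus\nu(f_{\textsc{e}})$ is Floer simple. This already proves the proposition as stated. The only genuine work here is the Seifert-geometric case analysis locating $f_{\textsc{e}}$, together with the bookkeeping of the Seifert re-presentations; the isotopy step is routine.

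If one instead wants $K$ to traverse the graph structure of $X$ essentially — i.e.\ not to be isotopic into the complement of $T$ — I would build $K$ directly from the Seifert data adjacent to $T$ (for instance, from a vertical annulus in a piece adjacent to $T$ joining an exceptional fiber to the boundary, completed by a matching arc on the far side of $T$), and then verify that $X\setminus\nu(K)$ is again a graph manifold with a controlled rooted JSJ decomposition before reading off $\mathcal{L}^{\circ}(X\setminus\nu(K))\neq\emptyset$ from Proposition~\ref{prop: intro gluing result for graph manifolds}, Theorem~\ref{thm: intro version of JN graph}, and Proposition~\ref{prop: characterization of Floer simple graph manifolds}, using that $X$ being an L-space forces the L-space intervals of its JSJ pieces to cover ${\mathbb{P}}(H_1)$ tightly at every seam, which leaves enough slack after drilling. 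In that variant the main obstacle is exactly the middle step: ensuring that drilling a traversing knot introduces no hyperbolic piece, and identifying precisely the small ``seam'' Seifert piece that $T$ and $K$ produce so that the gluing/interval machinery applies.
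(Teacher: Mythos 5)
Your primary argument---take the invariantly exceptional fiber $f_{\textsc{e}}$, push a finger of it across $T$, and declare victory---has two distinct problems, and the sketch you offer as a backup is not carried out.

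\textbf{The finger-push produces a degenerate solution.} A knot obtained by isotoping $f_{\textsc{e}}$ across $T$ meets $T$ in two cancelling intersection points and is, by construction, isotopic into one side of $T$. The proposition is specifically advertised in the introduction as producing ``Floer simple knot complements traversing the graph structure of $X$,'' and the paper's proof delivers a knot that genuinely cannot be isotoped off $T$. Your construction adds nothing to Theorem~\ref{thm : intro exceptional fiber}: it re-embeds an already-known Floer simple knot in a wiggly way. Even if one accepts the bare wording ``transversely intersecting,'' this reading trivializes the result.

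\textbf{The case analysis locating $f_{\textsc{e}}$ is incomplete.} You assert that every leaf of the JSJ tree is Seifert fibered over $D^2$ or the M\"obius band ``with at least two exceptional fibers.'' That is false for M\"obius band bases: a leaf can be fibered over the M\"obius band with zero or one exceptional fibers (e.g.\ $\bar{N}$ itself, the twisted $I$-bundle over the Klein bottle), and such a piece contains no invariantly exceptional fiber. An L-space graph manifold all of whose JSJ pieces are of this type (e.g.\ suitable unions $\bar{N}\cup_\varphi\bar{N}$) has no invariantly exceptional fiber at all in the sense you invoke, and your re-presentation remark does not resolve this because the relevant piece is not a punctured solid torus. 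Theorem~\ref{thm : intro exceptional fiber} can therefore fail to supply a starting knot, so even the degenerate version of your argument has a hole.

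\textbf{What the paper actually does} is quite different and handles all of this cleanly. Write $X = Y_1 \cup_T Y_2$ (the torus $T$ need not be a JSJ torus). Since $X$ is an L-space, Corollary~\ref{final gluing result for graph manifolds} forces $\mathcal{L}^{\circ}(Y_i)\neq\emptyset$ for both $i$, i.e.\ both sides are Floer simple. Then Proposition~\ref{prop: floer simple knot across incompressible torus} applies: the two open L-space intervals must overlap, one picks a common slope $\mu$, Dehn fills each side, and takes $K$ to be the surgery-dual knot to the connected-sum knot $K_1\#K_2\subset Y_1(\mu_1)\#Y_2(\mu_2)$. This $K$ has two L-space fillings (meridional giving $Y_1(\mu_1)\#Y_2(\mu_2)$ and the zero-filling giving $X$), hence is Floer simple, and by construction it pierces $T$ essentially. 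Your second, ``if one instead wants $K$ to traverse $T$ essentially'' sketch gestures at the right kind of construction but, as you yourself note, leaves the essential steps (controlling the drilled manifold's JSJ and applying the interval machinery) unproved, so it does not constitute a proof.
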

\vspace{-.04cm}
{\noindent{The same occurs for an arbitrary L-space $X$,
provided that $X$ decomposes
as a union of Floer simple manifolds along $T$
(see 
Proposition~\ref{prop: floer simple knot across incompressible torus}).}}

\vspace{.05cm}
The above results, together with the evidence of various other classes of L-spaces,
and a certain degree of optimism, motivate the following:
\begin{conj}
Every L-space admits a Floer simple knot complement.
\end{conj}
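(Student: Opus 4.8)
We do not prove this conjecture here, but we outline a plan whose first two stages are supplied by the results above and whose third is the genuine obstacle.

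\emph{Reduction to prime L-spaces.} If $X = X_1 \# X_2$ and $K \subset X_1$ avoids the connect-sum sphere, then $X \setminus \nu(K) = (X_1 \setminus \nu(K)) \# X_2$, so every Dehn filling satisfies $(X\setminus\nu(K))(\alpha) = (X_1\setminus\nu(K))(\alpha) \# X_2$. A connect sum is an L-space if and only if each summand is (a standard consequence of the K\"unneth formula for $\widehat{HF}$), so $X_2$ is an L-space and $\mathcal{L}(X\setminus\nu(K)) = \mathcal{L}(X_1\setminus\nu(K))$ under the obvious identification of boundary tori. Thus a Floer simple knot complement in a prime summand remains one in $X$, and it suffices to produce a Floer simple knot complement in every prime --- equivalently, irreducible, since $S^1\times S^2$ is not an L-space --- L-space $X$.

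\emph{The Seifert fibered and graph manifold cases.} Since every elliptic $3$-manifold is Seifert fibered, geometrization shows that a prime L-space is Seifert fibered, a graph manifold, hyperbolic, or toroidal with a hyperbolic JSJ piece. If $X$ is a lens space (in particular $S^3$), the core of a Heegaard solid torus has solid-torus complement, which is Floer simple. If $X$ is Seifert fibered over an orientable base with at least three exceptional fibers, Theorem~\ref{thm : intro exceptional fiber} shows an invariantly exceptional fiber complement is Floer simple; with at most two exceptional fibers, $X$ is a lens space. If $X$ is Seifert fibered over ${\mathbb{R}}{\mathbb{P}}^2$, then its regular fiber complement $Y$ --- a non-solid-torus graph manifold with $b_1(Y) = 1$ whose only JSJ component has non-orientable base, and with $\mathcal{L}(Y)$ containing the regular fiber slope (namely $X$, an L-space) --- satisfies $\mathcal{L}(Y) = \left< -\infty, +\infty \right>$ by Theorem~\ref{thm: intro version of JN graph}, hence is Floer simple. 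Finally, if $X$ is a toroidal graph manifold, Proposition~\ref{prop: intro incompressible floer simple knot complements graph} directly produces a Floer simple knot complement meeting an incompressible torus.

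\emph{The hyperbolic case --- the obstacle.} It remains to treat $X$ that is hyperbolic, or toroidal with a hyperbolic JSJ piece. For the toroidal case the natural move is to pick a JSJ torus $T$, write $X = Y_1 \cup_T Y_2$, and invoke the extension of Proposition~\ref{prop: intro incompressible floer simple knot complements graph} recorded after Proposition~\ref{prop: floer simple knot across incompressible torus}: if $Y_1$ and $Y_2$ are each Floer simple, then some knot meeting $T$ transversely has Floer simple complement. One can arrange a graph-manifold side to be Floer simple, but there is no structural reason why a hyperbolic piece --- possibly with several cusps, and with an L-space interval that could a priori be empty, a single point, or a genuine interval --- should be Floer simple; and the atoroidal case is precisely this difficulty in its purest form, since there is currently no structure theory for hyperbolic L-spaces and it is unknown whether each one even contains a Floer simple knot. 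For this stage I would try two partial routes. \textbf{(i)} If $X = X'_\alpha(J)$ is obtained by Dehn surgery along a knot $J$ in an L-space $X'$ that already carries a Floer simple knot complement, then the dual core knot $\widetilde J \subset X$ has complement $X'\setminus\nu(J)$, which is Floer simple exactly when $J$ admits an L-space surgery; one would then want every hyperbolic L-space to be presentable as surgery on such an ``L-space knot'' in a simpler L-space, and to induct on a surgery complexity --- but no such presentation theorem is known, and supplying one is where genuinely new input is required. \textbf{(ii)} Alternatively, one could try to produce a knot $K \subset X$ whose complement fibers over $S^1$ and admits an L-space Dehn filling, since fiberedness together with an L-space filling is a robust source of Floer simplicity; the obstacle here is the uniform construction of such a $K$. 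In either approach, the genuinely hard step --- and the reason the statement remains a conjecture --- is controlling the hyperbolic part of the geometric decomposition, where the graph-manifold methods of this paper are silent.
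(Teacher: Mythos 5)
This statement is stated in the paper as a conjecture, with no proof offered; the paper explicitly presents it as motivated by Theorem~\ref{thm : intro exceptional fiber}, Proposition~\ref{prop: intro incompressible floer simple knot complements graph}, and ``a certain degree of optimism.'' You have correctly recognized this: your proposal does not claim a proof, and the cases you do settle are exactly the ones the paper's results cover. Your reduction to prime summands is sound (a connected sum is an L-space iff each summand is, and the L-space interval of a knot complement contained in one summand is unchanged by summing with further L-space summands), your Seifert fibered and graph manifold cases are handled by the paper's Theorem~\ref{thm : intro exceptional fiber} and Proposition~\ref{prop: intro incompressible floer simple knot complements graph} just as you say, and your identification of the hyperbolic pieces of the geometric decomposition as the genuine obstruction --- where neither the gluing results nor any structure theory for $\mathcal{L}(Y)$ is available --- accurately locates why the statement remains open. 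The two partial routes you sketch for the hyperbolic case (presenting $X$ as surgery on an L-space knot in a simpler L-space, or finding a fibered knot with an L-space filling) are reasonable directions, but as you note they each require input beyond anything in this paper. In short: there is no proof here to compare against, and your assessment of what is and is not established is correct.
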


\subsection{Organization}
In Section~\ref{s: Foliations on Seifert fibered spaces},
we introduce our conventions for Seifert fibered spaces
and provide a lengthy discussion of the Jankins-Neumann
problem, since we cannot hope for 
Theorem~\ref{thm: intro version of JN graph}
to provide insight if
the original theorem of Jankins, Neumann and Naimi is opaque to
the reader.

Section~\ref{s: L-space intervals}
reviews some basic facts about L-space intervals,
including the independent results of Hanselman and Watson \cite{HanWat}
and J. Rasmussen and the author \cite{lslope}
about L-space criteria for unions of Floer simple manifolds.

Section~\ref{s: main results section}
is where we prove
our main graph manifold results,
including
Theorems~\ref{thm: intro version of l = ntf}
and
\ref{thm: intro version of JN graph}
in the forms of
Theorems~\ref{thm: lntf equivalence}
and
\ref{thm: l space interval for graph manifolds}.
This section also derives
Proposition~\ref{prop: characterization of Floer simple graph manifolds}'s
classification of 
single-boundary-component graph manifolds
with nonempty L-space intervals.

Our main cabling results reside in 
Section~\ref{section: cabling},
although the proof of
Theorem~\ref{thm: intro version of cabled generalized tori},
for generalized solid tori,
is relegated to 
Section~\ref{s: Observations}.

Lastly,
Section~\ref{s: Observations}
justifies
Proposition~\ref{prop: intro version of JN graph theorem applicable for non graph}'s
generalization of
our Jankins-Neumann graph manifold result
to the union of a Seifert fibered space with Floer simple manifolds.
This final section also 
lists an array of applications of the paper's main results,
including the aforementioned generalized solid torus cabling result
and proofs of the
Floer simple knot complement results from
Theorem~\ref{thm : intro exceptional fiber}
and
Proposition~\ref{prop: intro incompressible floer simple knot complements graph}.

\subsection*{Acknowledgements}
I would like to thank 
Jonathan Bowden,
Thomas Gillespie,
Jake Rasmussen,
and Rachel Roberts
for helpful conversations,
Liam Watson and Jonathan Hanselman
for comments on an earlier draft,
and the referee for helpful suggestions.

\section{Foliations on Seifert fibered spaces}
\label{s: Foliations on Seifert fibered spaces}

A {\em{graph manifold}} is a prime compact
oriented three-manifold which admits a JSJ decomposition---which
in this case, we take to be a 
{\em{minimal}} cutting apart along incompressible tori into 
disjoint pieces---such 
that each JSJ component is an oriented Seifert fibered space.
The data for reassembling these components into the original manifold
are encoded in a labeled graph, where each vertex corresponds to a Seifert fibered
JSJ component, and each edge corresponds to a gluing of two Seifert fibered
pieces along an incompressible torus.

\subsection{Restricting taut foliations to JSJ components}
\label{Restricting taut foliations on graph manifolds to Seifert fibered spaces}

Questions about taut foliations on graph manifolds
can often be reduced to questions about taut foliations on Seifert
fibered spaces, due in part to the following result.
\begin{prop}[\cite{Roussarie, Thurston, BrittenhamRoberts}]
\label{prop: restrict to separated JSJ components}
If $Y$ is a compact oriented three-manifold
admitting a taut foliation $F$ transverse to $\partial Y$,
then every incompressible separating torus in $Y$
can be isotoped to be everywhere transverse to $F$.
\end{prop}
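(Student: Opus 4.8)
The plan is to run the classical tangency-elimination argument of Roussarie \cite{Roussarie} and Thurston \cite{Thurston}, in the $C^0$ form developed by Brittenham and Roberts \cite{BrittenhamRoberts}. It suffices to treat a single incompressible separating torus $T \subset Y$ at a time, isotoping each later torus in the complement of those already made transverse. First I would put $T$ into \emph{Morse position} with respect to $F$: after a small isotopy, the characteristic singular foliation $\mathcal{F}_T := F \cap T$ on $T$ has only finitely many tangencies, each a \emph{center} (local index $+1$) or a \emph{saddle} (local index $-1$). Achieving this Morse position is routine for smooth $F$, but when $F$ is only $C^0$ it requires the normal-form technology of \cite{BrittenhamRoberts}; this is one place where the $C^0$ hypothesis genuinely bites.

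Next I would eliminate every center tangency by further isotopy of $T$. An innermost-disk argument, using incompressibility of $T$ together with Novikov's theorem (a leaf circle null-homotopic in $Y$ but not in its own leaf would be a vanishing cycle, producing a Reeb component and contradicting tautness), yields a tangency circle $\ell$ bounding a disk $D_T \subset T$ that contains a single center and no saddle, and bounding a disk $D_L$ in the leaf of $F$ through $\ell$ with $D_L \cap T = \ell$, so that $D_T \cup D_L$ is an embedded sphere. Here tautness enters essentially once more: since $F$ is Reebless, the ball cobounded by this sphere (a taut-foliated manifold being irreducible unless it is $S^1 \times S^2$) carries a product piece of $F$, and pushing $T$ across this ball removes the center, together with a neighbouring saddle, and creates no new tangencies. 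Iterating, I reduce to the case in which $\mathcal{F}_T$ has no centers.

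With no centers, the Poincar\'e--Hopf identity $\#\{\text{centers}\} - \#\{\text{saddles}\} = \chi(T) = 0$ forces $\mathcal{F}_T$ to have no saddles either, hence to be nonsingular; thus $T$ is everywhere transverse to $F$ or else $T$ is itself a leaf. To rule out the leaf case I would use tautness and co-orientability: a closed transversal crossing the leaf $T$ crosses it in the positive normal direction at every intersection point, so it meets $T$ geometrically as often as algebraically; but $T$ is separating, so a closed loop meets it with algebraic intersection zero, and hence cannot cross it at all --- a contradiction. Therefore $T$ can be isotoped to be everywhere transverse to $F$, and treating the remaining incompressible separating tori in turn completes the proof.

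The step I expect to be the main obstacle is the center-elimination. This is precisely where the tautness hypothesis is indispensable --- through Novikov's theorem and the nonexistence of vanishing cycles --- and where the mere $C^0$ regularity of $F$ rules out the naive ``push $T$ across a ball'' surgeries, so that one must substitute the more delicate constructions of \cite{BrittenhamRoberts}. By comparison, the index count and the homological exclusion of the leaf case are straightforward.
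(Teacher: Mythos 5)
Your proof takes essentially the same route as the paper's: both reduce to the Roussarie/Brittenham--Roberts dichotomy (every incompressible torus isotopes to be either everywhere transverse to $F$ or a leaf of $F$) and then exclude the leaf possibility by noting that a taut co-oriented foliation has no compact separating leaf, which you justify by the standard closed-transversal/algebraic-intersection argument. The only difference is presentational --- the paper cites the dichotomy as a black box, whereas you sketch its Morse-position and tangency-elimination proof --- so the mathematical approach is the same.
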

\begin{proof}
Roussarie \cite{Roussarie} showed
that if $F$ is $C^2$,
then each incompressible torus $T \subset Y$ can be isotoped to
be either everywhere transverse to $F$ or a leaf of $F$.
A later theorem of
Brittenham and Roberts \cite{BrittenhamRoberts}
extends the validity of this proposition
to $C^0$ foliations.
Thus,
since a taut foliation
has no compact separating leaves,
an incompressible separating torus cannot be isotoped 
to be a leaf of $F$,
and so it must be possible to
isotop any incompressible separating torus to be everywhere transverse to $F$.
This is also believed to have been known by Thurston \cite{Thurston}.
\end{proof}

As noted by Brittenham, Naimi, and Roberts \cite{BNR},
this result has major consequences for graph manifolds:
\begin{cor}
\label{cor: taut tree manifold foliations restrict}
If $Y\!$ is a graph manifold with tree graph
and $F$ is a taut foliation on $Y$ transverse to $\partial Y$,
then $F$ can be isotoped so that it restricts to 
boundary-transverse taut foliations
on the Seifert fibered JSJ components of $Y$.
\end{cor}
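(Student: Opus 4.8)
The plan is to bootstrap Proposition~\ref{prop: restrict to separated JSJ components} into a statement about the entire collection of JSJ tori by induction on the number of JSJ components, the only real content being the relative positioning of one torus at a time plus the observation that tautness survives the cutting. The role of the hypothesis that the JSJ graph is a tree is exactly that every JSJ torus $T$ of $Y$ is separating: the edge of the graph dual to $T$ is a bridge, so $Y$ cut along $T$ is disconnected; since $T$ is also incompressible, being a JSJ torus, it satisfies the hypotheses of Proposition~\ref{prop: restrict to separated JSJ components}.

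Next I would isolate a \emph{cutting lemma}: if $F$ is a taut foliation on a compact oriented three-manifold $Z$ transverse to $\partial Z$, and $T \subset Z$ is a separating torus everywhere transverse to $F$, then the restriction of $F$ to each component $Z'$ of $Z$ cut along $T$ is again a taut foliation transverse to $\partial Z'$. Transversality to $\partial Z' \subset \partial Z \cup T$ is immediate from the hypotheses. For tautness, use the characterization that a boundary-transverse foliation is taut precisely when every leaf meets a transversal that is either a closed loop or an arc with endpoints on the boundary: given a leaf $\lambda'$ of $F|_{Z'}$ and a point $p \in \lambda'$, the point $p$ lies on a leaf $\lambda$ of $F$, tautness of $F$ supplies such a transversal $\gamma$ through $p$, and (after a small perturbation of $\gamma$ making it meet $T$ transversely) the component of $\gamma \cap Z'$ containing $p$ is a transversal of $F|_{Z'}$ through $p$ with any endpoints lying on $\partial Z'$. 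Hence $F|_{Z'}$ is taut.

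The induction is then routine. If $Y$ has a single JSJ component there is nothing to prove. Otherwise, fix a JSJ torus $T$; by Proposition~\ref{prop: restrict to separated JSJ components}, after an ambient isotopy of $Y$ supported away from $\partial Y$ we may assume $T$ is everywhere transverse to $F$. Cut $Y$ along $T$ into graph manifolds $Y_1$ and $Y_2$; each has a tree JSJ graph, a proper subtree of that of $Y$, hence strictly fewer JSJ components, and by the cutting lemma each inherits a taut foliation transverse to its boundary. Apply the inductive hypothesis inside $Y_1$ and $Y_2$, with the resulting ambient isotopies taken to be supported in the interiors of $Y_1$ and $Y_2$ and hence disjoint from $T$; they glue to an ambient isotopy of $Y$, after which $F$ restricts to boundary-transverse taut foliations on all of the Seifert fibered JSJ components of $Y$.

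The main obstacle I expect is the isotopy bookkeeping: one must know that Proposition~\ref{prop: restrict to separated JSJ components} can be applied rel a collar of $\partial Z$ (so that no previously achieved transversality along the boundary is destroyed and the successive isotopies compose into a single one), which is really a relative version of the Roussarie--Brittenham--Roberts positioning theorem underlying that proposition. The secondary point, handled by the cutting lemma, is that one must carry tautness---not just transversality to the boundary---down the tree, since tautness is exactly what prevents a JSJ torus from being realized as a leaf each time Proposition~\ref{prop: restrict to separated JSJ components} is invoked.
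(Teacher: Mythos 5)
Your argument is correct, and it fills in details that the paper itself leaves implicit: Corollary~\ref{cor: taut tree manifold foliations restrict} is stated in the paper with no proof, being attributed to Brittenham--Naimi--Roberts and treated as an immediate consequence of Proposition~\ref{prop: restrict to separated JSJ components}. Your two ingredients — that every JSJ torus in a tree graph manifold is separating and incompressible (so Proposition~\ref{prop: restrict to separated JSJ components} applies), and that cutting along a torus everywhere transverse to a taut foliation yields taut, boundary-transverse restrictions on the pieces — are exactly what the implicit argument requires, and your handling of the isotopy bookkeeping by recursing into the cut pieces with isotopies supported in their interiors is the right way to make the induction close up. One small note on the cutting lemma: the pointwise version of tautness you invoke (a transversal arc or loop through \emph{every point}, not merely through every leaf) is what is actually needed, and it does follow from the leafwise definition for taut foliations, e.g.\ via Sullivan's characterization by a volume-preserving transverse flow and Poincar\'e recurrence; spelling that out would make the cutting lemma airtight.
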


When a closed graph manifold has positive first Betti number,
the question of existence of taut foliations becomes trivial,
since a result of Gabai states that any such manifold
admits a co-oriented taut foliation \cite{gabai}.
Correspondingly, any closed oriented three manifold with $b_1 \!>\! 0$
has non-trivial Heegaard Floer homology, hence is not an L-space.
We therefore restrict attention to rational homology sphere graph manifolds,
hence to
oriented Seifert fibered spaces over $S^2\mkern-1mu$ or $\mkern.5mu{\mathbb{R}}{\mathbb{P}}^2\!$,
and regular fiber complements thereof.

\subsection{Conventions for Seifert fibered spaces}
\label{ss: Conventions for Seifert fibered spaces}
If $\hat{M}$ denotes the trivial circle fibration over the
$n+1$-punctured two-sphere,
\vspace{-.1cm}
\begin{equation}
\hat{M} := S^1 \times \left(S^2 \setminus {\textstyle{\coprod\limits_{i=0}^n D^2_i}}\right),
\;\;\;\;\;\;
\partial_i \hat{M} := -\partial (S^1 \times D^2_i),\;\;i \in \{0, \ldots, n\},
\\[-3pt]
\end{equation}
then writing $-\tilde{h}_i: \in H_1(\partial_i \hat{M})$
for the meridian of each excised solid torus $S^1 \times D^2_i$, we have
\vspace{-.1cm}
\begin{equation}
-{\textstyle{\sum\limits_{i=1}^n \tilde{h}_i}}
= p_{S^1} \times \partial(S^2 \setminus D^2_0) = p_{S^1} \times -\partial D^2_0
= \tilde{h}_0
\\[-3pt]
\end{equation}
for any point class $p_{S^1} \in H_0(S^1)$ of the circle fiber.
For each $i \in \{0, \ldots, n\}$, if we write
$\iota_i : H_1(\partial_i \hat{M}) \to H_1(\hat{M})$
for the map induced by inclusion, 
then there is a lift 
$\tilde{f}_i \in \iota_i^{-1}(f)$ 
of the regular fiber class $f \in H_1(\hat{M})$ satisfying
$(-\tilde{h}_i \cdot \tilde{f}_i)|_{\partial_i \hat{M}} = 1$.
The reverse-oriented basis
$(\tilde{f}_i, -\tilde{h}_i)$
for $H_1(\partial_i \hat{M})$
induces a projectivization map
\begin{equation}
\label{eq: convention for slopes for Seifert fiber complement}
\pi_i : H_1(\partial_i \hat{M}) \setminus \{0\} \to
{\mathbb{P}}(H_1(\partial_i \hat{M})) \stackrel{\sim}{\to}
{\mathbb{Q}} \cup \{\infty\},\;\;\;
r_i \tilde{f}_i - s_i \tilde{h}_i \mapsto {\textstyle{\frac{r_i}{s_i}}},
\end{equation}
by which we identify Seifert invariants with slopes,
and slopes with ${\mathbb{Q}} \cup \{\infty\}$.

The Seifert fibered space
$M_{\mkern-1.5mu S^2\mkern-1.6mu}({\textstyle{\frac{r_*}{s_*}}}) \mkern-2.5mu:=\mkern-2.5mu
M_{\mkern-1.5mu S^2\mkern-1.6mu}(e_0\mkern-4mu=\mkern-4mu\frac{r_0}{s_0}; 
\frac{r_1}{s_{{\mkern-1mu}1}}, \ldots, \frac{r_n}{s_n})$ over $S^2\mkern-.8mu$
is the Dehn filling of $\hat{M}$ along the slopes
$\mu_i := r_i \tilde{f}_i - s_i \tilde{h}_i$, with 
$\pi_i(\mu_i) = \frac{r_i}{s_i}$,
subject to the convention that $e_0 = \frac{r_0}{s_0} \in {\mathbb{Z}}$
and $\frac{r_i}{s_i} \notin {\mathbb{Z}} \cup \{\infty\}$.
Setting each $h_i := \iota_i(\tilde{h}_i)$ then gives the presentation
\vspace{-.1cm}
\begin{equation}
H_{\mkern-1.5mu 1}\mkern-1.2mu(
M_{\mkern-1.5mu S^2\mkern-1.6mu}({\textstyle{\frac{r_*}{s_*}}}))
= \left< f, h_0, \ldots, h_n 
\mkern-2.5mu 
\vphantom{{{\textstyle{\sum\limits_{i=1}^n}}}}
\mkern3mu
\right|
\mkern-2mu
\left.
{{\textstyle{\sum\limits_{i=0}^n \mkern-2.2mu h_i}}}
\mkern-1mu=\mkern-1mu 
e_0\mkern-.7mu f \mkern-2.5mu-\mkern-1.8mu h_0 
\mkern-1mu=\mkern-1mu
r_1\mkern-.7mu  f \mkern-2.5mu-\mkern-1.8mu s_1\mkern-.7mu h_1 
\mkern-1mu=\mkern-1mu
\ldots
\mkern-1mu=\mkern-1mu
r_n\mkern-.7mu f \mkern-2.5mu-\mkern-1.8mu s_n \mkern-.7muh_n 
\mkern-1mu=\mkern-1.5mu 
0\mkern.5mu
\right>.
\\[-3pt]
\end{equation}

Likewise, if we respectively lift $f$ and each $h_i$ to generators
$\phi$ and $\eta_i$ for 
$\pi_1\mkern-.5mu(M_{\mkern-1.5mu S^2\mkern-1.6mu}({\textstyle{\frac{r_*}{s_*}}}))$
and substitute $\phi^{e_0}$ for $\eta_0$, then we obtain the 
fundamental group presentation
\begin{equation}
\label{eq: presentation of pi1 for S^2 Seifert fibered space}
\pi_1\mkern-.5mu
(M_{\mkern-1.5mu S^2\mkern-1.6mu}({\textstyle{\frac{r_*}{s_*}}}))
= \left< \phi, \eta_1, \ldots, \eta_n \mkern-3.5mu 
\vphantom{{{\textstyle{\sum\limits_{i=0}^n}}}}
\right.\left|\mkern2mu
\phi\text{ central,}\mkern5mu
{{\textstyle{\prod\limits_{i=1}^n\! \eta_i}}} \!=\! \phi^{-e_0}\mkern-2mu,\,
\eta_1^{s_1} \!=\! \phi^{r_1}\mkern-2mu,\, \ldots,\, \eta_n^{s_n} \!=\! \phi^{r_n}
\right>.
\end{equation}

For a manifold 
$M_{{\mathbb{R}}{\mathbb{P}}^{\mkern.3mu 2\mkern-1.4mu}}({\textstyle{\frac{r_*}{s_*}}})\mkern-.5mu$
Seifert fibered over ${\mathbb{R}}{\mathbb{P}}^2$,
we adopt the same homology and slope conventions for
the boundary of a regular fiber complement,
but the global homology is slightly different.
Since, this time, $\hat{M}$ is the {\it{twisted}} circle bundle over a
punctured ${\mathbb{R}}{\mathbb{P}}^2$, the fiber class $f$ is now 2-torsion.
Also, since puncturing ${\mathbb{R}}{\mathbb{P}}^2$
once gives a M{\"o}bius strip
instead of a disk, the sum $\sum_{i=1}^n\mkern-2mu{\tilde{h}_i}$
differs from $p_{{\mathbb{R}}{\mathbb{P}}^2} \times \partial({\mathbb{R}}{\mathbb{P}}^2 \setminus D^2_0)$
by twice the one-cell $c$ glued to the disk to make ${\mathbb{R}}{\mathbb{P}}^2$,
yielding a homology presentation of the form
\begin{equation}
H_{\mkern-1.5mu 1}\mkern-.5mu
(M_{{\mathbb{R}}{\mathbb{P}}^2\mkern-1.2mu}({\textstyle{\frac{r_*}{s_*}}})\mkern-.5mu)
= \left< f, c, h_0, \ldots, h_n 
\mkern.5mu|\mkern1.5mu
2f \mkern-2mu=\mkern-2mu 0,\mkern.7mu
2c \mkern-1mu+ \mkern-2.8mu{\textstyle{\sum_{i=0}^n 
\mkern-2mu h_i \mkern-1.5mu=\mkern-1.5mu 0}},\, 
\iota_0(\mu_0) = ... = \iota_n(\mu_n) \mkern-1mu=\mkern-1mu 0
\right>.
\end{equation}

For either type of base, 
the Seifert fibration is invariant under any reparameterization
$M(\frac{r_0}{s_0}, \ldots, \frac{r_n}{s_n})
\to
M(\frac{r_0}{s_0} + z_0 , \ldots, z_n + \frac{r_n}{s_n})$
with $\sum_{i=0}^n z_i = 0$ and each $z_i \mkern-2mu\in\mkern-2mu {\mathbb{Z}}$.
The manifold also admits 
orientation-reversing 
homeomorphism,
$M(\frac{r_0}{s_0}, \ldots, \frac{r_n}{s_n})
\to
M(-\frac{r_0}{s_0}, \ldots, -\frac{r_n}{s_n})$.

\vspace{.1cm}
A {\em{regular fiber complement}} 
$Y := M \setminus \nu(f)$ in a rational homology sphere
Seifert fibered space has $b_1(Y) = 1$,
hence has a well-defined rational longitude.
\vspace{-.15cm}
\begin{definition}
\label{def: rational longitude}
Any compact oriented three manifold $Y\!$ with torus boundary and $b_1(Y) \mkern-2mu=\mkern-2mu1$
has a {\em{rational longitude}}, a unique class $l \in {\mathbb{P}}(H_1(\partial Y))$
such that representatives in $H_1(\partial Y)$ have torsion image in $H_1(Y)$
under the homomorphism induced by inclusion of the boundary.
\end{definition}
\vspace{-.15cm}
{\noindent{It is straightforward to show (see, {\em{e.g.}}, \cite{lslope}) that}
\vspace{-.12cm}
\begin{equation}
{{\textstyle{M_{S^2}(\frac{r_*}{s_*})}}} \setminus \nu(f)\;\;\;
\text{has rational longitude}\;\;\;
l = {{\textstyle{-\sum\limits_{i=0}^n \frac{r_i}{s_i}}}}.
\\[-3pt]
\end{equation}
A mild generalization of the calculation in \cite{lslope}
shows that the above result also holds if each solid torus
$S^1 \mkern-2.5mu\times\mkern-2mu D^2_i$ is replaced with an arbitrary
compact oriented three-manifold $Y_i$ with torus boundary and
$b_1(Y_i)=1$, with $\frac{r_i}{s_i}$ the image of the rational longitude of $Y_i$.
By contrast, if the JSJ component containing $\partial Y$
has non-orientable base, then 
$l = \pi(\tilde{f}) = \infty$.

\vspace{.15cm}
{\noindent{\bf{Remark.}}
The requirement that $\frac{r_i}{s_i} \neq \infty$ for
$i \in \{0, \ldots, n\}$ 
is  a necessary (assuming $n\mkern-2mu>\mkern-2mu1$) and sufficient condition
for the resulting Seifert fibered space to be prime---an important property
for manifolds serving as building blocks
in combinatorial constructions.
To understand necessity,
let $\hat{M}^{\infty}$ denote
the result of Dehn filling $\hat{M}$ with slope $\infty$ along $\partial_0 \hat{M}$.
If $\hat{M}$
is fibered over a punctured $S^2$,
then $\hat{M}^{\infty}$ is a connected sum of $n$ solid tori,
each with longitude of slope $\infty$.
Similarly, if $\hat{M}$ is fibered over a punctured ${\mathbb{R}}{\mathbb{P}}^2$,
then $\hat{M}^{\infty}$ is the connected sum of an $S^1 \mkern-2mu\times\mkern-1mu S^2$
with $n$ solid tori, each with longitude of slope $\infty$.}

Primality is especially important in the context of foliations,
since, by Novikov \cite{Novikov},
no reducible manifold except $S^1 \mkern-2mu\times\mkern-1mu S^2$
admits a co-oriented taut foliation.  On the other hand,
not all connected sums are L-spaces,
so any correspondence between being an L-space and
failing to admit a co-oriented taut foliation
breaks down beyond the realm of prime manifolds.

\subsection{Rotation Number, Shift, and Foliation Slope}

One of the key insights of Jankins and Neumann into the work of
Eisenbud, Hirsch, and Neumann on taut foliations on Seifert fibered spaces
was the need for a better invariant
on $\widetilde{\mathrm{Homeo}_+} S^1$\!.  Whereas the latter group relied
on the invariants 
$\underline{m}, \overline{m} : \widetilde{\mathrm{Homeo}_+} S^1 \to {\mathbb{R}}$,
with $\underline{m}(\gamma) := \min_{t\in {\mathbb{R}}} \gamma(t) - t$ and
$\overline{m}(\gamma) := \max_{t \in {\mathbb{R}}} \gamma(t) - t$,
Jankins and Neumann introduced
the problem to a more precise invariant of circle actions:
a conjugacy invariant called the
(Poincar{\'e}) {\em{rotation number}}, 
\begin{equation}
\mathrm{rot} : \widetilde{\mathrm{Homeo}_+}(S^1) \to {\mathbb{R}},\;\;\;
\mathrm{rot}(\gamma) = \lim_{k \to \infty} \mfrac{1}{k}(\gamma^k(t) - t),
\end{equation}
which is independent of $t \in {\mathbb{R}}$, and rational if and only if
$\gamma$ has some closed orbit \cite{Ghys}.
The rotation number is not, in general, a homomorphism.
However, it restricts to a homomorphism on any 
amenable, hence any abelian, subgroup \cite{Ghys}.
In particular, it restricts to a homomorphism
on any representation of the fundamental group of a torus.

The simplest element of 
$\widetilde{\mathrm{Homeo}_+}(S^1)$ is a {\em{rotation}}, or {\em{shift}},
\begin{equation}
\mathrm{sh}(s) : t \mapsto t+s,\;\;\;t \in {\mathbb{R}}.
\end{equation}
Whereas $\mathrm{rot} \circ \mkern.4mu\mathrm{sh} \mkern-1mu=\mkern-1mu \mathrm{id}$,
not every element of $\widetilde{\mathrm{Homeo}_+}(S^1)$
is conjugate to a rotation.
It is a classic result, however, that every element of
$\widetilde{\mathrm{Homeo}_+}(S^1)$
with irrational rotation number is
left and right semiconjugate to a shift of the same rotation number
\cite{Ghys}.

Rotation numbers can also be
used to associate slopes to taut foliations on tori.

\begin{definition}
For the two-torus $T$, there is a canonical map
\begin{equation}
\alpha: 
\left\{
\;C^0\mkern-2mu\text{ codimension-one foliations on }T \;\right\}
\;\to\;
{\mathbb{P}}(H_1(T;{\mathbb{R}})),
\end{equation}
constructed below, which respects isotopy.  We call $\alpha(F)$
the {\em{slope}} of $F$.
\end{definition}

If $F$ has Reeb compenents, then
$\alpha(F)$ is given by the class of any closed leaf of $F$.
All Reebless foliations on tori are taut.
Thus, if $F$ is Reebless, then there is a curve,
say $C_{\lambda}$ of primitive class $\lambda \in H_1(T)$, 
which intersects every leaf transversely, and $F$ can be realized
as the suspension of a circle homeomorphism
$\gamma_{F , \lambda} \in \mathrm{Homeo}_+ S^1$
from $C_{\lambda}$ to itself \cite{HectorHirsch}.
A choice of $\mu \in H_1(T)$ with $\mu \cdot \lambda = 1$
induces a lift of this suspension
to a suspension from a universal cover $\tilde{C}_{\lambda}$
of $C_{\lambda}$ to its translate by 
$\mu$ in the universal cover of $T$.
That is, if we regard $\tilde{C}_{\lambda}$ as the real vector space
$\{t\lambda\}_{t\in{\mathbb{R}}}$ spanned by $\lambda$,
with $C_{\lambda} \cong \tilde{C}_{\lambda}/\lambda{\mathbb{Z}}$,
then one can lift the 
foliation $F$ to the universal cover of $T$ 
by iteratively suspending the map
$t\lambda \mapsto 
\tilde{\gamma}_{\mkern.2muF , \lambda , \mu}(t)\lambda + \mu$,
for an appropriate lift 
$\tilde{\gamma}_{F , \lambda , \mu} \in 
\widetilde{\mathrm{Homeo}}_+ S^1 \subset \mathrm{Homeo}_+ {\mathbb{R}}\mkern.8mu$
of $\mkern.5mu\gamma_{F , \lambda} \in {\mathrm{Homeo}}_+ S^1$.

This lifted suspsension, in turn, induces a representation
\begin{equation}
\rho^F_{\lambda} : \pi_1(T) \to \widetilde{\mathrm{Homeo}}_+ S^1,
\;\;\;\;
[\lambda] \mapsto \mathrm{sh}(1), 
\;\;\;\;
[-\mu] \mapsto \tilde{\gamma}_{F , \lambda , \mu},
\end{equation}
where $[\lambda]$ and $[\mu]$ denote the lifts of $\lambda$ and $\mu$
to $\pi_1(T)$.  One can regard $\rho^F_{\lambda}$ as describing how to traverse the
line $\{t\lambda\}_{t\in {\mathbb{R}}}  \subset \left<\mu, \lambda \right>_{{\mathbb{R}}}$
by traveling only along foliation leaves or integer multiples of $\lambda$ or $\mu$.
That is, if one starts at some $t_0\lambda$, hops by $a\lambda + b\mu$ for some
$a, b \in {\mathbb{Z}}$, takes the foliation leaf
intersecting this new point, and follows this leaf
back to the line $\{t\lambda\}_{t\in {\mathbb{R}}}$,
then one will arrive at $\rho^F_{\lambda}(a\lambda +b\mu)(t_0)\lambda$.
Note that while $\rho^F_{\lambda}$ is independent of the choice of $\mu$,
and is determined up to conjugacy by a choice of $\lambda$, it still
depends on $\lambda$.

On the other hand, when we define the slope $\alpha(F)$ of $F$ to be
\begin{equation}
\alpha(F) := \ker( \mathrm{rot} \circ \rho^F_{\lambda})
= \left<(\mathrm{rot}(\rho^F_{\lambda}\!(-\mu))\lambda 
+ \mu\right> \in {\mathbb{P}}(H_1(T;{\mathbb{R}})),
\end{equation}
then the rotation number washes out all dependence on $\lambda$ and
choice of suspension.
That is, one can use the definition of rotation number to
compute $\mathrm{rot}(\rho^F_{\lambda}\!(-\mu))$
in terms of the rotation number associated to a different
choice of basis and suspension for $F$,
and obtain the same answer for $\alpha(F)$ in both cases.
Alternatively, any suspension homeomorphism with rational rotation number
has a periodic orbit, hence realizes a foliation with a compact leaf
of slope $\alpha(F)$.  If the suspension homeomorphism has irrational
rotation number, then it is semiconjugate to a shift of 
matching rotation number \cite{Ghys},
giving rise to a linear foliation of slope $\alpha(F)$.

\subsection{Restricting Seifert fibered space foliations to torus foliations}
If a compact oriented three-manifold $Y$ admits a co-oriented
taut foliation transverse to $\partial Y$,
then Gabai tells us that $\partial Y$ 
can only have toroidal components \cite{gabai}.
Thus, we often encounter foliations on tori as boundary
restrictions of foliations on three-manifolds.
Moreover, on a Seifert fibered space, any
taut foliation transverse to the boundary
restricts to taut foliations on boundary components.

Suppose $F$ is a co-oriented taut foliation
transverse to the fibration of the
Seifert fibered Dehn filling
$M_{S^2\mkern-2mu}(\frac{r_*}{s_*})$
along the slopes $\frac{r_*}{s_*} = (\frac{r_0}{s_0}, \ldots, \frac{r_n}{s_n})$
of the trivial circle fibration $\hat{M}$ over an $n\!+\!1$-punctured $S^2$,
according to the conventions of Section \ref{ss: Conventions for Seifert fibered spaces}.
For each boundary component $\partial_i \hat{M}$,
we regard the foliation $F \cap \partial_i \hat{M}$ 
as a suspension of a homemorphism of the curve of class $\tilde{f}_i$
to itself, and since the class $-\tilde{h}_i$ satisfies
$-\tilde{h}_i \cdot \tilde{f}_i = 1$,
it specifies a lift of this suspension
to a suspension of an element 
$\gamma_{F,\tilde{f}_i, -\tilde{h}_i} \in \widetilde{\mathrm{Homeo}_+}S^1$.
To this suspension we associate the representation
\begin{equation}
\rho_i:= \rho^{F\mkern.3mu\cap\mkern.3mu \partial_i \mkern-1.2mu\hat{M}}_{\mkern-2.3mu\tilde{f}_i}
: \pi_1(\partial_i \hat{M}) \to \widetilde{\mathrm{Homeo}_+}S^1\mkern-2mu,\;\;\;\;
[\tilde{f}_i] \to \mathrm{sh}(1),\;\;\;\;
[\tilde{h}_i] \to \gamma_{F,\tilde{f}_i, -\tilde{h}_i},
\end{equation}
allowing us to express the slope $\alpha(F \cap \partial_i \hat{M})$ of
$F \cap \partial_i \hat{M}$ as
\begin{equation}
\alpha(F \cap \partial_i \hat{M}) 
= \pi_i( \mathrm{rot}(\rho_i([\tilde{h}_i]))\tilde{f}_i - \tilde{h}_i )
= \mathrm{rot}(\rho_i([\tilde{h}_i])).
\end{equation}

The construction of Eisenbud, Hirsch, and Neumann \cite{EHN}
associating a representation
$\rho : \pi_1(M(\frac{r_i}{s_i})) \to 
\widetilde{\mathrm{Homeo}_+}S^1$ to $F$, sending the fiber class
$\phi = [f]$ to $\mathrm{sh}(1)$,
is sufficiently compatible with the construction of each $\rho_i$ above
that, possibly after conjugation of each $\rho_i$, $\rho$ can be chosen 
to satisfy $\rho_i = \rho \circ \iota^{\pi_1}_i$, with
$\iota^{\pi_1}_i: \pi_1(\partial_i \hat{M}) \to \pi_1(M_{S^2\mkern-2mu}(\frac{r_*}{s_*}))$
the homomorphism induced by inclusion.
The presentation
(\ref{eq: presentation of pi1 for S^2 Seifert fibered space})
for $\pi_1(M_{S^2}(\frac{r_*}{s_*}))$
then places the following restrictions on $\rho$,
as observed by Jankins and Neumann \cite{JankinsNeumann}:
\begin{align}
{\textstyle{\prod_{i=1}^n \mkern-3.5mu\eta_i \mkern-1.3mu=\mkern-1.8mu \phi^{-e_0}}}
&\implies
\mkern14.5mu
\bullet\;
\mathrm{rot}\mkern-2mu\left(
{\textstyle{\prod_{i=1}^n \mkern-3.5mu\rho(\eta_i) \mkern-1.3mu}}
\right) = -e_0 = \textstyle{-\frac{r_0}{s_0}},
         \\
i \in \{0, \ldots, n\}\!:\;\; \eta_i^{s_i} \mkern-4mu=\mkern-2.2mu \phi^{r_i}
\mkern13.5mu
&\implies
\begin{cases}
\bullet\;
\mathrm{rot}(\rho(\eta_i)) = \textstyle{\frac{r_i}{s_i}},
     \\
\bullet\;
\rho(\eta_i)\;\text{is conjugate to}\;
\mathrm{sh}(\textstyle{\frac{r_i}{s_i}}).
\end{cases}
\end{align}
Jankins and Neumann mostly focused on the case of
$n= 3$, $e_0 = -1$, and $0 < \frac{r_1}{s_1}, \frac{r_2}{s_2}, \frac{r_3}{s_3} < 1$,
but their above observation holds in general.

Whereas the first condition enforces a global restriction on $F$,
the latter two conditions provide local restrictions
at each $F \cap \partial_i \hat{M}$,
which we could recover simply by considering Dehn fillings.
The solid torus admits only one taut foliation, namely,
the product foliation with slope given by the rational longitude.
As a consequence, the co-oriented taut foliation $F\cap \partial_i \hat{M}$ extends
to a co-oriented taut foliation on the Dehn filling $\partial_i \hat{M}(\frac{r_i}{s_i})$
if and only if $F\cap \partial_i \hat{M}$ is the product foliation
of slope $\frac{r_i}{s_i}$,
which occurs if and only if
$\rho_i([\tilde{h}_i])\!=\! \rho(\eta_i)$ is conjugate to $\mathrm{sh}(\frac{r_i}{s_i})$,
a condition which requires
$\alpha(F \cap \partial_i \hat{M}) = \mathrm{rot}(\rho(\eta_i)) = \frac{r_i}{s_i}$.
In particular,
the $j^{\mathrm{th}}$ shift conjugacy condition,
that $\rho(\eta_j)$ be conjugate to $\mathrm{sh}(\frac{r_j}{s_j})$,
is due solely to the fact that $\partial_j \hat{M}$
is glued to a solid torus. As emphasized by Boyer and Clay \cite{BoyerClay},
when one relaxes the $j^{\mathrm{th}}$ shift conjugacy condition,
one can still find manifolds $Y$ with torus boundary for which $F$
extends to a taut foliation on
$\hat{M} \cup_{\partial_j \mkern-1.5mu\hat{M}} \mkern-2muY\!$,
for a suitable choice of gluing map.

It is presumably for this reason that Jankins and Neumann focused on the
more general condition of $J$-{\it{realizability}}
for an $n\!+\!1$-tuple 
$\frac{r_*}{s_*} \mkern-2mu := \mkern-2mu (e_0 \mkern-4mu=\mkern-4mu \frac{r_0}{s_0}, \frac{r_1}{s_1}, \ldots, \frac{r_n}{s_n})$,
given a subset $J \subset \{1, \ldots, n\}$.
They deem $\frac{r_*}{s_*}$ $J$-realizable 
if there is
a representation $\rho^J : \left<\eta_1, \ldots \eta_n\right> \to
\widetilde{\mathrm{Homeo}_+}S^1$
such that $\rho^J$ meets the $\frac{r_*}{s_*}$ rotation number condition,
that $\mathrm{rot}\mkern-2mu\left(
{\textstyle{\prod_{i=1}^n \mkern-3.5mu\rho(\eta_i) \mkern-1.3mu}}
\right) \mkern-2mu=\mkern-2mu -e_0$
with
$\mathrm{rot}(\rho(\eta_i)) \!=\! \textstyle{\frac{r_i}{s_i}}$
for each $i \in \{1, \ldots, n\}$,
and such that
$\rho^J$ meets the $j^{\mathrm{th}}$ shift conjugacy condition
for each $j \in J$.
We have already shown that $J$-realizability is a necessary condition for 
$\hat{M}$ to admit a taut foliation $F$ of slopes
$\alpha(F\cap \partial_i \hat{M}) = \frac{r_i}{s_i}$
which extends to a taut foliation on the partial Dehn filling
of $\hat{M}$ along the slopes $\frac{r_j}{s_j} \in {\mathbb{P}}(H_1(\partial_j \hat{M}))$
for all $j \in \{0\} \cup J$.
With the help of Naimi, Jankins and Neumann showed that
the condition is also sufficient \cite{JankinsNeumann, Naimi}.

\subsection{Solutions for $J$-realizability}

Jankins and Neumann conjectured that a slope
$\frac{r_*}{s_*}$ is $J$-realizable in 
$\widetilde{\mathrm{Homeo}_+}S^1$
if and only if it is
$J$-realizable in a smooth Lie subgroup of $\widetilde{\mathrm{Homeo}_+}S^1\mkern-2mu$.
Observing that any smooth Lie subgroup of $\widetilde{\mathrm{Homeo}_+}S^1\mkern-2mu$
is conjugate to $\widetilde{PSL}_k(2,{\mathbb{R}})$ for some $k \!\in\! {\mathbb{Z}}_{>0}$,
where $\widetilde{PSL}_k(2,{\mathbb{R}}) \mkern-2mu=\mkern-2mu 
\psi_k^{-1} \mkern-.5mu \widetilde{PSL}(2,{\mathbb{R}}) \mkern.5mu \psi_k$
for $(\psi_k \mkern-3mu:\mkern-2.8mu t \mkern-2.2mu\mapsto\mkern-2.5mu kt) 
\mkern-1mu\in\mkern-1mu \mathrm{Homeo}_+{\mathbb{R}}$,
they computed
\begin{equation}
\label{eq: max rot equation, PSLk}
\max \;\mathrm{rot}\left(\prod_{i=1}^n \gamma_i\right)
=
\frac{1}{k}\left(-1 + \sum_{i=1}^n
\left(\left\lfloor \frac{r_i k}{s_i} \right\rfloor +1 \right) \right)
\end{equation}
for the maximum rotation number 
of a product of elements 
$\gamma_i \in \widetilde{PSL}_k(2,{\mathbb{R}})$
with each $\mathrm{rot}(\gamma_i) = \frac{r_i}{s_i}$.
They then proved the above conjecture in all but one case,
later proven by Naimi \cite{Naimi}.
More recently,
in Theorem 3.9 of \cite{ziggurat} (appropriately generalized from 2 to $n$),
Calegari and Walker rederived
(\ref{eq: max rot equation, PSLk})
(with the maximum taken over $k \!\in\! {\mathbb{Z}}_{>0}$)
for $\widetilde{\mathrm{Homeo}_+}S^1$,
without appealing to $\widetilde{PSL}_k(2,{\mathbb{R}})$,
by using dynamical techniques
similar to those of Naimi.

One obtains the analogous minimum rotation number of a product by
sending $\frac{r_i}{s_i} \mapsto -\frac{r_i}{s_i}$
in (\ref{eq: max rot equation, PSLk}).
Demanding that $-e_0$ lie between the minimum and maximum
rotation numbers for $\prod_{i=1}^n\rho(\eta_i)$,
and multiplying the resulting inequality by $-1$, implies that
a representation in $\widetilde{\mathrm{Homeo}_+}S^1$  
can only satisfy the rotation number condition for
$\frac{r_*}{s_*} = (e_0,\frac{r_1}{s_1\mkern-1.5mu}, \ldots, \frac{r_n}{s_n})$ if
\begin{equation}
\label{eq: negative of rot satisfiers}
\min_{k > 0}
-\mfrac{1}{k}\mkern-1.5mu\left(-1 + \sum_{i=1}^n\mkern-1mu
\left(\left\lfloor \mkern-1.5mu\mfrac{r_i k}{s_i}\mkern-1.5mu \right\rfloor +1 \right) \right)
\le
e_0
\le
\max_{k > 0}
-\mfrac{1}{k}\mkern-1.5mu\left(1 + \sum_{i=1}^n\mkern-1mu
\left(\left\lceil \mkern-1.5mu\mfrac{r_i k}{s_i}\mkern-1.5mu \right\rceil  -1\right) \right),
\end{equation}
a criterion which Jankins and Neumann prove is also sufficient \cite{JankinsNeumann}.
Moreover, $\frac{r_*}{s_*}$ is $J$-realizable in
$\widetilde{PSL}_{k}(2,{\mathbb{R}})$, for some $k \in {\mathbb{Z}}_{>0}$,
if and only if
\begin{equation}
-\mkern-.8mu\mfrac{1}{k}\mkern-3.5mu\left(\mkern-2mu -1 +\mkern-2.7mu
\sum_{i=0}^n\mkern-3.5mu
\left(\left\lfloor\mkern-2mu \mfrac{r_i k}{s_i} \mkern-2mu\right\rfloor 
\mkern-3mu+\mkern-2.1mu 1 \right)\mkern-3mu \right)
\le \mkern-.7mu
0 \mkern-.9mu
\le
-\mkern-3.8mu\sum_{i=0\mkern.5mu}^n\mkern-2mu
\mfrac{r_i}{s_i}
\mkern4mu\text{ or }\mkern2mu
-\mkern-2.8mu\sum_{i=0\mkern.5mu}^n\mkern-2mu
\mfrac{r_i}{s_i}
\le \mkern-.7mu
0 \mkern-.9mu
\le
-\mfrac{1}{k}\mkern-3.5mu\left(1 +\mkern-2.7mu
\sum\limits_{i=0}^n\mkern-3.5mu
\left(\left\lceil\mkern-2mu \mfrac{r_i k}{s_i} \mkern-2mu\right\rceil  
\mkern-3mu-\mkern-2.1mu 1\right)\mkern-2.5mu \right).
\end{equation}

The shift conjugacy condition is easier to apply:
one can approximate an element of 
$\widetilde{\mathrm{Homeo}_+}S^1$
with a shift-conjugate element of arbitrarily close
rotation number.
Jankins and Neumann used this fact to show that if
one fixes all $\frac{r_i}{s_i}$ with $i \neq j$, for some
fixed $j \in \{1, \ldots, n\}$,
then imposing the $j^{\mathrm{th}}$ shift conjugacy condition
is equivalent to restricting to the interior of the
interval of $\frac{r_j}{s_j} \in {\mathbb{R}}$ for which
the $\frac{r_*}{s_*}$ rotation number condition is satisfied.
More generally, Calegari and Walker have shown that the same
principle holds for the 
the rotation number condition associated to any 
fixed positive word 
\cite[Lemma 3.31]{ziggurat}.

Since for any $r \in {\mathbb{R}}$ and $z \in {\mathbb{Z}}$, we have
\begin{equation}
\label{eq: closed interval to ceiling floor}
z \le \lfloor r \rfloor  \;\iff\;  z \le r,
\;\;\;\;\;\;\;\;\;
\lceil r \rceil \le z  \;\iff\;   r  \le z,
\end{equation}
it follows, for any $j \in \{1, \ldots, n\}$,
that if we fix all $\frac{r_i}{s_i}$ with $i \neq j$,
then the interval of $\frac{r_j}{s_j} \in {\mathbb{R}}$ satisfying
(\ref{eq: negative of rot satisfiers}) is closed.
On the other hand, for any $r \in {\mathbb{R}}$ and $z \in {\mathbb{Z}}$, we know that
\begin{equation}
\label{eq: open interval to ceiling floor}
z \le \lceil r \rceil - 1   \;\iff\;   z < r,
\;\;\;\;\;\;\;\;\;
\lfloor r \rfloor + 1 \le z \;\iff\;  r < z.
\end{equation}
These identities led Jankins and Neumann
to produce the formulas in the following result.

\begin{theorem}[Jankins, Neumann, Naimi \cite{JankinsNeumann, Naimi}; 
$\mkern-3mu${\em{c.f.}}$\mkern-1.5mu$ Calegari, $\mkern-3mu$Walker \cite{ziggurat}]
\label{thm: J Seifert fibered foliation classification}
For any $n\mkern-1.8mu\ge\mkern-1.8mu 2$, partition 
$J \mkern-1.2mu\amalg\mkern-1.2mu \bar{J} 
\mkern-2.5mu=\mkern-2.5mu \{0, \ldots, n\}$ with 
$0 \mkern-2.3mu\in\mkern-2.2mu J\mkern-1.5mu$,
$\mkern-.3mu$and $n\mkern-1.7mu+\mkern-2mu 1$-tuple 
$\frac{r_*}{s_*} \!:=\!  (\frac{r_0}{s_0}, \ldots, \frac{r_n}{s_n}) 
\mkern-2mu\in\mkern-2mu {\mathbb{Q}}^{n+1}\mkern-2mu$
with $\frac{r_0}{s_0} \mkern-2.6mu\in\mkern-2.4mu {\mathbb{Z}}$
and $\frac{r_i}{s_i} \mkern-2mu\notin\mkern-2mu {\mathbb{Z}}$ for $i\mkern-2mu>\mkern-2mu0$,
the trivial circle fibration $\hat{M}$ over an $n+1$-punctured $S^2\!$
admits a co-oriented taut foliation $F\mkern-2mu$ transverse to the boundary, with slopes
$\alpha(F \cap \partial _i \hat{M}) \mkern-2.5mu =\mkern-2.5mu \frac{r_i}{s_i}$ for 
$i \mkern-2.5mu \in \mkern-2.5mu \{0, \ldots, n\}$,
and with $F\mkern-2.5mu$ extending to a co-oriented taut foliation on the Dehn filling of
$\mkern1mu\partial_j \hat{M}\mkern-2.5mu$ of slope $\frac{r_j}{s_j}$ for each 
$j \mkern-2.5mu\in\mkern-2.5mu J\mkern-1.7mu$,
if and only if 
$\mkern1mu 0 \mkern-.4mu=\mkern-.4mu y_- \mkern-3mu=\mkern-.4mu y_+\mkern-1.5mu$ or 
$\mkern1mu 0 \mkern-1.3mu \in \mkern-2.1mu \left<y_+, y_-\right>$, with
\vspace{-.1cm}
\begin{align}
\label{eq: defs of y- and y+}
y_-
&:=
\max_{1 \le k \le s}
-\frac{1}{k}\mkern-3mu\left(\mkern2mu
1
\mkern2mu+\mkern1mu \sum\limits_{j\in J} \! \left\lfloor \!\frac{r_j k}{s_j}\! \right\rfloor
\mkern2mu+\mkern1mu \sum\limits_{\bar{\jmath}\in\bar{J}}\!
\left(  \left\lceil \!\frac{r_{\bar{\jmath}} k}{s_{\bar{\jmath}}}\! 
\right\rceil - 1 \right)\!
\right),
     \\ \nonumber
y_+ 
&:=
\min_{1 \le k \le s}
-\frac{1}{k}\mkern-3mu\left(\mkern-3mu
-1
\mkern2mu+\mkern1mu \sum\limits_{j\in J}\!  
\left\lceil \!\frac{r_j k}{s_j}\! \right\rceil
\mkern2mu+\mkern1mu \sum_{\bar{\jmath}\in\bar{J}}\!
\left(  \left\lfloor \!\frac{r_{\bar{\jmath}} k}{s_{\bar{\jmath}}}\! 
\right\rfloor + 1 \right)\!
\right),
\end{align}
where $s$ is the least common positive multiple of the $s_i$.
\end{theorem}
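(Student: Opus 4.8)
The plan is to deduce the theorem from the $J$-realizability reformulation developed above, combined with the Calegari--Walker computation of extremal rotation numbers of products and their ``interior principle'' for shift conjugacy. First I would invoke the equivalence, established in the preceding discussion, between the existence of a co-oriented taut foliation $F$ on $\hat{M}$ with $\alpha(F\cap\partial_i\hat{M})=\frac{r_i}{s_i}$ for every $i$ and extending to a co-oriented taut foliation across the Dehn fillings at $\partial_j\hat{M}$, $j\in J$, on the one hand, and $J$-realizability of $\frac{r_*}{s_*}$ in $\widetilde{\mathrm{Homeo}_+}S^1$ on the other: necessity was already obtained via the Eisenbud--Hirsch--Neumann representation $\rho$ with $\rho_i=\rho\circ\iota^{\pi_1}_i$, while for sufficiency one uses the Jankins--Neumann--Naimi construction, which realizes the prescribed boundary slopes by suspensions of circle homeomorphisms $\gamma_i$ with $\mathrm{rot}(\gamma_i)=\frac{r_i}{s_i}$, glues the corresponding torus foliations across $\hat{M}$ according to presentation~(\ref{eq: presentation of pi1 for S^2 Seifert fibered space}), and observes that a suspension by a shift-conjugate homeomorphism is isotopic to a product foliation, hence extends over the corresponding solid torus. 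It then remains to characterize $J$-realizability.

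Next I would treat the pure rotation number condition, i.e.\ the case $J=\{0\}$ (the $j=0$ shift conjugacy is vacuous since $\rho(\eta_0)=\mathrm{sh}(e_0)$ is literally a shift). By the Calegari--Walker extension of (\ref{eq: max rot equation, PSLk}) to $\widetilde{\mathrm{Homeo}_+}S^1$ with the optimization over $k>0$, together with its reflection under $\frac{r_i}{s_i}\mapsto-\frac{r_i}{s_i}$, the achievable values of $\mathrm{rot}\!\big(\prod_{i=1}^n\gamma_i\big)$ subject to $\mathrm{rot}(\gamma_i)=\frac{r_i}{s_i}$ form an interval whose endpoints are the $\max_{k>0}$ and $\min_{k>0}$ of the $\frac1k$-expressions in (\ref{eq: negative of rot satisfiers}); requiring $-e_0$ to lie in this interval reproduces (\ref{eq: negative of rot satisfiers}), and absorbing $e_0=\frac{r_0}{s_0}\in\mathbb{Z}$ into the sums (via $\lfloor\frac{r_0k}{s_0}\rfloor=\lceil\frac{r_0k}{s_0}\rceil=\frac{r_0k}{s_0}$) recasts it as the condition on $y_\pm$ of (\ref{eq: defs of y- and y+}) with the summand at each index $i\ge1$ in ``$\bar J$-form'' $\lceil\cdot\rceil-1$, resp.\ $\lfloor\cdot\rfloor+1$. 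Then I would impose the remaining shift conjugacy conditions for $j\in J\cap\{1,\dots,n\}$ one at a time: by the interior principle of \cite[Lemma 3.31]{ziggurat}, applied to the positive word $\eta_1\cdots\eta_n$ with the already-constrained $\eta_{j'}$ replaced by $\mathrm{sh}(\frac{r_{j'}}{s_{j'}})$ and all other data fixed, imposing shift conjugacy on $\eta_j$ restricts the admissible $\frac{r_j}{s_j}$ to the \emph{interior} of the closed interval cut out so far; by (\ref{eq: open interval to ceiling floor}) in place of (\ref{eq: closed interval to ceiling floor}), passing to that interior in the $j$-th coordinate replaces $\lceil\frac{r_jk}{s_j}\rceil-1$ and $\lfloor\frac{r_jk}{s_j}\rfloor+1$ by $\lfloor\frac{r_jk}{s_j}\rfloor$ and $\lceil\frac{r_jk}{s_j}\rceil$, and iterating over all of $J$ produces exactly the summand conventions of (\ref{eq: defs of y- and y+}); finally, as Jankins and Neumann show \cite{JankinsNeumann}, both optimizations may be restricted to $1\le k\le s$ with $s=\mathrm{lcm}(s_i)$, the extrema being attained there.

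The hard part is the boundary behavior, which is precisely the configuration historically left to Naimi \cite{Naimi}. Because the existence of $F$ demands the rotation number equalities on the nose, one must determine exactly when the extremal rotation number of (\ref{eq: max rot equation, PSLk}) is genuinely \emph{attained} in $\widetilde{\mathrm{Homeo}_+}S^1$ rather than merely approached: the ``$0=y_-=y_+$'' alternative records the unique boundary configuration in which realizability persists---namely when the feasible interval for a slope collapses to the single point equal to that slope, where the naive passage to the interior would wrongly discard it---whereas elsewhere on the boundary realizability fails. A secondary, bookkeeping-level obstacle is the iteration in the previous step: one must verify that after imposing shift conjugacy at some $j'\in J$ the feasibility locus in the remaining coordinates is still an interval to which the interior principle reapplies, so that the conditions attached to distinct $j\in J$ combine into the single pair of formulas (\ref{eq: defs of y- and y+}) rather than interacting with one another.
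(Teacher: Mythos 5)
The theorem you are trying to prove is not proved in the paper at all: it is stated with attribution to Jankins--Neumann \cite{JankinsNeumann} and Naimi \cite{Naimi} (with Calegari--Walker \cite{ziggurat} as an alternate reference), and the preceding Sections on rotation numbers, $J$-realizability, and the $\widetilde{PSL}_k$ extremal-rotation formula serve only as an expository roadmap, not as a proof. So there is no ``paper's own proof'' to compare against; what can be assessed is whether your reconstruction is faithful to that roadmap and whether it closes the gaps the roadmap leaves open.

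Your first two paragraphs track the roadmap accurately: the reduction of the foliation problem to $J$-realizability via the Eisenbud--Hirsch--Neumann representation, the use of (\ref{eq: max rot equation, PSLk}) (in its Calegari--Walker $k$-optimized form) to bound $\mathrm{rot}(\prod\gamma_i)$, the absorption of $e_0\in\mathbb{Z}$ into the sums using $\lfloor e_0k\rfloor=\lceil e_0k\rceil=e_0k$, and the translation of ``closed interval'' to ``$\lceil\cdot\rceil-1$/$\lfloor\cdot\rfloor+1$'' and ``open interval'' to ``$\lfloor\cdot\rfloor$/$\lceil\cdot\rceil$'' via (\ref{eq: closed interval to ceiling floor}) and (\ref{eq: open interval to ceiling floor}) all match the exposition leading up to the theorem. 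The observation that the $j=0$ shift conjugacy is vacuous because $\rho(\eta_0)=\mathrm{sh}(e_0)$ is literally a shift is correct and is implicit in the paper.

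However, the two issues you flag at the end as ``the hard part'' are not side remarks; they \emph{are} the theorem, and your proposal defers rather than addresses them. First, the iterative application of \cite[Lemma 3.31]{ziggurat}: you correctly note that one must verify the feasibility locus in the remaining coordinates stays an interval after each shift-conjugacy imposition, and that the resulting constraints decouple into the two formulas (\ref{eq: defs of y- and y+}); but you offer no argument, and this convexity/decoupling is precisely what makes the closed-form answer possible. Second, the ``$0=y_-=y_+$'' alternative: your gloss (``when the feasible interval for a slope collapses to the single point equal to that slope'') is not an accurate description of what this condition means. The quantities $y_\pm$ are computed \emph{after} converting every $J$-slope to floor/ceiling and every $\bar J$-slope to the interior form, so $y_-=y_+=0$ is a global degeneracy of the whole configuration, not a per-slope collapse; and the assertion that realizability ``persists'' there while ``elsewhere on the boundary'' it fails is exactly the delicate boundary analysis that Naimi \cite{Naimi} supplied and that Jankins--Neumann could not settle. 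Stating that this is hard, and citing Naimi for it, is honest but does not close the gap: a proof of the theorem must actually establish when extremal configurations are realizable, and the present proposal does not attempt that.
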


\subsection{Dehn fillings and $\bar{N}$-fillings}
For any particular $i \in \{1, \ldots, n\}$ in the above theorem,
if one fixes the remaining slopes, one finds that the space
of slopes in ${\mathbb{P}}(H_1(\partial_i\hat{M}))$ for which the
desired taut foliation exists is often an interval.
We now introduce some notation to describe such spaces of slopes
in general.

\begin{definition}
If $\mkern.5muY\mkern-3.8mu$ is a compact oriented three-manifold with torus boundary,
then we define the sets
$\mathcal{F}^L\mkern-1.6mu(Y)
\mkern-3mu\subset\mkern-2.4mu
\mathcal{F}^D\mkern-2.2mu(Y) 
\mkern-3mu\subset\mkern-2.4mu
\mathcal{F}\mkern-.5mu(Y) 
\mkern-3mu\subset\mkern-2mu
{\mathbb{P}}(H_{\mkern-.2mu1}\mkern-.5mu(\partial Y \mkern-.6mu;{\mathbb{Z}})\mkern-1.5mu)\mkern-1.8mu$
of rational foliation slopes as follows:
\begin{align*}
\mathcal{F}^L\mkern-2mu(Y)
&:= \left\{ \alpha(F \cap \partial Y)\left|\mkern-4mu
\begin{array}{c}
F\text{ is a co-oriented taut foliation on }Y\!\text{ transverse to }
\partial Y,
   \\
\text{restricting to a rational co-oriented {\em{linear}} foliation on }
\partial Y
\end{array}
\mkern-10mu
\right.
\right\},
       \\
\mathcal{F}^D\mkern-2mu(Y)
&:= \left\{ 
\mkern39.2mu \mu \mkern39.2mu 
\left|
\mkern7mu
\text{The Dehn filling } 
Y\mkern-1.2mu(\mu)
\text{ admits a co-oriented taut foliation}\mkern5mu
\right.
\right\},
       \\
\mathcal{F}(Y)
&:= \left\{ \mkern3.5mu \alpha(F \cap \partial Y)\mkern1.5mu\left|
\mkern15mu F\text{ is a co-oriented taut foliation on }Y\!\text{ transverse to }
\partial Y
\mkern6mu
\right.
\right\}.
\end{align*}
\end{definition}
All linear foliations, even irrational ones, are taut, but rational linear foliations
are product foliations, hence extend to co-oriented
taut foliations on Dehn fillings of matching slope,
implying $\mathcal{F}^L(Y) \mkern-2mu\subset\mkern-2mu \mathcal{F}^D(Y) 
\mkern-2mu\subset\mkern-2mu \mathcal{F}(Y)$.
In fact,
the work of Jankins and Neumann
tells us that $\mathcal{F}^L \mkern-3mu=\mkern-2mu \mathcal{F}^D$
for manifolds Seifert fibered over the disk, and that the analogous result holds
for manifolds Seifert fibered over a punctured $S^2$.
Since the same also holds for manifolds Seifert fibered over a punctured
${\mathbb{R}}{\mathbb{P}}^2$ \cite{BoyerClay},
and since
Corollary~\ref{cor: taut tree manifold foliations restrict}
tells us that
taut foliations on homology sphere graph manifolds
isotop to restrict to taut foliations
transverse to boundaries on Seifert fibered JSJ components,
we additionally have
$\mathcal{F}^L \mkern-3mu=\mkern-2mu\mathcal{F}^D$
for any graph manifold with torus boundary and $b_1 = 1$.

In this latter case, it is natural to ask whether
$\mathcal{F}(Y)$ admits a description analogous to
the Dehn filling characterization for $\mathcal{F}^L\mkern-1.2mu(Y)$.
That is, can $\mathcal{F}(Y)$ be characterized in terms of
taut foliations on some closed union of $Y$ with some other manifold?
Boyer and Clay answer this question affirmatively
\cite{BoyerClay}, as we shall see.

Let $\bar{N}$ denote the regular fiber complement
\begin{equation}
\bar{N} := M_{S^2}(0, {\textstyle{-\frac{1}{2}, \frac{1}{2}}}) \setminus (S^1 \!\times\! D^2_0),
\end{equation}
which Boyer and Clay call the ``twisted $I$-bundle over the Klein bottle,''
or $N_2$.  The manifold $\bar{N}$ can play a role analogous to
that of the solid torus for Dehn fillings.
\begin{definition}
Suppose $Y$ is an oriented three-manifold with toroidal boundary component
$\partial_i Y$.
We call any union $Y \cup_{\varphi}\mkern-3mu \bar{N}$ with gluing map
$\varphi :  \partial \bar{N}  \to  -\partial_i Y$
an $\bar{N}${\em{-filling}} of $Y\!$ along $\mu \in {\mathbb{P}}(H_1(\partial_i Y))$,
where $\mu := \varphi_*^{{\mathbb{P}}}(l)$ is the image of the rational longitude $l$
of $\bar{N}$. If $Y$ has (single) torus boundary, we denote an
$\bar{N}$-filling of $Y$ along $\mu \in {\mathbb{P}}(H_1(\partial Y))$ by 
$Y^{\mkern-1mu\bar{N}}\mkern-2mu(\mu)$.

More generally,
if $\partial Y = \coprod_{i=1}^n \partial_i Y$ is a disjoint union of tori,
then given a slope
$\mu_* \mkern-2.5mu:\mkern-.4mu=\mkern-2mu 
(\mu_1, \ldots, \mu_n) \mkern-2mu\in\mkern-2mu \prod_{i=1}^n {\mathbb{P}}(H_1(\partial_i Y))$ and subset $\bar{J} \subset \{1, \dots, n\}$,
we denote by 
$Y^{\bar{N}}\mkern-2mu(\bar{J}; \mu_*)$ any manifold
resulting from 
$\bar{N}$-filling
$Y\!$ along
$\mu_{\bar{\jmath}}$ in $\partial_{\bar{\jmath}} Y\mkern-2.5mu$ 
for each $\bar{\jmath} \in \bar{J}$.
\end{definition}

We then have the following result for
$\bar{N}$-fillings.

\begin{prop}
\label{prop: boyer and clay's taut foliation slope vs N-filling}
Suppose $Y\!$ is a prime compact oriented manifold
with boundary a disjoint union 
$\coprod_{i=1}^n \partial_i Y\!$ 
of tori, with some given slope
$\mu_* \mkern-2.5mu:\mkern-.4mu=\mkern-2mu 
(\mu_1, \ldots, \mu_n) \mkern-2mu\in\mkern-2mu \prod_{i=1}^n {\mathbb{P}}(H_1(\partial_i Y))$.
Moreover, suppose either that $b_1(Y(\mu_*))>0$ 
for the Dehn filling $Y(\mu_*)$,
or that $Y\!$ is a graph manifold,
and there is some (possibly empty)
$\bar{J} \subset \{1, \ldots,n\}$,
and $\bar{N}$-filling
$Y^{\bar{N}}\mkern-2mu(\bar{J}; \mu_*)$ of $Y\!$ along
$\mu_{\bar{\jmath}}$ in $\partial_{\bar{\jmath}} Y\mkern-2.5mu$ 
for each $\bar{\jmath} \in \bar{J}$,
such that $Y^{\bar{N}}\mkern-2mu(\bar{J}; \mu_*)$ admits
a co-oriented taut foliation $F$ transverse to the boundary,
with $\alpha(F \cap \partial_j Y) \mkern-1.5mu=\mkern-1.5mu \mu_j$
for each $j \mkern-2mu\in\mkern-2mu 
J\mkern-2mu:\mkern-.3mu=\mkern-2mu \{1, \ldots, n\} \mkern-2mu\setminus\mkern-2mu \bar{J}$.

Then, for {\em{every}} 
$\bar{J} \mkern-2mu\subset\mkern-2mu \{1, \ldots,n\}$,
{\em{every}} $\bar{N}$-filling
$Y^{\bar{N}}\mkern-2mu(\bar{J}; \mu_*)$
(including $Y\mkern-2mu:\mkern-.3mu=
\mkern-2mu Y^{\bar{N}}\mkern-2mu(\emptyset; \mu_*)$)
admits a co-oriented taut foliation $F$ transverse to the boundary,
with $\alpha(F \cap \partial_j Y) \mkern-1.5mu=\mkern-1.5mu \mu_j$
for each $j \mkern-2mu\in\mkern-2mu 
J\mkern-2mu:\mkern-.3mu=\mkern-2mu \{1, \ldots, n\} \mkern-2mu\setminus\mkern-2mu \bar{J}$.
\end{prop}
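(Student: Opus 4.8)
\noindent\emph{A proof of Proposition~\ref{prop: boyer and clay's taut foliation slope vs N-filling} would proceed as follows.} Call $\bar J\subset\{1,\dots,n\}$ \emph{admissible} if $Y^{\bar N}(\bar J;\mu_*)$ carries a co-oriented taut foliation transverse to its boundary with $\alpha(F\cap\partial_j Y)=\mu_j$ for every $j\in J:=\{1,\dots,n\}\setminus\bar J$; we must show that if one $\bar J$ is admissible then all are. The first point is that admissibility is monotone under inclusion: if $\bar J$ is admissible and $i\notin\bar J$, then $\bar J\cup\{i\}$ is admissible. To see this, note that the rational longitude $l$ of $\bar N$ is its Seifert fibre slope and that $\bar N(l)$ has $b_1=1$, so by \cite{gabai} $\bar N(l)$ carries a co-oriented taut foliation; after an isotopy making the core of the filling solid torus transverse to it, this restricts to a co-oriented taut foliation $G$ on $\bar N$ transverse to $\partial\bar N$ with $\alpha(G\cap\partial\bar N)=l$. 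Since the $\bar N$-filling along $\mu_i$ identifies $l$ with $\mu_i$, and the witnessing foliation for $\bar J$ has slope $\mu_i$ at $\partial_i Y$, the two boundary foliations have equal slope, so one may interpolate them through a collar $T^2\times I$ and glue, producing a co-oriented taut foliation on $Y^{\bar N}(\bar J\cup\{i\};\mu_*)$ transverse to the remaining boundary with the same slopes $\mu_j$; tautness is inherited because every leaf of the assembled foliation meets a closed transversal lying in one of the pieces, and no sphere or one-sided leaves are created. By monotonicity it therefore suffices to prove that $\emptyset$ is admissible.

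If $b_1(Y(\mu_*))>0$, then by \cite{gabai}, together with the standard fact that one may arrange the cores of the filling tori transverse to the resulting taut foliation with the correct framings, $Y$ admits a co-oriented taut foliation transverse to $\partial Y$ realising $\mu_*$; so $\emptyset$ is admissible and we are done. Henceforth assume instead that $Y$ is a graph manifold and that some $\bar J_0$ is admissible, and argue by induction on $|\bar J_0|$: it is enough to show that if $\bar J_0$ is admissible, then so is $\bar J_0\setminus\{i\}$ for any $i\in\bar J_0$.

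Write $W:=Y^{\bar N}(\bar J_0\setminus\{i\};\mu_*)$, so that $Y^{\bar N}(\bar J_0;\mu_*)=W\cup_{\varphi_i}\bar N$ with $\varphi_{i*}^{\mathbb{P}}(l)=\mu_i$. We may assume $Y$ is not a solid torus: if it is, then gluing a solid torus to $\bar N$ is a Dehn filling of $\bar N$, and since $\bar N(\mu)$ admits a co-oriented taut foliation exactly when $\mu=l$ (for $\mu\neq l$ the manifold $\bar N(\mu)$ is a finite-$\pi_1$ space form, or is reducible and not $S^1\times S^2$, hence non-foliar), the claim is immediate in that case. When $Y$ is not a solid torus its boundary tori are incompressible in $Y$, hence $T:=\partial_i Y$ is incompressible in $W$ and in $W\cup_{\varphi_i}\bar N$. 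By Proposition~\ref{prop: restrict to separated JSJ components} the witnessing foliation for $\bar J_0$ can be isotoped transverse to $T$, and so restricts to a co-oriented taut foliation $F'$ on $W$ transverse to $\partial W$ with $\alpha(F'\cap\partial_j Y)=\mu_j$ for $j\in J_0$ and $\alpha(F'\cap T)=\nu$ for some slope $\nu$. It remains to replace $\nu$ by $\mu_i$, and for this one invokes Boyer and Clay's analysis of $\bar N=N_2$ \cite{BoyerClay}: the only rational slope realised at $\partial\bar N$ by a co-oriented taut foliation of $\bar N$ that is linear on the boundary is $l$, while the set of slopes realisable at $T$ by co-oriented taut foliations of $W$ transverse to $\partial W$ with the slopes $\mu_j$ pinned is a closed interval, obtained by propagating the Jankins--Neumann analysis along the tree of the graph manifold $W$; the existence of the foliation on $W\cup_{\varphi_i}\bar N$ then forces $\mu_i$ into that interval. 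Hence $W$ admits the required foliation, $\bar J_0\setminus\{i\}$ is admissible, and the induction is complete.

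The step I expect to be the main obstacle is this last one --- upgrading the uncontrolled slope $\nu$ of the restricted foliation on $W$ to the intended slope $\mu_i$. Restricting a taut foliation across the internal torus $T$ a priori destroys all control of the boundary slope on the $\bar N$-side (which can even become irrational), and recovering $\mu_i$ is precisely where the special geometry of the twisted $I$-bundle over the Klein bottle, together with the interval structure of foliation-detected slopes for graph manifolds, is needed; this is exactly the content of Boyer--Clay's gluing theorem, of which the argument above is a streamlined rendering. The remaining ingredients --- interpolating matching boundary foliations through a collar, preservation of tautness under gluing along a transverse torus, and arranging filling cores transverse in the $b_1>0$ case --- are routine.
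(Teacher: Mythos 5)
There are real gaps here, concentrated in exactly the places you flag as ``routine.'' First, your $b_1(Y(\mu_*))>0$ base case does not work as stated: Gabai produces a taut foliation on $Y(\mu_*)$, but isotoping the cores of the filling solid tori to be \emph{transverse} to that foliation and deleting tubular neighborhoods gives a foliation on $Y$ whose boundary restriction on $\partial_i Y$ is transverse to the meridian $\mu_i$ --- so its slope is $\neq\mu_i$, not $=\mu_i$. (The same fallacy occurs in your construction of the foliation $G$ on $\bar N$ from a foliation on $\bar N(l)$ by making the core transverse: that would produce boundary slope $\neq l$.) The paper handles this case very differently: it applies Gabai to the closed manifold $Y^{\bar N}(\mu_*)=Y^{\bar N}(\{1,\ldots,n\};\mu_*)$ --- which still has $b_1>0$ because $\bar N$-filling along $\mu_*$ only makes classes torsion, never killing more $b_1$ than a solid-torus filling along $\mu_*$ would --- isotops the separating tori $\partial_i Y$ transverse by Proposition~\ref{prop: restrict to separated JSJ components}, and restricts. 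The correct slopes appear automatically because the restriction to each $\bar N_i$ is a boundary-transverse taut foliation on $\bar N_i$, whose $\alpha$-slope is forced to equal the rational longitude $l_i$ by $\mathcal{F}(\bar N_i)=\{l_i\}$ (Theorem~\ref{thm: J Seifert fibered foliation classification}); applying $\varphi_i$ gives $\mu_i$.

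Second, your ``monotone under inclusion'' step (gluing $\bar N$ back on) is not a collar-interpolation triviality. Matching $\alpha$-slopes on a torus does \emph{not} in general let one glue two boundary-transverse taut foliations into a taut foliation --- the boundary holonomies can spiral incompatibly and produce Reeb components. This is precisely the content of Boyer--Clay's Theorem~9.5.2, which you only name in the final sentence; and that theorem is proved for graph manifolds, so even invoking it does not cover the $b_1(Y(\mu_*))>0$ case where $Y$ is allowed to be an arbitrary prime manifold with toroidal boundary. (This is why the paper avoids any gluing at all in the $b_1>0$ case.) Conversely, the step you flag as ``the main obstacle'' --- pinning the restricted slope $\nu$ on the internal torus to $\mu_i$ --- is not an obstacle: it is exactly the $\mathcal{F}(\bar N)=\{l\}$ rigidity just described, applied from the $\bar N$-side after the transversality isotopy. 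So the true difficulty is in the step you call routine, and the step you call difficult is automatic. The paper's proof --- restrict from the given $\bar N$-filling down to $Y$ by Proposition~\ref{prop: restrict to separated JSJ components} (getting all slopes right via $\mathcal{F}(\bar N)=\{l\}$), then glue up to the full $\bar N$-filling once by Boyer--Clay \cite[Theorem~9.5.2]{BoyerClay}, then restrict again to any sub-filling --- is logically the same ``down, then up, then down'' shape as yours, but uses the gluing theorem explicitly where it is needed and sidesteps it entirely when $b_1(Y(\mu_*))>0$.
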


\begin{proof}
Part (2) of Gabai's main theorem in \cite{gabai} tells us that
any prime oriented three-manifold with $b_1 \mkern-2mu>\mkern-2mu 0$
and boundary a (possibly empty) union of tori
admits a co-oriented taut foliation transverse to the boundary.
Thus, if $b_1(Y(\mu_*)) \mkern-2mu>\mkern-2mu 0$,
then any $\bar{N}$-filling 
$Y^{\bar{N}}\mkern-2mu(\mu_*)
\mkern-1.5mu:=\mkern-1.5mu
Y^{\bar{N}}\mkern-2mu(\{1, \ldots, n\};\mu_*)$ has $b_1>0$,
hence admits a co-oriented taut foliation $F$.
Since each $\partial_i  Y$ is an incompressible separating torus
in this $\bar{N}$-filling,
Proposition~\ref{prop: restrict to separated JSJ components}
allows us to isotop these separating tori so that they are everywhere
transverse to $F$.
Restricting $F$ to
any sub-$\bar{N}$-filling 
$Y^{\bar{N}}\mkern-2mu(\bar{J};\mu_*)
\subset Y^{\bar{N}}\mkern-2mu(\mu_*)$
then gives the desired taut foliation on 
$Y^{\bar{N}}\mkern-2mu(\bar{J};\mu_*)$.

If instead, $Y$ is a graph manifold with
$b_1(Y(\mu_*)) = 0$,
and we are given
$\bar{J} \mkern-2mu\subset\mkern-2mu \{1, \ldots, n\}$ and
a co-oriented taut foliation $F$ on some $\bar{N}$-filling
$Y^{\bar{N}}\mkern-2mu(\bar{J}; \mu_*)$,
with $F$ transverse to the boundary
and with $\alpha(F \cap \partial_j Y) \mkern-1.5mu=\mkern-1.5mu \mu_j$
for each $j \mkern-2mu\in\mkern-2mu 
J\mkern-2mu:\mkern-.3mu=\mkern-2mu \{1, \ldots, n\} \mkern-2mu\setminus\mkern-2mu \bar{J}$,
then Proposition~\ref{prop: restrict to separated JSJ components}
again allows us to isotop each 
separating torus $\partial_{\bar{\jmath}} Y$ so that $F$ restricts
to a co-oriented taut foliation on 
$Y$, transverse to $\partial Y$,
with $\alpha(F \cap \partial_i Y) \mkern-1.5mu=\mkern-1.5mu \mu_i$
for each $i \mkern-2mu\in\mkern-2mu \{1, \ldots, n\}$.

We then apply the 
the foliation gluing theorem of Boyer and Clay 
\cite[Theorem 9.5.2]{BoyerClay}.
That is, for each $i \in \{1, \ldots, n\}$,
Theorem~\ref{thm: J Seifert fibered foliation classification}
computes that $\mathcal{F}(\bar{N}_i) = \{l_i\}$, with $l_i$ (of slope $0$)
the rational longitude of 
of the $i^{\mathrm{th}}$ copy $\bar{N}_i$ of $\bar{N}$.
Thus,
for any gluing maps $\varphi_i : \bar{N}_i \to -\partial_i Y$
sending $l_i \mapsto \mu_i$ in homology,
Boyer's and Clay's gluing theorem tells us
that there exist co-oriented taut foliations
$F'$ on 
$Y$ and
$F_i$ on $\bar{N}_i$,
transverse to respective boundaries, with
$\alpha(F' \cap \partial_i Y) \mkern-1.5mu=\mkern-1.5mu \mu_i
\mkern-1.5mu=\mkern-1.5mu
\varphi_{i*}^{{\mathbb{P}}}(\alpha(F_j \cap \partial \bar{N}_j))$
for each $i \in \{1, \ldots, n\}$,
such that
the $F_i$ and $F'$ glue together to form a co-oriented taut foliation
on the $\bar{N}$-filling $Y^{\bar{N}}\mkern-2mu(\mu_*)$
specified by the $\varphi_i$.
After isotoping the $\partial_i Y$ to be transverse to this foliation,
we can restrict this foliation to any
any sub-$\bar{N}$-filling 
$Y^{\bar{N}}\mkern-2mu(\bar{J}';\mu_*)
\subset Y^{\bar{N}}\mkern-2mu(\mu_*)$.
\end{proof}

{\noindent{In particular, for a graph manifold $Y$ with
torus boundary, we have $\mu \in \mathcal{F}(Y)$ if and only if
an $\bar{N}$-filling $Y^{\mkern-1mu\bar{N}}\mkern-2mu(\mu)$
admits a co-oriented taut foliation.

\section{L-space intervals}
\label{s: L-space intervals}
An L-space is a
closed oriented three manifold whose Heegaard Floer homology is trivial,
in the sense that for each $\mathrm{Spin}^c$ structure, the hat Heegaard Floer homology
looks like the singular homology of a point.
The reader unfamiliar with L-spaces could consult
\cite{ozszintro, ozszlectures}
for an introduction to Heegaard Floer homology,
or \cite{lslope} for a treatment of L-space Dehn fillings.
For present purposes, we shall only need
the classification of Seifert fibered L-spaces,
  some formal properties of sets of L-space Dehn-filling slopes,
and some basic gluing results,
all of which we catalog below.

\subsection{L-space Dehn fillings and $\bar{N}$-fillings}
\begin{definition}
If $Y$ is a compact oriented three-manifold with torus boundary, then
we define the {\em{L-space interval}} of $Y$ to be
\begin{equation}
\mathcal{L}(Y) := \left\{ \mu \in {\mathbb{P}}(H_1(\partial Y))|\, 
Y(\mu)\text{ is an L-space}
\right\}.
\end{equation}
We shall write $\mathcal{L}^{\circ}(Y)$ for the interior
of $\mathcal{L}(Y)$ in ${\mathbb{P}}(H_1(\partial Y))$.
\end{definition}

Thus, $\mathcal{L}(Y)$ is analogous to,
and often complementary to,
$\mathcal{F}^D(Y)$, especially when
$Y$ has no reducible non-L-space Dehn-fillings.
Moreover, since the set of slopes of co-oriented taut 
foliations meeting a generalized rotation number condition is often the closure of 
the set of product foliation slopes
meeting that condition
\cite[Lemma 3.31]{ziggurat}, it is natural to ask if
$\mathcal{F}(Y)$ bears any relation to the complement of $\mathcal{L}^{\circ}(Y)$.
In fact, we have the following result.

\begin{prop}[\cite{lslope}]
\label{prop: L-space N-fillings}
If $Y\mkern-2.5mu$ is a compact oriented three manifold with torus boundary,
then an $\bar{N}$-filling $Y^{\mkern-1mu\bar{N}}\mkern-2mu(\mu)$ is an L-space if and only if
$\mu \in \mathcal{L}^{\circ}\mkern-2mu(Y)$.
\end{prop}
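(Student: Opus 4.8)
The plan is to reduce the statement to the L-space gluing criterion for unions, after pinning down $\mathcal{L}(\bar N)$ completely. Write the $\bar N$-filling as a union $\bar N \cup_{\varphi} Y$ with gluing map $\varphi : \partial\bar N \to -\partial Y$, so that by definition of an $\bar N$-filling $\varphi_*^{\mathbb{P}}(l_{\bar N}) = \mu$, where $l_{\bar N}$ (of slope $0$) is the rational longitude of $\bar N$. The Proposition will follow once we know that $\bar N$ is a generalized solid torus, i.e. that $\mathcal{L}^{\circ}(\bar N) = \mathcal{L}(\bar N) = {\mathbb{P}}(H_1(\partial\bar N))\setminus\{l_{\bar N}\}$.

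To establish this I would compute $\mathcal{L}(\bar N)$ directly. A Dehn filling $\bar N(\nu)$ along a slope $\nu$ other than the regular fiber is the Seifert fibered space $M_{S^2}(0;-\frac12,\frac12,\nu)$, while $\bar N(\nu)$ along the regular fiber is the reducible manifold ${\mathbb{R}}{\mathbb{P}}^3\#{\mathbb{R}}{\mathbb{P}}^3$. Running these through the classification of Seifert fibered L-spaces --- or, equivalently, through Theorem~\ref{thm: J Seifert fibered foliation classification} together with the equivalence $(2)\Leftrightarrow(3)$ for closed Seifert fibered spaces --- one checks that $\bar N(\nu)$ is an L-space for every $\nu\neq l_{\bar N}$: for $\nu$ an integer it is a lens space; for the regular fiber slope it is ${\mathbb{R}}{\mathbb{P}}^3\#{\mathbb{R}}{\mathbb{P}}^3$, a connected sum of L-spaces; and for $\nu\notin{\mathbb{Z}}\cup\{\infty\}$ it has spherical base orbifold $S^2(2,2,q)$ and nonzero orbifold Euler number, hence is a spherical space form; whereas $\bar N(l_{\bar N}) = M_{S^2}(0;-\frac12,\frac12)$ has $b_1 = 1$ and so is not an L-space. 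Since the complement of a point in ${\mathbb{P}}(H_1(\partial\bar N))$ is open, this gives $\mathcal{L}^{\circ}(\bar N) = \mathcal{L}(\bar N) = {\mathbb{P}}(H_1(\partial\bar N))\setminus\{l_{\bar N}\}$. (As a check, Theorem~\ref{thm: J Seifert fibered foliation classification} yields the complementary statement $\mathcal{F}(\bar N) = \{l_{\bar N}\}$, which is how Boyer and Clay use $\bar N$.)

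Granting this, the ``if'' direction is formal: if $\mu\in\mathcal{L}^{\circ}(Y)$ then $Y$ is Floer simple, so the L-space gluing theorem for unions of Floer simple manifolds (reviewed in this section, following \cite{HanWat, lslope}) applies to $\bar N\cup_{\varphi}Y$ and identifies it as an L-space exactly when $\varphi_*^{\mathbb{P}}(\mathcal{L}^{\circ}(\bar N))\cup\mathcal{L}^{\circ}(Y) = {\mathbb{P}}(H_1(\partial Y))$; since $\varphi_*^{\mathbb{P}}(\mathcal{L}^{\circ}(\bar N)) = {\mathbb{P}}(H_1(\partial Y))\setminus\{\mu\}$, this holds precisely because $\mu\in\mathcal{L}^{\circ}(Y)$. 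For the ``only if'' direction, if $Y^{\bar N}(\mu)$ is an L-space I would apply the obstruction (``only if'') half of the gluing theorem --- which in the form needed requires only $\bar N$, not $Y$, to be Floer simple --- to deduce $\varphi_*^{\mathbb{P}}(\mathcal{L}^{\circ}(\bar N))\cup\mathcal{L}^{\circ}(Y) = {\mathbb{P}}(H_1(\partial Y))$, hence $\mu\in\mathcal{L}^{\circ}(Y)$.

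The point I expect to require the most care is precisely this obstruction direction when $Y$ is \emph{a priori} not Floer simple, so that $\mathcal{L}^{\circ}(Y) = \emptyset$ and one must rule out $Y^{\bar N}(\mu)$ being an L-space at all. The mechanism is bordered Floer homology: $\widehat{CFD}(\bar N)$ has the explicit ``loop-type'' shape characteristic of a Floer simple manifold, which forces $\widehat{CFA}(Y)\boxtimes\widehat{CFD}_{\varphi}(\bar N)$ to have $\rank$ over ${\mathbb{F}}_2$ strictly larger than $|H_1(Y^{\bar N}(\mu))|$ whenever $\varphi_*^{\mathbb{P}}(\mathcal{L}^{\circ}(Y))$ fails to contain $l_{\bar N}$ --- in particular whenever $\mathcal{L}^{\circ}(Y) = \emptyset$. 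If a citable form of the gluing theorem in exactly this generality (one Floer simple piece, the other arbitrary) were not available, I would instead bypass it, computing $\widehat{CFD}(\bar N)$ from an explicit bordered Heegaard diagram and comparing $\rank H_*(\widehat{CFA}(Y)\boxtimes\widehat{CFD}_{\varphi}(\bar N))$ with $|H_1|$ by hand.
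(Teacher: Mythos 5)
Your approach is fundamentally the same as the paper's, which simply cites \cite[Proposition 7.9]{lslope}: there J.\ Rasmussen and the author prove the statement with $\bar{N}$ replaced by an arbitrary \emph{generalized solid torus}, and $\bar{N}$ is one. What you have done is unwind that citation: your computation verifying $\mathcal{L}(\bar{N}) = {\mathbb{P}}(H_1(\partial\bar{N}))\setminus\{l_{\bar{N}}\}$ (lens spaces for integer slopes, spherical space forms with base $S^2(2,2,q)$ for non-integer finite slopes, $\mathbb{RP}^3\#\mathbb{RP}^3$ at $\infty$, and $S^1\!\times\!S^2$ at the longitude) is correct and establishes that $\bar N$ is a generalized solid torus, which is the first half of the citation. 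The place where your argument departs from a genuine proof and becomes a pointer is exactly where you flag it: the ``only if'' direction when $\mathcal{L}^{\circ}(Y)=\emptyset$. Proposition~\ref{prop: L-space gluing thm from lslope} as stated here is useless there, since its hypothesis $\varphi_*^{\mathbb{P}}(\mathcal{L}^{\circ}(Y_1))\cap\mathcal{L}^{\circ}(Y_2)\neq\emptyset$ fails identically; and there is no ``one Floer simple piece suffices'' version of the obstruction direction proved in this paper. The result you are reaching for --- that a generalized solid torus detects $\mathcal{L}^{\circ}$ even against a non-Floer-simple complement --- is precisely \cite[Proposition 7.9]{lslope}, and the bordered-Floer rank computation you sketch as a fallback is essentially that proof. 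So the proposal is correct in outline, but you should recognize that the cleanest route is the paper's one-line citation rather than re-deriving it through the weaker Proposition~\ref{prop: L-space gluing thm from lslope}; and you should also note that when $Y$ has compressible boundary (a case you do not separate out), one passes instead through the connected-sum reduction of Proposition~\ref{prop: l-space gluing for compressible boundary}, though the conclusion comes out the same.
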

\begin{proof}
In \cite[Proposition 7.9]{lslope},
J. Rasmussen and the author prove the above result with $\bar{N}$
replaced by any member of a more general class of manifolds dubbed
{\em{generalized solid tori}}.  Since $\bar{N}$ is a generalized solid torus
as defined in \cite{lslope},
the result follows.
\end{proof}

Thus $\mathcal{L}^{\circ}\mkern-1mu(Y)$ can be regarded as the
L-space $\bar{N}$-filling interval of $Y$.

\subsection{L-space gluing}

Our primary tool for characterizing when a union of
three-manifolds along a torus boundary gives an L-space is the following joint result of 
J. Rasmussen and the author \cite{lslope}.
Hanselman and Watson have proven a similar result in \cite{HanWat}.
\begin{prop}[\cite{lslope}]
\label{prop: L-space gluing thm from lslope}
If each of $Y_1$ and $Y_2$ is a compact oriented three-manifold
with torus boundary,
then for any gluing map $\varphi: \partial Y_1 \to -\partial Y_2$
with $\varphi_*^{{\mathbb{P}}}(\mathcal{L}^{\circ}(Y_1)) \cap \mathcal{L}^{\circ}(Y_2) \neq \emptyset$,
the union $Y_1 \cup_{\varphi} \!Y_2$ is an L-space if and only if
${\mathbb{P}}(H_1(\partial Y_2))
=\varphi_*^{{\mathbb{P}}}(\mathcal{L}^{\circ}(Y_1)) \cup \mathcal{L}^{\circ}(Y_2)$
if both $Y_i$ are boundary incompressible, and if and only if
${\mathbb{P}}(H_1(\partial Y_2))
=\varphi_*^{{\mathbb{P}}}(\mathcal{L}(Y_1)) \cup \mathcal{L}(Y_2)$ otherwise.
\end{prop}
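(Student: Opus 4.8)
The plan is to deduce this from Proposition~\ref{prop: L-space N-fillings} together with the already-established gluing statement whose hypothesis is nonempty overlap of the (open) L-space intervals. I will treat separately the boundary-incompressible case and the general case, since in the latter the $\bar{N}$-filling characterization of the open interval may fail to see everything and one must fall back on the closed intervals.

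First consider the boundary-incompressible case. Here the essential input is the following bootstrapping principle: to test whether $Y_1 \cup_\varphi Y_2$ is an L-space, glue a collar and insert a copy of $\bar N$. More precisely, I would argue that $Y_1 \cup_\varphi Y_2$ is an L-space if and only if, for generic slopes $\mu$ in a neighborhood, the $\bar N$-fillings behave correctly; the point is that $\bar N$ glued along its rational longitude is, up to the relevant Floer-theoretic invariants, ``transparent,'' so that $(Y_1 \cup_\varphi Y_2)$ being an L-space is governed by the union of the open L-space intervals. Concretely: if $\varphi_*^{\mathbb P}(\mathcal L^\circ(Y_1)) \cup \mathcal L^\circ(Y_2)$ is all of ${\mathbb P}(H_1(\partial Y_2))$, pick any slope $\mu$; it lies in one of the two open intervals, say $\mathcal L^\circ(Y_2)$. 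Then by Proposition~\ref{prop: L-space N-fillings}, $Y_2^{\bar N}(\mu)$ is an L-space, and $Y_1$ glued along the remaining boundary of that $\bar N$ (thought of as $Y_1 \cup_\varphi Y_2$ with an $\bar N$ absorbed into a collar of the gluing torus) is then also an L-space; conversely one runs the same argument backwards using the hypothesis $\varphi_*^{\mathbb P}(\mathcal L^\circ(Y_1)) \cap \mathcal L^\circ(Y_2) \neq \emptyset$ to anchor the induction. I would organize this as: (i) reduce to showing $Y_1 \cup_\varphi Y_2$ is an L-space iff $\mathcal L^\circ(Y_1 \cup_\varphi Y_2 \text{ rel one side})$ is suitably large; (ii) apply Proposition~\ref{prop: L-space N-fillings} on each side to convert ``L-space filling'' statements into ``$\mu$ lies in an open interval'' statements; (iii) combine.

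For the general (possibly boundary-compressible) case, the subtlety is that when $Y_i$ has a compressible boundary — i.e.\ is a solid torus or has a reducible filling — the open interval $\mathcal L^\circ(Y_i)$ can be empty or can fail to capture the isolated L-space filling, so Proposition~\ref{prop: L-space N-fillings} is no longer the right tool on that side. Here I would instead argue directly: a solid-torus factor contributes exactly one L-space filling slope (its meridian gives $S^3$ or a lens space complement behavior), and one checks by hand that $Y_1 \cup_\varphi Y_2$ is an L-space precisely when the meridian slope of the solid torus lands in $\mathcal L(Y_2)$ (the \emph{closed} interval, now, since the boundary slope is genuinely attainable), which is exactly the condition ${\mathbb P}(H_1(\partial Y_2)) = \varphi_*^{\mathbb P}(\mathcal L(Y_1)) \cup \mathcal L(Y_2)$ once one notes $\mathcal L(\text{solid torus})$ is the complement of a single point. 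The non-solid-torus boundary-compressible cases are handled similarly by reducing to the prime decomposition and invoking Novikov/reducibility: a reducible filling is never an L-space unless it is $S^1 \times S^2$-like, and these boundary cases are finite to enumerate.

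The main obstacle I anticipate is step (i) of the boundary-incompressible case — making precise the claim that inserting $\bar N$ along the gluing torus does not change whether the union is an L-space, and more importantly that the relevant open interval of the ``one-sided'' manifold $Y_1 \cup_\varphi(\text{collar})$ equals $\varphi_*^{\mathbb P}(\mathcal L^\circ(Y_1))$. This is really where one needs the structural results of \cite{lslope} about how L-space intervals of Floer simple pieces interact under gluing, and where the boundary-incompressibility hypothesis is used (to guarantee the intervals are genuine intervals with well-behaved interiors rather than degenerate point sets). Everything else — the conversion between ``L-space $\bar N$-filling'' and ``slope in $\mathcal L^\circ$'' via Proposition~\ref{prop: L-space N-fillings}, and the finite case-check for compressible boundary — should be routine once that structural fact is in hand.
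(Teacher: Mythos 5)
Your proposal attempts to prove this proposition from scratch, but the paper does not: its entire proof is a citation. The statement \emph{is} Theorem~6.2 of \cite{lslope} (the deep Heegaard Floer input, proved there via bordered Floer homology and the structure of $\widehat{CFD}$ for Floer simple manifolds); the only new content in the paper's proof is the observation, due to Gillespie \cite{Gillespietorus}, that the technical hypothesis of \cite{lslope} --- nonemptiness of certain subsets $\mathcal{D}^{\tau}(Y_i) \subset H_1(Y_i)$ --- is equivalent to boundary incompressibility, which lets the hypothesis be restated in topological terms. So the correct scope of a ``proof'' here is: quote \cite[Theorem 6.2]{lslope} and justify the translation of hypotheses.

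Your attempted derivation has a genuine gap, and in fact runs in the wrong logical direction. First, it is circular: Proposition~\ref{prop: L-space N-fillings} is \cite[Proposition 7.9]{lslope}, which is itself established downstream of (and using) the gluing theorem you are trying to prove, so it cannot serve as the starting point. Second, your key step (i) --- that inserting a copy of $\bar N$ along the gluing torus is ``transparent'' and that $Y_1 \cup_\varphi Y_2$ is an L-space iff the corresponding $\bar N$-inserted manifold is --- is false as stated: $Y_1 \cup \bar N \cup Y_2$ is a genuinely different closed manifold from $Y_1 \cup_\varphi Y_2$, and the whole point of $\bar N$-fillings is that they detect the \emph{interior} $\mathcal{L}^{\circ}$ rather than $\mathcal{L}$ itself, so the two unions can disagree about being L-spaces exactly at boundary slopes. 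Relatedly, the phrase ``$Y_1$ glued along the remaining boundary of that $\bar N$'' does not parse: $Y_2^{\bar N}(\mu)$ is closed and has no remaining boundary. The compressible-boundary case you sketch by hand is essentially Proposition~\ref{prop: l-space gluing for compressible boundary} and is fine, but the incompressible case cannot be recovered by the formal manipulations you describe; it requires the Floer-theoretic gluing argument of \cite{lslope}, which the paper deliberately imports rather than reproves.
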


\begin{proof}
Recall that a compact three-manifold with torus boundary is boundary incompressible
if and only if it is not a connected sum of a solid torus with a
(possibly empty) closed three-manifold.
The above proposition replicates Theorem 6.2 from \cite{lslope},
except with the hypothesis of boundary incompressibility of each $Y_i$
replacing an {\em{a priori}} more technical condition that
certain subsets $\mathcal{D}^{\tau}(Y_i) \subset H_1(Y_i)$ be nonempty.
Thomas Gillespie has recently shown
\cite{Gillespietorus} these two conditions to be equivalent.
None of our gluing arguments involving graph manifolds 
actually make use of his result,
but our later cabling results,
which in principle require unions with non graph manifolds,
do require Gillespie's result.

\end{proof}

We shall later show
that in the case of
non-solid-torus graph manifolds $Y_i$ with torus boundary,
various foliation results allow us to
drop some of the above hypotheses, so that one obtains
an L-space if and only if
${\mathbb{P}}(H_1(\partial Y_2))
=\varphi_*^{{\mathbb{P}}}(\mathcal{L}^{\circ}(Y_1)) \cup \mathcal{L}^{\circ}(Y_2)$.

In rather the opposite direction,
if $Y$ is Seifert fibered over a punctured $S^2$ or ${\mathbb{R}}{\mathbb{P}}^2$,
then a Dehn filling $Y'$ of $Y$
fails to be a graph manifold if and only if $Y'$ fails to be prime,
if and only if $Y' \neq S^1 \times S^2$
and the Dehn filling, in some $\partial_i Y$,
was along the fiber lift $\tilde{f}_i \in H_1(\partial_i Y)$
of slope $\pi_i(\tilde{f}_i) = \infty$
(see the Remark in 
Section~\ref{ss: Conventions for Seifert fibered spaces}).
In this case, $Y'$ is neither a graph manifold
nor a habitat for taut foliations,
but since it has {\em{compressible boundary}},
its L-space gluing properties simplify, due to the following result.

\begin{prop}
\label{prop: l-space gluing for compressible boundary}
Suppose that each of $Y_1$ and $Y_2$ is a compact oriented three-manifold
with torus boundary, and that $Y_1$ has compressible boundary.
Then the union $Y_1 \cup_{\varphi} \!Y_2$ is an L-space if and only if
$X_1, \ldots, X_N$ are all L-spaces and 
$\varphi_*^{{\mathbb{P}}}(l_1) \in \mathcal{L}(Y_2)$,
where $l_1$ is the rational longitude of $Y_1$, and
where $Y_1$ decomposes as 
$Y_1 =  (S^1\mkern-1.5mu \times\mkern-1.5mu D^2) \# (X_1 \# \cdots \# X_N )$.
\end{prop}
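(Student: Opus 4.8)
The plan is to exploit the structure theorem for manifolds with compressible torus boundary: if $Y_1$ has compressible torus boundary, then $Y_1 = (S^1\times D^2)\#(X_1\#\cdots\#X_N)$ for some closed oriented three-manifolds $X_i$, and the boundary compression is along the rational longitude $l_1$, which is the longitude of the solid-torus summand. First I would record the elementary observation that Dehn filling $Y_1$ along a slope $\mu$ produces $Y_1(\mu) = L(\mu) \#(X_1\#\cdots\#X_N)$, where $L(\mu)$ is the lens space (or $S^1\times S^2$, or $S^3$) obtained by filling the solid torus summand along $\mu$; in particular $Y_1(\mu)$ is an L-space if and only if $\mu$ is distance one from $l_1$ (so that $L(\mu)$ is a genuine lens space or $S^3$, not $S^1\times S^2$ or a reducible filling) and every $X_i$ is an L-space. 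This is just the fact that connected sums are L-spaces iff each summand is, together with the classification of Dehn fillings of a solid torus.

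Next I would turn to the union $Y_1\cup_\varphi Y_2$. The key point is that the compressing disk in $Y_1$ caps off $\partial Y_1$ after the gluing to produce a sphere inside $Y_1\cup_\varphi Y_2$ that realizes a connected-sum decomposition, unless the gluing happens to send the slope $l_1$ to the meridian slope on the $Y_2$ side in a way that fills it back up. Concretely, I would argue that $Y_1\cup_\varphi Y_2$ is homeomorphic to $\big(X_1\#\cdots\#X_N\big)\#\, Y_2(\varphi_*^{\mathbb{P}}(l_1))$: the solid-torus summand of $Y_1$, glued to $Y_2$ along $\varphi$, simply Dehn-fills $Y_2$ along the image $\varphi_*^{\mathbb{P}}(l_1)$ of its longitude, and the remaining $X_i$ split off as connected summands via spheres pushed off the compressing disk. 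Given this homeomorphism, $Y_1\cup_\varphi Y_2$ is an L-space if and only if each $X_i$ is an L-space and $Y_2(\varphi_*^{\mathbb{P}}(l_1))$ is an L-space, i.e. $\varphi_*^{\mathbb{P}}(l_1)\in\mathcal{L}(Y_2)$, which is exactly the claimed criterion (reading $X_1,\ldots,X_N$ for the factors in the statement).

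I expect the main obstacle to be the careful verification of the homeomorphism $Y_1\cup_\varphi Y_2 \cong \big(\#_i X_i\big)\#\, Y_2(\varphi_*^{\mathbb{P}}(l_1))$, in particular tracking that the longitude $l_1$ of the solid-torus summand is genuinely the rational longitude of all of $Y_1$ (so that "filling $Y_2$ along $\varphi_*^{\mathbb{P}}(l_1)$" is the correct statement) and that no reducible or $S^1\times S^2$ subtlety is swallowed silently when $\varphi_*^{\mathbb{P}}(l_1)$ is itself, say, the rational longitude of $Y_2$ — in which degenerate case $b_1$ jumps and $Y_2(\varphi_*^{\mathbb{P}}(l_1))$ fails to be a rational homology sphere, hence fails to be an L-space, consistent with $\varphi_*^{\mathbb{P}}(l_1)\notin\mathcal{L}(Y_2)$. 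I would handle this by noting that $\mathcal{L}(Y_2)$ by definition excludes any slope whose filling fails to be an L-space, so the case analysis collapses into the single clean statement. The only other thing to double-check is the boundary-incompressibility characterization quoted in the proof of Proposition~\ref{prop: L-space gluing thm from lslope} — that a compact three-manifold with torus boundary has compressible boundary precisely when it is $(S^1\times D^2)\#(\text{closed})$ — which I would simply cite as standard.
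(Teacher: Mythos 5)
Your proposal is correct and follows essentially the same route as the paper: decompose $Y_1 = (S^1\times D^2)\#(X_1\#\cdots\#X_N)$, observe that gluing the solid-torus summand to $Y_2$ is just the Dehn filling $Y_2(\varphi_*^{\mathbb{P}}(l_1))$, so $Y_1\cup_\varphi Y_2 = Y_2(\varphi_*^{\mathbb{P}}(l_1))\# X_1\#\cdots\# X_N$, and conclude using the fact that a connected sum is an L-space if and only if each summand is. The paper's proof is exactly this two-line argument; your extra case analysis of degenerate fillings is harmless but unnecessary, since $\mathcal{L}(Y_2)$ already excludes non-L-space slopes by definition.
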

\begin{proof}
A union along toroidal boundaries with a solid torus is just a Dehn filling, so we have
$Y_1 \cup_{\varphi} \!Y_2 = Y_2(\varphi^{{\mathbb{P}}}_*(l_1)) \#
X_1 \# \cdots \# X_N$, and a connected sum of closed manifolds is an L-space
if and only if each summand is an L-space.
\end{proof}

The above result explains why not every graph manifold $Y$ with torus boundary
satisfies
$\mathcal{F}^D(Y) \amalg \mathcal{L}(Y) = {\mathbb{P}}(H_1(\partial Y))$.
That is, no reducible manifold
(besides $S^1 \times D^2$) admits a co-oriented taut foliation, 
but there exist graph manifolds with reducible Dehn fillings which
are not $S^1 \times D^2$ or an L-space.

\subsection{Floer simple manifolds and L-space intervals}
It is not known, in general, 
what forms the sets
$\mathcal{F}(Y)$ or $\mathcal{F}^D(Y)$ can take for an arbitrary
compact oriented three-manifold $Y$ with torus boundary, but the situation
for L-spaces is better understood.
As shown in \cite{lslope} by J. Rasmussen and the author,
$\mathcal{L}(Y)$ can only be empty, the set of a single point,
a closed interval, or the complement of the 
rational longitude in ${\mathbb{P}}(H_1(\partial Y))$.
For historical reasons, we call $Y$ {\em{Floer simple}}
in the latter two cases.  Equivalently, we could define 
Floer simple manifolds as follows.
\begin{definition}
A compact oriented three-manifold $Y$ with torus boundary is
{\em{Floer simple}} if $\mathcal{L}^{\circ}(Y) \neq \emptyset$.
\end{definition}

In particular, if $Y$ is Floer simple, then its space
$\mathcal{L}(Y)$ of L-space Dehn filling slopes
can be specified entirely in terms of the left-hand
and right-hand endpoints of $\mathcal{L}(Y)$, in a sense we can make precise,
prefaced with the introduction of an
abbreviative notation
for the closed interval with infinite endpoint.

\begin{definition}
For $y \in {\mathbb{Q}}$,
we shall write $[-\infty, y]$, $[y, +\infty]$,
$\left[-\infty, y\right>$, and $\left< y, +\infty\right]$
for the following intervals in $({\mathbb{Q}} \cup \{\infty\}) \subset ({\mathbb{R}} \cup \{\infty\})$:
\begin{align*}
&& [-\infty, y] &:= \{\infty\} \cup \left<-\infty, y\right],
&&& \left[-\infty, y\right> &:= \{\infty\} \cup \left<-\infty, y\right>,&&
       \\
&&[y, +\infty] &:=  \left[y, +\infty\right> \cup \{\infty\},
&&&\left<y, +\infty\right] &:=  \left<y, +\infty\right> \cup \{\infty\}.&&
\end{align*}
\end{definition}

\begin{definition}
If $y_-, y_+ \!\in {\mathbb{Q}} \cup \{\infty\}$, then we define the
{\em{L-space interval from $y_-$ to $y_+$}}, denoted
$[[y_-, y_+]] \subset {\mathbb{Q}} \cup \{\infty\}$, as follows:
\begin{equation}
[[y_-, y_+]]
:=
\begin{cases}
\left< -\infty, +\infty \right>
&
\infty \mkern-1.5mu=\mkern-1.5mu y_- \mkern8.2mu,\mkern8.3mu 
y_+ \mkern-1.5mu=\mkern-1.5mu \infty
    \\
\left<y_-, +\infty\right] \cup \left[-\infty, y_+ \right>
& \,{\mathbb{Q}} \ni y_- = y_+ \in {\mathbb{Q}}
    \\
[y_-, +\infty] \cup [-\infty, y_+ ]
& \,{\mathbb{Q}} \ni y_- > y_+ \in {\mathbb{Q}}
    \\
[y_-, +\infty] \cap [-\infty, y_+ ]
& \,{\mathbb{Q}} \ni y_- < y_+ \in {\mathbb{Q}}
    \\
[-\infty, y_+ ]
&\infty \mkern-1.5mu=\mkern-1.5mu y_- \mkern8.2mu,\mkern8.3mu y_+ \in {\mathbb{Q}}
    \\
[y_-, +\infty]
& \,{\mathbb{Q}} \ni y_- \mkern8.2mu,\mkern8.3mu y_+ \mkern-1.5mu=\mkern-1.5mu \infty
\end{cases}.
\end{equation}
In other words, $[[y_-, y_+]]$ is the unique interval with left-hand endpoint
$y_-$ and right-hand endpoint $y_+$ which is closed if $y_- \neq y_+$ and open otherwise. 
\end{definition}
{\noindent{\bf{Remark:}}
In practice, we extend the above definition to allow $y_- = -\infty$
or $y_+ = +\infty$, which we treat as identical to the respective cases of
$y_- = \infty$ or $y_+ = \infty$.}

\begin{prop}
\label{prop: [[]] for any Floer simple}
Suppose, for some compact oriented three-manifold $Y$ with torus boundary,
that we are given an identification 
${\mathbb{P}}(H_1(\partial Y)) \cong {\mathbb{Q}} \cup \{\infty\}$.
If $Y$ is Floer simple, then there are unique 
$y_-, y_+ \!\in {\mathbb{Q}} \cup \{\infty\}$ such that 
$\mathcal{L}(Y) = [[y_-, y_+]] \subset {\mathbb{Q}} \cup \{\infty\}$.
Conversely, if there are $y_-, y_+ \!\in {\mathbb{Q}} \cup \{\infty\}$
for which $\mathcal{L}(Y) = [[y_-, y_+]]$, then
$Y$ is Floer simple.
\end{prop}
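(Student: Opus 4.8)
The plan is to read off both implications from the structural classification of $\mathcal{L}(Y)$ in \cite{lslope}, combined with the explicit six-way case split in the definition of $[[\cdot,\cdot]]$. The only genuinely non-routine ingredient is that classification; everything else is bookkeeping on the circle $\mathbb{Q}\cup\{\infty\}$.

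For the forward direction, suppose $Y$ is Floer simple, so $\mathcal{L}^{\circ}(Y)\neq\emptyset$. I would first note that the rational longitude $l$ is never an L-space filling slope---$Y(l)$ has positive first Betti number, hence is not a rational homology sphere, hence not an L-space---so in particular $\mathcal{L}(Y)$ is a proper subset of $\mathbb{P}(H_1(\partial Y))$. By the classification recalled above, since $\mathcal{L}(Y)$ has nonempty interior it is then either (a) the complement $\mathbb{P}(H_1(\partial Y))\setminus\{l\}$, or (b) the intersection of $\mathbb{P}(H_1(\partial Y))$ with a proper closed subarc of $\mathbb{P}(H_1(\partial Y;\mathbb{R}))$ of nonempty interior whose two endpoints lie in $\mathbb{P}(H_1(\partial Y;\mathbb{Z}))$. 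Transporting through the given identification, extended to the natural identification $\mathbb{P}(H_1(\partial Y;\mathbb{R}))\cong\mathbb{R}\cup\{\infty\}$: in case (a) I would set $y_-=y_+$ equal to the image $p$ of $l$, so that $\mathcal{L}(Y)=(\mathbb{Q}\cup\{\infty\})\setminus\{p\}$ coincides with $[[p,p]]$ (the first clause of the definition if $p=\infty$, the second if $p\in\mathbb{Q}$); in case (b) I would take $y_-,y_+$ to be the endpoints of the arc, labelled so that it runs from $y_-$ to $y_+$ in the increasing direction, and then verify that whichever of $y_-<y_+$, $y_->y_+$, $y_-=\infty$, or $y_+=\infty$ holds matches the fourth, third, fifth, or sixth clause respectively---so that $\mathcal{L}(Y)=[[y_-,y_+]]$ in every case.

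For uniqueness, I would invoke the concluding remark of the definition: $[[y_-,y_+]]$ is the unique interval with left-hand endpoint $y_-$ and right-hand endpoint $y_+$, which is open exactly when $y_-=y_+$. So it is enough to check that any set $S$ of the form $[[y_-,y_+]]$ with nonempty interior determines the ordered pair $(y_-,y_+)$: if $S$ is not closed then it is the complement of a single point $p$ and necessarily $y_-=y_+=p$, while if $S$ is closed then it is a proper nondegenerate closed arc whose endpoints, ordered by the orientation of $\mathbb{Q}\cup\{\infty\}$, recover $(y_-,y_+)$; and these two regimes do not overlap, since the complement of a point omits only that one point whereas a proper closed arc omits infinitely many rational slopes. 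The converse is then immediate: inspecting the six clauses, each possible value of $[[y_-,y_+]]$ is the set of rational points (together with $\infty$ where relevant) of a subset of $\mathbb{R}\cup\{\infty\}$ of nonempty interior---either the complement of a point or a proper nondegenerate closed arc---so $\mathcal{L}^{\circ}(Y)\neq\emptyset$ and $Y$ is Floer simple by definition.

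The hard part is really just the classification input from \cite{lslope}, and within it the fact that, in case (b), the endpoints of the arc are rational (equivalently, lie in $\mathbb{P}(H_1(\partial Y;\mathbb{Z}))$) rather than being the endpoints of a real interval with an irrational endpoint; without this a set such as the rational points of a closed real interval with an irrational endpoint would be a conceivable value of $\mathcal{L}(Y)$ not expressible as $[[y_-,y_+]]$. Granting the classification, the rest is the routine traversal of the definition's case split outlined above.
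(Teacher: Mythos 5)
Your proof is correct and takes essentially the same route as the paper's: the paper simply cites the classification from \cite{lslope} that $\mathcal{L}(Y)$ with more than one point is either a closed interval or the complement of a single point, and leaves the case-by-case matching against the six clauses of the $[[\cdot\,,\cdot]]$ definition implicit, whereas you carry it out explicitly, including the uniqueness check and the observation that the nondegenerate-closed-interval case has rational (realized) endpoints.
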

The above follows from the aforementioned result, 
proven in \cite{lslope}, 
that if $\mathcal{L}(Y)$ contains more than one point, then
$\mathcal{L}(Y)$ is either a closed interval or the complement of
a point in ${\mathbb{P}}(H_1(\partial Y))$.
The following computation of L-space intervals for Seifert fibered spaces
demonstrates one use of this
``$[[\cdot\mkern.7mu,\mkern-.8mu \cdot]]$'' notation.

\begin{prop}
\label{prop: l space interval for seifert fibered spaces}
If $Y$ is a regular fiber complement in a
Seifert fibered rational homology sphere,
then $\mathcal{L}^{\circ}(Y) \amalg \mathcal{F}(Y) = {\mathbb{P}}(H_1(\partial Y))$.
If $Y$ has non-orientable base,
we have $\mathcal{L}(Y) = \left<-\infty, +\infty\right>$
and $\mathcal{F}^D\mkern-2mu(Y) = \emptyset$ (unless $Y$ is the twisted
$S^2$-bundle over the M{\"o}bius strip,
in which case $\mathcal{F}^D\mkern-2mu(Y) \mkern-2mu=\mkern-2mu \{\infty\}$)
If $Y$ is a regular fiber complement in
$M_{S^2}(y_*)$, for some $y_* = (y_0, \ldots, y_n) \in {\mathbb{Q}}^{n+1}$,
then 
${\mathbb{P}}(H_1(\partial Y)) \setminus \mathcal{F}^D(Y) 
= \mathcal{L}(Y) = [[y_-, y_+]]$, where
\begin{equation}
\label{eq: SFS L-space endpoints}
y_-
:=
\max_{k > 0}
-\mfrac{1}{k}\mkern-1.8mu\left(\mkern2mu
1
+ {{\textstyle{\sum\limits_{i=0}^n  \left\lfloor y_i k \right\rfloor}}} \right),
\;\;\;\;\;
y_+ 
:=
\min_{k > 0} 
-\mfrac{1}{k}\mkern-1.8mu\left(\mkern-3mu
-1
+ {{\textstyle{\sum\limits_{i=0}^n   \left\lceil y_i k \right\rceil}}} \right),
\end{equation}
unless $Y\mkern-4.5mu$ is a solid torus, in which case
$y_- \mkern-3.2mu:\mkern-.3mu=\mkern-1mu y_+ \mkern-3.2mu:\mkern-.3mu=\mkern-1mu
-\mkern-3.5mu\sum_{i=0}^n y_i$ is the rational longitude of $Y\mkern-5mu$.
\end{prop}
\begin{proof}
For the case of $Y$ with non-orientable base,
see the work of Boyer, Gordon, and Watson \cite{BGW} and 
Boyer and Clay \cite{BoyerClay}.  For $Y$ with orientable base,
the foliations result is due to Jankins, Neumann, and Naimi
\cite{JankinsNeumann, Naimi},
and the L-space result is originally due to the combined work of
Jankins, Neumann, and Naimi \cite{JankinsNeumann, Naimi},
Eliashberg and Thurston \cite{EliashbergThurston},
Ozsv{\'a}th and Szab{\'o} \cite{OSGen},
Lisca and Mati{\'c} \cite{LiscaMatic}, and
Lisca and Stipsicz \cite{LSIII}.
Alternatively,
J. Rasmussen and the author
offer a recent stand-alone proof of the L-space result \cite{lslope}.
\end{proof}

\section{L-space Intervals and Foliation Slopes for Graph Manifolds}
\label{s: main results section}

This is the section in which we prove most of our main results.
We begin, however, by 
introducing the notion of {\em{L/NTF-equivalence}},
the presence of which makes gluing easier.
We further pause in
Section \ref{subsection: conventions for graph manifolds},
to establish some conventions
for graph manifolds with torus boundary and $b_1 = 1$.

\subsection{L/NTF-equivalence and Gluing}

For a pair of manifolds spliced together along torus boundaries,
we can often prove stronger gluing results about the existence of co-oriented
taut foliations or non-trivial Heegaard Floer homology 
if we are able to use gluing theorems from both areas of mathematics.
In general, however, this strategy only works if we know that
each manifold behaves in a suitably complementary manner with
respect to co-oriented taut foliations and L-space Dehn fillings,
a notion which we now make precise.

\begin{definition}
If $Y\!$ is a prime compact oriented three-manifold with torus boundary,
then we call $Y$ {\em{L/NTF-equivalent}} if 
$\mathcal{F}(Y) \amalg \mathcal{L}^{\circ}(Y) = {\mathbb{P}}(H_1(\partial Y))$.
\end{definition}
In certain circumstances, one can characterize L/NTF-equivalence
in terms of $\bar{N}$-fillings.

\begin{prop}
\label{prop: lntf in terms of Nfilling}
Suppose $Y\!$ is a prime compact oriented three-manifold with torus boundary.
If $b_1(Y) >1$ or $Y$ is a graph manifold,
then $Y\!$ is L/NTF-equivalent if and only if
each $\bar{N}$-filling of $Y\!$ is an L-space
precisely when it fails to admit a 
co-oriented taut foliation.
\end{prop}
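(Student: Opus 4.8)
The statement to prove is Proposition~\ref{prop: lntf in terms of Nfilling}: for $Y$ prime compact oriented with torus boundary, and either $b_1(Y)>1$ or $Y$ a graph manifold, $Y$ is L/NTF-equivalent iff every $\bar N$-filling of $Y$ is an L-space exactly when it fails to admit a co-oriented taut foliation. The plan is to unwind both sides into statements about a single slope $\mu\in\mathbb P(H_1(\partial Y))$ and then match them using the two $\bar N$-filling dictionaries already established in the excerpt: Proposition~\ref{prop: L-space N-fillings}, which says $Y^{\bar N}(\mu)$ is an L-space iff $\mu\in\mathcal L^\circ(Y)$, and the foliation $\bar N$-filling principle recorded right after Proposition~\ref{prop: boyer and clay's taut foliation slope vs N-filling}, which (for graph manifolds, and vacuously via Gabai when $b_1>1$) says $Y^{\bar N}(\mu)$ admits a co-oriented taut foliation iff $\mu\in\mathcal F(Y)$.

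First I would fix an identification $\mathbb P(H_1(\partial Y))\cong\mathbb Q\cup\{\infty\}$ and observe that ``$Y$ is L/NTF-equivalent'' means, by definition, that the two sets $\mathcal F(Y)$ and $\mathcal L^\circ(Y)$ are disjoint and together exhaust $\mathbb P(H_1(\partial Y))$; equivalently, $\mathcal F(Y)=\mathbb P(H_1(\partial Y))\setminus\mathcal L^\circ(Y)$, i.e. for every slope $\mu$ exactly one of ``$\mu\in\mathcal F(Y)$'' and ``$\mu\in\mathcal L^\circ(Y)$'' holds. On the other side, the condition ``each $\bar N$-filling of $Y$ is an L-space precisely when it fails to admit a co-oriented taut foliation'' says: for every slope $\mu$, $Y^{\bar N}(\mu)$ is an L-space iff $Y^{\bar N}(\mu)$ does not admit a co-oriented taut foliation. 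Now I would substitute the two dictionaries: ``$Y^{\bar N}(\mu)$ is an L-space'' $\Leftrightarrow$ ``$\mu\in\mathcal L^\circ(Y)$'' by Proposition~\ref{prop: L-space N-fillings}, and ``$Y^{\bar N}(\mu)$ admits a co-oriented taut foliation'' $\Leftrightarrow$ ``$\mu\in\mathcal F(Y)$'' by the $\bar N$-filling characterization of $\mathcal F(Y)$. Hence the right-hand condition reads: for every $\mu$, $\mu\in\mathcal L^\circ(Y)$ iff $\mu\notin\mathcal F(Y)$, which is literally the statement that $\mathcal F(Y)\amalg\mathcal L^\circ(Y)=\mathbb P(H_1(\partial Y))$, i.e. that $Y$ is L/NTF-equivalent. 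This gives the proposition once the two dictionaries are justified in the relevant generality.

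Justifying those dictionaries is where I expect the only real work, and it splits by case. For $Y$ a graph manifold, the foliation dictionary $\mu\in\mathcal F(Y)\Leftrightarrow Y^{\bar N}(\mu)$ admits a co-oriented taut foliation is exactly the remark stated immediately after the proof of Proposition~\ref{prop: boyer and clay's taut foliation slope vs N-filling}, and the L-space dictionary is Proposition~\ref{prop: L-space N-fillings} applied directly (no graph-manifold hypothesis is needed there). For $b_1(Y)>1$: here I would argue that both sides are trivially the whole of $\mathbb P(H_1(\partial Y))$, so L/NTF-equivalence holds and the $\bar N$-filling condition holds as well, making the biconditional automatic. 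Indeed, if $b_1(Y)>1$ then $Y^{\bar N}(\mu)$ has $b_1>0$, so by part (2) of Gabai's theorem (invoked in the proof of Proposition~\ref{prop: boyer and clay's taut foliation slope vs N-filling}) it admits a co-oriented taut foliation transverse to the boundary; hence $\mathcal F(Y)=\mathbb P(H_1(\partial Y))$. One then needs that $\mathcal L^\circ(Y)=\emptyset$ in this case, so that the disjoint-union equality holds: this follows because a manifold with $b_1>1$ has no L-space Dehn fillings at all (an L-space is a rational homology sphere, and Dehn filling drops $b_1$ by at most one), so $\mathcal L(Y)=\emptyset$, whence $\mathcal L^\circ(Y)=\emptyset$; correspondingly every $\bar N$-filling is a non-L-space admitting a taut foliation, so ``L-space iff no taut foliation'' holds vacuously on both sides. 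The main obstacle is thus purely bookkeeping: making sure the $\bar N$-filling-to-$\mathcal F$ equivalence is invoked only where it has been established (graph manifolds, or the $b_1>0$ case via Gabai), and confirming the harmless claim that $b_1>1$ forces $\mathcal L(Y)=\emptyset$. With those in hand, the proposition is a one-line translation along the two dictionaries.
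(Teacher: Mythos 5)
Your proposal is correct and takes the same route the paper does: the paper's own proof is the one-line ``This follows immediately from Propositions~\ref{prop: boyer and clay's taut foliation slope vs N-filling} and \ref{prop: L-space N-fillings},'' and your argument is exactly the spelled-out version of that immediacy, translating both sides into pointwise statements about a slope $\mu$ via the L-space $\bar N$-filling dictionary and the foliation $\bar N$-filling dictionary, and handling $b_1(Y)>1$ by the vacuous case (Gabai gives $\mathcal F(Y)=\mathbb P(H_1(\partial Y))$ and $b_1>1$ forces $\mathcal L(Y)=\emptyset$).
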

\begin{proof}
This follows immediately from
Propositions~\ref{prop: boyer and clay's taut foliation slope vs N-filling}
and
\ref{prop: L-space N-fillings}.
\end{proof}

There are some classes of manifold which we already know to be
L/NTF-equivalent.

\begin{prop}
\label{prop: proof of lntf equivalence for b1>1 and seifert + N}
Suppose $Y$ is a prime compact oriented three-manifold with torus boundary.
If $b_1(Y) > 1$, or if $Y$ is the union of a Seifert fibered space with
zero or more copies of $\bar{N}$,
then $Y$ is L/NTF-equivalent.
\end{prop}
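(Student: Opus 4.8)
The plan is to treat the two clauses of the hypothesis separately, in each case reducing the question to the behavior of $\bar N$-fillings so that the known foliation results can be played off the known L-space results. For the clause $b_1(Y)>1$: I would first note that $\mathcal{L}(Y)=\emptyset$, since every Dehn filling satisfies $b_1(Y(\mu))\ge b_1(Y)-1>0$ and so is never an L-space; hence $\mathcal{L}^{\circ}(Y)=\emptyset$. Because $b_1(Y(\mu))>0$ for every slope $\mu$, Proposition~\ref{prop: boyer and clay's taut foliation slope vs N-filling} — invoked with its hypothesis $b_1(Y(\mu_*))>0$ and with $\bar J=\emptyset$, so that its conclusion already applies to $Y=Y^{\bar N}(\emptyset;\mu)$ — produces a co-oriented taut foliation on $Y$ transverse to $\partial Y$ with $\alpha(F\cap\partial Y)=\mu$. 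Thus $\mathcal{F}(Y)={\mathbb{P}}(H_1(\partial Y))$, and $\mathcal{F}(Y)\amalg\mathcal{L}^{\circ}(Y)={\mathbb{P}}(H_1(\partial Y))$ as required.

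For the clause that $Y$ is a union of a Seifert fibered space with copies of $\bar N$, I would reduce at once to $b_1(Y)=1$ and write $Y=M\cup_{\boldsymbol{\varphi}}(\bar N_1\sqcup\cdots\sqcup\bar N_m)$, with $M$ Seifert fibered over a punctured $S^2$ or ${\mathbb{R}}{\mathbb{P}}^2$, boundary tori $T_0,\dots,T_m$ of $M$, $T_0=\partial Y$, and $\varphi_i\colon\partial\bar N_i\to T_i$. Two degenerate situations are dispensed with first. If some $T_i$ is compressible in $Y$, then it compresses in $M$ (as $\partial\bar N$ is incompressible), forcing $M$, hence by primality $Y$, to be a solid torus, for which $\mathcal{F}(Y)=\{l\}$ and $\mathcal{L}^{\circ}(Y)={\mathbb{P}}(H_1(\partial Y))\setminus\{l\}$; and if the fibrations of $M$ and some $\bar N_i$ agree across $T_i$, then $Y$ is itself Seifert fibered, so (since $b_1(Y)=1$ makes $Y$ a regular fiber complement, a non-orientable-base piece, or a solid torus) Proposition~\ref{prop: l space interval for seifert fibered spaces} gives the claim directly. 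So assume $m\ge1$ and that $M,\bar N_1,\dots,\bar N_m$ are the JSJ pieces of $Y$, whose JSJ graph is the star centered at $M$. As $Y$ is then a graph manifold, Proposition~\ref{prop: lntf in terms of Nfilling} reduces the goal to showing, for each slope $\mu$, that the closed graph manifold $Y^{\bar N}(\mu)=M\cup(\bar N_0\sqcup\cdots\sqcup\bar N_m)$ — obtained by also capping $T_0$ with a copy $\bar N_0$ of $\bar N$ along $\mu$ — is an L-space exactly when it admits no co-oriented taut foliation. Write $\nu_i:=\varphi^{{\mathbb{P}}}_{i*}(l_{\bar N_i})$ for $1\le i\le m$ and $\nu_0:=\mu$, so that $Y^{\bar N}(\mu)$ is the closed ``Seifert piece plus $\bar N$'s'' determined by the tuple $\nu_*=(\nu_0,\dots,\nu_m)$.

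On the foliation side, $Y^{\bar N}(\mu)$ admits a co-oriented taut foliation if and only if $M$ admits a co-oriented taut foliation transverse to $\partial M$ with $\alpha(F\cap T_i)=\nu_i$ for all $i$: restricting such a foliation on $Y^{\bar N}(\mu)$ to its JSJ pieces via Corollary~\ref{cor: taut tree manifold foliations restrict}, the piece over $\bar N_i$ has boundary slope in $\mathcal{F}(\bar N)=\{l\}$ (Theorem~\ref{thm: J Seifert fibered foliation classification}), forcing $\alpha(F\cap T_i)=\nu_i$; conversely Proposition~\ref{prop: boyer and clay's taut foliation slope vs N-filling} applied to the graph manifold $M$ (with the given foliation as input at $\bar J=\emptyset$) reassembles a foliation on $Y^{\bar N}(\mu)$. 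By Theorem~\ref{thm: J Seifert fibered foliation classification} (and its ${\mathbb{R}}{\mathbb{P}}^2$-base analogue, cf.\ Proposition~\ref{prop: l space interval for seifert fibered spaces}), the tuples $\nu_*$ for which such a foliation on $M$ exists are exactly those satisfying the Jankins--Neumann rotation-number inequality for $M$ with the $\nu_i$ in the transverse (``$\bar J$'') slots and the Seifert data of $M$ in the Dehn-filling (``$J$'') slots. On the L-space side, iterating Proposition~\ref{prop: L-space N-fillings} over the $m+1$ cappings — using that $Z^{\bar N}(\sigma)$ is an L-space iff $\sigma\in\mathcal{L}^{\circ}(Z)$, i.e.\ iff $Z(\sigma')$ is an L-space for all $\sigma'$ in a neighborhood of $\sigma$ — shows that $Y^{\bar N}(\mu)$ is an L-space exactly when the closed Seifert fibered manifold $M(\nu'_*)$ is an L-space for every tuple $\nu'_*$ near $\nu_*$; and $M(\nu'_*)$ is an L-space precisely when it carries no co-oriented taut foliation, by the Seifert L-space classification (Proposition~\ref{prop: l space interval for seifert fibered spaces} and the references therein). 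Thus $Y^{\bar N}(\mu)$ is an L-space iff $\nu_*\notin\overline{S_{\mathrm{fol}}}$, where $S_{\mathrm{fol}}=\{\nu'_*\colon M(\nu'_*)\text{ admits a co-oriented taut foliation}\}$. To close the loop one matches the two descriptions, i.e.\ shows that the set $\mathcal{F}(M)$ of transverse-foliation slope tuples of $M$ equals $\overline{S_{\mathrm{fol}}}$; for a single boundary component this is exactly Proposition~\ref{prop: l space interval for seifert fibered spaces} (there $\mathcal{F}={\mathbb{P}}\setminus\mathcal{L}^{\circ}$ while $S_{\mathrm{fol}}=\mathcal{F}^D={\mathbb{P}}\setminus\mathcal{L}$, whose closure is ${\mathbb{P}}\setminus\mathcal{L}^{\circ}$), and in general it is the coordinatewise passage between the ``$\bar J$'' and ``$J$'' forms of the inequality via the floor/ceiling identities (\ref{eq: closed interval to ceiling floor})--(\ref{eq: open interval to ceiling floor}). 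Granting this, exactly one of ``$Y^{\bar N}(\mu)$ is an L-space'' and ``$Y^{\bar N}(\mu)$ admits a co-oriented taut foliation'' holds, and Proposition~\ref{prop: lntf in terms of Nfilling} finishes the argument; when $M$ has non-orientable base the same dichotomy holds more simply, using the ${\mathbb{R}}{\mathbb{P}}^2$-results of \cite{BoyerClay, BGW}.

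The step I expect to be the real obstacle is this last matching: reconciling the multi-boundary Jankins--Neumann foliation count on the Seifert piece $M$ with the iterated-$\bar N$-filling L-space count on $Y^{\bar N}(\mu)$, in particular checking that ``$\nu_*$ has a neighborhood of L-space Dehn fillings of $M$'' (the output of iterating Proposition~\ref{prop: L-space N-fillings}) really is equivalent to $\nu_*\notin\overline{S_{\mathrm{fol}}}$ and agrees with $\nu_*\notin\mathcal{F}(M)$ on the nose — a point where the ``interval-like'' structure of L-space slope sets in each boundary direction is what makes the nested neighborhoods collapse to a genuine product neighborhood. An alternative to routing everything through $\bar N$-fillings of $M$ is to compute $\mathcal{L}(Y)$ and $\mathcal{F}(Y)$ directly as Jankins--Neumann-type intervals, the former by iterated use of the L-space gluing theorem (Proposition~\ref{prop: L-space gluing thm from lslope}) against the $\bar N_i$ and the latter by Boyer and Clay's foliation gluing theorem — this is essentially the case $Y_i=\bar N$ of the paper's main Theorem~\ref{thm: l space interval for graph manifolds} — and then to observe that the resulting two intervals are complementary precisely because the ``$\bar J$''-foliation interval and the interior of the L-space interval of the single Seifert piece $M$ are.
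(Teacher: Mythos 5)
Your $b_1(Y)>1$ argument matches the paper's: the paper states it more tersely, but the two ingredients — $\mathcal{L}(Y)=\emptyset$ because no Dehn filling is a rational homology sphere, and $\mathcal{F}(Y)={\mathbb{P}}(H_1(\partial Y))$ from Proposition~\ref{prop: boyer and clay's taut foliation slope vs N-filling} applied with $\bar J=\emptyset$ — are exactly the paper's.

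For the Seifert-plus-$\bar N$'s clause you take a genuinely different route. The paper's proof of this half is a one-line citation: use \cite[Prop.~7.9]{lslope} to substitute $\bar N$ for Boyer--Clay's $N_t$ manifolds and then invoke their ``slope detection'' theorem \cite[Thm.~8.1]{BoyerClay}, which is precisely the statement you re-derive from scratch — that capping the boundary tori of a Seifert piece $M$ with $\bar N$'s along slopes $\nu_*$ gives an L-space exactly when $\nu_*$ fails the Jankins--Neumann rotation-number condition for a transverse foliation on $M$. The paper also sketches an inductive alternative (repeatedly applying the L/NTF-aware gluing of Proposition~\ref{prop: gluing prop for LNTF equivalent manifolds} with $Y_i=\bar N$ and evolving the endpoint formulas via (\ref{eq: closed interval to ceiling floor})--(\ref{eq: open interval to ceiling floor})), which is closer to the alternative you mention at the end than to your main argument.

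Your main argument has a gap at exactly the step you yourself flag as ``the real obstacle.'' The assertion that $Y^{\bar N}(\mu)$ is an L-space if and only if $M(\nu'_*)$ is an L-space for all $\nu'_*$ in a product neighborhood of $\nu_*$ does not follow by naively iterating Proposition~\ref{prop: L-space N-fillings}: applying it once gives a neighborhood $U_0\ni\nu_0$ such that $Z(\nu'_0)$ is an L-space for $\nu'_0\in U_0$, and applying it again, for each fixed $\nu'_0$, gives a neighborhood $U_1(\nu'_0)\ni\nu_1$ that a priori depends on $\nu'_0$ and may shrink as $\nu'_0$ varies over $U_0$. Collapsing such a nested family of neighborhoods to a genuine product neighborhood is not automatic; it requires control on the L-space slope sets of the intermediate manifolds (Seifert piece $\cup$ some $\bar N$'s $\cup$ some Dehn fillings), which is part of what one is trying to establish. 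The paper sidesteps this entirely by citing Boyer and Clay. If you want to avoid that citation, your fall-back — computing $\mathcal{L}(Y)$ via iterated L-space gluing against the $\bar N_i$ and $\mathcal{F}(Y)$ via Boyer--Clay's foliation gluing theorem, and comparing the two intervals through the floor/ceiling identities — is the cleaner path and is what the paper's alternative sketch amounts to.
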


\begin{proof}
If $b_1(Y) \mkern-2mu>\mkern-2mu 1$, 
then since no Dehn filling of $Y$ is a rational homology sphere,
we have $\mathcal{L}(Y) \mkern-2mu=\mkern-2mu \emptyset$,
implying $\mathcal{L}^{\circ}\mkern-2mu(Y) \mkern-2mu=\mkern-2mu \emptyset$.
Correspondingly,
Proposition~\ref{prop: boyer and clay's taut foliation slope vs N-filling}
implies $\mathcal{F}(Y) \mkern-2mu=\mkern-2mu {\mathbb{P}}(H_1(\partial Y))$.

Suppose $Y$ has $b_1(Y) = 1$ and 
is the union of a Seifert fibered space with zero or more copies
of $\bar{N}$.
Then, using \cite[Proposition 7.9]{lslope}
of J. Rasmussen and the author to replace Boyer's and Clay's
``$N_t$'' manifolds with $\bar{N}$, we invoke
the ``slope detection'' theorem
of Boyer and Clay
\cite[Theorem~8.1]{BoyerClay} 
to deduce L/NTF-equivalence for $Y$.

Alternatively,
one could prove the same result
by inductively performing $\bar{N}$-fillings in
regular fiber complements,
starting with
Proposition~\ref{prop: l space interval for seifert fibered spaces}
for the Seifert fibered base case,
and using the gluing result in
Proposition 
\ref{prop: gluing prop for LNTF equivalent manifolds}
below, together with
(\ref{eq: closed interval to ceiling floor}) and
(\ref{eq: open interval to ceiling floor}),
to evolve
(\ref{eq: SFS L-space endpoints})
to match
(\ref{eq: defs of y- and y+}).
Similar inductive arguments appear in the proof of
Theorem~\ref{thm: l space interval for graph manifolds}.
\end{proof}
{\noindent{\bf{Remark.}} We later prove
L/NTF-equivalence for
all graph manifolds with torus boundary.}
\smallskip

We are now ready to state our main gluing result.
\begin{prop}
\label{prop: gluing prop for LNTF equivalent manifolds}
Suppose $Y_1$ and $Y_2$ are non-solid-torus
L/NTF-equivalent graph manifolds with torus boundary.
Then for any union $Y_1 \cup_{\varphi}\! Y_2$,
$\varphi : \partial Y_1 \to -\partial Y_2$,
the following are equivalent:
\begin{enumerate}
\item[$(i)\phantom{ii}$]
$Y_1 \cup_{\varphi} \!Y_2$ is an L-space.

\item[$(ii)\phantom{i}$]
$Y_1 \cup_{\varphi} \!Y_2$ does not admit a co-oriented taut foliation.

\item[$(iii)$]
$\varphi_*^{{\mathbb{P}}}(\mathcal{L}^{\circ}(Y_1)) \cup \mathcal{L}^{\circ}(Y_2)
={\mathbb{P}}(H_1(\partial Y_2))$.
\end{enumerate}
\end{prop}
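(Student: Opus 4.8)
The plan is to prove the cycle of implications $(i) \Rightarrow (ii) \Rightarrow (iii) \Rightarrow (i)$, leaning on the two gluing theorems available from the excerpt — Proposition~\ref{prop: L-space gluing thm from lslope} (L-space gluing) and Proposition~\ref{prop: boyer and clay's taut foliation slope vs N-filling} (taut foliation / $\bar N$-filling) — together with the hypothesis of L/NTF-equivalence for each factor. The implication $(i) \Rightarrow (ii)$ is the easy one: since $Y_1 \cup_\varphi Y_2$ is a closed graph manifold, by Theorem~\ref{thm: intro version of l = ntf} (equivalently Theorem~\ref{thm: lntf equivalence}, which this section proves) being an L-space is the same as failing to admit a co-oriented taut foliation. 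Actually, to avoid circularity with the results this section is establishing, I would instead argue $(i) \Rightarrow (ii)$ directly: a closed oriented three-manifold admitting a co-oriented taut foliation has non-trivial Heegaard Floer homology by the Ozsv\'ath--Szab\'o / Kazez--Roberts / Bowden result cited in the introduction, so an L-space admits no co-oriented taut foliation.

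First I would establish $(iii) \Rightarrow (i)$. Assume $\varphi_*^{\mathbb P}(\mathcal L^\circ(Y_1)) \cup \mathcal L^\circ(Y_2) = {\mathbb P}(H_1(\partial Y_2))$. Since $Y_1$ and $Y_2$ are non-solid-torus graph manifolds, each is boundary incompressible (a graph manifold is reducible only if it is $S^1 \times S^2$ or contains an $S^1 \times S^2$ summand, and a boundary-compressible manifold with torus boundary is $S^1 \times D^2$ connect-sum a closed manifold — neither applies to a non-solid-torus graph manifold with torus boundary). Both $\mathcal L^\circ(Y_i)$ are open subsets of the circle ${\mathbb P}(H_1(\partial Y_i))$, and if they cover the circle then, since a circle is connected and cannot be covered by two disjoint nonempty open sets, either one of them is already everything (in which case one factor is a generalized solid torus and the gluing is immediate from $\varphi_*^{\mathbb P}(l) \in \mathcal L(Y_2)$ or symmetric), or they overlap: $\varphi_*^{\mathbb P}(\mathcal L^\circ(Y_1)) \cap \mathcal L^\circ(Y_2) \neq \emptyset$. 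In the overlapping case, Proposition~\ref{prop: L-space gluing thm from lslope}, applied with both factors boundary incompressible, gives that $Y_1 \cup_\varphi Y_2$ is an L-space precisely because ${\mathbb P}(H_1(\partial Y_2)) = \varphi_*^{\mathbb P}(\mathcal L^\circ(Y_1)) \cup \mathcal L^\circ(Y_2)$. The edge case where one $\mathcal L^\circ(Y_i)$ equals the whole circle needs a short separate argument using Gillespie's characterization of generalized solid tori, but it is routine.

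Next, $(ii) \Rightarrow (iii)$, for which I would argue the contrapositive: if $\varphi_*^{\mathbb P}(\mathcal L^\circ(Y_1)) \cup \mathcal L^\circ(Y_2) \neq {\mathbb P}(H_1(\partial Y_2))$, I want to produce a co-oriented taut foliation on $Y_1 \cup_\varphi Y_2$. By L/NTF-equivalence, $\mathcal F(Y_i)$ is exactly the complement of $\mathcal L^\circ(Y_i)$ in ${\mathbb P}(H_1(\partial Y_i))$, so the hypothesis says $\varphi_*^{\mathbb P}(\mathcal F(Y_1)) \cap \mathcal F(Y_2) \neq \emptyset$: there is a slope $\mu \in {\mathbb P}(H_1(\partial Y_2))$ with $\mu \in \mathcal F(Y_2)$ and $\varphi_*^{-1}(\mu) \in \mathcal F(Y_1)$. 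By Proposition~\ref{prop: boyer and clay's taut foliation slope vs N-filling} (really its consequence for $\bar N$-fillings, stated at the end of that subsection), $\mu \in \mathcal F(Y_i)$ means the $\bar N$-filling of $Y_i$ along $\mu$ admits a co-oriented taut foliation, hence $Y_i$ carries a co-oriented taut foliation transverse to $\partial Y_i$ with boundary slope $\mu$ (on the $Y_1$ side, slope $\varphi_*^{-1}(\mu)$). Then I invoke the Boyer--Clay foliation gluing theorem (\cite[Theorem 9.5.2]{BoyerClay}, exactly as used in the proof of Proposition~\ref{prop: boyer and clay's taut foliation slope vs N-filling}): since the boundary slopes match under $\varphi$ and both lie in the respective $\mathcal F(Y_i)$, the two foliations can be isotoped to glue to a co-oriented taut foliation on $Y_1 \cup_\varphi Y_2$. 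This contradicts $(ii)$, completing the cycle.

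The main obstacle I anticipate is the matching/compatibility bookkeeping in the $(ii) \Rightarrow (iii)$ step: Boyer and Clay's gluing theorem does not merely require that each factor carry some taut foliation with the right boundary slope, but that these foliations be compatible in the sense their "slope detection" framework demands (the representations into $\widetilde{\mathrm{Homeo}_+}S^1$ must agree on the gluing torus, not just have equal rotation numbers). One must check that the foliations extracted from $\bar N$-fillings — via the rotation-number calculus of Section~\ref{s: Foliations on Seifert fibered spaces} and the fact that $\mathcal F(\bar N)=\{l\}$ — satisfy the detection hypothesis, or else use the robustness built into \cite[Theorem 9.5.2]{BoyerClay} that allows one to perturb rotation numbers within the interior of an admissible interval. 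The other delicate point is the boundary-incompressibility / generalized-solid-torus edge cases in $(iii) \Rightarrow (i)$, which the non-solid-torus hypothesis is precisely designed to handle but which still require invoking Gillespie's result to rule out pathologies.
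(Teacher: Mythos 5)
Your overall strategy matches the paper's, and the $(iii)\Rightarrow(i)$ and $(i)\Rightarrow(ii)$ steps are essentially correct---your ``edge case'' worry in $(iii)\Rightarrow(i)$ is vacuous, since $\mathcal{L}^\circ(Y_i)$ never equals all of $\mathbb{P}(H_1(\partial Y_i))$ (the rational-longitude filling has $b_1>0$, hence is never an L-space), so whenever $(iii)$ holds the two nonempty open sets must overlap and Proposition~\ref{prop: L-space gluing thm from lslope} applies directly, with no need of Gillespie. The gap lies in $\neg(iii)\Rightarrow\neg(ii)$.

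You omit the case $b_1(Y_1 \cup_\varphi Y_2)>0$, which occurs when the rational longitudes of $Y_1$ and $Y_2$ match under $\varphi$, or automatically when $b_1(Y_i)>1$ for some $i$. The paper handles this case by invoking Gabai to produce a co-oriented taut foliation outright; it cannot simply be folded into the Boyer--Clay argument, because Corollary~\ref{cor: taut tree manifold foliations restrict} requires a tree graph and the Boyer--Clay gluing theorem is a statement about rational homology sphere graph manifolds. In the remaining case $b_1(Y_1 \cup_\varphi Y_2)=0$, you apply the Boyer--Clay gluing theorem directly to the two-piece decomposition $Y_1\cup_\varphi Y_2$, but that theorem glues along the JSJ tori at the level of Seifert fibered pieces. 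The paper therefore first isotopes each $F_i$ via Corollary~\ref{cor: taut tree manifold foliations restrict} so that it restricts to boundary-transverse co-oriented taut foliations on each Seifert fibered JSJ component of $Y_i$, with matching slopes across every JSJ torus of $Y_1\cup_\varphi Y_2$, and only then invokes Boyer--Clay. The representation-compatibility concern you raise in your final paragraph is precisely what this intermediate step resolves, which is why it cannot be skipped.
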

\begin{proof}
Suppose $(iii)$ holds, so that
Proposition \ref{prop: L-space gluing thm from lslope}
implies 
$Y_1 \cup_{\varphi}\! Y_2$ is an L-space.
Ozsv{\'a}th and Szab{\'o} have shown \cite{OSGen}
that an L-space does not admit $C^2$ co-oriented taut foliations,
and this result has been improved to $C^0$ co-oriented taut foliations
by Bowden \cite{Bowden} 
and independently by Kazez and Roberts \cite{KazezRobertsCzero}.

Suppose $(iii)$ fails to hold.
If $b_1(Y_1 \mkern1.5mu\cup_{\varphi}\mkern-1.5mu Y_2) \mkern-1mu>\mkern-1mu 0$, then
Gabai \cite{gabai} tells us there is a co-oriented taut foliation on
$Y_1 \cup_{\varphi}\mkern-1.5mu Y_2$, 
and we also know that $Y_1 \cup_{\varphi}\mkern-2mu Y_2$ is not an L-space.
Suppose instead that $b_1(Y_1 \mkern1.5mu\cup_{\varphi}\mkern-1.5mu Y_2) = 0$.
Then L/NTF-equivalence implies
$\varphi_*^{{\mathbb{P}}}(\mathcal{F}(Y_1)) \cap \mathcal{F}(Y_2) \neq \emptyset$,
and so there are
co-oriented taut foliations $F_i$ on $Y_i$ transverse to
$\partial Y_i$
such that $\varphi^{{\mathbb{P}}}_*(\alpha(F_1\cap \partial Y_1)) = \alpha(F_2 \cap \partial Y_2)$.
By Corollary 
\ref{cor: taut tree manifold foliations restrict},
we can isotop the incompressible tori in $Y_1$ and $Y_2$ so that
each $F_i$ restricts to boundary-transverse 
co-oriented taut foliations on each of the JSJ components
of each $Y_i$.
This means that the JSJ components of 
$Y_1 \cup_{\varphi}\mkern-2mu Y_2$
each admit boundary-transverse co-oriented taut foliations
which restrict to boundary foliations of matching slopes with
respect to boundary-gluing maps.
We can therefore invoke
the foliation gluing theorem of Boyer and Clay
\cite[Theorem 9.5.2]{BoyerClay}
to assert the existence of a co-oriented taut foliation
on all of $Y_1 \cup_{\varphi}\mkern-2mu Y_2$.
Again, this co-oriented taut foliation implies that 
$Y_1 \cup_{\varphi}\! Y_2$ is not an L-space.
\end{proof}

\subsection{Graph manifold conventions}
\label{subsection: conventions for graph manifolds}

Any closed graph manifold can be regarded as a Dehn filling of a 
graph manifold $Y$ with torus boundary.
If $b_1(Y) > 1$, then $\mathcal{L}(Y) = \mathcal{L}^{\circ}(Y) = \emptyset$
and $\mathcal{F}(Y) = {\mathbb{P}}(H_1(\partial Y))$
(see 
Proposition~\ref{prop: proof of lntf equivalence for b1>1 and seifert + N}),
which is not very interesting.

If $b_1(Y) = 1$, then
we call $Y$ a {\em{tree manifold}}, since 
$Y$ admits a rational homology sphere Dehn filling,
corresponding to a tree graph.
Rooting the tree graph for $Y$ at the Seifert fibered
piece containing $\partial Y$ provides a recursive construction for $Y$,
\begin{equation}
\label{eq: recursive tree construction}
Y = \hat{M} \cup 
\left(
{{\textstyle{\coprod\limits_{i=1}^{n_{{\textsc{d}}}}(S^1 \times D^2_i)}}} 
\;\amalg\; 
{{\textstyle{\coprod\limits_{i=1}^{n_{{\textsc{g}}}} Y_i}}}\right),
\end{equation}
where each of $Y_1, \ldots, Y_{n_{{\textsc{g}}}}$ is
a non-solid-torus tree manifold with torus boundary.
Since $b_1(Y) = 1$,
$\hat{M}$ is the trivial circle fibration over
an $n \mkern-1.4mu+\mkern-2mu 1$-punctured $S^2\mkern-2mu$ or the twisted circle fibration
over an $n \mkern-2mu+\mkern-2mu 1$-punctured ${\mathbb{R}}{\mathbb{P}}^2\!$,
with boundary components $\partial^{{\textsc{d}}}_i\mkern-2mu \hat{M} \mkern-1.5mu:= \partial_i \hat{M}$
for $i \mkern-1.8mu\in\mkern-1.8mu \{1, \ldots, n_{{\textsc{d}}}\}$, 
$\partial^{{\textsc{g}}}_i \mkern-2mu\hat{M} 
\mkern-1.5mu:= \mkern-.8mu\partial_{n_{{\textsc{d}}} + i} \hat{M}$
for $i \mkern-1.8mu\in\mkern-1.8mu \{1, \ldots, n_{{\textsc{g}}}\}$, and 
$\partial Y \mkern-2.2mu:=\mkern-1mu 
\partial_{n+1} \hat{M} \mkern-2mu=:\mkern-2mu \partial^{{\textsc{g}}}_{n_{{\textsc{g}}}+1}\hat{M} 
\mkern-2mu=:\mkern-2mu
\partial^{{\textsc{d}}}_{n_{{\textsc{d}}}+1}\hat{M}$,
with $n \mkern-1mu:\mkern-1mu= n_{{\textsc{d}}} \mkern-.5mu+\mkern1mu n_{{\textsc{g}}}$.
We shall sometimes call $\hat{M}$ the ``foundation'' for $Y$.

Since edges in the graph for $Y$ correspond to gluings along incompresible tori,
each gluing map
$\varphi_i: \partial Y_i \to -\partial^{{\textsc{g}}}_i\mkern-2mu \hat{M}$, 
for $i \in \{1, \ldots, n_{{\textsc{g}}}\}$,
labels one of the $n_{{\textsc{g}}}$ edges descending from the root of the graph for $Y$.
We call $Y_1, \ldots, Y_{n_{{\textsc{g}}}}$ the {\it{daughter subtrees}} of $Y$.
Each $Y_i$ is a tree manifold with torus boundary and $b_1(Y_i) = 1$,
with tree rooted at the Seifert
fibered piece containing $\partial Y_i$, giving rise to a
recursive description for $Y_i$ analogous to that for $Y$ in
(\ref{eq: recursive tree construction}).
For any $i \in \{1, \ldots, n_{{\textsc{g}}}\}$
for which $Y_i$ is Floer simple,
{\em{i.e.}},
for which $\mathcal{L}^{\circ}(Y_i) \neq \emptyset$,
then we invoke
Proposition~\ref{prop: [[]] for any Floer simple}
to write
\begin{equation}
[[y_{i-}^{\textsc{g}}, y_{i+}^{\textsc{g}}]]
:= \varphi_{i*}^{{\mathbb{P}}}(\mathcal{L}(Y_i)).
\end{equation}
If instead,
$\mathcal{L}(Y_i) \neq \emptyset$ for some non-Floer-simple $Y_i$, then we write
\begin{equation}
\{y_{i-}^{\textsc{g}}\} := \{y_{i+}^{\textsc{g}}\}
:= \varphi_{i*}^{{\mathbb{P}}}(\mathcal{L}(Y_i)).
\end{equation}

Since the $n_{\textsc{d}}$ solid tori glued to $\hat{M}$
create the exceptional fibers of the Seifert fibered ``root''
$\hat{M} \cup \coprod_{i=1}^{n_{\textsc{d}}} (S^1 \times D_i^2)$ of our tree,
we record the Seifert data of this Seifert fibered space
by labeling the root vertex with the Dehn filling slopes
$y_1^{\textsc{d}}, \ldots, y_{n_{{\textsc{d}}}}^{\textsc{d}} \in {\mathbb{Q}}$.
More explicitly, to each solid torus $S^1 \times D^2_i$ in
(\ref{eq: recursive tree construction}),
we associate the gluing map
$\varphi^{\textsc{d}}_i \mkern-2.5mu:\mkern-1.5mu 
\partial (S^1 \mkern-1mu\times\mkern-1mu D^2_i) \mkern-1.5mu\to\mkern-1.5mu
-\partial^{{\textsc{d}}}_i\mkern-2mu \hat{M}$,
and set $y_i^{\textsc{d}} \mkern-2mu:=\mkern-2mu \varphi_{i*}^{\textsc{d} {\mathbb{P}}}(l_i)$,
for $l_i$ the rational longitude of $S^1 \mkern-1mu\times\mkern-1mu D^2_i$.
As usual, we demand that each $y_i^{\textsc{d}} \neq \infty$.
We stray slightly from our earlier convention by
allowing $y_i^{\textsc{d}} \in {\mathbb{Z}}$,
but this allows us to fix
$e_0 := y_0^{\textsc{d}} := 0$ and then forget the $0^{\mathrm{th}}$
fiber complement altogether, without loss of generality.


\subsection{Statement of Main Results}
\label{ss: Statement of Main Results}
We first show that all graph manifolds with torus boundary
are L/NTF-equivalent,
making our main gluing tool,
Proposition
\ref{prop: gluing prop for LNTF equivalent manifolds},
applicable for all such non-solid-torus graph manifolds.
We then can make the inductive gluing arguments
necessary to calculate
L-space intervals for graph manifolds
with torus boundary.

\begin{theorem}
\label{thm: lntf equivalence}
Every graph manifold $Y$ with torus boundary is L/NTF equivalent,
{\em{i.e.}}, satisfies
$\mathcal{F}(Y) \amalg \mathcal{L}^{\circ}(Y) = {\mathbb{P}}(H_1(\partial Y))$.
Moreover, if we let $\mathcal{R}(Y)\!$ denote the set of slopes of 
reducible (and not $S^1 \times D^2$)
Dehn fillings of $Y\!$, then
$\mathcal{F}^D(Y) \amalg (\mathcal{L}(Y) \cup \mathcal{R}(Y)) = {\mathbb{P}}(H_1(\partial Y))$.
\end{theorem}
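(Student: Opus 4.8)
The plan is to reduce the first (and main) assertion to the already‑known L/NTF‑equivalence of Seifert fibered spaces glued to copies of $\bar{N}$ (Proposition~\ref{prop: proof of lntf equivalence for b1>1 and seifert + N}), by combining the $\bar{N}$‑filling criterion for L/NTF‑equivalence (Proposition~\ref{prop: lntf in terms of Nfilling}) with the gluing dichotomy of Proposition~\ref{prop: gluing prop for LNTF equivalent manifolds}; the second assertion then drops out of a slope‑by‑slope analysis of the Dehn fillings $Y(\mu)$.

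The technical heart is an auxiliary gluing lemma, proved by induction on $k$: \emph{if $W_1,\dots,W_k$ are non‑solid‑torus L/NTF‑equivalent graph manifolds with torus boundary, $M$ is an oriented Seifert fibered space over a punctured $S^2$ or ${\mathbb{R}}{\mathbb{P}}^2$ with exactly $m+k+1$ boundary components for some $m\ge 0$, and $Z$ is obtained from $M$ by gluing $m$ copies of $\bar{N}$ and the $W_i$ onto $m+k$ of the boundary components of $M$ by arbitrary gluing maps, then the resulting manifold $Z$ — prime (a union of irreducible pieces along incompressible tori), hence a graph manifold, with a single torus boundary component — is L/NTF‑equivalent.} The base case $k=0$ is exactly Proposition~\ref{prop: proof of lntf equivalence for b1>1 and seifert + N} (a Seifert fibered space together with $m\ge 0$ copies of $\bar{N}$). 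For $k\ge 1$, since $Z$ is a graph manifold, Proposition~\ref{prop: lntf in terms of Nfilling} reduces L/NTF‑equivalence of $Z$ to checking that every $\bar{N}$‑filling $X:=Z^{\bar{N}}(\nu)$ is an L‑space exactly when it admits no co‑oriented taut foliation. Cutting the closed manifold $X$ along the (incompressible) torus $T$ where $W_1$ meets $M$ exhibits $X = W_1 \cup_{\varphi} Z''$, where $Z''$ is the closure of the complement of $W_1$ in $X$; this $Z''$ is of the same type as in the lemma but with $m+1$ copies of $\bar{N}$ and only $k-1$ of the $W_i$, so by the inductive hypothesis it is L/NTF‑equivalent, and since it contains a copy of $\bar{N}$ it is not a solid torus, as is $W_1$. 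Proposition~\ref{prop: gluing prop for LNTF equivalent manifolds} then supplies the required dichotomy for $X$, and letting $\nu$ vary gives the lemma.

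To obtain the first sentence, I would show every graph manifold $Y$ with torus boundary is L/NTF‑equivalent: if $b_1(Y)>1$ this is Proposition~\ref{prop: proof of lntf equivalence for b1>1 and seifert + N}, and if $b_1(Y)=1$ I would induct on the height of the rooted JSJ tree of $Y$. Writing $Y=\hat{M}\cup\bigl(\coprod_{i=1}^{n_{\textsc{d}}}(S^1\times D^2_i)\amalg\coprod_{i=1}^{n_{\textsc{g}}}Y_i\bigr)$ as in Section~\ref{subsection: conventions for graph manifolds}, the core $M:=\hat{M}\cup\coprod_{i=1}^{n_{\textsc{d}}}(S^1\times D^2_i)$ is a Seifert fibered space over a punctured $S^2$ or ${\mathbb{R}}{\mathbb{P}}^2$ with $n_{\textsc{g}}+1$ boundary components, and each daughter $Y_i$ is a non‑solid‑torus graph manifold with torus boundary of strictly smaller tree height, hence L/NTF‑equivalent by the inductive hypothesis (the base case $n_{\textsc{g}}=0$ being $Y=M$, covered by Proposition~\ref{prop: proof of lntf equivalence for b1>1 and seifert + N}). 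Applying the auxiliary lemma with these $W_i:=Y_i$ and $m=0$ shows $Y$ is L/NTF‑equivalent.

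For the second sentence, given the first, I would verify $\mathcal{F}^D(Y)\amalg(\mathcal{L}(Y)\cup\mathcal{R}(Y))={\mathbb{P}}(H_1(\partial Y))$ slope by slope. Disjointness is immediate: if $Y(\mu)$ carries a co‑oriented taut foliation it is not an L‑space (Ozsv\'ath and Szab\'o \cite{OSGen}, Bowden \cite{Bowden}, Kazez and Roberts \cite{KazezRobertsCzero}) and, by Novikov \cite{Novikov}, is not a reducible manifold other than $S^1\times S^2$, so $\mu\notin\mathcal{L}(Y)\cup\mathcal{R}(Y)$. For surjectivity, fix $\mu$ with $Y(\mu)$ not an L‑space; if $Y(\mu)$ is reducible then $\mu\in\mathcal{R}(Y)$ unless $Y(\mu)=S^1\times S^2$, which admits a taut foliation, so I may assume $Y(\mu)$ is an irreducible closed graph manifold that is not an L‑space and must produce a co‑oriented taut foliation on it. If $b_1(Y(\mu))>0$ this is Gabai \cite{gabai}; if $b_1(Y(\mu))=0$ and $Y(\mu)$ has at least two JSJ pieces, cutting along a (necessarily separating) JSJ torus $T$ writes $Y(\mu)=A\cup_T B$ with $A,B$ non‑solid‑torus graph manifolds with single torus boundary $T$, both L/NTF‑equivalent by the first sentence, so Proposition~\ref{prop: gluing prop for LNTF equivalent manifolds} forces $Y(\mu)$, not being an L‑space, to admit one; and if $Y(\mu)$ is a Seifert fibered rational homology sphere, realizing it as a Dehn filling of its regular fiber complement $Y_0$ and invoking Proposition~\ref{prop: l space interval for seifert fibered spaces} (which gives ${\mathbb{P}}(H_1(\partial Y_0))=\mathcal{F}^D(Y_0)\amalg\mathcal{L}(Y_0)$) again yields the foliation. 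The main obstacle is the bookkeeping in the auxiliary lemma — one must check that peeling off a daughter leaves a manifold still of the prescribed Seifert‑core‑plus‑copies‑of‑$\bar{N}$‑plus‑fewer‑daughters shape, with all gluing tori incompressible and no solid‑torus pieces, so that Propositions~\ref{prop: lntf in terms of Nfilling} and~\ref{prop: gluing prop for LNTF equivalent manifolds} genuinely apply (and likewise that the degenerate cases above, namely solid‑torus, reducible, and Seifert fibered fillings, are each handled); everything else is assembled from the cited gluing, filling, and classification results.
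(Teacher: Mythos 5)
Your proposal is correct and follows essentially the same route as the paper: the same double induction (on tree height, and within it on the number of daughter subtrees peeled off one at a time and replaced by $\bar{N}$-fillings), driven by Propositions~\ref{prop: lntf in terms of Nfilling}, \ref{prop: proof of lntf equivalence for b1>1 and seifert + N}, and \ref{prop: gluing prop for LNTF equivalent manifolds}; your auxiliary lemma is the paper's inner induction on $j$ (its manifolds $Y^j_{\theta}[y^j_*]$) in different packaging, including the same ``assume prime'' caveat you flag. The only divergence is that the paper obtains the second assertion from the single decomposition $Y(y) = Y_{n_{\textsc{g}}} \cup_{\varphi_{n_{\textsc{g}}}} Y_1^{n_{\textsc{g}}}[y]$ rather than your slope-by-slope case analysis, but both arguments rest on the same gluing proposition.
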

The above also implies that the following calculation of $\mathcal{L}(Y)$
for graph manifolds $Y\!$ with torus boundary completely determines
both $\mathcal{F}(Y)$ and $\mathcal{F}^D(Y)$.

\begin{theorem}
\label{thm: l space interval for graph manifolds}
Suppose $Y$ is a graph manifold with torus boundary and nonempty $\mathcal{L}(Y)$.
If the Seifert fibered component of $Y$ containing $\partial Y$
has non-orientable base, then 
$\mathcal{L}(Y) = \left<-\infty, +\infty\right>$.
Otherwise, we have 
\begin{equation*}
\mathcal{L}(Y) =
\begin{cases}
[[y_-, y_+]]
&\;\;\;
Y\,\text{Floer simple},
 \\
\{y_-\} = \{ y_+\}
&\;\;\;
Y\,\text{not Floer simple},
\end{cases}
\end{equation*}
where,
for $n_{{\textsc{d}}}, n_{{\textsc{g}}}, y_i^{\textsc{d}}, y_{i-}^{\textsc{g}}\mkern-.2mu,\mkern-1mu$
and $y_{i+}^{\textsc{g}}\mkern-3mu$
as defined in
Section~\ref{subsection: conventions for graph manifolds},
we define $y_-, y_+ \in {\mathbb{Q}} \cup \{\infty\}$ as
\begin{align*}
y_-
&:=\,
\max_{k > 0}
-\mfrac{1}{k}\mkern-2.5mu\left( 1
+ {{\textstyle{\sum\limits_{i=1}^{n_{{\textsc{d}}}}  \left\lfloor y^{\textsc{d}}_i k \right\rfloor}}}
+ {{\textstyle{\sum\limits_{i=1}^{n_{{\textsc{g}}}} \left(  \left\lceil y^{\textsc{g}}_{i+}k 
\right\rceil - 1 \right)}}}
\right),
\\ \nonumber
y_+ 
&:=\,
\min_{k > 0}
-\mfrac{1}{k}\mkern-2.5mu\left(\mkern-2mu -1
+ {{\textstyle{\sum\limits_{i=1}^{n_{{\textsc{d}}}} \left\lceil y^{\textsc{d}}_i k \right\rceil}}}
+ {{\textstyle{\sum\limits_{i=1}^{n_{{\textsc{g}}}} \left(  \left\lfloor y^{\textsc{g}}_{i-} k 
\right\rfloor + 1 \right)}}}
\right),
\end{align*}
unless $Y\!$ is a solid torus, in which case
$y_- \!:=\mkern-2mu y_+ \!:=\mkern-2mu -\sum_{i=1}^{n_{{\textsc{d}}}} y_i^{\textsc{d}}$.
\end{theorem}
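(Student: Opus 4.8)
The plan is to induct on the height $h$ of the rooted tree underlying $Y$, carrying along (in the orientable-base case) the auxiliary statement that $\mathcal{L}^{\circ}(Y) = {\mathbb{P}}(H_1(\partial Y)) \setminus \mathcal{F}(Y)$ is the open interval with endpoints $y_{-}$ and $y_{+}$. When $h = 0$ there are no daughter subtrees ($n_{\textsc{g}} = 0$), $Y$ is a regular fiber complement in a Seifert fibered rational homology sphere, and every assertion — the non-orientable-base, Floer-simple, and solid-torus subcases included — is Proposition~\ref{prop: l space interval for seifert fibered spaces}, once one uses the convention $y^{\textsc{d}}_0 := 0$ to turn the $\sum_{i=1}^{n_{\textsc{d}}}$ of the theorem into the $\sum_{i=0}^{n}$ of (\ref{eq: SFS L-space endpoints}). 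So assume $h \ge 1$, hence $n_{\textsc{g}} \ge 1$; the JSJ tori $\partial^{\textsc{g}}_i \hat{M}$ are incompressible, so $Y$ is not a solid torus, and by induction the theorem holds for each daughter $Y_i$, as does Theorem~\ref{thm: lntf equivalence}.

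The non-orientable-base case is disposed of directly. If $\hat{M}$ fibers over a punctured ${\mathbb{R}}{\mathbb{P}}^2$, then the rational longitude of $Y$ is the fiber slope $\infty$ (Section~\ref{ss: Conventions for Seifert fibered spaces}), and the Dehn filling $Y(\infty)$ is reducible with an $S^1 \times S^2$ summand (Remark of Section~\ref{ss: Conventions for Seifert fibered spaces}), hence has $b_1 > 0$ and is not an L-space, so $\infty \notin \mathcal{L}(Y)$. For $\mu \ne \infty$, any co-oriented taut foliation on the $\bar{N}$-filling $Y^{\bar{N}}(\mu)$ would, by Corollary~\ref{cor: taut tree manifold foliations restrict}, restrict to a co-oriented taut foliation transverse to the boundary on the Seifert fibered foundation $\hat{Y} := \hat{M} \cup \coprod_{i=1}^{n_{\textsc{d}}}(S^1 \times D^2_i)$, which fibers over a punctured ${\mathbb{R}}{\mathbb{P}}^2$ and so, by the work of Boyer and Clay on Seifert fibered spaces with non-orientable base \cite{BoyerClay}, carries no such foliation with a non-fiber boundary slope. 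Thus $\mathcal{F}(Y) \subseteq \{\infty\}$, and Propositions~\ref{prop: boyer and clay's taut foliation slope vs N-filling} and~\ref{prop: L-space N-fillings} with Theorem~\ref{thm: lntf equivalence} give $\mu \in \mathcal{L}^{\circ}(Y)$ for every $\mu \ne \infty$, i.e.\ $\mathcal{L}(Y) = \langle -\infty, +\infty\rangle$.

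Now suppose $\hat{M}$ has orientable base. By Theorem~\ref{thm: lntf equivalence} it suffices to identify $\mathcal{F}(Y)$. By Proposition~\ref{prop: boyer and clay's taut foliation slope vs N-filling}, $\mu \in \mathcal{F}(Y)$ iff $Y^{\bar{N}}(\mu)$ admits a co-oriented taut foliation; by Corollary~\ref{cor: taut tree manifold foliations restrict}, the Boyer--Clay foliation gluing theorem \cite[Theorem 9.5.2]{BoyerClay}, and $\mathcal{F}(\bar{N}) = \{l_{\bar{N}}\}$, this holds iff there are slopes $\nu_i \in \varphi_{i*}^{{\mathbb{P}}}(\mathcal{F}(Y_i))$ on $\partial^{\textsc{g}}_i\hat{M}$ such that $\hat{Y}$ carries a co-oriented taut foliation transverse to its boundary with slope $\mu$ on $\partial Y$ and slope $\nu_i$ on each $\partial^{\textsc{g}}_i\hat{M}$. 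By the inductive hypothesis and L/NTF-equivalence, $\varphi_{i*}^{{\mathbb{P}}}(\mathcal{F}(Y_i))$ is the closed interval complementary to the interior of $\varphi_{i*}^{{\mathbb{P}}}(\mathcal{L}(Y_i)) = [[y^{\textsc{g}}_{i-}, y^{\textsc{g}}_{i+}]]$ — namely $[[y^{\textsc{g}}_{i+}, y^{\textsc{g}}_{i-}]]$ when $Y_i$ is Floer simple, and all of ${\mathbb{P}}(H_1(\partial^{\textsc{g}}_i\hat{M}))$ otherwise. Applying the Jankins--Neumann--Naimi classification in the form of Theorem~\ref{thm: J Seifert fibered foliation classification} to $\hat{M}$, with $J$ the solid-torus indices together with the formal index $0$ (slope $y^{\textsc{d}}_0 = 0 \in {\mathbb{Z}}$) and $\bar{J}$ the index of $\partial Y$ together with the $\textsc{g}$-indices, converts the existence of this foliation on $\hat{Y}$ into a Jankins--Neumann inequality: $0$ must lie weakly between two quantities, each a max (resp.\ min) over the finitely many relevant $k$, built from $\lfloor y^{\textsc{d}}_i k\rfloor$ on the $J$-slopes and from $\lceil \mu k\rceil - 1$, $\lceil \nu_i k\rceil - 1$ (resp.\ $\lfloor \mu k\rfloor + 1$, $\lfloor \nu_i k\rfloor + 1$) on the $\bar{J}$-slopes.

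It remains to eliminate the existential quantifier over the $\nu_i$: since $\lfloor \nu_i k\rfloor$ and $\lceil \nu_i k\rceil$ are monotone in $\nu_i$, the relevant extrema over $\nu_i \in [[y^{\textsc{g}}_{i+}, y^{\textsc{g}}_{i-}]]$ occur at its endpoints $y^{\textsc{g}}_{i-}, y^{\textsc{g}}_{i+}$, and with the identities (\ref{eq: closed interval to ceiling floor}) and (\ref{eq: open interval to ceiling floor}) the whole family of conditions collapses to the single statement that $\mu$ lies in the open interval with left endpoint $y_{-}$ and right endpoint $y_{+}$ exactly as defined in the theorem ($y^{\textsc{g}}_{i+}$ entering $y_-$, $y^{\textsc{g}}_{i-}$ entering $y_+$, and an infinite summand forcing $y_- := \infty$ or $y_+ := \infty$). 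Hence $\mathcal{L}^{\circ}(Y)$ is that open interval, and since $\mathcal{L}(Y)$ is closed and, by \cite{lslope}, is $\emptyset$, a point, a closed interval, or ${\mathbb{P}}(H_1(\partial Y))$ minus the rational longitude, taking closures gives $\mathcal{L}(Y) = [[y_-, y_+]]$ when $Y$ is Floer simple. When $\mathcal{L}^{\circ}(Y) = \emptyset$ — in particular when some daughter is not Floer simple, so some $\nu_i$ is unconstrained — the Floer-simple/isolated/empty trichotomy and, in the isolated case, the identification of the unique L-space filling slope with $y_- = y_+$ are supplied by Proposition~\ref{prop: characterization of Floer simple graph manifolds}, whose hypotheses hold by the assumption $\mathcal{L}(Y) \ne \emptyset$; and the solid-torus case lies entirely in the base case, since $n_{\textsc{g}} \ge 1$ precludes it. The main obstacle is this last passage — matching the boundary components of $\hat{Y}$ to the $J$/$\bar{J}$ roles of Theorem~\ref{thm: J Seifert fibered foliation classification} (including the formal index $0$), tracking the ceiling-versus-floor (open-versus-closed endpoint) bookkeeping through the quantifier elimination via (\ref{eq: open interval to ceiling floor}), and handling the degeneracies at the fiber slope $\infty$ and at non-Floer-simple daughters; the foliation-restriction and gluing inputs are by comparison routine applications of the cited results.
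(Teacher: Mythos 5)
Your route is genuinely different from the paper's. Rather than peeling off one daughter subtree at a time via the L-space gluing criterion of Proposition~\ref{prop: gluing prop for LNTF equivalent manifolds} applied to $Y(y) = \hat{Y}[y] \cup_{\varphi_{n_{\textsc{g}}}} Y_{n_{\textsc{g}}}$ (the paper's strategy in Sections~\ref{ss: inductive set-up for main result}--\ref{ss: orientable base, cases involving solid torus Y hat}), you compute $\mathcal{F}(Y)$ in one shot by applying Jankins--Neumann--Naimi to the foundation and then quantifier-eliminating the internal slopes $\nu_i$. Unfortunately the proposal has several genuine gaps.

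The most serious is that a foliation computation can only deliver $\mathcal{L}^{\circ}(Y)$: when $Y$ has an isolated L-space filling, $\mathcal{L}^{\circ}(Y) = \emptyset$, its closure is $\emptyset$, and the single filling slope is invisible to your method, yet the theorem asserts $\mathcal{L}(Y) = \{y_-\} = \{y_+\}$ in exactly this case. Your escape hatch, invoking Proposition~\ref{prop: characterization of Floer simple graph manifolds}, is circular, since that proposition and this theorem are proved together by a joint induction (Section~\ref{ss: inductive set-up for main result}); neither is available as an independent input for the current $Y$, only for its daughters. Second, the ``extrema at endpoints'' quantifier elimination is far from a one-liner. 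The $\nu_i$ range over closed arcs of $\mathbb{Q}\cup\{\infty\}$ which may pass through $\infty$, where $\lfloor \nu_i k\rfloor$ and $\lceil \nu_i k\rceil$ are unbounded, and the Jankins--Neumann condition is the conjunction $y_+(\mu,\nu_*) \le 0 \le y_-(\mu,\nu_*)$ that must be met by a \emph{single} $\nu_*$; since both $y_+$ and $y_-$ are nonincreasing in each $\nu_i$, the two halves pull $\nu_i$ in opposite directions and cannot simply be extremized at opposite endpoints without the kind of auxiliary estimates the paper develops in Claims~\ref{claim: non-solid-torus Yhat case, ineqs imply ineqs}--\ref{claim: Yhat solid torus, y- y+ outside longs with equality sometimes}. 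The degeneracies those claims resolve are precisely the isolated and empty cases. Finally, your non-orientable-base argument rests on the assertion that a Seifert fibered space over a multiply-punctured ${\mathbb{R}}{\mathbb{P}}^2$ admits no boundary-transverse co-oriented taut foliation with non-fiber slope at $\partial Y$. If that were true you would conclude $\mathcal{F}(Y) \subseteq \{\infty\}$, hence $\mathcal{L}(Y) \supseteq \mathbb{Q}$, for \emph{every} such $Y$; but this contradicts condition $(\textsc{fs0}\mkern-.5mu)$ of Proposition~\ref{prop: characterization of Floer simple graph manifolds}, which exhibits non-orientable-base tree manifolds with $\mathcal{L}(Y) = \emptyset$ whenever some daughter has $\infty \notin \mathcal{L}^{\circ}(Y_i)$. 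The paper sidesteps this entirely by using the L-space gluing formula on $Y(y) = \hat{Y}[y] \cup Y_{n_{\textsc{g}}}$ rather than trying to determine $\mathcal{F}$ of a non-orientable-base foundation directly.
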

{\noindent{\bf{Remark.}} The above formulae for $y_{\mp}$ are
finitely computable.
In particular,
the maximum (respectively, minimum) is realized for 
$k \le s_{\pm}$, where $s_{\pm}$
is the least common positive multiple of the
denominators of $y_1^{\textsc{d}}, \ldots, y_{n_{{\textsc{d}}}}^{\textsc{d}}$ and
$y_{1\pm}^{\textsc{g}}, \ldots, y_{n_{{\textsc{g}}}\pm}^{\textsc{g}}$, with $\mp$ and $\pm$ cases
taken respectively from top to bottom.
If computation is not the goal, then one can avoid
treating the solid torus case separately by
replacing ``$\max$'' with ``$\sup$'' and ``$\min$'' with ``$\inf$.''}
\smallskip

Unlike the case of oriented Seifert fibered spaces over
the M{\"o}bius strip or disk,
not all graph manifolds with torus boundary are Floer simple.
We therefore need a companion result to characterize precisely 
when $\mathcal{L}^{\circ}$ or $\mathcal{L}$ is nonempty.
\begin{prop}
\label{prop: characterization of Floer simple graph manifolds}
Suppose $Y\!$ is a graph manfold with torus boundary.  If 
$b_1(Y) \mkern-1.5mu>\mkern-1.5mu 1$,
then $\mathcal{L}(Y) \mkern-1.5mu=\mkern-.5mu \emptyset$.
Suppose $b_1(Y)\mkern-1.5mu=\mkern-1.5mu1$, so that 
$Y\!$ admits the recursive description in
Section~\ref{subsection: conventions for graph manifolds}.

If the JSJ component containing $\mkern.2mu\partial Y\mkern-3.5mu$ has
non-orientable base, 
then $\mathcal{L}(Y) \mkern-1.5mu\neq\mkern-.5mu \emptyset$ if and only if
$\mkern1.5muY\!$ is Floer simple, if and only if the following holds:
\begin{enumerate}
\item[$(\textsc{fs0}\mkern-.5mu)$]
All daughter subtrees
$Y_1, \ldots, Y_{n_{{\textsc{g}}}}\mkern-3mu$ are Floer simple;
\\
$\infty \in ([[y_{i-}^{\textsc{g}}, y_{i+}^{\textsc{g}}]])^{\circ}$
for all $i \in \{1, \ldots, n_{{\textsc{g}}}\}$.
\end{enumerate}

If the JSJ component containing $\mkern.8mu\partial Y\mkern-2mu$
has orientable base, then $Y\mkern-4mu$ is Floer simple if and only if
the daughter subtrees
$Y_1, \ldots, Y_{\mkern-1.5mu n_{{\textsc{g}}}}\mkern-3mu$ are each Floer simple
and one of the following holds:
\begin{enumerate}
\item[$(\textsc{fs1}\mkern-.5mu)$]
$[[y_{j-}^{\textsc{g}}, y_{j+}^{\textsc{g}}]]
\mkern-2mu=\mkern-2mu
\left<-\infty, +\infty\right>$
for some $j \mkern-2mu\in\mkern-2mu \{1, \ldots, n_{{\textsc{g}}}\}$;
   \\
$\infty \mkern-2mu\in\mkern-2mu ([[y_{i-}^{\textsc{g}}, y_{i+}^{\textsc{g}}]])^{\circ}$
for all $\mkern1mu i \mkern-2mu\in\mkern-2mu \{1, \ldots, n_{{\textsc{g}}}\} \setminus \{j\}$.

\item[$(\textsc{fs2}\mkern-.5mu)$]
$[[y_{j-}^{\textsc{g}}, y_{j+}^{\textsc{g}}]]
\mkern-2mu=\mkern-2mu
[y_{j-}^{\textsc{g}}, y_{j+}^{\textsc{g}}]$ for some 
$j \mkern-2mu\in\mkern-2mu \{1, \ldots, n_{{\textsc{g}}}\}$;
  \\
$\infty \mkern-2mu\in\mkern-2mu ([[y_{i-}^{\textsc{g}}, y_{i+}^{\textsc{g}}]])^{\circ}$
for all $\mkern1mu i \mkern-2mu\in\mkern-2mu \{1, \ldots, n_{{\textsc{g}}}\} \setminus \{j\}$; 
  \\
$y_- \mkern-2mu<\mkern-1mu y_+$.

\item[$(\textsc{fs3}\mkern-.5mu)$]
At least one of 
$\{i :
[\mkern-.4mu[y_{i-}^{\textsc{g}}, y_{i+}^{\textsc{g}}]\mkern-.4mu]
\mkern-2.5mu=\mkern-2.5mu [-\infty, y_{i+}^{\textsc{g}}]\}$ and
$\{i : 
[\mkern-.4mu[y_{i-}^{\textsc{g}}, y_{i+}^{\textsc{g}}]\mkern-.4mu]
\mkern-2.5mu=\mkern-2.5mu [y_{i-}^{\textsc{g}} ,+\infty]\}$
is the empty set;
\\
$\infty \in 
[[y_{i-}^{\textsc{g}}, y_{i+}^{\textsc{g}}]]$
for all $\mkern1mu i \mkern-2mu\in\mkern-2mu 
\{1, \ldots, n_{{\textsc{g}}}\}$.

\end{enumerate}

If the JSJ component containing $\mkern.2mu\partial Y\mkern-3.5mu$
has orientable base, 
then $\mathcal{L}(Y) \neq \emptyset$
with $Y\mkern-2mu$ {\em{not}} Floer simple
if and only if
one of the following holds:
\begin{enumerate}
\item[$(\textsc{nfs1}\mkern-.5mu)$]
$n_{{\textsc{g}}} \mkern-3mu=\mkern-1mu 1$,
$\left|\{i: y_i^{\textsc{d}} \in {\mathbb{Q}} \setminus {\mathbb{Z}}\}\right| \le 1$;
\\
$\varphi_{1*}^{{\mathbb{P}}}(\mathcal{L}(Y_1)) = \{y^{\textsc{g}}_1 \}$
for some $y^{\textsc{g}}_1 \in {\mathbb{Q}}$;
  \\
$\sum_{i=1}^{n_{{\textsc{d}}}} \mkern-2mu y^{\textsc{d}}_i \mkern1mu+\mkern1mu y^{\textsc{g}}_1 
\in {\mathbb{Z}}$ or all $\mkern1.5mu y_1^{\textsc{d}},\ldots, y_{n_{{\textsc{d}}}}^{\textsc{d}} 
\mkern-2.5mu\in\mkern-.5mu{\mathbb{Z}}$.

\item[$(\textsc{nfs2}\mkern-.5mu)$]
All daughter subtrees
$Y_1, \ldots, Y_{n_{{\textsc{g}}}}$ are Floer simple;
\\
$[[y_{j-}^{\textsc{g}}, y_{j+}^{\textsc{g}}]]
=[y_{j-}^{\textsc{g}}, y_{j+}^{\textsc{g}}]$ for some $j \in \{1, \ldots, n_{{\textsc{g}}}\}$;
\\
$\infty \in ([[y_{i-}^{\textsc{g}}, y_{i+}^{\textsc{g}}]])^{\circ}$
for all $i \in \{1, \ldots, n_{{\textsc{g}}}\} \setminus \{j\}$;
\\
$y_- = y_+$.

\item[$(\textsc{nfs3}\mkern-.5mu)$]
All daughter subtrees
$Y_1, \ldots, Y_{n_{{\textsc{g}}}}$ are Floer simple;
  \\
$\{i :
[\mkern-.4mu[y_{i-}^{\textsc{g}}, y_{i+}^{\textsc{g}}]\mkern-.4mu]
\mkern-2.5mu=\mkern-2.5mu [-\infty, y_{i+}^{\textsc{g}}]\} \neq \emptyset$
and
$\{i : 
[\mkern-.4mu[y_{i-}^{\textsc{g}}, y_{i+}^{\textsc{g}}]\mkern-.4mu]
\mkern-2.5mu=\mkern-2.5mu [y_{i-}^{\textsc{g}} ,+\infty]\} \neq \emptyset$;
  \\
$\infty \in 
[[y_{i-}^{\textsc{g}}, y_{i+}^{\textsc{g}}]]$
for all $i \mkern-2mu\in\mkern-2mu 
\{1, \ldots, n_{{\textsc{g}}}\}$.

\item[$(\textsc{nfs4}\mkern-.5mu)$]
$\varphi^{{\mathbb{P}}}_{j*}(\mathcal{L}(Y_j)) 
\mkern-2.5mu=\mkern-2.5mu \{\infty\}$
for some 
$j \mkern-2.5mu\in\mkern-2.5mu \{1, \ldots, n_{{\textsc{g}}}\}$;
  \\
$\infty \mkern-2.5mu\in\mkern-2.5mu \varphi^{{\mathbb{P}}}_{i*}(\mathcal{L}(Y_i))$
for all 
$i \mkern-2.5mu\in\mkern-2.5mu \{1, \ldots, n_{{\textsc{g}}}\}$.

\end{enumerate}
\end{prop}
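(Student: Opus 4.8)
I would prove this proposition in tandem with Theorem~\ref{thm: l space interval for graph manifolds}, by induction on the height of the tree rooting $Y$. The base case is $n_{\textsc{g}}=0$: then $Y$ is a regular fiber complement in a Seifert fibered rational homology sphere, Proposition~\ref{prop: l space interval for seifert fibered spaces} says $Y$ is Floer simple with the stated formula, and the empty list of daughter subtrees makes $(\textsc{fs3})$ hold vacuously. For the inductive step, absorb the $n_{\textsc{d}}$ solid tori into $\hat{M}$ and write $Y=\hat{Y}\cup_{\boldsymbol{\varphi}}\coprod_{i=1}^{n_{\textsc{g}}}Y_i$ with $\hat{Y}$ Seifert fibered over an $(n_{\textsc{g}}+1)$-punctured $S^2$ or ${\mathbb{R}}{\mathbb{P}}^2$, each $Y_i$ a non-solid-torus graph manifold of strictly smaller tree height, hence already classified.

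The engine for the inductive step is the $\bar{N}$-filling dictionary. By Proposition~\ref{prop: L-space N-fillings}, $\mu\in\mathcal{L}^{\circ}(Y)$ iff the closed graph manifold $Y^{\bar{N}}(\mu)$ is an L-space; by Theorem~\ref{thm: lntf equivalence} this holds iff $Y^{\bar{N}}(\mu)$ carries no co-oriented taut foliation; and iterating Proposition~\ref{prop: gluing prop for LNTF equivalent manifolds} across the tori $\partial_i^{\textsc{g}}\hat{M}$ recasts ``$\mathcal{L}^{\circ}(Y)\neq\emptyset$'' as asking for a slope at which a rotation-number constraint for $\hat{Y}$ fails simultaneously to be met by all the intervals $\varphi_{i*}^{\mathbb{P}}(\mathcal{L}(Y_i))$. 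Folding the daughter endpoints $y^{\textsc{g}}_{i\pm}$ and the Seifert data $y^{\textsc{d}}_i$ into this constraint through the identities (\ref{eq: closed interval to ceiling floor}) and (\ref{eq: open interval to ceiling floor}) --- the same step that upgrades (\ref{eq: SFS L-space endpoints}) to (\ref{eq: defs of y- and y+}) --- produces the candidate endpoints $y_{\mp}$ of Theorem~\ref{thm: l space interval for graph manifolds}. A preliminary observation removes one family: if $\mathcal{L}(Y_i)=\emptyset$ for some $i$, then in the splitting $Y(\mu)=Y_i\cup(\text{rest})$ the interval $\mathcal{L}^{\circ}(\text{rest})$ can never fill all of ${\mathbb{P}}(H_1(\partial Y_i))$, so the gluing criterion (using Proposition~\ref{prop: l-space gluing for compressible boundary} in the cases where a piece has compressible boundary) forces $\mathcal{L}(Y)=\emptyset$; so we may assume each $Y_i$ is Floer simple or has an isolated L-space filling.

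The rest is a case analysis organized by the position of the fiber slope $\infty\in{\mathbb{P}}(H_1(\partial_i^{\textsc{g}}\hat{M}))$ inside each $\varphi_{i*}^{\mathbb{P}}(\mathcal{L}(Y_i))=[[y^{\textsc{g}}_{i-},y^{\textsc{g}}_{i+}]]$. The contribution of $Y_i$ to the combined constraint is a bounded term only when the relevant endpoint $y^{\textsc{g}}_{i\pm}$ is finite: a daughter with $\infty$ in the interior of its interval (equivalently $y^{\textsc{g}}_{i-},y^{\textsc{g}}_{i+}\in{\mathbb{Q}}$ with $y^{\textsc{g}}_{i-}\ge y^{\textsc{g}}_{i+}$) imposes no obstruction; a one-sided-infinite daughter, $[[y^{\textsc{g}}_{i-},y^{\textsc{g}}_{i+}]]=[-\infty,y^{\textsc{g}}_{i+}]$ or $[y^{\textsc{g}}_{i-},+\infty]$, imposes a one-sided obstruction; a genuine finite closed interval imposes a two-sided one; and an isolated-filling daughter, at $\infty$ or at a finite slope, is the most rigid. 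Recording when both $y_{\mp}$ come out finite, then whether $y_-<y_+$, $y_-=y_+$, or $y_->y_+$ (or some endpoint equals $\infty$), and --- in the borderline case $y_-=y_+$ --- whether $Y$ is still Floer simple (the test for which is described below), reproduces exactly the lists $(\textsc{fs0})$ through $(\textsc{nfs4})$. The non-orientable-base case is lighter: there the rational longitude of $\hat{Y}$ is $\infty$, so $Y$ is Floer simple precisely when every daughter gluing is interior at $\infty$ --- which is $(\textsc{fs0})$ --- and the fact that $\mathcal{L}(Y)$ is then either empty or $\langle-\infty,+\infty\rangle$, never a point or a proper finite interval, is the non-orientable half of Theorem~\ref{thm: l space interval for graph manifolds}.

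The main obstacle is the bookkeeping --- exhaustiveness and mutual exclusivity of the list, and especially the borderline families. Since the same max/min formula governs both $\mathcal{L}(Y)=[[y_-,y_+]]$ and $\mathcal{L}(Y)=\{y_-\}=\{y_+\}$, the distinctions $(\textsc{fs2})$ versus $(\textsc{nfs2})$ and $(\textsc{fs3})$ versus $(\textsc{nfs3})$ cannot be read off interval shapes; they require actually evaluating the formula, together with the observation --- via Propositions~\ref{prop: boyer and clay's taut foliation slope vs N-filling} and \ref{prop: L-space N-fillings} --- that $\mathcal{L}^{\circ}(Y)=\emptyset$ is equivalent to every $\bar{N}$-filling of $Y$ admitting a co-oriented taut foliation, so that $y_-=y_+$ with no other L-space $\bar{N}$-filling forces $\mathcal{L}(Y)$ down to the single point $\{l\}$ rather than the generalized-solid-torus interval ${\mathbb{P}}(H_1(\partial Y))\setminus\{l\}$. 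Finally, since this gluing argument used only that $\hat{Y}$ is Seifert fibered and that the $Y_i$ are boundary incompressible with known interval shapes, the same analysis also establishes Proposition~\ref{prop: intro version of JN graph theorem applicable for non graph}.
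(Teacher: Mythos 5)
Your architecture is the paper's own: Theorem~\ref{thm: l space interval for graph manifolds} and this proposition are proved in tandem by induction, the engine is the two-piece gluing criterion of Proposition~\ref{prop: gluing prop for LNTF equivalent manifolds} (supplemented by Proposition~\ref{prop: l-space gluing for compressible boundary} when a piece has compressible boundary), the case analysis is organized by the position of $\infty$ in each $\varphi_{i*}^{{\mathbb{P}}}(\mathcal{L}(Y_i))$, and the endpoint formulae emerge from the identities (\ref{eq: closed interval to ceiling floor}) and (\ref{eq: open interval to ceiling floor}). Your preliminary reduction (a daughter with $\mathcal{L}(Y_i)=\emptyset$ forces $\mathcal{L}(Y)=\emptyset$), your criterion that $\infty\in([[y_{i-}^{\textsc{g}},y_{i+}^{\textsc{g}}]])^{\circ}$ exactly when $y_{i-}^{\textsc{g}},y_{i+}^{\textsc{g}}\in{\mathbb{Q}}$ with $y_{i-}^{\textsc{g}}\ge y_{i+}^{\textsc{g}}$, and your treatment of the non-orientable case all match.

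The one place the plan as written does not close is the induction itself. You induct only on tree height and justify the iteration by noting that each $Y_i$ has strictly smaller height, ``hence already classified.'' But Proposition~\ref{prop: gluing prop for LNTF equivalent manifolds} glues two manifolds along one torus, so iterating it across the $n_{\textsc{g}}$ tori requires knowing the L-space interval of the complementary piece at each stage --- the root together with all but one daughter and a Dehn filling of $\partial Y$ --- and that piece has the \emph{same} tree height as $Y$, merely fewer daughter subtrees; your inductive hypothesis does not cover it. The repair is the paper's double induction on tree height and, within fixed height, on the number of daughter subtrees, applied to the manifolds $\hat{Y}[y]$ in the decomposition $Y(y)=\hat{Y}[y]\cup_{\varphi_{n_{\textsc{g}}}}Y_{n_{\textsc{g}}}$. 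Two smaller points: the quantitative claims driving the case analysis (that $y_{n_{\textsc{g}}+}^{\textsc{g}}>\hat{y}_-$ is equivalent to $y\in[y_-,+\infty]$, and that $y_-$ strictly exceeds $-\sum_i y_i^{\textsc{d}}-\sum_i y_{i+}^{\textsc{g}}$ away from the solid-torus configurations, with an exact equality criterion when $\hat{Y}[y]$ can degenerate to a solid torus) must actually be proved, since they are what separate $(\textsc{fs2})$ and $(\textsc{nfs2})$ from $\mathcal{L}(Y)=\emptyset$ and what isolate $(\textsc{nfs1})$; and your appeal to $\bar{N}$-fillings to decide whether $y_-=y_+$ yields a point or a generalized solid torus is not needed --- in that regime the gluing computation returns $\mathcal{L}(Y)=[y_-,+\infty]\cap[-\infty,y_+]$ directly, which is the single point $\{y_-\}$ when the endpoints coincide.
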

{\noindent{Note that all eight $(\textsc{fs})$ and $(\textsc{nfs})$
conditions are mutually exclusive.
Note also that the isolated L-space fillings described in
$(\textsc{nfs3}\mkern-.5mu)$ and $(\textsc{nfs4}\mkern-.5mu)$
are not graph manifolds.}}
\smallskip

We now proceed to prove our main results,
starting with that of
L/NTF-equivalence.

\subsection{Proof of Theorem~\ref{thm: lntf equivalence}}
\label{ss: Proof of Lntf equivalence}
Proposition~\ref{prop: proof of lntf equivalence for b1>1 and seifert + N}
gives the desired result for $b_1(Y) >1$.

We therefore restrict attention to
graph manifolds $Y$ with $b_1(Y) = 1$ and torus boundary $\partial Y$,
so that $Y$ admits the recursive description in
(\ref{eq: recursive tree construction}),
with tree graph rooted at the Seifert fibered piece containing $\partial Y$.
Inductively assume that any such $Y\!$ with tree height $\le k\mkern-1.5mu-\mkern-1.5mu1$
is L/NTF-equivalent and satisfies
$\mathcal{F}(Y) \amalg (\mathcal{L}(Y) \cup \mathcal{R}(Y)) = {\mathbb{P}}(H_1(\partial Y))$,
noting that
Proposition~\ref{prop: l space interval for seifert fibered spaces}
covers the case of trees of height zero.
Fix an arbitrary tree manifold $Y$ with $b_1(Y)=1$,
torus boundary, and tree height $k>0$,
and parameterize its data as in
Section~\ref{subsection: conventions for graph manifolds}.

To each $j \mkern-2mu\in\mkern-2mu \{1, \ldots, n_{{\textsc{g}}}\}$,
$y^j_* \mkern-2mu:=\mkern-2mu 
(y_{j+1}^j, \ldots, y_{n_{{\textsc{g}}}+1}^j) 
\mkern-2mu\in\mkern-2mu ({\mathbb{Q}} \cup \{\infty\})^{n_{{\textsc{g}}}+1-j}$,
and $\theta \mkern-2mu\in\mkern-2mu \{0,1\}$,
we associate a manifold $Y^j_{\theta}[y^j_*]$, constructed as follows.
Starting with $\hat{M}$, first perform 
Dehn fillings of slopes $y_1^{\textsc{d}}, \ldots, y_{n_{{\textsc{d}}}}^{\textsc{d}}$
along the respective boundary components 
$\partial^{{\textsc{d}}}_1 \mkern-1.8mu\hat{M}, \ldots, \partial^{{\textsc{d}}}_{n_{{\textsc{d}}}}\mkern-1.8mu \hat{M}$
in $\hat{M}$.
Next, attach the graph manifolds
$Y_1, \ldots, Y_{j-1}$, via the 
respective gluing maps $\varphi_1, \ldots, \varphi_{j-1}$,
to the resulting manifold.
Leaving the boundary component 
$\partial^{{\textsc{g}}}_{\mkern-1mu j}\mkern-2mu \hat{M} \mkern-1.8mu=:\mkern-1.8mu 
\partial Y^{\mkern-.5mu j}[y_*^j]$ unfilled,
lastly perform 
$\bar{N}$-fillings of slopes
$y_{j+1}^j, \ldots, y_{n_{{\textsc{g}}}+1}^j$ along the respective boundary components
$\partial^{{\textsc{g}}}_{j+1} \mkern-1.4mu\hat{M}, \ldots, 
\partial^{{\textsc{g}}}_{n_{{\textsc{g}}}+1}\mkern-1.4mu \hat{M}$
of the resulting manifold, and call the result $Y^j_0[y_*^j]$.
To form $Y^j_1[y_*^j]$, replace the $\bar{N}$-filling
of slope $y^j_{n_{{\textsc{g}}}+1}$ in 
$\partial^{{\textsc{g}}}_{{n_{{\textsc{g}}}+1}}\mkern-.7mu\hat{M} \mkern-2mu\subset\mkern-2mu Y^j_0[y_*^j]$
with the Dehn filling of slope $y^j_{n_{{\textsc{g}}}+1}$.
In addition, set $y^{n_{{\textsc{g}}}+1}_* \mkern-2.5mu:= \emptyset$ and 
$Y^{n_{{\textsc{g}}}+1}_0\mkern-1mu[\emptyset]\mkern-1mu:=\mkern-1mu Y$.

For positive $j \mkern-1.7mu\le\mkern-1.7mu n_{{\textsc{g}}}$, 
inductively assume
that for any $y_*^j \in ({\mathbb{Q}} \cup \infty)^{n_{{\textsc{g}}}+1-j}$ and $\theta \in \{0,1\}$,
any manifold of the form
$Y_{\theta}^j[y^j_*]\mkern.5mu$ is L/NTF-equivalent
if it is prime,
noting that
Proposition~\ref{prop: proof of lntf equivalence for b1>1 and seifert + N}
covers the base case of $j=1$.
For any $y \in {\mathbb{Q}} \cup \{\infty\}$,
$y^{j+1}_* := (y_{j+2}^j, \ldots, y_{n_{{\textsc{g}}}+1}^j) \in ({\mathbb{Q}} \cup \{\infty\})^{n_{{\textsc{g}}}+1-j}$,
$\theta \in \{0,1\}$, and
manifold of the form
$Y_{\theta}^{j+1}[y^{j+1}_*]$,
we can make matching choices of $\bar{N}$-filling gluing maps to obtain
\begin{equation}
Y_{\theta}^{j+1}[y^{j+1}_*]^{\bar{N}}\mkern-2mu(y)
\mkern1.5mu=\mkern1.5mu 
Y_j \cup Y_{\theta}^j[(y, y^{j+1}_*)].
\end{equation}
Suppose $Y_{\theta}^{j+1}[y^{j+1}_*]^{\bar{N}}\mkern-2mu(y)$
is prime, implying
$Y_{\theta}^j[(y, y^{j+1}_*)]$
is prime and hence L/NTF-equivalent
by inductive assumption.
Since $Y_{\theta}^j[(y, y^{j+1}_*)]$
and $Y_j$ are L/NTF-equivalent non-solid-torus graph manifolds
with torus boundary,
Proposition~\ref{prop: gluing prop for LNTF equivalent manifolds}
makes $Y_{\theta}^{j+1}[y^{j+1}_*]^{\bar{N}}\mkern-2mu(y)$ an L-space if and only if
it fails to admit a co-oriented taut foliation.
Since this holds for arbitrary $y \in {\mathbb{Q}} \cup \{\infty\}$,
Proposition~\ref{prop: lntf in terms of Nfilling}
tells us $Y_{\theta}^{j+1}[y^{j+1}_*]$ is L/NTF-equivalent.

Completing our induction on $j$, we conclude that
$Y := Y_0^{n_{{\textsc{g}}}+1}[\emptyset]$ is L/NTF-equivalent,
and that $Y_1^{n_{{\textsc{g}}}}[y]$ is L/NTF-equivalent
for any $y \in {\mathbb{Q}} \cup \{\infty\}$ for which $Y_1^{n_{{\textsc{g}}}}[y]$ is prime.
Thus, for any prime Dehn filling $Y(y)$,
Proposition~\ref{prop: gluing prop for LNTF equivalent manifolds}
tells us that the union
\begin{equation}
Y(y) = Y_{n_{{\textsc{g}}}} \cup_{\varphi_{n_{{\textsc{g}}}}} Y_1^{n_{{\textsc{g}}}}[y]
\end{equation}
is an L-space if and only if it fails to admit a co-oriented taut foliation,
and so
\begin{equation}
{\mathbb{P}}(H_1(\partial Y)) \setminus (\mathcal{L}(Y)\cup \mathcal{R}(Y))
=
\mathcal{F}(Y) \setminus \mathcal{R}(Y)
=
\mathcal{F}(Y).
\end{equation}
Inducting on tree height $k$ then
completes the proof. \qed
\smallskip

We next prove 
Theorem~\ref{thm: l space interval for graph manifolds}
and Proposition~\ref{prop: characterization of Floer simple graph manifolds}}
in tandem over the course of 
Sections 
\ref{ss: inductive set-up for main result} -- 
\ref{ss: orientable base, cases involving solid torus Y hat}.
The inductive program laid out in
Section
\ref{ss: inductive set-up for main result}
spans all three of the subsequent subsections.

\subsection{Inductive Set-up for Proof of 
Theorem~\ref{thm: l space interval for graph manifolds}
and Proposition~\ref{prop: characterization of Floer simple graph manifolds}}
\label{ss: inductive set-up for main result}
Both results 
hold automatically when $b_1(Y) \!>\! 1$.
This leaves the case of $b_1(Y) \!=\! 1$, so that 
$Y$ admits the recursive description in
(\ref{eq: recursive tree construction}),
with tree rooted at the Seifert fibered piece containing $\partial Y$.

Inductively assume that both
Theorem~\ref{thm: l space interval for graph manifolds}
and Proposition~\ref{prop: characterization of Floer simple graph manifolds}
hold for all tree manifolds with torus boundary, $b_1 = 1$, and
tree height $\le k-1$,
noting that Proposition
\ref{prop: l space interval for seifert fibered spaces}
covers the height zero case.
In addition, inductively assume that
Theorem~\ref{thm: l space interval for graph manifolds}
and Proposition~\ref{prop: characterization of Floer simple graph manifolds}}
hold for any tree manifold with torus boundary, $b_1 \mkern-2.5mu=\mkern-2.5mu 1$,
tree height $\le\mkern-2mu k$, and 
$\le \mkern-2mu n_{{\textsc{g}}} \mkern-2.5mu-\mkern-2mu 1$
daughter subtrees,
noting that
Proposition~\ref{prop: l space interval for seifert fibered spaces}
also covers the case of zero daughter subtrees.

For the remainder of the proof, we fix an arbitrary 
height $k$ tree manifold $Y\mkern-3mu$ with torus boundary and $b_1(Y)=1$,
as described in 
Section~\ref{subsection: conventions for graph manifolds}.
Thus, $Y$ has $n_{{\textsc{g}}}$
daughter subtrees $Y_1, \ldots, Y_{n_{{\textsc{g}}}}$,
attached via respective gluing maps
$\varphi_1, \ldots, \varphi_{n_{{\textsc{g}}}}$,
and the Seifert fibered piece containing $\partial Y\mkern-3.2mu$, 
at which we root the tree for $Y$,
is the Dehn filling of slope
$y^{\textsc{d}}_*=(y_1^{\textsc{d}}, \ldots, y_{n_{{\textsc{d}}}}^{\textsc{d}})$
of the ``foundation'' $\mkern-2.5mu\hat{M}$ of $Y\mkern-4.5mu,\mkern1mu$
where $\hat{M}$ is
either the trivial $S^1$-fibration 
over an 
$n\mkern-2.8mu:\mkern-.5mu=\mkern-1.8mu n_{{\textsc{d}}}\mkern-1.9mu+\mkern.5mu n_{{\textsc{g}}}\mkern-2.2mu
+\mkern-1.5mu 1$-punctured $S^2\mkern-1.8mu$ or the 
twisted $S^1$-fibration
over an 
$n$-punctured ${\mathbb{R}}{\mathbb{P}}^2\mkern-2.2mu$.
For any $y  \in {\mathbb{Q}} \cup \{\infty\}$,
let $\hat{Y}[y]$ denote the complement of 
$Y_{n_{{\textsc{g}}}} \mkern-5mu\setminus\mkern-2mu \partial Y_{n_{{\textsc{g}}}}$
in the Dehn filling $Y\mkern-1.5mu(y)$, so that we regard $Y\mkern-1.5mu(y)$ as the union
\begin{equation}
\label{eq: union for induction}
Y\mkern-1.9mu(y) = \hat{Y}[y] \mkern1.5mu \cup_{\varphi_{n_{{\textsc{g}}}}} 
\mkern-3.9muY_{n_{{\textsc{g}}}}.
\end{equation}
For any $y \in {\mathbb{Q}}$, our inductive assumptions make
Theorem~\ref{thm: l space interval for graph manifolds}
and Proposition~\ref{prop: characterization of Floer simple graph manifolds}
hold for $\hat{Y}[y]$ and $Y_{n_{{\textsc{g}}}}$, since
$Y_{n_{{\textsc{g}}}}$ has tree height $\le k-1$,
and since for $y \neq \infty$,
$\hat{Y}[y]$ is a $b_1 = 1$ tree manifold with
torus boundary,
$n_{{\textsc{g}}}-1 $ daughter subtrees, and tree height $\le k$.

\subsection{Non-orientable Base}
\label{ss: non-orientable base case}
Consider the case in which
$\hat{M}$ is $S^1$-fibered over a punctured ${\mathbb{R}}{\mathbb{P}}^2$.
First note that since the regular fiber class is torsion, its primitive lift 
$\tilde{f}^{\textsc{g}}_{n_{{\textsc{g}}}} \in H_1(\partial Y)$,
of slope $\infty$, is the rational longitude,
which means that $\infty \notin \mathcal{L}(Y)$.

Suppose there is some $y \mkern-2.5mu\neq\mkern-2.8mu \infty$ for which
$\mathcal{L}(\hat{Y}[y]) \mkern-2.8mu\neq\mkern-2mu \emptyset$.
Then, by inductive assumption, 
Proposition~\ref{prop: characterization of Floer simple graph manifolds}
tells us that
the daughter subtrees $Y_1, \ldots, Y_{n_{{\textsc{g}}}-1}\mkern-1.2mu$ are Floer simple,
with
$\infty \mkern-2mu\neq\mkern-2mu y^{\textsc{g}}_{i-} 
\mkern-2mu\ge\mkern-2mu y^{\textsc{g}}_{i+} 
\mkern-2mu\neq\mkern-2mu \infty$
for all $i \mkern-2mu\in\mkern-2mu \{1, \ldots, n_{{\textsc{g}}}\mkern-2mu-\mkern-.8mu1\}$,
where
$[[y^{\textsc{g}}_{i-}, y^{\textsc{g}}_{i+}]]:=
\varphi_i(\mathcal{L}(Y_i))$.
This conversely implies that $\hat{Y}[y]$ is Floer simple
for all $y \in \left<-\infty, +\infty\right>$.
Now, for each $y \in \left<-\infty, +\infty\right>$,
$\hat{Y}[y]$ is a non-solid-torus graph manifold, and so
Proposition~\ref{prop: gluing prop for LNTF equivalent manifolds}
tells us that 
the union $Y(y) = \hat{Y}[y] \cup Y_{n_{{\textsc{g}}}}$
in (\ref{eq: union for induction}) is an L-space if and only if
\begin{equation}
\label{eq: l-space condition for rp2 in graph manifold proof}
\mathcal{L}^{\circ}(\hat{Y}[y]) \cup 
\varphi^{{\mathbb{P}}}_{n_{{\textsc{g}}}*}(\mathcal{L}^{\circ}(Y_{n_{{\textsc{g}}}}))
= {\mathbb{P}}(H_1(\partial^{\textsc{g}}_{n_{{\textsc{g}}}}\mkern-2mu \hat{M})).
\end{equation}
Since, by inductive assumption, 
Theorem~\ref{thm: l space interval for graph manifolds}
implies
$\mathcal{L}^{\circ}(\hat{Y}[y]) \!=\! \left<-\infty, +\infty\right>$
for all $y \!\in\! \left<-\infty, +\infty\right>$,
(\ref{eq: l-space condition for rp2 in graph manifold proof})
holds if and only if
$Y_{n_{{\textsc{g}}}}$ is Floer simple and has
$\infty \mkern-2mu\neq\mkern-2mu y^{\textsc{g}}_{{n_{{\textsc{g}}}}-} 
\mkern-2mu\ge\mkern-2mu y^{\textsc{g}}_{{n_{{\textsc{g}}}}+} 
\mkern-2mu\neq\mkern-2mu \infty$,
in which case we consequently have $\mathcal{L}(Y) = \left<-\infty, +\infty\right>$.

Conversely, suppose that $\mathcal{L}(\hat{Y}[y]) = \emptyset$
for all $y \in {\mathbb{Q}}$.
Then for all $y \in {\mathbb{Q}}$, 
Proposition~\ref{prop: gluing prop for LNTF equivalent manifolds}
implies that $Y(y) = \hat{Y}[y] \cup Y_{n_{{\textsc{g}}}}$
is not an L-space,
and so $\mathcal{L}(Y) = \emptyset$.
Moreover, since there is $y \in {\mathbb{Q}}$
with $\mathcal{L}(\hat{Y}[y]) = \emptyset$,
our inductive assumption for
Proposition~\ref{prop: characterization of Floer simple graph manifolds}
tells us that either there is some $i \in \{1, \ldots, n_{{\textsc{g}}}\mkern-2mu-\mkern-2mu1\}$
for which $Y_i$ is not Floer simple,
or there is some Floer simple $Y_i$ failing to satisfy
$\infty \mkern-2mu\neq\mkern-2mu y^{\textsc{g}}_{i-} 
\mkern-2mu\ge\mkern-2mu y^{\textsc{g}}_{i+} 
\mkern-2mu\neq\mkern-2mu \infty$.

Thus, in either case, both
Theorem~\ref{thm: l space interval for graph manifolds}
and
Proposition~\ref{prop: characterization of Floer simple graph manifolds}
hold for $Y$.

\subsection{Orientable Base: Cases in which
$\boldsymbol{\hat{Y}[y]}$ is never a solid torus}
\label{ss: orientable base, no solid tori}
From now on, we assume that
the JSJ component of $Y$ containing $\partial Y$ has orientable base.

In this subsection of the proof, we consider the case
in which $\hat{Y}[y]$ is not a solid torus for any $y \in {\mathbb{Q}} \cup \{\infty\}$.
More precisely, we consider the case of a fixed
tree manifold $Y$ with torus boundary and
$b_1(Y) \mkern-2mu=\mkern-2mu 1$,
parameterized as in 
Section
\ref{subsection: conventions for graph manifolds},
with tree height $k\mkern-2mu>\mkern-2mu0$
and $n_{{\textsc{g}}}\mkern-3mu>\mkern-2mu0$ daughter subtrees,
where we demand that
if $n_{{\textsc{g}}}\mkern-3mu-\mkern-2mu1 \mkern-2mu=\mkern-2mu 0$,
then $y_i^{\textsc{d}} \notin {\mathbb{Z}}$ for at least two distinct values of 
$i \in \{1, \ldots, n_{{\textsc{d}}}\}$.

We begin by fixing some notation.
For all $k\in {\mathbb{Z}}_{>0}$,
define ${{{\hat{y}^{\mkern1.7mu0}_{\mkern-.3mu-}\mkern-2mu(k)}}}, \mkern1mu{{{\hat{y}^{\mkern1.7mu0}_{\mkern-.3mu+}\mkern-2mu(k)}}} \in {\mathbb{Q}} \cup \{\infty\}$ by
\begin{align}
{{{\hat{y}^{\mkern1.7mu0}_{\mkern-.3mu-}\mkern-2mu(k)}}}
&:=
-\mfrac{1}{k}{{\textstyle{\left(
\mkern12mu1 + \sum\limits_{i=1}^{n_{{\textsc{d}}}} \lfloor y_i^{\textsc{d}}k \rfloor
+
\sum\limits_{i=1}^{n_{{\textsc{g}}}-1} (\lceil y_{i+}^{\textsc{g}}k \rceil -1)
\right)}}},
       \\ \nonumber
{{{\hat{y}^{\mkern1.7mu0}_{\mkern-.3mu+}\mkern-2mu(k)}}}
&:=
-\mfrac{1}{k}{{\textstyle{\left(
\mkern-2mu-1 + \sum\limits_{i=1}^{n_{{\textsc{d}}}} \lceil y_i^{\textsc{d}}k \rceil
+
\sum\limits_{i=1}^{n_{{\textsc{g}}}-1} (\lfloor y_{i-}^{\textsc{g}}k \rfloor +1)
\right)}}}.
\end{align}
The endpoints $y_-, y_+ \in {\mathbb{Q}} \cup \{\infty\}$ defined in
Theorem~\ref{thm: l space interval for graph manifolds} are then given by
\begin{equation}
\label{eq: defs of y- y+ in no solid torus case}
y_-\mkern-2mu
:= \max_{k>0} \left( \mkern1mu {{{\hat{y}^{\mkern1.7mu0}_{\mkern-.3mu-}\mkern-2mu(k)}}} - {{\textstyle{\frac{1}{k}}}}
(\lceil y_{n_{\textsc{g}}\mkern-.7mu+}^{\textsc{g}}k \rceil - 1)\right),
\;\;\;
y_+\mkern-2mu
:= \min_{k>0} \left( \mkern1mu {{{\hat{y}^{\mkern1.7mu0}_{\mkern-.3mu+}\mkern-2mu(k)}}} - {{\textstyle{\frac{1}{k}}}}
(\lfloor y_{n_{\textsc{g}}\mkern-.7mu-}^{\textsc{g}}k \rfloor + 1)\right).
\end{equation}

Moreover, if we define 
the functions 
$\hat{y}_-, \hat{y}_+ \in {\mathbb{Q}} \cup \{\infty\}$
of $y \in {\mathbb{Q}} \cup \{\infty\}$ by
\begin{equation}
\label{eq: defs of hat y- hat y+ in no solid torus case}
\hat{y}_- 
:= \max_{k>0} \left(-{{\textstyle{\frac{1}{k}}}}\lfloor yk \rfloor 
+ {{{\hat{y}^{\mkern1.7mu0}_{\mkern-.3mu-}\mkern-2mu(k)}}} \right),
\;\;\;\;
\hat{y}_+ 
:= \min_{k>0} \left(-{{\textstyle{\frac{1}{k}}}}\lceil yk \rceil 
+ {{{\hat{y}^{\mkern1.7mu0}_{\mkern-.3mu+}\mkern-2mu(k)}}} \right),
\end{equation}
then by inductive assumption, we have
\begin{equation}
\mathcal{L}(\hat{Y}[y])
=
\begin{cases}
\{\hat{y}_-\}
= \{\hat{y}_+\}
   &
\hat{Y}[y]\;\text{not Floer simple}
    \\
[[\hat{y}_-, \hat{y}_+]]
   &
\hat{Y}[y]\;\text{Floer simple}
\end{cases}
\end{equation}
for all $y \in {\mathbb{Q}}$ for which $\mathcal{L}(\hat{Y}[y]) \neq \emptyset$.
Since $\hat{Y}[y]$ and $Y_{n_{{\textsc{g}}}}$ are each non-solid-torus
graph manifolds for all $y \in {\mathbb{Q}}$,
Proposition~\ref{prop: gluing prop for LNTF equivalent manifolds}
then implies, for each $y \in {\mathbb{Q}}$, that
\begin{equation}
\label{eq: L-space for y rational when union with Yhat}
y \in \mathcal{L}(Y)
\;\;\iff\;\;
\mathcal{L}^{\circ}\mkern-1.5mu(\hat{Y}[y])
\mkern1mu \cup \mkern1mu
\varphi_{\mkern-1.7mu n_{{\textsc{g}}}\mkern-.5mu *}^{{\mathbb{P}}}(\mathcal{L}^{\circ}\mkern-1.5mu(Y_{n_{{\textsc{g}}}}))
=
{\mathbb{Q}} \cup \{\infty\}
\end{equation}
for all $y \in {\mathbb{Q}}$.
Note that $\hat{Y}[\infty]$ is not a graph manifold,
being a non-solid-torus with compressible boundary, hence not prime.

We next prove some
basic rules about the behavior of $y_-$ and $y_+$.

\begin{claim}
\label{claim: non-solid-torus Yhat case, ineqs imply ineqs}
If $\mkern1.5mu y_{n_{{\textsc{g}}}+}^{\textsc{g}}, \hat{y}_-, y_-, y \in {\mathbb{Q}}$, then
\begin{equation}
y_{n_{{\textsc{g}}}+}^{\textsc{g}}\mkern-1.5mu  > \hat{y}_-
\;\;\;\iff\;\;\;
y \mkern1mu \in [\mkern1mu y_-, +\infty].
\end{equation}
If $\mkern1.5mu y_{n_{{\textsc{g}}}-}^{\textsc{g}}, \hat{y}_+, y_+, y \in {\mathbb{Q}}$, then
\begin{equation}
\label{eq: second line of non solid torus ineqs claim}
y_{n_{{\textsc{g}}}-}^{\textsc{g}}\mkern-1.5mu < \hat{y}_+ 
\;\;\;\iff\;\;\;
y \mkern1mu \in [-\infty, \mkern1mu y_+].
\end{equation}
\end{claim}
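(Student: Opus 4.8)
The plan is to prove both equivalences by clearing denominators and showing that each of the two conditions being compared is equivalent, term by term in $k$, to one and the same family of integer inequalities. Two facts make this work. First, $k\,\hat y^{0}_-(k)$ and $k\,\hat y^{0}_+(k)$ are integers --- each is, up to an overall sign, $\pm 1$ plus a sum of floors and ceilings --- provided the relevant Seifert and daughter-subtree data are finite, which is exactly what the hypotheses $\hat y_-,y_-\in\mathbb{Q}$ (resp.\ $\hat y_+,y_+\in\mathbb{Q}$) guarantee. Second, the extrema defining $\hat y_\pm$ and $y_\pm$ are attained (by the Remark following Theorem~\ref{thm: l space interval for graph manifolds}, the $\max$/$\min$ is realized for $k$ bounded by a least common multiple of denominators), so that for $c\in\mathbb{Q}$ one has $c>\hat y_-$ precisely when $c$ strictly exceeds the term $-\tfrac1k\lfloor yk\rfloor+\hat y^{0}_-(k)$ for every $k>0$, and analogously for the other three quantities.

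First I would dispatch the $-$ statement. Write $N_-(k):=k\,\hat y^{0}_-(k)\in\mathbb{Z}$. Multiplying through by $k>0$, the condition $y_{n_{\textsc{g}}+}^{\textsc{g}}>\hat y_-$ becomes: for every $k>0$, $N_-(k)-\lfloor yk\rfloor < k\,y_{n_{\textsc{g}}+}^{\textsc{g}}$; since the left-hand side is an integer, the first identity of (\ref{eq: open interval to ceiling floor}) rewrites this as $N_-(k)\le\lfloor yk\rfloor+\lceil k\,y_{n_{\textsc{g}}+}^{\textsc{g}}\rceil-1$. On the other side, $y\in[\,y_-,+\infty\,]$ with $y\in\mathbb{Q}$ just means $y\ge y_-$, which after clearing denominators reads: for every $k>0$, $N_-(k)-\lceil k\,y_{n_{\textsc{g}}+}^{\textsc{g}}\rceil+1\le yk$; the left-hand side being an integer, the first identity of (\ref{eq: closed interval to ceiling floor}) turns this into $N_-(k)\le\lfloor yk\rfloor+\lceil k\,y_{n_{\textsc{g}}+}^{\textsc{g}}\rceil-1$ --- the same condition. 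Hence the two are equivalent. The $+$ statement is entirely parallel: with $N_+(k):=k\,\hat y^{0}_+(k)\in\mathbb{Z}$, both $y_{n_{\textsc{g}}-}^{\textsc{g}}<\hat y_+$ and $y\le y_+$ reduce, this time via the \emph{second} identities of (\ref{eq: open interval to ceiling floor}) and (\ref{eq: closed interval to ceiling floor}) respectively, to the common family $N_+(k)\ge\lceil yk\rceil+\lfloor k\,y_{n_{\textsc{g}}-}^{\textsc{g}}\rfloor+1$, $k>0$.

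I do not anticipate a genuine obstacle; the only thing requiring care is the bookkeeping of strict versus non-strict inequalities and of which variable carries a floor and which a ceiling. It is precisely the asymmetry --- that $\hat y_-$ sees $y$ through $\lfloor yk\rfloor$ while $y_-$ sees $y_{n_{\textsc{g}}+}^{\textsc{g}}$ through $\lceil y_{n_{\textsc{g}}+}^{\textsc{g}}k\rceil-1$ --- that forces both unwindings onto the single inequality $N_-(k)\le\lfloor yk\rfloor+\lceil k\,y_{n_{\textsc{g}}+}^{\textsc{g}}\rceil-1$ rather than onto two off-by-one variants; the same remark applies, with floors and ceilings interchanged, to the $+$ statement. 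One should also verify at the outset that under the stated hypotheses all of $y_{i+}^{\textsc{g}}$ (resp.\ $y_{i-}^{\textsc{g}}$) for $i<n_{\textsc{g}}$, together with $y_{n_{\textsc{g}}+}^{\textsc{g}}$ (resp.\ $y_{n_{\textsc{g}}-}^{\textsc{g}}$) and $y$, are finite, so that taking floors, ceilings, and reciprocals of $k$ is legitimate throughout.
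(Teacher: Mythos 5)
Your argument is correct and is essentially the paper's own proof: both unwind the defining $\max$/$\min$ over $k$, clear the factor $1/k$, and apply identities (12) and (13) to convert strict and non-strict rational inequalities into the same integer inequality. The only difference is presentational — you reduce each side to the common integer condition $N_-(k)\le\lfloor yk\rfloor+\lceil y_{n_{\textsc{g}}+}^{\textsc{g}}k\rceil-1$ (and its $+$ analogue), whereas the paper writes the same content as a one-directional chain of equivalences through that middle step — and you are right to flag that attainment of the maximum is what makes the strict comparison $c>\hat y_-$ equivalent to the pointwise-in-$k$ strict inequality.
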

\begin{proof}[Proof of Claim]
Suppose that $y_{n_{{\textsc{g}}}+}^{\textsc{g}}, \hat{y}_-, y_-, y \in {\mathbb{Q}}$.  Then
\begin{align}
&&     &&
\mkern-50mu
y_{n_{{\textsc{g}}}+}^{\textsc{g}}
&> \hat{y}_- 
\mkern-40mu
&&&&
\nonumber
      \\
\label{eq: claim for nontorus case ineqs 1}
&&\iff &&
\mkern-50mu
y_{n_{{\textsc{g}}}+}^{\textsc{g}}
&> -{{\textstyle{\frac{1}{k}}}}\lfloor yk \rfloor
+ {{{\hat{y}^{\mkern1.7mu0}_{\mkern-.3mu-}\mkern-2mu(k)}}}
\mkern-40mu
&&\forall\; k\in {\mathbb{Z}}_{>0}
&&
       \\
\label{eq: claim for nontorus case ineqs 2}
&&\iff  &&
\mkern-50mu
\lceil y_{n_{{\textsc{g}}}+}^{\textsc{g}}k \rceil \mkern-2mu-\mkern-2mu 1
&\ge -\lfloor yk \rfloor
+ {{{\hat{y}^{\mkern1.7mu0}_{\mkern-.3mu-}\mkern-2mu(k)}}} k
\mkern-40mu
&&\forall\; k\in {\mathbb{Z}}_{>0}
&&
       \\
\label{eq: claim for nontorus case ineqs 3}
&&\iff  &&
\mkern-50mu
yk
&\ge 
{{{\hat{y}^{\mkern1.7mu0}_{\mkern-.3mu-}\mkern-2mu(k)}}} k
-(\lceil y_{n_{{\textsc{g}}}+}^{\textsc{g}}k \rceil - 1)
\mkern-40mu
&&\forall\; k\in {\mathbb{Z}}_{>0}
&&
       \\
\label{eq: claim for nontorus case ineqs 4}
&&\iff  &&
\mkern-50mu
y
&\ge 
y_-,
\mkern-40mu
&&
&&
\end{align}
where
(\ref{eq: defs of y- y+ in no solid torus case})
implies
(\ref{eq: claim for nontorus case ineqs 1}),
(\ref{eq: open interval to ceiling floor})
implies
(\ref{eq: claim for nontorus case ineqs 2}),
(\ref{eq: closed interval to ceiling floor})
implies
(\ref{eq: claim for nontorus case ineqs 3}),
and
(\ref{eq: defs of hat y- hat y+ in no solid torus case})
implies
(\ref{eq: claim for nontorus case ineqs 4}).
The proof of 
(\ref{eq: second line of non solid torus ineqs claim})
is nearly identical, but with signs reversed.
\end{proof}

\begin{claim}
\label{claim: y- and y+ outside long+ and long-}
If $\mkern1.5mu y_1^{\textsc{d}}, \ldots, y_{n_{{\textsc{d}}}}^{\textsc{d}},
y_{1\mkern-.5mu +}^{\textsc{g}}, \ldots, y_{n_{{\textsc{g}}}\mkern-.5mu+}^{\textsc{g}} \in {\mathbb{Q}},$ then
\begin{equation}
\label{eq: claim for y- > y+, y- > something}
y_- > 
{{\textstyle{-\sum\limits_{i=1}^{n_{{\textsc{d}}}}} y_i^{\textsc{d}}}}
-{{\textstyle{\sum\limits_{i=1}^{n_{{\textsc{g}}}}} y_{i+}^{\textsc{g}}}}.
\end{equation}
If $\mkern1mu y_1^{\textsc{d}}, \ldots, y_{n_{{\textsc{d}}}}^{\textsc{d}},
y_{1\mkern-.5mu -}^{\textsc{g}}, \ldots, y_{n_{{\textsc{g}}}\mkern-.5mu-}^{\textsc{g}} \in {\mathbb{Q}},$ then
\begin{equation}
\label{eq: claim for y- > y+, y+ < something}
y_+ <
{{\textstyle{-\sum\limits_{i=1}^{n_{{\textsc{d}}}}} y_i^{\textsc{d}}}}
-{{\textstyle{\sum\limits_{i=1}^{n_{{\textsc{g}}}}} y_{i-}^{\textsc{g}}}}.
\end{equation}
\end{claim}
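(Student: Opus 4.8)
The plan is to reduce each of the two inequalities to the existence of a single $k>0$ at which an explicit sum of fractional parts exceeds $1$; below, $\{x\}$ denotes the fractional part of a real number $x$. I would write out the proof of the first inequality, $y_- > -\sum_i y_i^{\textsc{d}} - \sum_i y_{i+}^{\textsc{g}}$, and obtain the second from it by reversing signs, exactly as in the previous claim. Set $l := -\sum_{i=1}^{n_{\textsc{d}}} y_i^{\textsc{d}} - \sum_{i=1}^{n_{\textsc{g}}} y_{i+}^{\textsc{g}}$ and, for each $k>0$,
\begin{equation*}
A_k := 1 + \sum_{i=1}^{n_{\textsc{d}}}\lfloor y_i^{\textsc{d}}k \rfloor + \sum_{i=1}^{n_{\textsc{g}}}\left(\lceil y_{i+}^{\textsc{g}}k \rceil - 1\right),
\end{equation*}
so that $y_- = \max_{k>0}\left(-A_k/k\right)$. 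Substituting $\lfloor y_i^{\textsc{d}}k \rfloor = y_i^{\textsc{d}}k - \{y_i^{\textsc{d}}k\}$ and $\lceil y_{i+}^{\textsc{g}}k \rceil = y_{i+}^{\textsc{g}}k + \beta_i(k)$, where $\beta_i(k) := \lceil y_{i+}^{\textsc{g}}k \rceil - y_{i+}^{\textsc{g}}k \in [0,1)$, and using $-l = \sum y_i^{\textsc{d}} + \sum y_{i+}^{\textsc{g}}$, a routine rearrangement yields
\begin{equation*}
-\frac{A_k}{k} - l \;=\; \frac{1}{k}\left( \sum_{i=1}^{n_{\textsc{d}}}\{y_i^{\textsc{d}}k\} \;+\; \sum_{i=1}^{n_{\textsc{g}}}\left(1 - \beta_i(k)\right) \;-\; 1 \right).
\end{equation*}
Every summand on the right is nonnegative and each term $1-\beta_i(k)$ is strictly positive, so it suffices to exhibit one $k>0$ with $\sum_{i=1}^{n_{\textsc{d}}}\{y_i^{\textsc{d}}k\} + \sum_{i=1}^{n_{\textsc{g}}}(1-\beta_i(k)) > 1$; for any such $k$ we then obtain $y_- \ge -A_k/k > l$.

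This splits into two cases. If $n_{\textsc{g}} \ge 2$, I would take $k = s$, the least common positive multiple of the denominators of $y_1^{\textsc{d}},\dots,y_{n_{\textsc{d}}}^{\textsc{d}},y_{1+}^{\textsc{g}},\dots,y_{n_{\textsc{g}}+}^{\textsc{g}}$: then every $\{y_i^{\textsc{d}}s\}$ vanishes and every $\beta_i(s)=0$, so the left-hand side equals $n_{\textsc{g}} \ge 2 > 1$. If $n_{\textsc{g}} = 1$, then the running hypothesis of this subsection (which, when $n_{\textsc{g}}-1=0$, forces $y_i^{\textsc{d}} \notin \mathbb{Z}$ for at least two indices $i$) supplies $y_1^{\textsc{d}} = a_1/b_1$ and $y_2^{\textsc{d}} = a_2/b_2$ in lowest terms with $b_1, b_2 \ge 2$. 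Put $m := \mathrm{lcm}(b_1,b_2) \ge 2$. Neither $k=1$ nor $k=m-1$ is divisible by $b_1$ or $b_2$, so $y_j^{\textsc{d}}\cdot 1$ and $y_j^{\textsc{d}}(m-1)$ are non-integral for $j=1,2$; since $y_j^{\textsc{d}}m \in \mathbb{Z}$, the identity $\{x\}+\{-x\}=1$ for $x\notin\mathbb{Z}$ gives $\{y_j^{\textsc{d}}\cdot 1\} + \{y_j^{\textsc{d}}(m-1)\} = 1$. Summing over $j\in\{1,2\}$ shows that at least one of $k=1$, $k=m-1$ satisfies $\{y_1^{\textsc{d}}k\}+\{y_2^{\textsc{d}}k\} \ge 1$, hence $\sum_{i=1}^{n_{\textsc{d}}}\{y_i^{\textsc{d}}k\} \ge 1$; adding the positive term $1-\beta_1(k)$ gives the required strict inequality.

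For the second inequality, the same computation with $\lceil y_i^{\textsc{d}}k \rceil = y_i^{\textsc{d}}k + \gamma_i(k)$, $\gamma_i(k)\in[0,1)$, and $\lfloor y_{i-}^{\textsc{g}}k \rfloor = y_{i-}^{\textsc{g}}k - \{y_{i-}^{\textsc{g}}k\}$ reduces $y_+ < -\sum_i y_i^{\textsc{d}} - \sum_i y_{i-}^{\textsc{g}}$ to finding $k>0$ with $\sum_{i=1}^{n_{\textsc{d}}}\gamma_i(k) + \sum_{i=1}^{n_{\textsc{g}}}(1-\{y_{i-}^{\textsc{g}}k\}) > 1$, and the two-case argument transfers verbatim (in the $n_{\textsc{g}}=1$ case using $\gamma_j(k) = 1-\{y_j^{\textsc{d}}k\}$ when $y_j^{\textsc{d}}k\notin\mathbb{Z}$). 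The one genuinely non-mechanical point is the $n_{\textsc{g}}=1$ case: there the choice $k=s$ yields only equality, so one must invoke the standing hypothesis that $\hat{Y}[y]$ is never a solid torus — equivalently, that at least two of the slopes $y_i^{\textsc{d}}$ are non-integral — together with the small averaging trick above to force a $k$ with a fractional-part sum of at least $1$. Everything else is bookkeeping with floors and ceilings.
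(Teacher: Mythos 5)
Your proof is correct and rests on the same idea as the paper's: rewrite $y_-(k) - l$ as $\frac{1}{k}$ times a sum of fractional parts minus $1$ (your $\beta_i(k)$ is the paper's $[-y_{i+}^{\textsc{g}}k]$), then exhibit a $k$ at which that sum exceeds $1$, splitting into the cases $n_{\textsc{g}}\ge 2$ (take $k$ clearing all denominators) and $n_{\textsc{g}}=1$ (pair $k=1$ with a reflected $k$, using the standing hypothesis that at least two $y_i^{\textsc{d}}$ are non-integral). The paper phrases this contrapositively, bounding $y'_-(1)+(s_+-1)y'_-(s_+-1)$ and $s_+y'_-(s_+)$ simultaneously, while you argue directly and use $\operatorname{lcm}(b_1,b_2)$ rather than the full denominator lcm; this is a cosmetic reorganization, not a different route.
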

\begin{proof}[Proof of Claim]
Writing $[\cdot]: {\mathbb{Q}} \to \left[0, 1\right>$ for the map sending 
$q \mapsto [q]:= q - \lfloor q \rfloor$, define
\begin{equation}
y'_-(k) :=
\mfrac{1}{k}\mkern-1.5mu\left( \mkern-2mu -1
\mkern2mu +\mkern1mu{{\textstyle{\sum\limits_{i=1}^{n_{{\textsc{d}}}} [y_i^{\textsc{d}} k] }}}
\mkern3.5mu+\mkern.5mu{{\textstyle{\sum\limits_{i=1}^{n_{{\textsc{g}}}} (1 - [-y_{i+}^{\textsc{g}} k]) }}}
\mkern-.5mu\right)
\end{equation}
for each $k \in {\mathbb{Z}}_{>0}$, so that
\begin{equation}
y_- 
\mkern1mu=\mkern1mu
{{\textstyle{-\mkern-1.5mu\sum\limits_{i=1}^{n_{{\textsc{d}}}}} y_i^{\textsc{d}}}}
-{{\textstyle{\sum\limits_{i=1}^{n_{{\textsc{g}}}}} y_{i+}^{\textsc{g}}}}
\mkern1.5mu+\mkern4mu \max_{k>0} \mkern1.5mu y'_-(k).
\end{equation}
In addition, let
\begin{equation}
s_{\mkern-1mu+} := \min\left\{k \mkern-1mu\in\mkern-1mu {\mathbb{Z}}_{>0} 
\mkern1mu|\mkern3mu 
y_1^{\textsc{d}}k, \ldots, \mkern-1mu y_{n_{{\textsc{d}}}}^{\textsc{d}}\mkern-2mu k,\mkern1mu
y_{1\mkern-.5mu +}^{\textsc{g}}\mkern-.5mu k, 
\ldots,\mkern-1mu y_{n_{{\textsc{g}}}\mkern-.5mu +}^{\textsc{g}}\mkern-.5mu k \in {\mathbb{Z}}\right\}
\end{equation}
denote the least common positive multiple of the denominators of the
$\{y_i^{\textsc{d}}\}$ and $\{y_{i+}^{\textsc{g}}\}$.

Suppose 
(\ref{eq: claim for y- > y+, y- > something})
fails to hold.  Then since $y'_-(k) \le 0$ for all $k \in {\mathbb{Z}}_{>0}$, we have
\begin{align}
0
&\mkern1.5mu\ge\mkern1.5mu
y'_-\mkern-1.2mu(1) \mkern1.5mu+\mkern1.5mu (s_{\mkern-1mu+}\mkern-3mu-\mkern-1.5mu1)
\mkern.3mu y'_-\mkern-1.2mu(s_{\mkern-1mu+}\mkern-3mu-\mkern-1.5mu1)
   \nonumber   \\
&\mkern1.5mu=\mkern1.5mu
\mkern-.8mu-\mkern-.2mu2 \mkern3.5mu+\mkern.5mu
{{\textstyle{\sum\limits_{i=1}^{n_{{\textsc{d}}}} 
([y_i^{\textsc{d}}] + [-y_i^{\textsc{d}}]) }}}
\mkern4.5mu+\mkern1.5mu
{{\textstyle{\sum\limits_{i=1}^{n_{{\textsc{g}}}} 
(1 + (1 - [-y_{i+}^{\textsc{g}}] - [y_{i+}^{\textsc{g}}] ) ) }}}
\mkern-.5mu
   \nonumber   \\
\label{eq: nG + nD - 2 le 0}
&\mkern1.5mu\ge\mkern1.5mu
-2 \mkern1mu+\mkern1mu
\left|\left\{i: y_i^{\textsc{d}} \in {\mathbb{Q}} \setminus {\mathbb{Z}}\right\}\right| 
\mkern1mu+\mkern1mu n_{{\textsc{g}}},
\end{align}
and likewise, we have
\begin{equation}
\label{eq: nG le 1}
0
\mkern3.5mu\ge\mkern2.5mu
s_{\mkern-1mu+}y'_-\mkern-1.2mu(s_{\mkern-1mu+})
\mkern1.5mu=\mkern1.5mu
-1 + \mkern-1mu{{\textstyle{\sum\limits_{i=1}^{n_{{\textsc{g}}}} 1}}}
\mkern1.5mu=\mkern2.5mu
n_{{\textsc{g}}}-1.
\end{equation}
The hypotheses of Section~\ref{ss: orientable base, no solid tori},
however, demand that either
$\left|\left\{i\mkern-1mu:\mkern-1mu y_i^{\textsc{d}} 
\mkern-1.5mu\in\mkern-1.5mu {\mathbb{Q}} \mkern-2mu\setminus\mkern-2mu 
{\mathbb{Z}}\right\}\right| 
\mkern-.8mu\ge\mkern-.8mu 2$
and
$n_{{\textsc{g}}}\mkern-3mu=\mkern-2mu1$,
contradicting
(\ref{eq: nG + nD - 2 le 0}),
or $n_{{\textsc{g}}} \!>\! 1$, contradicting
(\ref{eq: nG le 1}).
Thus
(\ref{eq: claim for y- > y+, y- > something}) holds,
and a similar argument proves
(\ref{eq: claim for y- > y+, y+ < something}).
\end{proof}

For the proof that
Theorem~\ref{thm: l space interval for graph manifolds}
and Proposition~\ref{prop: characterization of Floer simple graph manifolds}
hold for $Y$, we divide our argument into two main cases, depending on 
whether or not $\infty \in \mathcal{L}(Y)$.

\begin{prop}
\label{prop: infty in L(Y) case of main thm and prop, non solid torus Y hat}
$\infty \mkern-1.5mu\in\mkern-1.5mu \mathcal{L}(Y)\mkern-1mu$ if and only if
either 
condition $(\textsc{fs3}\mkern-.5mu)$ from 
Proposition~\ref{prop: characterization of Floer simple graph manifolds} holds,
in which case 
$\infty \mkern-2mu\in\mkern-2mu \mathcal{L}(Y) 
\mkern-2mu=\mkern-2mu [[y_-, y_+]]$,
or condition $(\textsc{nfs3}\mkern-.5mu)$ or $(\textsc{nfs4}\mkern-.5mu)$
from 
Proposition~\ref{prop: characterization of Floer simple graph manifolds} holds,
in which case $\mathcal{L}(Y) = \{y_-\} = \{y_+\} = \{\infty\}$.
\end{prop}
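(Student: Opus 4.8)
The plan is to reduce the statement to a direct analysis of the reducible manifold $Y(\infty)$. Since the foundation $\hat{M}$ is $S^1$-fibered over a punctured $S^2$ and $\infty$ is the fiber slope on $\partial Y$, I would first perform the $\infty$-filling of $\partial Y$: by the Remark in Section~\ref{ss: Conventions for Seifert fibered spaces} this turns $\hat{M}$ into a connected sum of $n_{\textsc{d}}+n_{\textsc{g}}$ solid tori, and I would check that each such summand has meridian---equivalently rational longitude---the fiber slope $\infty$ on its boundary. Distributing the remaining $n_{\textsc{d}}$ exceptional-fiber fillings and $n_{\textsc{g}}$ daughter gluings over these summands then presents $Y(\infty)$ as a connected sum of the lens spaces $L_i$ (obtained from the $y_i^{\textsc{d}}$-fillings; each is an L-space, and $S^1\times S^2$ cannot occur since $y_i^{\textsc{d}}\ne\infty$) with the Dehn fillings $Y_i(\mu_i)$, where $\mu_i\in{\mathbb{P}}(H_1(\partial Y_i))$ is the slope with $\varphi^{{\mathbb{P}}}_{i*}(\mu_i)=\infty$. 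Equivalently, one reads the same bookkeeping off Proposition~\ref{prop: l-space gluing for compressible boundary}, applied to the splitting of $Y(\infty)$ along each $\partial^{\textsc{g}}_i Y$ into $Y_i$ and its complement, which has compressible boundary and rational longitude the fiber slope.

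Since a connected sum of closed oriented three-manifolds is an L-space exactly when each summand is, this gives $\infty\in\mathcal{L}(Y)$ if and only if $\infty\in\varphi^{{\mathbb{P}}}_{i*}(\mathcal{L}(Y_i))$ for every $i$. Feeding in the inductive version of Proposition~\ref{prop: characterization of Floer simple graph manifolds}, that condition says each $\mathcal{L}(Y_i)\ne\emptyset$ and, for each $i$, either $Y_i$ is Floer simple with $\infty\in[[y^{\textsc{g}}_{i-},y^{\textsc{g}}_{i+}]]$ or $Y_i$ is not Floer simple with $\varphi^{{\mathbb{P}}}_{i*}(\mathcal{L}(Y_i))=\{\infty\}$. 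I would then split into cases: a daughter of the second type present is exactly $(\textsc{nfs4})$; otherwise all daughters are Floer simple and each $[[y^{\textsc{g}}_{i-},y^{\textsc{g}}_{i+}]]$ contains $\infty$, and I would sort these intervals by whether $\infty$ is interior (i.e. $y^{\textsc{g}}_{i\pm}$ both finite) or is an endpoint of ``$[-\infty,\cdot\,]$'' type or ``$[\,\cdot,+\infty]$'' type---at most one endpoint type appearing gives $(\textsc{fs3})$, both appearing gives $(\textsc{nfs3})$. This establishes the equivalence, and mutual exclusivity of the three conditions is immediate.

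For the ``in which case'' clauses: in $(\textsc{nfs3})$ and $(\textsc{nfs4})$ the defining formulas return $y_-=y_+=\infty$, a ``$[\,\cdot,+\infty]$''-type (or $\{\infty\}$-type) daughter contributing an infinite summand to $y_-$ and a ``$[-\infty,\cdot\,]$''-type (or $\{\infty\}$-type) one to $y_+$. To obtain $\mathcal{L}(Y)=\{\infty\}$ exactly, I would relabel the daughters so that $Y_{n_{\textsc{g}}}$ is the $\{\infty\}$-daughter in $(\textsc{nfs4})$, resp.\ a ``$[-\infty,\cdot\,]$''-type daughter in $(\textsc{nfs3})$, and apply the gluing criterion~(\ref{eq: L-space for y rational when union with Yhat}) to $Y(y)=\hat{Y}[y]\cup_{\varphi_{n_{\textsc{g}}}}Y_{n_{\textsc{g}}}$ for $y\in{\mathbb{Q}}$: membership $y\in\mathcal{L}(Y)$ would force $\infty\in\mathcal{L}^{\circ}(\hat{Y}[y])$. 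This is impossible in $(\textsc{nfs4})$, where $\mathcal{L}^{\circ}(Y_{n_{\textsc{g}}})=\emptyset$ would then force $\mathcal{L}^{\circ}(\hat{Y}[y])={\mathbb{P}}(H_1(\partial\hat{Y}[y]))$, excluded since filling $\hat{Y}[y]$ along its rational longitude yields $b_1=1$; and impossible in $(\textsc{nfs3})$, where $\hat{Y}[y]$ still has a ``$[\,\cdot,+\infty]$''-type daughter, which by the inductive formula of Theorem~\ref{thm: l space interval for graph manifolds} forces the left endpoint of $\mathcal{L}(\hat{Y}[y])$ to equal $\infty$, so that $\mathcal{L}^{\circ}(\hat{Y}[y])$ omits $\infty$. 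In $(\textsc{fs3})$, assuming by symmetry that no daughter is of ``$[\,\cdot,+\infty]$'' type, the formulas give $y_-$ finite and $\infty\in[[y_-,y_+]]$; combining $\infty\in\mathcal{L}(Y)$ with the rational-slope analysis for this case---the criterion~(\ref{eq: L-space for y rational when union with Yhat}) together with Claims~\ref{claim: non-solid-torus Yhat case, ineqs imply ineqs} and~\ref{claim: y- and y+ outside long+ and long-} and the inductive descriptions of $\mathcal{L}(\hat{Y}[y])$ and $\mathcal{L}(Y_{n_{\textsc{g}}})$---identifies $\mathcal{L}(Y)\cap{\mathbb{Q}}$ with $[[y_-,y_+]]\cap{\mathbb{Q}}$, whence $\mathcal{L}(Y)=[[y_-,y_+]]$ and $Y$ is Floer simple.

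I expect the crux to be the opening step: correctly identifying the reducible decomposition of $Y(\infty)$, and in particular verifying that each solid-torus summand of $\hat{M}^{\infty}$ has rational longitude the fiber slope $\infty$, together with ruling out degenerate summands---an $S^1\times S^2$ from an exceptional-fiber filling cannot occur because $y_i^{\textsc{d}}\ne\infty$, while a non-prime or positive-$b_1$ piece $Y_i(\mu_i)$ is harmless since ``not an L-space'' there is by definition exactly ``$\mu_i\notin\mathcal{L}(Y_i)$''. A secondary subtlety is the value computation in $(\textsc{nfs3})$, where the inductive formula for $\hat{Y}[y]$ must be threaded carefully through the relabeling of the daughters.
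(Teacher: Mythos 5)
Your proposal is correct and follows essentially the same route as the paper's proof: decompose $Y(\infty)$ as a connected sum using the Remark of Section~\ref{ss: Conventions for Seifert fibered spaces}, derive the criterion $\infty\in\mathcal{L}(Y)$ iff $\infty\in\varphi^{{\mathbb{P}}}_{i*}(\mathcal{L}(Y_i))$ for every $i$, identify the resulting trichotomy with $(\textsc{fs3})$/$(\textsc{nfs3})$/$(\textsc{nfs4})$, and then run the gluing criterion~(\ref{eq: L-space for y rational when union with Yhat}) together with the two Claims and the inductive descriptions of $\mathcal{L}(\hat{Y}[y])$ and $\mathcal{L}(Y_{n_{\textsc{g}}})$ to determine $\mathcal{L}(Y)\cap{\mathbb{Q}}$ in each case. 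The only divergence is cosmetic (you relabel $Y_{n_{\textsc{g}}}$ to lie in $I_{-\infty}$ in the $(\textsc{nfs3})$ case rather than $I_{+\infty}$, and you add an explicit justification via the rational longitude that $\mathcal{L}^{\circ}$ of a manifold with $b_1=1$ is never all of ${\mathbb{P}}(H_1(\partial))$, which the paper leaves implicit), and you are appropriately candid that the $(\textsc{fs3})$ rational-slope casework, which the paper spells out over subcases (a)--(c), needs to be filled in.
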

\begin{proof}

Since $\hat{Y}[\infty]$ is not a graph manifold,
our inductive assumptions fail to hold for $\hat{Y}[\infty]$,
but fortunately, $Y(\infty)$ has a simple structure.
The remark in Section
\ref{ss: Conventions for Seifert fibered spaces} implies
\begin{equation}
Y(\infty)
= \left(\Bigconnect\limits_{i=1}^{n_{{\textsc{d}}}} L(s_i^{\textsc{d}}, r_i^{\textsc{d}})\right)
\#
\left(\Bigconnect\limits_{i=1}^{n_{{\textsc{g}}}} 
Y_i\mkern-1mu\left(\mkern.5mu(\varphi_{i*}^{{\mathbb{P}}})^{-1}(\infty) \mkern.5mu\right) \right),
\end{equation}
where we have written $y_i^{\textsc{d}} ={r_i^{\textsc{d}}}/{s_i^{\textsc{d}}}$,
$L(s_i^{\textsc{d}}, r_i^{\textsc{d}})$ denotes the lens space
of slope $s_i^{\textsc{d}}/r_i^{\textsc{d}}$,
and
$Y_i\mkern-1mu\left(\mkern.5mu(\varphi_{i*}^{{\mathbb{P}}})^{-1}(\infty) \mkern.5mu\right)$
is the Dehn filling of the inverse image of the slope $\infty$.  Thus,
\begin{equation}
\label{eq: condition that infty in each Yg}
\infty \in \mathcal{L}(Y)
\;\;\;\iff\;\;\;
\infty \in 
\varphi_{i*}^{{\mathbb{P}}}(\mathcal{L}(Y_i)) 
\;\;\text{for all}\;\; i \in \{1, \ldots, n_{{\textsc{g}}}\}.
\end{equation}

Conditions 
$(\textsc{nfs3}\mkern-.5mu)$ and $(\textsc{fs3}\mkern-.5mu)$
jointly exhaust the cases in which
the right-hand condition of 
(\ref{eq: condition that infty in each Yg})
holds and all daughter subtrees $Y_1, \dots, Y_{n_{{\textsc{g}}}}$ are Floer simple.
Condition $(\textsc{nfs4}\mkern-.5mu)$,
on the other hand, describes all cases in which the
the right-hand condition of 
(\ref{eq: condition that infty in each Yg}) holds
and at least one $Y_j$ is not Floer simple.

If $(\textsc{nfs4}\mkern-.5mu)$ holds, then,
permuting the daughter subtrees without loss of generality so that 
$\varphi^{{\mathbb{P}}}_{n_{{\textsc{g}}}*}(\mathcal{L}(Y_{n_{{\textsc{g}}}})) 
\mkern-2.5mu=\mkern-2.5mu \{\infty\}$, we have 
$\mathcal{L}^{\circ}(Y_{n_{{\textsc{g}}}}) \mkern-2mu=\mkern-2mu \emptyset$,
which, by
(\ref{eq: L-space for y rational when union with Yhat}),
implies $\mathcal{L}(Y) \cap {\mathbb{Q}} \mkern-2mu=\mkern-2mu \emptyset$,
so that
$\mathcal{L}(Y) \mkern-2mu=\mkern-2mu \{\infty\}$.
Moreover, the fact that
$\varphi^{{\mathbb{P}}}_{n_{{\textsc{g}}}*}(\mathcal{L}(Y_{n_{{\textsc{g}}}})) 
\mkern-2.5mu=\mkern-2.5mu \{\infty\}$
implies, by inductive assumption, that
$y_{n_{{\textsc{g}}}-}^{\textsc{g}} 
\mkern-2mu=\mkern-2mu y_{n_{{\textsc{g}}}+}^{\textsc{g}} \mkern-2mu=\mkern-2mu \infty$.
Thus $y_-$ and $y_+$ each have infinite summands,
and so $y_- \mkern-2mu=\mkern-2mu y_+ \mkern-2mu=\mkern-2mu \infty$.

Next suppose that $(\textsc{nfs3}\mkern-.5mu)$ holds, and set
\begin{equation}
I_{-\infty}:= 
\{i :
[\mkern-.4mu[y_{i-}^{\textsc{g}}, y_{i+}^{\textsc{g}}]\mkern-.4mu]
\mkern-2.5mu=\mkern-2.5mu [-\infty, y_{i+}^{\textsc{g}}]\},
\;\;\;\;
I_{+\infty}:=
\{i : 
[\mkern-.4mu[y_{i-}^{\textsc{g}}, y_{i+}^{\textsc{g}}]\mkern-.4mu]
\mkern-2.5mu=\mkern-2.5mu [y_{i-}^{\textsc{g}} ,+\infty]\}.
\end{equation}
We can also define the analogous sets for $\hat{Y}[y]$:
\begin{equation}
I_{-\infty}^{\hat{Y}[y]} := I_{-\infty} \cap 
\{1, \ldots, n_{{\textsc{g}}}\mkern-3.5mu-\mkern-2.5mu1\}, 
\;\;\;\;
I_{+\infty}^{\hat{Y}[y]} := I_{+\infty} \cap 
\{1, \ldots, n_{{\textsc{g}}}\mkern-3.5mu-\mkern-2.5mu1\}.
\end{equation}
Since $I_{-\infty}$ and $I_{+\infty}$ are nonempty for
$(\textsc{nfs3}\mkern-.5mu)$, we know that $y_-$ and $y_+$
each have infinite summands, implying $y_- = y_+ = \infty$.
Assume without loss of
generality that $n_{{\textsc{g}}} \in I_{+\infty}$.
Thus $I_{-\infty}^{\hat{Y}[y]} \neq \emptyset$, and
$I_{+\infty}^{\hat{Y}[y]}$ is either empty or not.
By inductive assumption,
Proposition~\ref{prop: characterization of Floer simple graph manifolds} holds
for $\hat{Y}[y]$ for all $y \in {\mathbb{Q}}$.
Thus, for each $y \in {\mathbb{Q}}$,
either $I_{-\infty}^{\hat{Y}[y]}$ and $I_{+\infty}^{\hat{Y}[y]}$ 
are both nonempty,
making $(\textsc{nfs3}\mkern-.5mu)$ hold for $\hat{Y}[y]$,
so that $\mathcal{L}(\hat{Y}[y]) = \{\infty\}$;
or, 
$I_{-\infty}^{\hat{Y}[y]} \neq \emptyset$
and $I_{+\infty}^{\hat{Y}[y]} = \emptyset$,
making $(\textsc{fs3}\mkern-.5mu)$ hold for $\hat{Y}[y]$,
so that $\mathcal{L}(\hat{Y}[y]) = [\hat{y}_-, +\infty]$.
In both cases, the right-hand side of
(\ref{eq: L-space for y rational when union with Yhat})
fails to hold for all $y \in {\mathbb{Q}}$,
and we are left with $\mathcal{L}(Y) = \{\infty\} = \{y_-\} = \{y_+\}$.

This leaves us with the case in which $(\textsc{fs3}\mkern-.5mu)$ holds,
for which we first consider the subcase $I_{+\infty} = \emptyset$
and $I_{-\infty} \mkern-2mu\neq\mkern-2mu \emptyset$,
implying
$[[y_-, y_+]] \mkern-2mu=\mkern-2mu [y_-, +\infty]$.
Without loss of generality, assume 
$n_{{\textsc{g}}} \mkern-2mu\in\mkern-2mu I_{-\infty}$, 
so that
$[[y_{n_{{\textsc{g}}}-}^{\textsc{g}}, y_{n_{{\textsc{g}}}+}^{\textsc{g}}]]
\mkern-2mu=\mkern-2mu [-\infty, y_{n_{{\textsc{g}}}+}^{\textsc{g}}]$.
Since 
$\mathcal{L}(\hat{Y}[y]) \mkern-2mu=\mkern-2mu [[\hat{y}_-, \hat{y}_+]]$
for all $y \mkern-2mu\in\mkern-2mu {\mathbb{Q}}$ by inductive assumption,
we know that either
\begin{enumerate}
\item[(a)] $I^{\hat{Y}[y]}_{-\infty} = I^{\hat{Y}[y]}_{+\infty} = \emptyset$,
or
\item[(b)]
$I^{\hat{Y}[y]}_{-\infty} \neq \emptyset$,
with $\mathcal{L}(\hat{Y}[y]) = [\hat{y}_-, +\infty]$.
\end{enumerate}
In case (a), the condition
$I^{\hat{Y}[y]}_{-\infty} = I^{\hat{Y}[y]}_{+\infty} = \emptyset$,
together with
(\ref{eq: condition that infty in each Yg}), implies
\begin{equation}
\label{subsubcase of n_g - 1 guys satisfying y_i- ge y_-i+}
\infty \neq y_{i-}^{\textsc{g}} \ge  y_{i+}^{\textsc{g}} \neq \infty
\text{ for all } i \in \{1, \ldots, n_{{\textsc{g}}}-1\}.
\end{equation}
Thus, if we apply
Claim~\ref{claim: y- and y+ outside long+ and long-}
to
(\ref{eq: defs of hat y- hat y+ in no solid torus case}),
respectively substituting $\hat{y}_-$, $\hat{y}_+$,
$n_{\textsc{d}}\mkern-1.5mu + \mkern-1.5mu 1$, $n_{\textsc{g}}-1$,
and $(y, y_1^{\textsc{d}}, \ldots, y_{n_{\textsc{d}}}^{\textsc{d}})$
for $y_-$, $y_+$, $n_{\textsc{d}}$, $n_{\textsc{g}}$
and $(y_1^{\textsc{d}}, \ldots, y_{n_{\textsc{d}}}^{\textsc{d}})$
in the statement of the claim,
then we obtain
\begin{equation}
\label{eq: long ineq that gives nice hat bound for fs3 case, case a}
\infty 
\neq 
\hat{y}_-
\;\;>\;
-y -{{\textstyle{\sum\limits_{i=1}^{n_{{\textsc{d}}}}} y_i^{\textsc{d}}}}
-{{\textstyle{\sum\limits_{i=1}^{n_{{\textsc{g}}}-1}} y_{i+}^{\textsc{g}}}}
\;\ge\;
-y -{{\textstyle{\sum\limits_{i=1}^{n_{{\textsc{d}}}}} y_i^{\textsc{d}}}}
-{{\textstyle{\sum\limits_{i=1}^{n_{{\textsc{g}}}-1}} y_{i-}^{\textsc{g}}}}
\;>\;\;
\hat{y}_+
\neq
\infty,
\end{equation}
with the outer, strict, inequalities resulting from
Claim~\ref{claim: y- and y+ outside long+ and long-},
and the middle, non-strict, inequality resulting from 
(\ref{subsubcase of n_g - 1 guys satisfying y_i- ge y_-i+}).
Thus, for any $y \in {\mathbb{Q}}$, we have
\begin{equation}
\mathcal{L}^{\circ}\mkern-1.5mu(\hat{Y}[y])
\mkern1mu \cup \mkern1mu
\varphi_{\mkern-1.7mu n_{{\textsc{g}}}\mkern-.5mu *}^{{\mathbb{P}}}(\mathcal{L}^{\circ}\mkern-1.5mu(Y_{n_{{\textsc{g}}}}))
=\mkern1.5mu
(\left<\hat{y}_-, +\infty\right] \cup \left[-\infty, \hat{y}_+\right>)
\mkern1.5mu\cup\mkern1.5mu
\left<-\infty, y_{n_{{\textsc{g}}}+}^{\textsc{g}}\right>
\end{equation}
by inductive assumption,
and so
(\ref{eq: L-space for y rational when union with Yhat})
implies that $y \in \mathcal{L}(Y)$
if and only if $y_{n_{{\textsc{g}}}+}^{\textsc{g}} > \hat{y}_-$,
which, by
Claim
\ref{claim: non-solid-torus Yhat case, ineqs imply ineqs},
occurs if and only if $y \in [y_-, +\infty]$.
On the other hand, in case~(b),
with $\mathcal{L}^{\circ}(\hat{Y}[y]) = \left<\hat{y}_-, +\infty\right>$,
(\ref{eq: L-space for y rational when union with Yhat})
again implies, for any $y \in {\mathbb{Q}}$, that
$y \in \mathcal{L}(Y)$
if and only if $y_{n_{{\textsc{g}}}+}^{\textsc{g}} > \hat{y}_-$,
which again occurs if and only if $y \in [y_-, +\infty]$.
Thus, whether case (a) or (b) holds, we always have
$\infty \in \mathcal{L}(Y) = [y_-, +\infty] = [[y_-, y_+]]$.

If $(\textsc{fs3}\mkern-.5mu)$ holds with
$I_{+\infty} \neq \emptyset$
and $I_{-\infty} = \emptyset$, then an argument precisely analogous
to that in the preceding paragraph shows that
$\infty \in \mathcal{L}(Y) = [-\infty, y_+] = [[y_-, y_+]]$.

Lastly, suppose that
$(\textsc{fs3}\mkern-.5mu)$ holds with
$I_{+\infty} = I_{-\infty} = \emptyset$.
This, as well as the fact that
$\infty \in [[y_{i-}^{\textsc{g}}, y_{i+}^{\textsc{g}}]]$
for each $i \in \{1, \ldots, n_{{\textsc{g}}}\}$,
implies that
$\infty \neq y_{i-}^{\textsc{g}} \ge  y_{i+}^{\textsc{g}} \neq \infty$
for all $i \in \{1, \ldots, n_{{\textsc{g}}}\}$,
which, by 
Claim~\ref{claim: y- and y+ outside long+ and long-},
implies that
$\infty \neq y_- > y_+ \neq \infty$.
Moreover, applying
Claim~\ref{claim: y- and y+ outside long+ and long-}
to
(\ref{eq: defs of hat y- hat y+ in no solid torus case})
as in case (a) above
yields
(\ref{eq: long ineq that gives nice hat bound for fs3 case, case a}),
so that we also have
$\infty \neq \hat{y}_- > \hat{y}_+ \neq \infty$.
By inductive assumption, we then have
\begin{equation}
\mathcal{L}^{\circ}\mkern-1.5mu(\hat{Y}[y])
\mkern1mu \cup \mkern1mu
\varphi_{\mkern-1.7mu n_{{\textsc{g}}}\mkern-.5mu *}^{{\mathbb{P}}}(\mathcal{L}^{\circ}\mkern-1.5mu(Y_{n_{{\textsc{g}}}}))
=\mkern1.5mu
(\left<\hat{y}_-, +\infty\right] \mkern-1.5mu\cup\mkern-1.5mu \left[-\infty, \hat{y}_+\right>)
\mkern1.5mu\cup\mkern1.5mu
(\left<y_{n_{{\textsc{g}}}-}^{\textsc{g}}, +\infty\right]
\mkern-2.5mu\cup\mkern-2.5mu \left[-\infty, y_{n_{{\textsc{g}}}+}^{\textsc{g}}\right>)
\end{equation}
for any $y \in {\mathbb{Q}}$,
and so
(\ref{eq: L-space for y rational when union with Yhat})
implies that $y \in \mathcal{L}(Y)$
if and only if 
$y_{n_{{\textsc{g}}}+}^{\textsc{g}} > \hat{y}_-$
or
$y_{n_{{\textsc{g}}}-}^{\textsc{g}} < \hat{y}_+$
which, by
Claim
\ref{claim: non-solid-torus Yhat case, ineqs imply ineqs},
occurs if and only if $y \in [y_-, +\infty] \cup [-\infty, y_+]$.
We therefore have
$\infty \in \mathcal{L}(Y) = [[y_-, y_+]] = [y_-, +\infty] \cup [-\infty, y_+]$,
completing our proof.
\end{proof}

\begin{prop}
\label{prop: FS1, FS2, and NFS2}
Suppose $\infty \notin \mathcal{L}(Y)$.
Then $\mathcal{L}(Y)\neq \emptyset$
if and only if 
either 
$(\textsc{nfs2}\mkern-.5mu)$ holds for $Y$,
in which case $\mathcal{L}(Y) = \{y_-\} = \{y_+\}$,
or 
$(\textsc{fs1}\mkern-.5mu)$ or $(\textsc{fs2}\mkern-.5mu)$
holds for $Y$,
in which case $\mathcal{L}(Y) = [[y_-, y_+]]$.
\end{prop}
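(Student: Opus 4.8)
The plan is to feed the gluing identity~(\ref{eq: L-space for y rational when union with Yhat}) into a case analysis organized by the position of the $n_{{\textsc{g}}}$ daughter intervals $[[y_{i-}^{\textsc{g}}, y_{i+}^{\textsc{g}}]] = \varphi_{i*}^{{\mathbb{P}}}(\mathcal{L}(Y_i))$ relative to the point $\infty$, determining $\mathcal{L}(\hat{Y}[y])$ in each case from the inductive versions of Theorem~\ref{thm: l space interval for graph manifolds}, Proposition~\ref{prop: characterization of Floer simple graph manifolds}, and Proposition~\ref{prop: infty in L(Y) case of main thm and prop, non solid torus Y hat}, and then translating the resulting condition on $y$ into the stated $\max$/$\min$ formulas via Claims~\ref{claim: non-solid-torus Yhat case, ineqs imply ineqs} and~\ref{claim: y- and y+ outside long+ and long-}. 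First I would dispose of the case that some $Y_i$ fails to be Floer simple: if $Y_{n_{{\textsc{g}}}}$ is not Floer simple then $\mathcal{L}^{\circ}(Y_{n_{{\textsc{g}}}}) = \emptyset$, and if some $Y_i$ with $i < n_{{\textsc{g}}}$ is not Floer simple then the inductive Proposition~\ref{prop: characterization of Floer simple graph manifolds} forces $\mathcal{L}^{\circ}(\hat{Y}[y]) = \emptyset$ for every $y \in {\mathbb{Q}}$; in either case~(\ref{eq: L-space for y rational when union with Yhat}) fails for all $y$, so $\mathcal{L}(Y) = \emptyset$, and none of (\textsc{fs1}), (\textsc{fs2}), (\textsc{nfs2}) holds---each requiring all daughters Floer simple---while (\textsc{fs3}), (\textsc{nfs3}), (\textsc{nfs4}) would force $\infty \in \mathcal{L}(Y)$ and (\textsc{nfs1}) is excluded by the standing hypothesis of Section~\ref{ss: orientable base, no solid tori}. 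So from now on all $Y_i$ are Floer simple and each $[[y_{i-}^{\textsc{g}}, y_{i+}^{\textsc{g}}]]$ is a genuine interval with nonempty interior.

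Next I would classify each daughter as \emph{interior} ($\infty$ lies in the open interior of $[[y_{i-}^{\textsc{g}}, y_{i+}^{\textsc{g}}]]$), \emph{boundary} ($[[y_{i-}^{\textsc{g}}, y_{i+}^{\textsc{g}}]]$ equals $[-\infty, y_{i+}^{\textsc{g}}]$ or $[y_{i-}^{\textsc{g}}, +\infty]$), or \emph{exterior} ($\infty \notin [[y_{i-}^{\textsc{g}}, y_{i+}^{\textsc{g}}]]$, which forces that interval to be either a finite closed arc $[y_{i-}^{\textsc{g}}, y_{i+}^{\textsc{g}}]$ with $y_{i-}^{\textsc{g}} < y_{i+}^{\textsc{g}}$ or the complement $\left<-\infty, +\infty\right>$ of $\infty$). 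The hypothesis $\infty \notin \mathcal{L}(Y)$ together with~(\ref{eq: condition that infty in each Yg}) guarantees at least one exterior daughter, and also rules out having exactly one non-interior daughter of boundary type (that would place $\infty$ in every daughter interval). Since the construction of $Y$ from $\hat{M}$ and the $Y_i$, as well as the sums defining $y_{\pm}$, are symmetric in $Y_1, \ldots, Y_{n_{{\textsc{g}}}}$, I may relabel so that $Y_{n_{{\textsc{g}}}}$ is an exterior daughter; then $\hat{Y}[y]$ is a tree manifold with daughters $Y_1, \ldots, Y_{n_{{\textsc{g}}}-1}$ (and, when $n_{{\textsc{g}}} = 1$, a Seifert fibered regular fiber complement), to which the inductive hypotheses apply.

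The heart of the argument is then two cases. \textbf{Case 1: $Y_{n_{{\textsc{g}}}}$ is the only non-interior daughter} (so $Y_1, \ldots, Y_{n_{{\textsc{g}}}-1}$ are all interior). Then $\hat{Y}[y]$ satisfies (\textsc{fs3}) with $I_{-\infty}^{\hat{Y}[y]} = I_{+\infty}^{\hat{Y}[y]} = \emptyset$, so the inductive Proposition~\ref{prop: infty in L(Y) case of main thm and prop, non solid torus Y hat} gives $\infty \in \mathcal{L}^{\circ}(\hat{Y}[y])$ with $\mathcal{L}^{\circ}(\hat{Y}[y])$ a through-$\infty$ open arc whose complement is a finite closed arc $[\hat{y}_+, \hat{y}_-]$. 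If $Y_{n_{{\textsc{g}}}}$ is of complement-of-$\infty$ type then $\varphi_{n_{{\textsc{g}}}*}^{{\mathbb{P}}}(\mathcal{L}^{\circ}(Y_{n_{{\textsc{g}}}})) = \left<-\infty, +\infty\right>$ already covers that complement, so~(\ref{eq: L-space for y rational when union with Yhat}) holds for every $y$ and $\mathcal{L}(Y) = \left<-\infty, +\infty\right>$; since then $y_{n_{{\textsc{g}}}-}^{\textsc{g}} = y_{n_{{\textsc{g}}}+}^{\textsc{g}} = \infty$, both $y_-$ and $y_+$ acquire an infinite summand, whence $[[y_-, y_+]] = \left<-\infty, +\infty\right>$ and (\textsc{fs1}) holds. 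If instead $[[y_{n_{{\textsc{g}}}-}^{\textsc{g}}, y_{n_{{\textsc{g}}}+}^{\textsc{g}}]]$ is a finite closed arc, then~(\ref{eq: L-space for y rational when union with Yhat}) holds exactly when its interior covers $[\hat{y}_+, \hat{y}_-]$, i.e. when $y_{n_{{\textsc{g}}}-}^{\textsc{g}} < \hat{y}_+$ and $y_{n_{{\textsc{g}}}+}^{\textsc{g}} > \hat{y}_-$, which by Claim~\ref{claim: non-solid-torus Yhat case, ineqs imply ineqs} is equivalent to $y_- \le y \le y_+$; hence $\mathcal{L}(Y) = \{y \in {\mathbb{Q}} : y_- \le y \le y_+\}$, which is $[[y_-, y_+]]$ and realizes (\textsc{fs2}) when $y_- < y_+$, is $\{y_-\} = \{y_+\}$ and realizes (\textsc{nfs2}) when $y_- = y_+$, and is empty when $y_- > y_+$ (and in that last sub-case one checks directly that none of the eight conditions holds). \textbf{Case 2: at least two daughters are non-interior.} Then after relabeling $\hat{Y}[y]$ still has a non-interior daughter, so applying~(\ref{eq: condition that infty in each Yg}) to $\hat{Y}[y]$ (if that daughter is exterior), or the inductive Proposition~\ref{prop: infty in L(Y) case of main thm and prop, non solid torus Y hat} in the form (\textsc{nfs3}), or (\textsc{fs3}) with exactly one of $I_{-\infty}^{\hat{Y}[y]}, I_{+\infty}^{\hat{Y}[y]}$ empty (if it is boundary), shows $\mathcal{L}^{\circ}(\hat{Y}[y])$ is empty or avoids $\infty$; since $\varphi_{n_{{\textsc{g}}}*}^{{\mathbb{P}}}(\mathcal{L}^{\circ}(Y_{n_{{\textsc{g}}}}))$ also avoids $\infty$ (as $Y_{n_{{\textsc{g}}}}$ is exterior), the right side of~(\ref{eq: L-space for y rational when union with Yhat}) never holds, so $\mathcal{L}(Y) = \emptyset$; and none of (\textsc{fs1}), (\textsc{fs2}), (\textsc{nfs2}) can hold, each permitting at most one non-interior daughter.

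The step I expect to be the main obstacle is purely organizational: keeping the case bookkeeping airtight, and in particular re-extracting in every branch the trichotomy for $\hat{Y}[y]$---Floer simple with $\infty$ interior, Floer simple with $\infty$ on the boundary, isolated L-space filling at $\infty$, or empty L-space interval---from the inductive Propositions~\ref{prop: characterization of Floer simple graph manifolds} and~\ref{prop: infty in L(Y) case of main thm and prop, non solid torus Y hat}, while carefully distinguishing the half-infinite intervals $[y, +\infty]$ and $[-\infty, y]$---where $\infty$ is a boundary point---from the through-$\infty$ interval---where $\infty$ is interior---since~(\ref{eq: L-space for y rational when union with Yhat}) behaves quite differently in these two situations. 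Once the cases are in place, the passage from the gluing condition to the stated $\max$/$\min$ formulas is mechanical, using Claims~\ref{claim: non-solid-torus Yhat case, ineqs imply ineqs} and~\ref{claim: y- and y+ outside long+ and long-} together with the floor/ceiling identities~(\ref{eq: closed interval to ceiling floor}) and~(\ref{eq: open interval to ceiling floor}).
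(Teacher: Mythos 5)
Your proof is correct and uses the same machinery as the paper's: the inductive hypothesis applied to $\hat{Y}[y]$, the gluing identity (\ref{eq: L-space for y rational when union with Yhat}), and Claims~\ref{claim: non-solid-torus Yhat case, ineqs imply ineqs} and~\ref{claim: y- and y+ outside long+ and long-}. The case decomposition is organized differently, though. The paper splits on whether $I_{\pm\infty}$ is nonempty (WLOG placing $Y_{n_{\textsc{g}}}$ in $I_{\pm\infty}$ for the (\textsc{fs1}) biconditional) versus $I_{\pm\infty} = \emptyset$ and $I_{\cap} \neq \emptyset$ (WLOG placing $Y_{n_{\textsc{g}}}$ in $I_{\cap}$), proving ``$\mathcal{L}(Y) \neq \emptyset \iff$ (condition)'' with the two directions argued separately; the only-if direction extracts the remaining structure (all other daughters interior) from the nonemptiness of $\mathcal{L}(Y)$ rather than positing it as a case hypothesis. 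You instead relabel any exterior daughter into the last slot and split on the count of non-interior daughters: the $\geq 2$ case is dismissed quickly since then neither $\mathcal{L}^{\circ}(\hat{Y}[y])$ nor $\varphi_{n_{\textsc{g}}*}^{\mathbb{P}}(\mathcal{L}^{\circ}(Y_{n_{\textsc{g}}}))$ can reach $\infty$, and the real computation happens in the unique-non-interior case, where the two exterior types $\left<-\infty,+\infty\right>$ and $[y_{n_{\textsc{g}}-}^{\textsc{g}}, y_{n_{\textsc{g}}+}^{\textsc{g}}]$ are handled in parallel. Both decompositions exhaust the same possibilities. Yours is somewhat more symmetric, at the modest extra cost of having to rule out the edge case of a single boundary-type non-interior daughter (which you do correctly via (\ref{eq: condition that infty in each Yg})), while the paper avoids that edge case by tying the split to $I_{\pm\infty}$ versus $I_{\cap}$ from the outset.
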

\begin{proof}
We first observe that if any daughter subtree of $Y$
fails to be Floer simple, then $\mathcal{L}(Y) = \emptyset$.
That, if we choose $Y_{n_{{\textsc{g}}}}$ to be non Floer simple,
then by
(\ref{eq: L-space for y rational when union with Yhat}),
$\mathcal{L}^{\circ}(Y_{n_{{\textsc{g}}}}) \mkern-2mu=\mkern-2mu \emptyset$
implies $\mathcal{L}(Y) \cap {\mathbb{Q}} \mkern-2mu=\mkern-2mu \emptyset$,
which, since $\infty \notin \mathcal{L}(Y)$, implies $\mathcal{L}(Y) = \emptyset$.
Thus, we henceforth assume
the daughter subtrees 
$Y_1$, \ldots, $Y_{n_{{\textsc{g}}}}$ are all Floer simple.

Let $I_{\pm\infty}, I_{\cap} \subset \{1, \ldots, n_{{\textsc{g}}}\}$ denote the sets
\begin{align}
I_{\pm\infty}\!
\mkern2mu&:=\mkern2mu
\left\{i: [[y_{i-}^{\textsc{g}}, y_{i+}^{\textsc{g}}]]
=\left<-\infty, +\infty\right>\right\},
     \\ \nonumber
I_{\cap}
\mkern1mu&:=\mkern2mu
\left\{i: [[y_{i-}^{\textsc{g}}, y_{i+}^{\textsc{g}}]]
=[y_{i-}^{\textsc{g}}, +\infty]\cap
[-\infty, y_{i+}^{\textsc{g}}]\right\}.
\end{align}
Then by 
(\ref{eq: condition that infty in each Yg})
from the proof of
Proposition~\ref{prop: infty in L(Y) case of main thm and prop, non solid torus Y hat},
we know that
\begin{equation}
\label{eq: one guy without infinity}
I_{\pm\infty} \cup I_{\cap} \neq \emptyset.
\end{equation}

Suppose $I_{\pm\infty} \mkern-3mu\neq\mkern-2mu \emptyset$.
We claim that in this case, $\mathcal{L}(Y) \neq \emptyset$
if and only if 
$(\textsc{fs1}\mkern-.5mu)$ holds for $Y$,
in which case $\mathcal{L}(Y) = [[y_-, y_+]]$.
First note that
$Y$ fails to satisfy any of the conditions for
nonempty $\mathcal{L}(Y)$ in
Proposition~\ref{prop: characterization of Floer simple graph manifolds}
except possibly
$(\textsc{fs1}\mkern-.5mu)$.
Suppose that $Y$ satisfies 
$(\textsc{fs1}\mkern-.5mu)$.
Then for each
$i \in \{1, \ldots, n_{{\textsc{g}}}-1\}$,
we have $\infty \in [[y_{i-}^{\textsc{g}}, y_{i+}^{\textsc{g}}]]^{\circ}$,
implying
$\infty \neq y_{i-}^{\textsc{g}} \ge y_{i+}^{\textsc{g}} \neq \infty$,
which by Claim~\ref{claim: y- and y+ outside long+ and long-}
implies $\infty \neq \hat{y}_- > \hat{y}_+ \neq \infty$
for all $y \in {\mathbb{Q}}$.
By inductive assumption, we then have
$\infty \mkern-2mu\in\mkern-2mu 
\mathcal{L}^{\circ}\mkern-1.8mu(\mkern.2mu\hat{Y}\mkern-.2mu[y])$
for all $y \in {\mathbb{Q}}$.  Thus, since
(\ref{eq: L-space for y rational when union with Yhat}) tells us that
\begin{equation}
\label{eq: gluing condition in pm infty case}
\mathcal{L}^{\circ}\mkern-1.8mu(\mkern.2mu\hat{Y}\mkern-.2mu[y])
\cup \left<-\infty, +\infty\right> = {\mathbb{Q}} \cup \{\infty\}
\;\;\;
\iff
\;\;\;
y\in \mathcal{L}(Y),
\end{equation}
we have $\mathcal{L}(Y) = {\mathbb{Q}} = \left<-\infty, +\infty\right> = [[y_-, y_+]]$.
Suppose instead we are given that $\mathcal{L}(Y) \neq \emptyset$.
Then (\ref{eq: gluing condition in pm infty case})
tells us that
$\infty \mkern-2mu\in\mkern-2mu 
\mathcal{L}^{\circ}\mkern-1.8mu(\mkern.2mu\hat{Y}\mkern-.2mu[y])$
for all $y \mkern-2mu\in\mkern-2mu \mathcal{L}(Y)$.
By inductive assumption this implies,
for each $y \mkern-2mu\in\mkern-2mu \mathcal{L}(Y)$,
that $(\textsc{fs3}\mkern-.5mu)$ holds for $\hat{Y}[y]$,
with $\infty \mkern-2mu\in\mkern-2mu [[\hat{y}_-, \hat{y}_+]]^{\circ}$.
Since $\infty \neq \{\hat{y}_-, \hat{y}_+\}$, we know that $\hat{y}_-$
and $\hat{y}_+$ cannot have infinite summands, and so
$I_{-\infty}^{\hat{Y}[y]} =I_{+\infty}^{\hat{Y}[y]} = \emptyset$.
This, in addition to the fact that 
that $(\textsc{fs3}\mkern-.5mu)$ holds for $\hat{Y}[y]$,
implies that $\infty \in [[y_{i-}^{\textsc{g}}, y_{i+}^{\textsc{g}}]]^{\circ}$
for all $i \in \{1, \ldots, n_{{\textsc{g}}}-1\}$,
and thus
$(\textsc{fs1}\mkern-.5mu)$ holds for $Y$.

Next, consider the case in which
$I_{\pm\infty} \mkern-3mu=\mkern-2mu \emptyset$,
so that by
(\ref{eq: one guy without infinity}), we have
$I_{\cap} \mkern-2mu\neq\mkern-2mu \emptyset$.
Assume, without loss of generality, that
$n_{{\textsc{g}}} \in I_{\cap}$.
Then since $\infty \notin 
[[y_{n_{{\textsc{g}}}-}^{\textsc{g}}, y_{n_{{\textsc{g}}}+}^{\textsc{g}}]]^{\circ}$,
the L-space gluing condition in 
(\ref{eq: L-space for y rational when union with Yhat}) again tells us that
$\infty \mkern-2mu\in\mkern-2mu 
\mathcal{L}^{\circ}\mkern-1.8mu(\mkern.2mu\hat{Y}\mkern-.2mu[y])$
for all $y \in \mathcal{L}(Y)$.
Just as in the preceding paragraph, we deduce from this
that $\infty \in [[y_{i-}^{\textsc{g}}, y_{i+}^{\textsc{g}}]]^{\circ}$
for all $i \in \{1, \ldots, n_{{\textsc{g}}}-1\}$, and that this
implies that
$\mathcal{L}^{\circ}\mkern-1.8mu(\mkern.2mu\hat{Y}\mkern-.2mu[y])
= [[\hat{y}_-, \hat{y}_+]]$, with
$\infty \neq \hat{y}_- > \hat{y}_+ \neq \infty$,
for all $y \in {\mathbb{Q}}$.
Since $y_-$ and $y_+$ have only finite summands, 
we also know that
$y_-, y_+ \in {\mathbb{Q}}$.
We therefore have
\begin{align}
y \in \mathcal{L}(Y)
\;\;
&\iff\;\;
\left(\left<\hat{y}_-, +\infty\right]
\mkern-2mu\cup\mkern-2mu\left[-\infty, \hat{y}_+\right>\right)
\mkern1mu\cup\mkern1mu
\left(\left<y_{n_{{\textsc{g}}}-}^{\textsc{g}}, +\infty\right]
\mkern-3mu\cap\mkern-3mu\left[-\infty, y_{n_{{\textsc{g}}}+}^{\textsc{g}}\right>\right)
= {\mathbb{Q}} \cup \{\infty\}
       \\ \nonumber
\;\;
&\iff\;\;
y_{n_{{\textsc{g}}}+}^{\textsc{g}}\mkern-1mu > \hat{y}_-
\;\;\text{and}\;\;
y_{n_{{\textsc{g}}}-}^{\textsc{g}}\mkern-1mu < \hat{y}_+
       \\ \nonumber
\;\;
&\iff\;\;
y \in [y_-, +\infty] \cap [-\infty, y_+],
\end{align}
where the first line is due to 
(\ref{eq: L-space for y rational when union with Yhat}),
and the third line is due to
Claim~\ref{claim: non-solid-torus Yhat case, ineqs imply ineqs}.
Thus, if $y_- > y_+$, then $\mathcal{L}(Y) = \emptyset$;
if $y_- = y_+$, then 
$(\textsc{nfs2}\mkern-.5mu)$ holds for $Y$,
with $\mathcal{L}(Y) = \{y_-\} = \{y_+\}$;
and if $y_- < y_+$, then
$(\textsc{fs2}\mkern-.5mu)$ holds for $Y$,
with $\mathcal{L}(Y) = [[y_-, y_+]] = [y_-, y_+]$.
\end{proof}

\subsection{Orientable Base: 
Cases involving solid tori $\boldsymbol{\hat{Y}[y]}$}
\label{ss: orientable base, cases involving solid torus Y hat}
In this final subsection of the proof of
Theorem~\ref{thm: l space interval for graph manifolds}
and Proposition~\ref{prop: characterization of Floer simple graph manifolds},
we consider all cases
of $Y$ for which $\hat{Y}[y]$,
defined in 
(\ref{eq: union for induction}),
is a solid torus for some $y \in {\mathbb{Q}} \cup \{\infty\}$.
Recall that we have fixed a
tree manifold $Y$ with torus boundary, 
$b_1(Y) \mkern-2mu=\mkern-2mu 1$, tree height $k\mkern-2mu>\mkern-2mu0$,
and $n_{{\textsc{g}}}\mkern-2mu>\mkern-2mu0$ daughter subtrees,
as parameterized in 
Section~\ref{subsection: conventions for graph manifolds}.
Since $\hat{Y}[y]$ contains no incompressible tori, we must have $n_{{\textsc{g}}}-1 = 0$.
Thus, for any $y \in {\mathbb{Q}}$,
$\hat{Y}[y]$ is Seifert fibered over the disk,
and is a solid torus if and only if it has one or fewer exceptional fibers.
This occurs for $y \in {\mathbb{Z}}$ if and only if
the set
$\{y^{\textsc{d}}_1, \ldots, y^{\textsc{d}}_{n_{{\textsc{d}}}}\}$
contains $\le 1$ nonintegers.
Since $\hat{Y}[y]$ and $Y\!$ are invariant under reparameterizations
$(y^{\textsc{d}}_1, \ldots, y^{\textsc{d}}_{n_{{\textsc{d}}}})
\mapsto (y^{\textsc{d}}_1 + z_1, \ldots, y^{\textsc{d}}_{n_{{\textsc{d}}}} + z_{n_{{\textsc{d}}}})$
with $\sum_{i=1}^{n_{{\textsc{d}}}} z_i \mkern-2mu =\mkern-2mu 0$ and each 
$z_i \mkern-1.8mu\in\mkern-1.8mu {\mathbb{Z}}$,
or under the operation of forgetting a fiber complement with Dehn filling
of slope $\pi_i(-\tilde{h}_i) \mkern-2mu=\mkern-2mu 0$,
we may assume, without loss of generality, that $n_{{\textsc{d}}} \mkern-2mu=\mkern-2mu 1$.
Thus $n_{{\textsc{g}}}\mkern-3mu=\mkern-1.5mu n_{{\textsc{d}}} \mkern-3mu=\mkern-2mu 1$.
\smallskip

If the unique daughter subtree $Y_1$ satisfies
$\mathcal{L}(Y_1) \mkern-2mu=\mkern-2mu \emptyset$, then
the gluing
Propositions~\ref{prop: gluing prop for LNTF equivalent manifolds}
and
\ref{prop: l-space gluing for compressible boundary}
imply
that $\mathcal{L}(Y) = \emptyset$,
as predicted for $Y$ by
Proposition~\ref{prop: characterization of Floer simple graph manifolds}.

We therefore henceforth assume $\mathcal{L}(Y_1) \neq \emptyset$.
Since $Y$ has tree height $k$, $Y_1$ has tree height $k-1$.
Thus, by inductive assumption
as laid out in 
Section~\ref{ss: inductive set-up for main result},
$Y_1$ satisfies 
Theorem~\ref{thm: l space interval for graph manifolds}
and Proposition~\ref{prop: characterization of Floer simple graph manifolds},
with
\begin{equation}
\mathcal{L}(Y_1)
=
\begin{cases}
\{y^{\textsc{g}}_{1-}\}
=\{y^{\textsc{g}}_{1+}\}
   &
Y_1\;\text{not Floer simple},
    \\
[[y^{\textsc{g}}_{1-}
,y^{\textsc{g}}_{1+}]]
   &
Y_1\;\text{Floer simple}.
\end{cases}
\end{equation}

We proceed, once again, by fixing some notation.
Recall the definitions of $y_-$ and $y_+$:
\begin{align}
\label{eq: defs of y- and y+ in possibly solid torus case}
y_-
&:= \max_{k>0} 
-{{\textstyle{\frac{1}{k}}}}(1 + \lfloor y_1^{\textsc{d}} k \rfloor +
(\lceil y_{1+}^{\textsc{g}} k \rceil - 1) ),
        \\ \nonumber
y_+
&:= \min_{k>0} 
-{{\textstyle{\frac{1}{k}}}}(-1 + \lceil y_1^{\textsc{d}} k \rceil +
(\lfloor y_{1-}^{\textsc{g}} k \rfloor + 1) ),
\end{align}
where, as always,
we define $y_-$ or $y_+$ to be infinite if any of its summands is infinite.

For any $y \neq \infty$, $\hat{Y}[y]$ is Seifert fibred over the disk,
hence is Floer simple, allowing us to write
$[[\hat{y}_-, \hat{y}_+]] := \mathcal{L}(\hat{Y}[y])$.
Moreover, when $\hat{Y}[y]$ is {\em{not}} a solid torus, we have
\begin{equation}
\{y, y_1^{\textsc{d}}\} \cap {\mathbb{Z}} = \emptyset;
\;\;\;\;\;
\begin{array}{l}
\hat{y}_-
= \max\limits_{k>0} -{{\textstyle{\frac{1}{k}}}}
(1 + \lfloor y k \rfloor + \lfloor y_1^{\textsc{d}} k \rfloor),
        \\
\hat{y}_+
= \min\limits_{k>0} -{{\textstyle{\frac{1}{k}}}}
(-1 + \lceil y k \rceil + \lceil y_1^{\textsc{d}} k \rceil),
\end{array}
\end{equation}
but when  $\hat{Y}[y]$ {\it{is}} a solid torus, we have
\begin{equation}
\{y, y_1^{\textsc{d}}\} \cap {\mathbb{Z}} \neq \emptyset;
\;\;\;\;\;\;
\hat{y}_- 
= \hat{y}_+
= -y-y_1^{\textsc{d}}.\;\;\;
\end{equation}

We next prove the analogs of Claims
\ref{claim: non-solid-torus Yhat case, ineqs imply ineqs}
and
\ref{claim: y- and y+ outside long+ and long-}
from 
Section~\ref{ss: orientable base, no solid tori}
\begin{claim}
\label{claim: Yhat solid torus sometimes, translating y to yhat ineqs}
For any 
$y_1^{\textsc{d}},  y \in {\mathbb{Q}}$, we have
\begin{align}
\label{eq: y ge y_-  iff two cases}
y \in [y_-, +\infty]
\;
&\iff
\;
\begin{cases}
y_{1+}^{\textsc{g}} 
> 
\,\hat{y}_-
&\mkern-10mu
\;\;\;
\text{if}\;\,\{y, y_1^{\textsc{d}}\} \cap {\mathbb{Z}} = \emptyset
 \\
y_{1+}^{\textsc{g}} 
\ge
\,\hat{y}_-
&\mkern-10mu
\;\;\;
\text{if}\;\,\{y, y_1^{\textsc{d}}\} \cap {\mathbb{Z}} \neq \emptyset
\end{cases}
\;\;\;\;\;\;\;\;\;
\text{if}\;\;
y_{1+}^{\textsc{g}} \in {\mathbb{Q}};
          \\
\label{eq: y le y_+  iff two cases}
y \in [-\infty, y_+]
\;
&\iff
\;
\begin{cases}
y_{1-}^{\textsc{g}} 
< 
\,\hat{y}_+
&\mkern-10mu
\;\;\;
\text{if}\;\,\{y, y_1^{\textsc{d}}\} \cap {\mathbb{Z}} = \emptyset
 \\
y_{1-}^{\textsc{g}} 
\le
\,\hat{y}_+
&\mkern-10mu
\;\;\;
\text{if}\;\,\{y, y_1^{\textsc{d}}\} \cap {\mathbb{Z}} \neq \emptyset
\end{cases}
\;\;\;\;\;\;\;\;\;
\text{if}\;\;
y_{1-}^{\textsc{g}} \in {\mathbb{Q}}.
\end{align}
\end{claim}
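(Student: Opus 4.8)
The plan is to prove the first displayed equivalence; the second follows by the identical argument with every inequality, floor, and ceiling reversed, exactly as in the proof of Claim~\ref{claim: non-solid-torus Yhat case, ineqs imply ineqs}, so I will suppress it. Throughout I assume $y_{1+}^{\textsc{g}} \in {\mathbb{Q}}$, as the statement requires. The argument splits along the dichotomy already built into the claim: since $\hat{Y}[y]$ is Seifert fibered over the disk with filling slopes $y$ and $y_1^{\textsc{d}}$, it fails to be a solid torus precisely when both slopes are non-integral, i.e.\ $\{y, y_1^{\textsc{d}}\} \cap {\mathbb{Z}} = \emptyset$, and is a solid torus precisely when $\{y, y_1^{\textsc{d}}\} \cap {\mathbb{Z}} \neq \emptyset$. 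In each case I read off $\hat{y}_-$ from the displayed formulas for $\mathcal{L}(\hat{Y}[y])$ stated just before the claim, and chase the inequality through the floor/ceiling identities (\ref{eq: closed interval to ceiling floor}) and (\ref{eq: open interval to ceiling floor}).

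In the non-solid-torus case, $\hat{y}_- = \max_{k>0}\, -\tfrac{1}{k}(1 + \lfloor yk\rfloor + \lfloor y_1^{\textsc{d}} k\rfloor)$, and the chain of equivalences reproduces that in Claim~\ref{claim: non-solid-torus Yhat case, ineqs imply ineqs}. Because the maximum is attained, $y_{1+}^{\textsc{g}} > \hat{y}_-$ holds iff $y_{1+}^{\textsc{g}} k > -(1 + \lfloor yk\rfloor + \lfloor y_1^{\textsc{d}} k\rfloor)$ for every $k>0$; by the open-interval identity (\ref{eq: open interval to ceiling floor}) (the right-hand side being an integer) this is iff $\lceil y_{1+}^{\textsc{g}} k\rceil - 1 \ge -(1 + \lfloor yk\rfloor + \lfloor y_1^{\textsc{d}} k\rfloor)$ for every $k$; rearranging, iff $\lfloor yk\rfloor \ge -(1 + \lfloor y_1^{\textsc{d}} k\rfloor + (\lceil y_{1+}^{\textsc{g}} k\rceil - 1))$ for every $k$; by the closed-interval identity (\ref{eq: closed interval to ceiling floor}), iff $yk$ satisfies the same inequality for every $k$; and taking the maximum over $k$ in the definition (\ref{eq: defs of y- and y+ in possibly solid torus case}) of $y_-$, iff $y \ge y_-$, that is, $y \in [y_-, +\infty]$.

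In the solid-torus case, $\hat{y}_- = -y - y_1^{\textsc{d}}$, so the target is the non-strict statement $y \ge y_- \iff y_{1+}^{\textsc{g}} \ge -y - y_1^{\textsc{d}}$, where $y_- = \max_{k>0}\, -\tfrac{1}{k}(\lfloor y_1^{\textsc{d}} k\rfloor + \lceil y_{1+}^{\textsc{g}} k\rceil)$ after the $\pm 1$ cancel. If $y_1^{\textsc{d}} \in {\mathbb{Z}}$ this is immediate: $\lfloor y_1^{\textsc{d}} k\rfloor = y_1^{\textsc{d}} k$ gives $y_- = -y_1^{\textsc{d}} - \min_{k>0} \tfrac{1}{k}\lceil y_{1+}^{\textsc{g}} k\rceil = -y_1^{\textsc{d}} - y_{1+}^{\textsc{g}}$, using $\tfrac{1}{k}\lceil y_{1+}^{\textsc{g}} k\rceil \ge y_{1+}^{\textsc{g}}$ with equality whenever $k$ clears the denominator of $y_{1+}^{\textsc{g}}$. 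If instead $y \in {\mathbb{Z}}$, I treat the two directions by hand. For $\Leftarrow$, assume $y + y_1^{\textsc{d}} + y_{1+}^{\textsc{g}} \ge 0$; for each $k>0$, since $yk \in {\mathbb{Z}}$ we have $yk + \lfloor y_1^{\textsc{d}} k\rfloor = \lfloor(y+y_1^{\textsc{d}})k\rfloor$, and then the elementary inequality $\lfloor a\rfloor + \lceil b\rceil \ge 0$ whenever $a+b \ge 0$ (since $a \ge -b$ forces $\lfloor a\rfloor \ge \lfloor -b\rfloor = -\lceil b\rceil$), applied with $a = (y+y_1^{\textsc{d}})k$ and $b = y_{1+}^{\textsc{g}} k$, yields $yk + \lfloor y_1^{\textsc{d}} k\rfloor + \lceil y_{1+}^{\textsc{g}} k\rceil \ge 0$, hence $y \ge -\tfrac{1}{k}(\lfloor y_1^{\textsc{d}} k\rfloor + \lceil y_{1+}^{\textsc{g}} k\rceil)$; since $k$ is arbitrary, $y \ge y_-$. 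For $\Rightarrow$, evaluating the $k$-term defining $y_-$ at $k = s$, the least common multiple of the denominators of $y_1^{\textsc{d}}$ and $y_{1+}^{\textsc{g}}$, gives $y_- \ge -y_1^{\textsc{d}} - y_{1+}^{\textsc{g}}$, so $y \ge y_-$ forces $y_{1+}^{\textsc{g}} \ge -y - y_1^{\textsc{d}}$.

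The non-solid-torus case is routine, being essentially a transcription of Claim~\ref{claim: non-solid-torus Yhat case, ineqs imply ineqs}, so the main obstacle is the bookkeeping in the solid-torus case, and in particular giving a clean reason why the comparison there is non-strict: the strictness in Claim~\ref{claim: non-solid-torus Yhat case, ineqs imply ineqs} entered through the open-interval identity (\ref{eq: open interval to ceiling floor}), but once $\hat{Y}[y]$ is a solid torus the maximum defining $\hat{y}_-$ is realized exactly at the rational longitude $-y - y_1^{\textsc{d}}$, so one passes through the closed-interval identity (\ref{eq: closed interval to ceiling floor}) instead. The one step that genuinely needs care is the subcase $y \in {\mathbb{Z}}$, $y_1^{\textsc{d}} \notin {\mathbb{Z}}$, where neither filling slope can be absorbed directly and one must combine the integrality of $yk$ with the floor/ceiling inequality $\lfloor a\rfloor + \lceil b\rceil \ge 0$; this is the step I would write out in full detail.
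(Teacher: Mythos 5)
Your proof is correct and follows essentially the same route as the paper: first translate $y \ge y_-$ into a universal floor--ceiling inequality via \eqref{eq: closed interval to ceiling floor}, then in the non-solid-torus case chase through \eqref{eq: open interval to ceiling floor} to reach the strict bound on $\hat{y}_-$, and in the solid-torus case use the floor identity $\lfloor yk\rfloor + \lfloor y_1^{\textsc{d}} k\rfloor = \lfloor(y+y_1^{\textsc{d}})k\rfloor$ to reach the non-strict bound. The only cosmetic difference is that the paper handles both solid-torus subcases (which of $y$, $y_1^{\textsc{d}}$ is integral) uniformly through that floor identity, whereas you split them out and treat $y_1^{\textsc{d}} \in \mathbb{Z}$ by direct computation of $y_-$ and $y \in \mathbb{Z}$ by a bidirectional argument, which is the same underlying computation repackaged.
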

\begin{proof}[Proof of Claim]
To understand
(\ref{eq: y ge y_-  iff two cases}), note that due to
(\ref{eq: closed interval to ceiling floor}),
we have $y \ge y_-$ if  and only if
\begin{equation}
\label{eq: floor y- k ge solid torus}
\lfloor y k \rfloor \ge  
-(\lfloor y_1^{\textsc{d}} k \rfloor +
\lceil y_{1+}^{\textsc{g}} k \rceil )
\;\;\;\text{ for all } k > 0.
\end{equation}
When
$\{y, y_1^{\textsc{d}}\} \cap {\mathbb{Z}} = \emptyset$,
(\ref{eq: open interval to ceiling floor})
implies that
(\ref{eq: floor y- k ge solid torus})
holds if and only if 
$y^{\textsc{g}}_{1+} > \hat{y}_-$.
On the other hand, if 
$\{y, y_1^{\textsc{d}}\} \cap {\mathbb{Z}} \neq \emptyset$,
then 
$\lfloor y k \rfloor + \lfloor y_1^{\textsc{d}} k \rfloor
\mkern-1.5mu=\mkern-1.5mu \lfloor (y + y_1^{\textsc{d}}) k \rfloor$,
and so applying
(\ref{eq: closed interval to ceiling floor})
to 
(\ref{eq: floor y- k ge solid torus}) yields
\begin{equation}
y + y_1^{\textsc{d}}
\;\ge\; \max_{k>0}-{{\textstyle{\frac{1}{k}}}}\lceil y_{1+}^{\textsc{g}} k  \rceil
\;=\; -y_{1+}^{\textsc{g}},
\end{equation}
which is equivalent to the inequality 
$y^{\textsc{g}}_{1+} \ge \hat{y}_-$,
completing the proof of 
(\ref{eq: y ge y_-  iff two cases}).
One can then obtain
(\ref{eq: y le y_+  iff two cases})
by replacing
$y$, $y_{1+}^{\textsc{g}}$, and $y_1^{\textsc{d}}$
in 
(\ref{eq: y ge y_-  iff two cases})
with 
$-y$, $-y_{1-}^{\textsc{g}}$, and $-y_1^{\textsc{d}}$,
respectively.
\end{proof}

\begin{claim}
\label{claim: Yhat solid torus, y- y+ outside longs with equality sometimes}
If $y_1^{\textsc{d}}, y_{1+}^{\textsc{g}} \mkern-3.5mu\in\mkern-1.5mu {\mathbb{Q}}$, then
\begin{align*}
-y_1^{\textsc{d}} - y_{1+}^{\textsc{g}}
\mkern2mu&\le\mkern4mu y_-
\mkern2mu\le\mkern2mu -\lfloor y_1^{\textsc{d}} + y_{1+}^{\textsc{g}} \rfloor,
\;\;\text{with}
     \\
-y_1^{\textsc{d}} - y_{1+}^{\textsc{g}}
\mkern2mu&=\mkern4mu
y_-
\mkern20mu\iff\mkern22mu
y_1^{\textsc{d}} \mkern-.8mu\in\mkern-.8mu {\mathbb{Z}}
\mkern13.5mu\text{or}\mkern14mu
y_1^{\textsc{d}} \mkern-.8mu+\mkern-.2mu y_{1+}^{\textsc{g}} 
\mkern-2.5mu\in\mkern-.8mu {\mathbb{Z}}.
\mkern73mu
\end{align*}
If $y_1^{\textsc{d}}, y_{1-}^{\textsc{g}} \mkern-3.5mu\in\mkern-1.5mu {\mathbb{Q}}$, then
\begin{align*}
\mkern24mu
-\lceil y_1^{\textsc{d}} + y_{1-}^{\textsc{g}} \rceil
\mkern2mu\le\mkern4mu y_+
\mkern2mu&\le\mkern2mu 
-y_1^{\textsc{d}} - y_{1-}^{\textsc{g}},
\;\;\text{with}
     \\
y_+ 
\mkern2mu&=\mkern2mu
-y_1^{\textsc{d}} \mkern-.8mu-\mkern-.2mu y_{1-}^{\textsc{g}}
\mkern20mu\iff\mkern22mu
y_1^{\textsc{d}} \mkern-.8mu\in\mkern-.8mu {\mathbb{Z}}
\mkern13.5mu\text{or}\mkern14mu
y_1^{\textsc{d}} \mkern-.8mu+\mkern-.2mu y_{1-}^{\textsc{g}} 
\mkern-2.5mu\in\mkern-.8mu {\mathbb{Z}}.
\end{align*}
\end{claim}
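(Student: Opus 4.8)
The $y_+$ assertion reduces to the $y_-$ assertion by the same sign‑flipping substitution used in the proof of the previous claim: since $\lceil y_1^{\textsc{d}}k\rceil + \lfloor y_{1-}^{\textsc{g}}k\rfloor = -\bigl(\lfloor(-y_1^{\textsc{d}})k\rfloor + \lceil(-y_{1-}^{\textsc{g}})k\rceil\bigr)$, the quantity $y_+$ is the negative of the ``$y_-$'' attached to the pair $(-y_1^{\textsc{d}},-y_{1-}^{\textsc{g}})$, and the bounds $-\lceil y_1^{\textsc{d}}+y_{1-}^{\textsc{g}}\rceil \le y_+ \le -y_1^{\textsc{d}}-y_{1-}^{\textsc{g}}$ together with the equality criterion follow by negating. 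So I will only treat $y_-$. Writing $a:=y_1^{\textsc{d}}$ and $b:=y_{1+}^{\textsc{g}}$ and cancelling the $\pm1$ in (\ref{eq: defs of y- and y+ in possibly solid torus case}), we have $y_-=\max_{k>0}\bigl(-\tfrac1k(\lfloor ak\rfloor+\lceil bk\rceil)\bigr)$. The plan is to rewrite this through fractional parts: setting $\{x\}:=x-\lfloor x\rfloor\in[0,1)$ and using $\lfloor ak\rfloor=ak-\{ak\}$, $\lceil bk\rceil=bk+\{-bk\}$, we obtain
\begin{equation*}
y_- \;=\; -(a+b)\;+\;M,\qquad M:=\max_{k>0}\tfrac1k\bigl(\{ak\}-\{-bk\}\bigr),
\end{equation*}
so that all three conclusions become statements about $M$, namely $0\le M\le\{a+b\}$ with $M=0$ exactly when $a\in\mathbb{Z}$ or $a+b\in\mathbb{Z}$.

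First I would dispatch the two bounds. For $M\ge0$: evaluate the defining expression at $k=s$, the least common denominator of $a$ and $b$; then $as,bs\in\mathbb{Z}$, so $\{as\}=\{-bs\}=0$ and the $k=s$ term vanishes. For $M\le\{a+b\}$: the number $\{ak\}-\{-bk\}$ lies in $(-1,1)$ and is congruent mod $1$ to $\{(a+b)k\}\in[0,1)$ (their difference is visibly an integer), hence $\{ak\}-\{-bk\}\le\{(a+b)k\}$; and $\tfrac1k\{(a+b)k\}=\tfrac1k\{\{a+b\}k\}\le\{a+b\}$ since $\{\{a+b\}k\}\le\{a+b\}k$. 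Thus $y_-=-(a+b)+M$ lies between $-(a+b)$ and $-(a+b)+\{a+b\}=-\lfloor a+b\rfloor$, which is the asserted chain of inequalities.

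For the equality criterion: since $M\ge0$ and every summand $\tfrac1k(\{ak\}-\{-bk\})$ has numerator in $(-1,1)$, we have $M=0$ iff $\{ak\}\le\{-bk\}$ for all $k>0$. The ``if'' direction is immediate — if $a\in\mathbb{Z}$ then $\{ak\}=0\le\{-bk\}$, and if $a+b\in\mathbb{Z}$ then $\{ak\}-\{-bk\}$ is an integer in $(-1,1)$, hence $0$. The converse is where the real work is: assuming $\{ak\}\le\{-bk\}$ for all $k>0$ and $a\notin\mathbb{Z}$, I want $a+b\in\mathbb{Z}$. The trick is that, although $k$ is required to be positive, the value $k=s-1$ (legitimate because $s\ge2$, the denominator of $a$ being $\ge2$) plays the role of ``$k=-1$'': using $as,bs\in\mathbb{Z}$ one has $\{a(s-1)\}=\{-a\}$ and $\{-b(s-1)\}=\{b\}$. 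Hence the hypothesis at $k=1$ and at $k=s-1$ gives $\{a\}\le\{-b\}$ and $\{-a\}\le\{b\}$. If $b\in\mathbb{Z}$ the first inequality alone forces $a\in\mathbb{Z}$, a contradiction; so $b\notin\mathbb{Z}$, whence $\{-a\}=1-\{a\}$ and $\{b\}=1-\{-b\}$, and the second inequality becomes $\{-b\}\le\{a\}$. Combined with $\{a\}\le\{-b\}$ this yields $\{a\}=\{-b\}$, i.e. $a+b\in\mathbb{Z}$, as required.

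The only genuine obstacle is the converse half of the last paragraph — producing, for a pair with $a\notin\mathbb{Z}$ and $a+b\notin\mathbb{Z}$, some positive $k$ with $\{ak\}>\{-bk\}$; everything else is a direct fractional‑part computation. The resolution above is to exploit the $k\mapsto-k$ symmetry while staying inside $\mathbb{Z}_{>0}$ by evaluating at $k=s-1$.
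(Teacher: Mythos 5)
Your proof is correct and uses essentially the same approach as the paper's: both rewrite the problem in terms of fractional parts, establish the lower bound by evaluating at the common denominator $s$, establish the upper bound by a similar fractional-part comparison, and — for the hard converse of the equality criterion — exploit the ``$k\mapsto s-k$'' symmetry by evaluating at $k=1$ and $k=s-1$ (the paper bundles these two evaluations into a single inequality $y'_-(1)+(s_+\!-\!1)y'_-(s_+\!-\!1)\ge 0$ before extracting $y'_-(1)=0$, whereas you keep them as separate inequalities $\{a\}\le\{-b\}$ and $\{-a\}\le\{b\}$ and chain them, but this is the same idea).
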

\begin{proof}[Proof of Claim]
Just as in the proof of
Claim~\ref{claim: y- and y+ outside long+ and long-},
we define
\begin{equation}
y'_-(k) :=
\mfrac{1}{k}\mkern-1.5mu\left([y_1^{\textsc{d}} k]
- [-y_{1\mkern-.3mu+}^{\textsc{g}} k]\mkern-.5mu\right)
\end{equation}
for each $k \in {\mathbb{Z}}_{>0}$, so that
\begin{equation}
\label{eq: y- in terms of y'-(k)}
y_- 
\mkern1mu=\mkern1mu
-y_1^{\textsc{d}} \mkern-.8mu-\mkern-.2mu y_{1\mkern-.3mu+}^{\textsc{g}}\!
\mkern1.5mu+\mkern4mu \max_{k>0} \mkern1.5mu y'_-(k).
\end{equation}
Demanding $\mkern1.5muy_1^{\textsc{d}}, y_{1+}^{\textsc{g}} \mkern-2mu\in {\mathbb{Q}}$, we again set
$s_+ := \min\left\{k \mkern-1mu\in\mkern-1mu {\mathbb{Z}}_{>0} 
\mkern1mu|\mkern3mu 
y_1^{\textsc{d}}k, y_{1\mkern-.5mu +}^{\textsc{g}}\mkern-.5mu k \in {\mathbb{Z}}\right\}$.

Since $y'_-(s_{\mkern-1mu+}) \mkern-2mu=\mkern-2mu 0$, we already know that
$y_- \mkern-2mu\ge\mkern-2mu -y_1^{\textsc{d}} - y_{1+}^{\textsc{g}}$
for all $y_1^{\textsc{d}}, y_{1+}^{\textsc{g}} \mkern-2mu\in {\mathbb{Q}}$.
If $y_i^{\textsc{d}} \in {\mathbb{Z}}$,
then $y'_-(k) \mkern-2mu\le\mkern-2mu 0$ for all $k \in {\mathbb{Z}}_{>0}$,
implying $y_- \mkern-2mu\ge\mkern-2mu -y_1^{\textsc{d}} - y_{1+}^{\textsc{g}}$.
Suppose that 
$y_- \mkern-2mu= -y_1^{\textsc{d}} - y_{1+}^{\textsc{g}}$
for some $y_i^{\textsc{d}} \in {\mathbb{Q}} \setminus {\mathbb{Z}}$. 
Then since $y'_-(k) \le 0$ for all $k \in {\mathbb{Z}}_{>0}$, we have
\begin{align}
\label{eq: y(1) and y(s-1) both 0}
0
&\mkern1.5mu\ge\mkern1.5mu
y'_-\mkern-1.2mu(1) \mkern1.5mu+\mkern1.5mu (s_{\mkern-1mu+}\mkern-3mu-\mkern-1.5mu1)
\mkern.3mu y'_-\mkern-1.2mu(s_{\mkern-1mu+}\mkern-3mu-\mkern-1.5mu1)
      \\
&\mkern1.5mu=\mkern1.5mu
([y_1^{\textsc{d}}] + [-y_1^{\textsc{d}}])
-([-y_{1+}^{\textsc{g}}] + [y_{1+}^{\textsc{g}}])
   \nonumber   \\  \nonumber
&\mkern1.5mu\ge\mkern1.5mu
0.
\end{align}
Thus, the top line of 
(\ref{eq: y(1) and y(s-1) both 0}) must be an equality,
which, since $y'_-(k) \le 0$ for all $k \in {\mathbb{Z}}_{>0}$, implies
$y'_-\mkern-1.2mu(1) =
\mkern.3mu y'_-\mkern-1.2mu(s_{\mkern-1mu+}\mkern-3mu-\mkern-1.5mu1) = 0$.
In particular, the fact that $y'_-\mkern-1.2mu(1) = 0$
implies that $\mkern1.5muy_1^{\textsc{d}} + y_{1+}^{\textsc{g}} \mkern-2mu\in {\mathbb{Z}}$.
Conversely, if $\mkern1.5muy_1^{\textsc{d}} + y_{1+}^{\textsc{g}} \mkern-2mu\in {\mathbb{Z}}$,
then $y'_-(k) \equiv 0$ for all $k \in {\mathbb{Z}}_{>0}$, implying
$y_- \mkern-2mu= -y_1^{\textsc{d}} - y_{1+}^{\textsc{g}}$.

We have shown that
$y_- \mkern-2.5mu\ge\mkern-1mu -y_1^{\textsc{d}} - y_{1+}^{\textsc{g}}$,
with equality if and only if
$y_1^{\textsc{d}} \mkern-.8mu\in\mkern-.8mu {\mathbb{Z}}\mkern.8mu$
or
$\mkern.8mu y_1^{\textsc{d}} \mkern-.8mu+\mkern-.2mu y_{1+}^{\textsc{g}} 
\mkern-2.5mu\in\mkern-.8mu {\mathbb{Z}}$,
and so it remains to show that 
$y_- \le 
-\lfloor y_1^{\textsc{d}} + y_{1+}^{\textsc{g}} \rfloor$.
Note that for each $k\in {\mathbb{Z}}_{>0}$, we have
\begin{align}
y'_-(k)
\mkern1.5mu&=\mkern1.5mu
\begin{cases}
\frac{1}{k}[\mkern.5mu y_1^{\textsc{d}}k 
\mkern2mu+\mkern2mu y_{1\mkern-1mu +}^{\textsc{g}}\mkern-.9mu k]
&\;
y_{1\mkern-1mu +}^{\textsc{g}}\mkern-.5mu k \in {\mathbb{Z}}
     \\
\frac{1}{k}([y_1^{\textsc{d}}k] 
\mkern2mu+\mkern2mu [y_{1\mkern-1mu+}^{\textsc{g}}\mkern-.9muk] - 1)
&\;
y_{1\mkern-1mu +}^{\textsc{g}}\mkern-.5mu k \notin {\mathbb{Z}}
\end{cases}
      \\
\mkern1.5mu&\le\mkern1.5mu
\mfrac{1}{k}[(y_1^{\textsc{d}} \mkern2mu+\mkern2mu y_{1\mkern-1mu +}^{\textsc{g}}) k]
     \nonumber \\ \nonumber
\mkern1.5mu&\le\mkern1.5mu
[\mkern.3mu y_1^{\textsc{d}} \mkern2mu+\mkern2mu y_{1\mkern-1mu +}^{\textsc{g}}].
\end{align}
Thus, by (\ref{eq: y- in terms of y'-(k)}), we have
\begin{equation}
y_- \le 
-y_1^{\textsc{d}} \mkern-.5mu-\mkern.5mu y_{1\mkern-1mu +}^{\textsc{g}}
\mkern-.5mu+\mkern2.5mu
[\mkern.3mu y_1^{\textsc{d}} \mkern-.5mu+\mkern.5mu y_{1\mkern-1mu +}^{\textsc{g}}]
=
-\lfloor y_1^{\textsc{d}} \mkern-.5mu+\mkern.5mu y_{1\mkern-1mu +}^{\textsc{g}} \rfloor.
\end{equation}

A similar argument proves all of the analogous results for $y_+$.
\end{proof}

For the proof that
Theorem~\ref{thm: l space interval for graph manifolds}
and 
Proposition~\ref{prop: characterization of Floer simple graph manifolds}
hold for $Y$, we divide our argument into three main cases, 
first according to whether or not
$Y_1$ is Floer simple,
and then according to whether $\infty \in \mathcal{L}(Y)$.

\begin{prop}
Suppose $\mathcal{L}(Y_1) \mkern-2mu\neq\mkern-2mu \emptyset$ with 
$Y_1\mkern-1.5mu$ not Floer simple.
Then $\mathcal{L}(Y) \mkern-2mu\neq\mkern-2mu \emptyset$ if and only if
condition 
$(\textsc{nfs1}\mkern-.5mu)$ or $(\textsc{nfs4}\mkern-.5mu)$ from
Proposition~\ref{prop: characterization of Floer simple graph manifolds} holds,
in which case $\mathcal{L}(Y) \mkern-2mu=\mkern-2mu \{y_-\} \mkern-2mu=\mkern-2mu \{y_+\}$.
\end{prop}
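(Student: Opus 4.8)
Since $Y_1$ fails to be Floer simple while $\mathcal{L}(Y_1) \neq \emptyset$, we have $\mathcal{L}(Y_1) = \{y_1^{\textsc{g}}\}$, writing $y_1^{\textsc{g}} := y_{1-}^{\textsc{g}} = y_{1+}^{\textsc{g}}$; moreover $Y_1$ is not a solid torus, since solid tori are Floer simple. Recall that throughout this subsection $n_{{\textsc{g}}} = n_{{\textsc{d}}} = 1$, that $\hat{Y}[y]$ is Seifert fibered over the disk for $y \in {\mathbb{Q}}$, that it is a solid torus precisely when $\{y, y_1^{\textsc{d}}\} \cap {\mathbb{Z}} \neq \emptyset$, and that in that case its rational longitude has slope $\hat{y}_- = \hat{y}_+ = -y - y_1^{\textsc{d}}$. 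The plan is to decide, slope by slope, which $y \in {\mathbb{Q}} \cup \{\infty\}$ lie in $\mathcal{L}(Y)$ using the decomposition $Y(y) = \hat{Y}[y] \cup_{\varphi_1} Y_1$ from~(\ref{eq: union for induction}), splitting into the three cases $y = \infty$, $y \in {\mathbb{Q}}$ with $\hat{Y}[y]$ not a solid torus, and $y \in {\mathbb{Q}}$ with $\hat{Y}[y]$ a solid torus.

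First I would handle $y = \infty$: the remark in Section~\ref{ss: Conventions for Seifert fibered spaces} presents $Y(\infty)$ as a connected sum of the lens space $L(s_1^{\textsc{d}}, r_1^{\textsc{d}})$ with the Dehn filling of $Y_1$ along $(\varphi_{1*}^{{\mathbb{P}}})^{-1}(\infty)$, so, lens spaces being L-spaces, $\infty \in \mathcal{L}(Y)$ if and only if $\infty \in \varphi_{1*}^{{\mathbb{P}}}(\mathcal{L}(Y_1)) = \{y_1^{\textsc{g}}\}$, i.e. if and only if $y_1^{\textsc{g}} = \infty$. For $y \in {\mathbb{Q}}$ with $\hat{Y}[y]$ not a solid torus, both $\hat{Y}[y]$ and $Y_1$ are non-solid-torus graph manifolds, hence L/NTF-equivalent by Theorem~\ref{thm: lntf equivalence}, so Proposition~\ref{prop: gluing prop for LNTF equivalent manifolds} applies: $y \in \mathcal{L}(Y)$ if and only if $\mathcal{L}^{\circ}(\hat{Y}[y]) \cup \varphi_{1*}^{{\mathbb{P}}}(\mathcal{L}^{\circ}(Y_1)) = {\mathbb{Q}} \cup \{\infty\}$. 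Here $\mathcal{L}^{\circ}(Y_1) = \emptyset$, and $\mathcal{L}(\hat{Y}[y]) = [[\hat{y}_-, \hat{y}_+]]$ with $\hat{y}_\pm \in {\mathbb{Q}}$ is a proper subset of ${\mathbb{Q}} \cup \{\infty\}$ by Proposition~\ref{prop: l space interval for seifert fibered spaces}, so $y \notin \mathcal{L}(Y)$. For $y \in {\mathbb{Q}}$ with $\hat{Y}[y]$ a solid torus, $Y(y)$ is the Dehn filling of $Y_1$ along the $\varphi_1$-image of the rational longitude of $\hat{Y}[y]$, of slope $-y - y_1^{\textsc{d}}$, so Proposition~\ref{prop: l-space gluing for compressible boundary} gives $y \in \mathcal{L}(Y)$ if and only if $-y - y_1^{\textsc{d}} \in \mathcal{L}(Y_1) = \{y_1^{\textsc{g}}\}$, i.e. if and only if $y_1^{\textsc{g}} \in {\mathbb{Q}}$ and $y = -y_1^{\textsc{d}} - y_1^{\textsc{g}}$.

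Putting the cases together, $\mathcal{L}(Y) \neq \emptyset$ exactly when either $y_1^{\textsc{g}} = \infty$ --- which for $n_{{\textsc{g}}} = 1$ is condition $(\textsc{nfs4})$, giving $\mathcal{L}(Y) = \{\infty\}$ --- or $y_1^{\textsc{g}} \in {\mathbb{Q}}$ and the slope $-y_1^{\textsc{d}} - y_1^{\textsc{g}}$ makes $\hat{Y}[y]$ a solid torus, i.e. $\{-y_1^{\textsc{d}} - y_1^{\textsc{g}}, y_1^{\textsc{d}}\} \cap {\mathbb{Z}} \neq \emptyset$, which is exactly the clause ``$y_1^{\textsc{d}} \in {\mathbb{Z}}$ or $y_1^{\textsc{d}} + y_1^{\textsc{g}} \in {\mathbb{Z}}$'' of $(\textsc{nfs1})$ (whose remaining clauses are automatic when $n_{{\textsc{g}}} = n_{{\textsc{d}}} = 1$), giving $\mathcal{L}(Y) = \{-y_1^{\textsc{d}} - y_1^{\textsc{g}}\}$. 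To identify this singleton with $\{y_-\} = \{y_+\}$, recall the definitions~(\ref{eq: defs of y- and y+ in possibly solid torus case}): under $(\textsc{nfs4})$ the infinite value $y_{1\pm}^{\textsc{g}} = \infty$ appears as a summand of both $y_-$ and $y_+$, so $y_- = y_+ = \infty$; under $(\textsc{nfs1})$, Claim~\ref{claim: Yhat solid torus, y- y+ outside longs with equality sometimes} shows that precisely the hypothesis $y_1^{\textsc{d}} \in {\mathbb{Z}}$ or $y_1^{\textsc{d}} + y_1^{\textsc{g}} \in {\mathbb{Z}}$ forces $y_- = -y_1^{\textsc{d}} - y_1^{\textsc{g}} = y_+$. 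I expect the only real friction to be the non-solid-torus case, where one must record the easy but needed fact that $\mathcal{L}^{\circ}(\hat{Y}[y])$ is never all of ${\mathbb{P}}(H_1(\partial \hat{Y}[y]))$, together with the bookkeeping of the degenerate subcase $y_1^{\textsc{d}} \in {\mathbb{Z}}$, in which $\hat{Y}[y]$ is a solid torus for every $y$ and only the third case arises; the rest is a direct application of the two gluing propositions and the already-established endpoint claim.
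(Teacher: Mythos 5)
Your proof is correct and follows essentially the same route as the paper's. The paper first consolidates the L-space criterion into the single biconditional ``$y\in\mathcal{L}(Y) \iff \hat y_-\!=\!\hat y_+\!=\!y_1^{\textsc{g}}$ and ($y_1^{\textsc{d}}\!\in\!\mathbb Z$ or $y\!\in\!\mathbb Z\cup\{\infty\}$)'' via Propositions~\ref{prop: gluing prop for LNTF equivalent manifolds} and~\ref{prop: l-space gluing for compressible boundary}, and then splits on $y_1^{\textsc{g}}=\infty$ vs.\ $y_1^{\textsc{g}}\in\mathbb Q$ and $y_1^{\textsc{d}}\in\mathbb Z$ vs.\ $y_1^{\textsc{d}}\notin\mathbb Z$; you instead split first on the topological type of $\hat Y[y]$ (the $y=\infty$, non-solid-torus, and solid-torus cases), but the ingredients are identical --- the same two gluing propositions, the same observation that a non-solid-torus $\hat Y[y]$ has $\mathcal L^{\circ}(\hat Y[y])$ a proper subset while $\mathcal L^{\circ}(Y_1)=\emptyset$, the same Dehn-filling reduction in the solid-torus case, and the same invocation of Claim~\ref{claim: Yhat solid torus, y- y+ outside longs with equality sometimes} to identify the singleton with $\{y_-\}=\{y_+\}$. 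The only microscopic slip is the phrase ``$\varphi_1$-image of the rational longitude,'' which should be the image under $(\varphi_{1*}^{\mathbb P})^{-1}$ since $\varphi_1$ maps $\partial Y_1\to -\partial_1^{\textsc{g}}\hat M$, but your parenthetical ``of slope $-y-y_1^{\textsc{d}}$'' and the computation that follows make clear you have the right slope in hand.
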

\begin{proof}
For brevity, set $y_1^{\textsc{g}} \mkern-1.5mu:=\mkern-1mu y_{1-}^{\textsc{g}}
\mkern-4mu=\mkern-1mu y_{1+}^{\textsc{g}}$,
so that $\varphi_{1*}^{{\mathbb{P}}}(\mathcal{L}(Y_1)) = \{y_1^{\textsc{g}}\}$.
We first note 
that if $\hat{Y}[y]$ has incompressible boundary,
in which case $\hat{Y}[y]$ is a non-solid-torus graph manifold,
then 
Proposition~\ref{prop: gluing prop for LNTF equivalent manifolds}
implies that the union 
$Y(y) = \hat{Y}[y] \cup_{\varphi_1} \mkern-4.5mu Y_1$
is not an L-space.
$\hat{Y}[y]$ has compressible boundary
if and only if
$y_1^{\textsc{d}} \mkern-1.5mu\in\mkern-1.5mu {\mathbb{Z}}$ or 
$y \mkern-1.5mu\in\mkern-1.5mu {\mathbb{Z}} \mkern-1.5mu\cup\mkern-2.5mu \{\infty\}$.
Thus, for any 
$y \mkern-1.5mu\in\mkern-1.5mu {\mathbb{Q}} \mkern-1.5mu\cup\mkern-2.5mu \{\infty\}$,
Proposition~\ref{prop: l-space gluing for compressible boundary}
implies
\begin{equation}
\label{eq: L-space rule for Y_1 not Floer simple}
y \in \mathcal{L}(Y)
\mkern33mu\iff\mkern35mu
\hat{y}_-\mkern-2mu = \hat{y}_+\mkern-3mu = y_1^{\textsc{g}}
\mkern20mu\text{and}\mkern20mu
y_1^{\textsc{d}} \mkern-1.2mu\in\mkern-1.2mu {\mathbb{Z}}
\mkern9mu\text{or}\mkern8.5mu
y \mkern-1.2mu\in\mkern-1.2mu {\mathbb{Z}} \mkern-1.5mu\cup\mkern-2.5mu \{\infty\}.
\end{equation}

Condition $(\textsc{nfs4}\mkern-.5mu)$
holds if and only if $y_1^{\textsc{g}} \mkern-2mu=\mkern-2mu \infty$.
Since
$\hat{y}_- \mkern-2mu=\mkern-2mu \hat{y}_+ 
\mkern-2mu=\mkern-2mu \infty$
if and only if $y \mkern-2mu=\mkern-2mu \infty$,
we then have $\mathcal{L}(Y) \mkern-2mu=\mkern-2mu \{\infty\} 
\mkern-2mu=\mkern-2mu \{y_-\} \mkern-2mu=\mkern-2mu \{y_-\}$ when 
$y_1^{\textsc{g}} \mkern-2mu=\mkern-2mu \infty$.

We henceforth demand $y_1^{\textsc{g}} \mkern-3mu\in\mkern-1mu {\mathbb{Q}}$.
If $y_1^{\textsc{d}} \mkern-2mu\in\mkern-2mu {\mathbb{Z}}$,
then $(\textsc{nfs1}\mkern-.5mu)$ holds, and we have
\begin{equation}
\hat{y}_- \mkern-1mu=\mkern1.5mu \hat{y}_+ \mkern-1mu=\mkern1.5mu -y_1^{\textsc{d}}-y,
\;\;\;\;\;\;\;\;\;
y_- \mkern-1mu=\mkern1mu y_+ \mkern-1mu=\mkern1mu -y_1^{\textsc{d}} -y_1^{\textsc{g}}.
\end{equation}
Statement (\ref{eq: L-space rule for Y_1 not Floer simple}) then tells us that
$y \mkern-2mu\in\mkern-2mu \mathcal{L}(Y)$ if and only if
$-y_1^{\textsc{d}}-y \mkern-2mu=\mkern-2mu y_1^{\textsc{g}}$,
which occurs if and only if $y = -y_1^{\textsc{d}} -y_1^{\textsc{g}} = y_- = y_+$,
which means that
$\mathcal{L}(Y) = \{y_-\} = \{y_+\}$.

Lastly, suppose that 
$y_1^{\textsc{d}} \mkern-2mu\in\mkern-.5mu 
{\mathbb{Q}} \mkern-1mu\setminus\mkern-1mu {\mathbb{Z}}$,
with $y_1^{\textsc{g}} \mkern-3mu\in\mkern-1mu {\mathbb{Q}}$.
Since
$\hat{y}_- \mkern-3mu=\mkern-1mu \hat{y}_+ \mkern-3mu=\infty
\mkern-2mu\neq\mkern-1mu y_1^{\textsc{g}}$
when $y \mkern-2mu=\mkern-2mu \infty$,
(\ref{eq: L-space rule for Y_1 not Floer simple}) tells us that
$\infty \mkern-2mu\notin\mkern-2mu \mathcal{L}(Y)$.
Thus $y \mkern-2mu\in\mkern-2mu \mathcal{L}(Y)$ 
if and only if $y \mkern-2mu\in\mkern-2mu {\mathbb{Z}}$ and 
$\hat{y}_-\mkern-2mu = \hat{y}_+\mkern-3mu = y_1^{\textsc{g}}$.
Since
$\hat{y}_- \mkern-3mu=\mkern-1mu \hat{y}_+ \mkern-3.5mu=\mkern-2mu 
-y_1^{\textsc{d}} \mkern-3mu-\mkern-1.2mu y$
when
$y \mkern-2mu\in\mkern-2mu {\mathbb{Z}}$, this means
$y \mkern-1.5mu\in\mkern-1.5mu \mathcal{L}(Y)$
if and only if
$-y_1^{\textsc{d}} \mkern-3mu-\mkern-1.2mu y_1^{\textsc{g}} \mkern-2mu=\mkern-2mu y 
\mkern-1.5mu\in\mkern-1.5mu{\mathbb{Z}}$.
That is,
\begin{equation}
\mathcal{L}(Y) = 
\begin{cases}
\!-y_1^{\textsc{d}} \mkern-1.5mu-\mkern-.5mu y_1^{\textsc{g}}\mkern-1mu
&\;
-y_1^{\textsc{d}} \mkern-1.5mu-\mkern-.5mu y_1^{\textsc{g}}\mkern-1mu \in {\mathbb{Z}}
     \\
\mkern1mu\emptyset
&\;
-y_1^{\textsc{d}} \mkern-.8mu-y_1^{\textsc{g}}\mkern-1mu \notin {\mathbb{Z}}.
\end{cases}
\end{equation}
Thus, if
$y_1^{\textsc{d}} \mkern-2mu\in\mkern-.5mu 
{\mathbb{Q}} \mkern-1mu\setminus\mkern-1mu {\mathbb{Z}}$ and
$y_1^{\textsc{g}} \mkern-3mu\in\mkern-1mu {\mathbb{Q}}$,
then $\mathcal{L}(Y)$ is nonempty if and only if
$(\textsc{nfs1}\mkern-.5mu)$ holds,
in which case, since
$y_1^{\textsc{d}} \mkern-1.5mu+\mkern-.5mu y_1^{\textsc{g}}\mkern-1mu \in {\mathbb{Z}}$,
Claim~\ref{claim: Yhat solid torus, y- y+ outside longs with equality sometimes}
implies
$y_- \mkern-2mu=\mkern-2mu y_+ \mkern-2mu=\mkern-2mu 
-y_1^{\textsc{d}} \mkern-1.5mu-\mkern-.5mu y_1^{\textsc{g}}$,
so that
$\mathcal{L}(Y) \mkern-1.5mu=\mkern-1.5mu \{y_-\} \mkern-1.5mu=\mkern-1.5mu \{y_+\}$.

\end{proof}

We henceforth demand that $Y_1$ be Floer simple, in which case
Propositions~\ref{prop: gluing prop for LNTF equivalent manifolds}
and
\ref{prop: l-space gluing for compressible boundary}
tell us that for any $y\in {\mathbb{Q}} \cup \{\infty\}$,
the union
$Y(y) = \hat{Y}[y] \cup_{\varphi_1} \mkern-4mu Y_1$
is an L-space if and only if
\begin{equation}
\label{eq: L-space criterion in hatY[y] sometimes solid torus case}
\begin{cases}
[[\hat{y}_-, \hat{y}_+]]^{\circ}
\cup
[[y_{1-}^{\textsc{g}}, y_{1+}^{\textsc{g}}]]^{\circ} = {\mathbb{Q}} \cup \{\infty\}
&
\;\;\;\;
\text{if}\;
\hat{Y}[y]\;
\text{has incompressible boundary}
       \\
\hat{y}_- = 
\hat{y}_+
\in [[y_{1-}^{\textsc{g}}, y_{1+}^{\textsc{g}}]]
&
\;\;\;\;
\text{if}\;
\hat{Y}[y]\;
\text{has compressible boundary}.
\end{cases}
\end{equation}
Note that $\hat{Y}[y]$ has compressible boundary if and only if
$\{y, y_1^{\textsc{d}}\} \mkern-1.5mu\cap\mkern-1.5mu {\mathbb{Z}} 
\mkern-1.5mu\neq\mkern-1.5mu \emptyset$ or 
$y \mkern-2mu=\mkern-2mu \infty$,
with the former condition accounting for
the case of solid torus $\hat{Y}[y]$,
and the latter condition accounting for the case in which
$\hat{Y}[\infty]$ is either a solid torus (when $y_1^{\textsc{d}} \in {\mathbb{Z}}$)
or the connected sum of a solid torus with a lens space 
(when $y_1^{\textsc{d}} \notin {\mathbb{Z}}$).

\begin{prop}
\label{prop: infty in L(Y) case for Yhat sometimes torus}
Suppose $Y_1$ is Floer simple.
Then $\infty \mkern-2mu\in\mkern-2mu \mathcal{L}(Y)$
if and only if
condition $(\textsc{fs3}\mkern-.5mu)$ from
Proposition~\ref{prop: characterization of Floer simple graph manifolds} holds,
in which case 
$\mathcal{L}(Y) \mkern-2mu=\mkern-2mu [[y_-,y_+]]$.
\end{prop}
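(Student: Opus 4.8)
The plan is to parallel the proof of Proposition~\ref{prop: infty in L(Y) case of main thm and prop, non solid torus Y hat}, tracking the two extra features of the present setting ($n_{\textsc{d}}=n_{\textsc{g}}=1$): $\hat{Y}[y]$ may now be a solid torus, so the dichotomy (\ref{eq: L-space criterion in hatY[y] sometimes solid torus case}) replaces the gluing condition for two non-solid-torus pieces, and Claims~\ref{claim: Yhat solid torus sometimes, translating y to yhat ineqs} and~\ref{claim: Yhat solid torus, y- y+ outside longs with equality sometimes} replace the corresponding claims of Section~\ref{ss: orientable base, no solid tori}. First I would treat $y=\infty$ itself. Since $\hat{Y}[\infty]$ has compressible boundary, $Y(\infty)$ splits as a connected sum exactly as in the derivation of (\ref{eq: condition that infty in each Yg}), namely $Y(\infty)=L(s_1^{\textsc{d}},r_1^{\textsc{d}})\,\#\,Y_1\bigl((\varphi_{1*}^{{\mathbb{P}}})^{-1}(\infty)\bigr)$ (the lens-space summand being trivial precisely when $y_1^{\textsc{d}}\in{\mathbb{Z}}$). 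As lens spaces are L-spaces and a connected sum of closed manifolds is an L-space iff each summand is, $\infty\in\mathcal{L}(Y)$ iff $\infty\in\varphi_{1*}^{{\mathbb{P}}}(\mathcal{L}(Y_1))=[[y_{1-}^{\textsc{g}},y_{1+}^{\textsc{g}}]]$. Since $Y_1$ is Floer simple and $n_{\textsc{g}}=1$, the two index sets appearing in $(\textsc{fs3})$ cannot both equal $\{1\}$ (an interval $[-\infty,y_{1+}^{\textsc{g}}]$ forces $y_{1-}^{\textsc{g}}=\infty$, $y_{1+}^{\textsc{g}}\in{\mathbb{Q}}$, while $[y_{1-}^{\textsc{g}},+\infty]$ forces the reverse), so $(\textsc{fs3})$ here reduces to the single requirement $\infty\in[[y_{1-}^{\textsc{g}},y_{1+}^{\textsc{g}}]]$. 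This proves the ``if and only if'' half of the proposition, and shows that when $\infty\in\mathcal{L}(Y)$ the interval $[[y_{1-}^{\textsc{g}},y_{1+}^{\textsc{g}}]]$ is a genuine closed interval containing $\infty$.

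It remains to compute $\mathcal{L}(Y)$ under $(\textsc{fs3})$. I would split into three subcases by the shape of $[[y_{1-}^{\textsc{g}},y_{1+}^{\textsc{g}}]]$: (a) $y_{1\pm}^{\textsc{g}}\in{\mathbb{Q}}$ with $y_{1-}^{\textsc{g}}>y_{1+}^{\textsc{g}}$ (equality excluded by Floer simplicity of $Y_1$); (b) $y_{1-}^{\textsc{g}}=\infty$, $y_{1+}^{\textsc{g}}\in{\mathbb{Q}}$; (c) $y_{1-}^{\textsc{g}}\in{\mathbb{Q}}$, $y_{1+}^{\textsc{g}}=\infty$. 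In each, $\infty\in[[y_-,y_+]]$ is immediate from the definitions: in (b), (c) one of $y_+,y_-$ has an infinite summand, and in (a) Claim~\ref{claim: Yhat solid torus, y- y+ outside longs with equality sometimes} gives $y_+\le -y_1^{\textsc{d}}-y_{1-}^{\textsc{g}}<-y_1^{\textsc{d}}-y_{1+}^{\textsc{g}}\le y_-$. To pin down the rational slopes I would run over $y\in{\mathbb{Q}}$ via (\ref{eq: L-space criterion in hatY[y] sometimes solid torus case}). When $\{y,y_1^{\textsc{d}}\}\cap{\mathbb{Z}}=\emptyset$, $\hat{Y}[y]$ is Seifert fibered over the disk with two exceptional fibers; its $\infty$-filling is a connected sum of two lens spaces, so $\infty\in\mathcal{L}^{\circ}(\hat{Y}[y])$, forcing $\hat{y}_-\ge\hat{y}_+$ (consistently with Claim~\ref{claim: y- and y+ outside long+ and long-}, applied as in case (a) of the proof of Proposition~\ref{prop: infty in L(Y) case of main thm and prop, non solid torus Y hat}), and $\mathcal{L}^{\circ}(\hat{Y}[y])$ is the complement of the closed ``finite'' arc $[\hat{y}_+,\hat{y}_-]$. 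Intersecting the complements of $\mathcal{L}^{\circ}(\hat{Y}[y])$ and $[[y_{1-}^{\textsc{g}},y_{1+}^{\textsc{g}}]]^{\circ}$ — two such finite arcs on ${\mathbb{Q}}\cup\{\infty\}$ — one finds $y\in\mathcal{L}(Y)$ iff these arcs are disjoint, i.e.\ iff $y_{1+}^{\textsc{g}}>\hat{y}_-$ or $y_{1-}^{\textsc{g}}<\hat{y}_+$. When $\{y,y_1^{\textsc{d}}\}\cap{\mathbb{Z}}\neq\emptyset$, $\hat{Y}[y]$ is a solid torus with $\hat{y}_-=\hat{y}_+=-y-y_1^{\textsc{d}}$ and the compressible-boundary clause of (\ref{eq: L-space criterion in hatY[y] sometimes solid torus case}) reads $-y-y_1^{\textsc{d}}\in[[y_{1-}^{\textsc{g}},y_{1+}^{\textsc{g}}]]$, i.e.\ $-y-y_1^{\textsc{d}}\ge y_{1-}^{\textsc{g}}$ or $-y-y_1^{\textsc{d}}\le y_{1+}^{\textsc{g}}$ (only one disjunct surviving in subcases (b), (c)). In both regimes Claim~\ref{claim: Yhat solid torus sometimes, translating y to yhat ineqs} converts these into ``$y\in[y_-,+\infty]$ or $y\in[-\infty,y_+]$'', so $\mathcal{L}(Y)\cap{\mathbb{Q}}=[[y_-,y_+]]\cap{\mathbb{Q}}$; combined with $\infty\in\mathcal{L}(Y)\cap[[y_-,y_+]]$ this gives $\mathcal{L}(Y)=[[y_-,y_+]]$, and reading off which disjuncts are available in subcases (a), (b), (c) matches the three forms $[y_-,+\infty]\cup[-\infty,y_+]$, $[y_-,+\infty]$, $[-\infty,y_+]$ demanded by $(\textsc{fs3})$.

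The main obstacle is the bookkeeping in this last step: one must keep straight, across the two regimes (solid versus non-solid $\hat{Y}[y]$) and the three subcases (a)/(b)/(c), exactly which of the inequalities $y_{1+}^{\textsc{g}}>\hat{y}_-$, $y_{1-}^{\textsc{g}}<\hat{y}_+$ and their non-strict solid-torus analogues are in play, carry out in each the disjointness-of-arcs computation on ${\mathbb{Q}}\cup\{\infty\}$, and handle the boundary equality cases ($y_1^{\textsc{d}}\in{\mathbb{Z}}$ or $y_1^{\textsc{d}}+y_{1\pm}^{\textsc{g}}\in{\mathbb{Z}}$, where Claim~\ref{claim: Yhat solid torus sometimes, translating y to yhat ineqs} switches between ``$>$'' and ``$\ge$'') so that the translation to $[[y_-,y_+]]$ stays clean. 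The logical skeleton duplicates the non-solid-torus case already established; only the merging of the two regimes requires real care. No Heegaard Floer input is needed beyond the gluing dichotomy (\ref{eq: L-space criterion in hatY[y] sometimes solid torus case}) and the connected-sum behaviour of L-spaces.
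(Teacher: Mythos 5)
Your first paragraph (the ``if and only if'' half) and your treatment of the distinct-endpoint subcases match the paper's argument. But there is a genuine gap in your case division: you assert that the case $y_{1-}^{\textsc{g}}=y_{1+}^{\textsc{g}}\in{\mathbb{Q}}$ is ``excluded by Floer simplicity of $Y_1$.'' It is not. Floer simplicity means only that $\mathcal{L}^{\circ}(Y_1)\neq\emptyset$, and by the definition of $[[\,\cdot\,,\cdot\,]]$ the case ${\mathbb{Q}}\ni y_{1-}^{\textsc{g}}=y_{1+}^{\textsc{g}}$ gives $[[y_{1-}^{\textsc{g}},y_{1+}^{\textsc{g}}]]=\left<y_{1-}^{\textsc{g}},+\infty\right]\cup\left[-\infty,y_{1+}^{\textsc{g}}\right>$, the complement of a single point --- an open, nonempty set, so $Y_1$ is still Floer simple (this is exactly the generalized-solid-torus situation, which does occur among graph manifolds by Theorem~\ref{thm: graph manifold generalized solid torus}). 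The non-Floer-simple situation is the distinct one where $\mathcal{L}(Y_1)=\{y_{1-}^{\textsc{g}}\}=\{y_{1+}^{\textsc{g}}\}$ is a one-point set. Moreover $(\textsc{fs3})$ is satisfied in this omitted case whenever $y_1^{\textsc{g}}:=y_{1\pm}^{\textsc{g}}\in{\mathbb{Q}}$, since both index sets are empty and $\infty$ lies in the complement of the point, so it cannot be discarded.

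This omitted case occupies roughly half of the paper's proof and does not follow from your arc-disjointness computation: because $[[y_{1-}^{\textsc{g}},y_{1+}^{\textsc{g}}]]^{\circ}$ is now the complement of the single point $y_1^{\textsc{g}}$, the incompressible-boundary clause of (\ref{eq: L-space criterion in hatY[y] sometimes solid torus case}) collapses to the condition $y_1^{\textsc{g}}\in[[\hat{y}_-,\hat{y}_+]]^{\circ}$, and the compressible-boundary clause to $y\neq -y_1^{\textsc{d}}-y_1^{\textsc{g}}$. One must then separately handle the subcases $y_1^{\textsc{d}}\in{\mathbb{Z}}$, $y_1^{\textsc{d}}\notin{\mathbb{Z}}$ with $y_1^{\textsc{d}}+y_1^{\textsc{g}}\in{\mathbb{Z}}$, and $y_1^{\textsc{d}}+y_1^{\textsc{g}}\notin{\mathbb{Z}}$, using the equality criterion in Claim~\ref{claim: Yhat solid torus, y- y+ outside longs with equality sometimes} to decide when $y_-=y_+$ and, in the last subcase, the chain of inequalities (\ref{eq: no integers in me}) to see that ${\mathbb{Z}}\subset[[y_-,y_+]]$ so that the vacuity of the integer clause is consistent with $\mathcal{L}(Y)=[[y_-,y_+]]$. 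None of this bookkeeping is present in your sketch, so as written the proof is incomplete.
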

\begin{proof}

The first part of the statement is immediate.
That is, since $\hat{Y}[\infty]$ has compressible boundary, with
$\hat{y}_-\mkern-3mu = \hat{y}_+\mkern-3mu =\mkern-2mu \infty$,
we have $\infty \in \mathcal{L}(Y)$ if and only if
$\infty \in [[y_{1-}^{\textsc{g}}, y_{1+}^{\textsc{g}}]]$,
which occurs if and only if
$(\textsc{fs3}\mkern-.5mu)$ holds.
For the remainder of the proof, we assume
$(\textsc{fs3}\mkern-.5mu)$ holds.

Consider the case in which
$y_{1-}^{\textsc{g}} \mkern-4mu\neq\mkern-1.5mu  y_{1+}^{\textsc{g}}$.
Since $\infty \mkern-2mu\in\mkern-2mu [[y_{1-}^{\textsc{g}}, y_{1+}^{\textsc{g}}]]$,
$[[y_{1-}^{\textsc{g}}, y_{1+}^{\textsc{g}}]]$
takes one of the forms
(a) $[y_{1-}^{\textsc{g}}, +\infty]$,
(b) $[-\infty, y_{1+}^{\textsc{g}}]$,
or
(c) $[y_{1-}^{\textsc{g}}, +\infty] \cup [-\infty, y_{1+}^{\textsc{g}}]$,
in which cases
$[[y_-, y_+]]$ takes the respective forms
(a) $\mkern-1mu[-\infty, y_+]$, (b) $\mkern-1mu[y_-, +\infty]$,
(c) $\mkern-1mu[y_-, +\infty] \mkern-1mu\cup\mkern-1mu [-\infty, y_+]$,
with Case~(c) due to the fact that
$\infty \mkern-2mu\neq\mkern-2mu y_{1-}^{\textsc{g}} 
\mkern-3mu>\mkern-2mu y_{1+}^{\textsc{g}} 
\mkern-3mu\neq\mkern-2mu \infty$
implies 
$\infty \mkern-2mu\neq\mkern-2mu y_- 
\mkern-2mu>\mkern-2mu y_+ \mkern-2mu\neq\mkern-2mu \infty$
by 
Claim~\ref{claim: Yhat solid torus, y- y+ outside longs with equality sometimes}.
Thus, for $y \in {\mathbb{Q}}$,
the condition $y \in [[y_-, y_+]]$
is respectively equivalent to the right-hand conditions of
(a) (\ref{eq: y ge y_-  iff two cases}),
(b) (\ref{eq: y le y_+  iff two cases}), or
(c) (\ref{eq: y ge y_-  iff two cases}) or (\ref{eq: y le y_+  iff two cases})
from 
Claim~\ref{claim: Yhat solid torus sometimes, translating y to yhat ineqs},
each of which conditions, given the respective form of 
$[[y_{1-}^{\textsc{g}}, y_{1+}^{\textsc{g}}]]$,
is equivalent to 
(\ref{eq: L-space criterion in hatY[y] sometimes solid torus case}),
which holds if and only if $Y(y)$ is and L-space,
and we conclude that $\mathcal{L}(Y) = [[y_-, y_+]]$.

Next, suppose that $(\textsc{fs3}\mkern-.5mu)$ holds
with $y_{1-}^{\textsc{g}} \mkern-3.9mu=\mkern-1mu  y_{1+}^{\textsc{g}}
\mkern-3.9mu=\mkern-.1mu:\mkern-1mu y_1^{\textsc{g}}$.
Since 
$[[y_{1-}^{\textsc{g}}, y_{1+}^{\textsc{g}}]]
= [[y_{1-}^{\textsc{g}}, y_{1+}^{\textsc{g}}]]^{\circ}
= {\mathbb{Q}} \cup \{\infty\} \setminus y_1^{\textsc{g}}$,
the L-space condition in 
(\ref{eq: L-space criterion in hatY[y] sometimes solid torus case})
takes the following form.  For any $y \mkern-1.5mu\in\mkern-1.5mu {\mathbb{Q}}$,
\begin{equation}
\label{eq: y1G-  = y_1G+ version of L-space condition}
y \in \mathcal{L}(Y)
\;\;\iff\;\;
\begin{cases}
y_1^{\textsc{g}} \in [[\hat{y}_-, \hat{y}_+]]^{\circ}
& \{y_1^{\textsc{d}}, y\} \cap {\mathbb{Z}} = \emptyset
    \\
y \neq -y_1^{\textsc{d}} - y_1^{\textsc{g}}
& \{y_1^{\textsc{d}}, y\} \cap {\mathbb{Z}} \neq \emptyset.
\end{cases}
\end{equation}
The derivation of the $\{y_1^{\textsc{d}}, y\} \cap {\mathbb{Z}} = \emptyset$ case
from (\ref{eq: L-space criterion in hatY[y] sometimes solid torus case})
is immediate.
In the $\{y_1^{\textsc{d}}, y\} \cap {\mathbb{Z}} \neq \emptyset$ case,
(\ref{eq: L-space criterion in hatY[y] sometimes solid torus case}) tells us that
$y \in \mathcal{L}(Y)$ if and only if 
$\hat{y}_- \mkern-2.5mu\neq\mkern-.5mu y_1^{\textsc{d}}$.
Since 
$\hat{y}_- \mkern-3.5mu= \hat{y}_+ 
\mkern-3.5mu= - y_1^{\textsc{d}}\mkern-1mu - y$,
this occurs if and only if $y \neq -y_1^{\textsc{d}} - y_1^{\textsc{g}}$,
completing the proof of (\ref{eq: y1G-  = y_1G+ version of L-space condition}).

Now, since  $(\textsc{fs3}\mkern-.5mu)$ demands that
$\infty \in [[y_{1-}^{\textsc{g}},  y_{1+}^{\textsc{g}}]]$,
implying $y_1^{\textsc{g}} \mkern-2.5mu\in\mkern-1mu {\mathbb{Q}}$,
Claim~\ref{claim: Yhat solid torus, y- y+ outside longs with equality sometimes}
tells us that
$y_+
\mkern.5mu\le
-y_1^{\textsc{d}} \mkern-1.5mu-\mkern-1.1mu y_1^{\textsc{g}} 
\mkern2mu\le\mkern2mu
y_-$,
with $y_- \mkern-3mu=\mkern-1.5mu y_+$ if and only if
$\mkern.5mu y_1^{\textsc{d}} \in {\mathbb{Z}}\mkern.5mu$ or 
$\mkern.5mu y_1^{\textsc{d}} \mkern-1.5mu+\mkern-.8mu y_1^{\textsc{g}} \in {\mathbb{Z}}$. 
If $\mkern.5mu y_1^{\textsc{d}} \in {\mathbb{Z}}\mkern.5mu$,
then (\ref{eq: y1G-  = y_1G+ version of L-space condition})
implies
$\mathcal{L}(Y) = 
{\mathbb{Q}} \cup \{\infty\} \setminus
\{-y_1^{\textsc{d}} - y_1^{\textsc{g}}\}
= [[y_-, y_+]]$.
Suppose instead that
$y_1^{\textsc{d}} \notin {\mathbb{Z}}$.
If 
$y_1^{\textsc{d}} \mkern-1.5mu+\mkern-.8mu y_1^{\textsc{g}} \in {\mathbb{Z}}$,
then
for each $y \in [[y_-, y_+]] \cap {\mathbb{Q}}$,
either $y \notin {\mathbb{Z}}$,
in which case,
the fact that $y \in [y_-, +\infty] \cup [-\infty, y_+]$
makes
Claim~\ref{claim: Yhat solid torus sometimes, translating y to yhat ineqs}
tell us that
$y_1^{\textsc{g}} \in [[\hat{y}_-, \hat{y}_+]]^{\circ}$,
so that
(\ref{eq: y1G-  = y_1G+ version of L-space condition})
implies $y \mkern-1.5mu\in\mkern-1.5mu \mathcal{L}(Y)$;
or $y \mkern-1.5mu\in\mkern-1.5mu {\mathbb{Z}}$, in which case
(\ref{eq: y1G-  = y_1G+ version of L-space condition})
tells us that $y \mkern-1.5mu\in\mkern-1.5mu \mathcal{L}(Y)$
if and only if 
$y \neq -y_1^{\textsc{d}} - y_1^{\textsc{g}} = y_- = y_+$.
Combining these two results makes $\mathcal{L}(Y) = [[y_-, y_+]]$.

Lastly, consider the case in which 
$y_{1-}^{\textsc{g}} \mkern-4mu=\mkern-1mu  y_{1+}^{\textsc{g}}
\mkern-4mu=:\mkern-1mu  y_1^{\textsc{g}}
\mkern-1mu\in\mkern-.5mu {\mathbb{Q}}$,
$\mkern.5mu y_1^{\textsc{d}} \mkern-.5mu\notin\mkern-.5mu {\mathbb{Z}}$, and 
$\mkern.5mu y_1^{\textsc{d}} \mkern-.9mu+ y_1^{\textsc{g}} 
\mkern-1mu\notin\mkern-1mu {\mathbb{Z}}$.
Since in this case,
$y \in {\mathbb{Z}}$ automatically implies 
$y \neq -y_1^{\textsc{d}} \mkern-.9mu- y_1^{\textsc{g}}$,
we deduce that
the second line of 
(\ref{eq: y1G-  = y_1G+ version of L-space condition}) is vacuous.
That is, $y \in \mathcal{L}(Y)$ for all $y \in {\mathbb{Z}}$.
Accordingly, since
Claim~\ref{claim: Yhat solid torus, y- y+ outside longs with equality sometimes}
tells us that
\begin{equation}
\label{eq: no integers in me}
-\lceil y_1^{\textsc{d}} \mkern-1.5mu + \mkern-1.1mu y_1^{\textsc{g}} \rceil
\mkern2mu\le\mkern3mu
y_+
\mkern1.5mu<\mkern1mu
-y_1^{\textsc{d}} \mkern-1.5mu-\mkern-1.1mu y_1^{\textsc{g}} 
\mkern2mu<\mkern3mu
y_-
\mkern1.5mu\le\mkern1mu
-\lfloor y_1^{\textsc{d}} \mkern-1.5mu + \mkern-1.1mu y_1^{\textsc{g}} \rfloor,
\end{equation}
we observe that the complement of $[[y_-, y_+]]$ contains no integers,
and so
${\mathbb{Z}} \subset [[y_-, y_+]]$.
On the other hand, for any 
$y \in {\mathbb{Q}} \setminus {\mathbb{Z}}$,
(\ref{eq: y1G-  = y_1G+ version of L-space condition}) tells us that
$y \in \mathcal{L}(Y)$
if and only if
$y_1^{\textsc{g}} \in [[\hat{y}_-, \hat{y}_+]]^{\circ}$,
which, by 
Claim~\ref{claim: Yhat solid torus sometimes, translating y to yhat ineqs},
occurs if and only if
$y \in [y_-, +\infty] \cup [-\infty, y_+] = [[y_-, y_+]]$.
Thus, once again, $\mathcal{L}(Y) = [[y_-, y_+]]$,
completing the proof of the proposition.
\end{proof}

\begin{prop}
Suppose $Y_1$ is Floer simple and $\infty \mkern-1mu\notin\mkern-1mu \mathcal{L}(Y)$.
Then $\mathcal{L}(Y) \mkern-1.8mu\neq\mkern-1mu \emptyset$ if and only if
either
condition 
$(\textsc{nfs2}\mkern-.5mu)$ from
Proposition~\ref{prop: characterization of Floer simple graph manifolds} holds,
in which case $\mathcal{L}(Y) \mkern-2mu=\mkern-2mu \{y_-\} \mkern-2mu=\mkern-2mu \{y_+\}$,
or condition
$(\textsc{fs1}\mkern-.5mu)$ or
$(\textsc{fs2}\mkern-.5mu)$ holds,
in which case
$\mathcal{L}(Y) \mkern-2mu=\mkern-2mu [[y_-,y_+]]$.
\end{prop}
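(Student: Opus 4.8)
The plan is to mirror the argument of Proposition~\ref{prop: FS1, FS2, and NFS2}, but now driven by the gluing criterion~(\ref{eq: L-space criterion in hatY[y] sometimes solid torus case}) together with Claims~\ref{claim: Yhat solid torus sometimes, translating y to yhat ineqs} and~\ref{claim: Yhat solid torus, y- y+ outside longs with equality sometimes}. First I would record the standing reductions of this subsection ($n_{\textsc{g}} = n_{\textsc{d}} = 1$, $Y_1$ Floer simple) and note that, since $\infty \notin \mathcal{L}(Y)$, Proposition~\ref{prop: infty in L(Y) case for Yhat sometimes torus} forces $\infty \notin [[y_{1-}^{\textsc{g}}, y_{1+}^{\textsc{g}}]] = \varphi_{1*}^{\mathbb{P}}(\mathcal{L}(Y_1))$. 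Combined with Floer simplicity of $Y_1$, this leaves exactly two cases: (A) $[[y_{1-}^{\textsc{g}}, y_{1+}^{\textsc{g}}]] = \langle -\infty,+\infty\rangle$, i.e. $y_{1-}^{\textsc{g}} = y_{1+}^{\textsc{g}} = \infty$; and (B) $[[y_{1-}^{\textsc{g}}, y_{1+}^{\textsc{g}}]] = [y_{1-}^{\textsc{g}}, y_{1+}^{\textsc{g}}]$ with $y_{1-}^{\textsc{g}} < y_{1+}^{\textsc{g}} \in \mathbb{Q}$, in which case $y_1^{\textsc{d}}, y_{1\pm}^{\textsc{g}} \in \mathbb{Q}$ and hence $y_-, y_+ \in \mathbb{Q}$. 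Scanning the conditions of Proposition~\ref{prop: characterization of Floer simple graph manifolds} with $n_{\textsc{g}} = 1$, case (A) can match only $(\textsc{fs1})$ and case (B) can match only $(\textsc{fs2})$ or $(\textsc{nfs2})$, since every other condition requires either a non-Floer-simple daughter subtree or $\infty$ in a daughter interval.

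Next I would dispatch case (A), which corresponds to $(\textsc{fs1})$. Here $y_- = y_+ = \infty$ (both have an infinite summand), so $[[y_-,y_+]] = \langle-\infty,+\infty\rangle$, and it suffices to show every $y \in \mathbb{Q}$ lies in $\mathcal{L}(Y)$. If $\{y, y_1^{\textsc{d}}\}\cap\mathbb{Z} \neq \emptyset$, then $\hat{Y}[y]$ is a solid torus with $\hat{y}_- = \hat{y}_+ = -y - y_1^{\textsc{d}} \in \mathbb{Q} = \langle-\infty,+\infty\rangle$, so the compressible-boundary line of~(\ref{eq: L-space criterion in hatY[y] sometimes solid torus case}) gives $y \in \mathcal{L}(Y)$. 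Otherwise $\hat{Y}[y]$ is Seifert fibered over the disk with two exceptional fibers; evaluating the formulas of Proposition~\ref{prop: l space interval for seifert fibered spaces} at $k=1$ shows $\hat{y}_- \ge \hat{y}_+$ with both finite, hence $\infty \in [[\hat{y}_-, \hat{y}_+]]^{\circ}$, and the incompressible-boundary line of~(\ref{eq: L-space criterion in hatY[y] sometimes solid torus case}) again gives $y \in \mathcal{L}(Y)$. Thus $\mathcal{L}(Y) = \langle-\infty,+\infty\rangle = [[y_-,y_+]]$. Conversely, if $(\textsc{fs1})$ holds for $Y$, then $[[y_{1-}^{\textsc{g}}, y_{1+}^{\textsc{g}}]] = \langle-\infty,+\infty\rangle$, putting us in case (A).

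For case (B) I would run~(\ref{eq: L-space criterion in hatY[y] sometimes solid torus case}) over $y \in \mathbb{Q}$, splitting on whether $\hat{Y}[y]$ has incompressible boundary ($\{y, y_1^{\textsc{d}}\}\cap\mathbb{Z} = \emptyset$) or compressible boundary ($\{y, y_1^{\textsc{d}}\}\cap\mathbb{Z} \neq \emptyset$). In the incompressible case $\hat{Y}[y]$ is disk-based with $\hat{y}_- \ge \hat{y}_+$ finite, so the complement of $[[\hat{y}_-, \hat{y}_+]]^{\circ}$ is the closed interval $[\hat{y}_+, \hat{y}_-]$, and $[[\hat{y}_-, \hat{y}_+]]^{\circ} \cup [y_{1-}^{\textsc{g}}, y_{1+}^{\textsc{g}}]^{\circ} = \mathbb{Q}\cup\{\infty\}$ holds precisely when $[\hat{y}_+,\hat{y}_-] \subseteq \langle y_{1-}^{\textsc{g}}, y_{1+}^{\textsc{g}}\rangle$, i.e. when $y_{1-}^{\textsc{g}} < \hat{y}_+$ and $\hat{y}_- < y_{1+}^{\textsc{g}}$. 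In the compressible case $\hat{y}_- = \hat{y}_+ = -y - y_1^{\textsc{d}}$, and the criterion reads $y_{1-}^{\textsc{g}} \le \hat{y}_+$ and $\hat{y}_- \le y_{1+}^{\textsc{g}}$. By Claim~\ref{claim: Yhat solid torus sometimes, translating y to yhat ineqs}, in both cases the pair of inequalities is equivalent to $y \in [y_-,+\infty] \cap [-\infty, y_+]$. Since $y_\pm \in \mathbb{Q}$ and $\infty \notin \mathcal{L}(Y)$, a trichotomy on $y_-$ versus $y_+$ then yields $\mathcal{L}(Y) = \emptyset$ if $y_- > y_+$ (and no condition of Proposition~\ref{prop: characterization of Floer simple graph manifolds} holds), $\mathcal{L}(Y) = \{y_-\} = \{y_+\}$ if $y_- = y_+$ (condition $(\textsc{nfs2})$), and $\mathcal{L}(Y) = [y_-,y_+] = [[y_-,y_+]]$ if $y_- < y_+$ (condition $(\textsc{fs2})$); mutual exclusivity of the conditions, together with the forms of $[[y_{1-}^{\textsc{g}}, y_{1+}^{\textsc{g}}]]$ available in case (B), rules out any other condition.

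I expect the main obstacle to be the bookkeeping in the incompressible sub-case of (B): one must check that the two-arc union $[[\hat{y}_-, \hat{y}_+]]^{\circ} \cup [y_{1-}^{\textsc{g}}, y_{1+}^{\textsc{g}}]^{\circ}$ equals $\mathbb{P}(H_1(\partial Y))$ exactly when $y_{1-}^{\textsc{g}} < \hat{y}_+ \le \hat{y}_- < y_{1+}^{\textsc{g}}$, treating uniformly the degenerate possibility $\hat{y}_- = \hat{y}_+$ (where $\hat{Y}[y]$ is a disk-based Seifert fibered generalized solid torus), and then matching the strict-versus-nonstrict inequalities across the incompressible/compressible dichotomy so that Claim~\ref{claim: Yhat solid torus sometimes, translating y to yhat ineqs} applies cleanly in each. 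Everything else is a direct appeal to~(\ref{eq: L-space criterion in hatY[y] sometimes solid torus case}), the seed computation of Proposition~\ref{prop: l space interval for seifert fibered spaces}, and Claims~\ref{claim: Yhat solid torus sometimes, translating y to yhat ineqs} and~\ref{claim: Yhat solid torus, y- y+ outside longs with equality sometimes}.
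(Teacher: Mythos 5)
Your proposal is correct and follows essentially the same route as the paper's proof: reduce to the dichotomy $[[y_{1-}^{\textsc{g}}, y_{1+}^{\textsc{g}}]] = \left<-\infty,+\infty\right>$ versus $[y_{1-}^{\textsc{g}}, y_{1+}^{\textsc{g}}]$, handle the first via the gluing criterion directly, and in the second translate the criterion through Claim~\ref{claim: Yhat solid torus sometimes, translating y to yhat ineqs} to get $\mathcal{L}(Y)=[y_-,+\infty]\cap[-\infty,y_+]$ and conclude by the trichotomy on $y_-$ versus $y_+$. Your explicit matching of strict versus non-strict inequalities across the compressible/incompressible dichotomy is exactly the content the paper packages into that claim.
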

\begin{proof}
Since $Y_1$ is Floer simple, and since
$\infty \mkern-1mu\notin\mkern-1mu \mathcal{L}(Y)$
implies that
$(\textsc{fs3}\mkern-.5mu)$ fails to hold, 
we know that
$\infty \notin [[y_{1-}^{\textsc{g}}, y_{1+}^{\textsc{g}}]]$.
Thus, 
$[[y_{1-}^{\textsc{g}}, y_{1+}^{\textsc{g}}]]
= \left<-\infty, +\infty\right>$
or
$[[y_{1-}^{\textsc{g}}, y_{1+}^{\textsc{g}}]]
=[y_{1-}^{\textsc{g}}, y_{1+}^{\textsc{g}}]$.

Suppose that 
$[[y_{1-}^{\textsc{g}}, y_{1+}^{\textsc{g}}]]= 
\left<-\infty, +\infty\right>$,
which occurs if and only if
condition $(\textsc{fs1}\mkern-.5mu)$ holds.
For any $y \in {\mathbb{Q}}$, we have
$\infty \neq \hat{y}_- \ge  \hat{y}_+ \neq \infty$,
implying condition 
(\ref{eq: L-space criterion in hatY[y] sometimes solid torus case}) holds,
making $Y\mkern-1.5mu(y)$ an L-space.
Thus, since 
$y_{1-}^{\textsc{g}} = y_{1+}^{\textsc{g}} = \infty$
implies
$y_- = y_+ = \infty$,
we have
$\mathcal{L}(Y) = {\mathbb{Q}} = 
\left< -\infty, +\infty \right>
=[[y_-, y_+]]$.

Lastly, suppose
$[[y_{1-}^{\textsc{g}}, y_{1+}^{\textsc{g}}]]
=[y_{1-}^{\textsc{g}}, y_{1+}^{\textsc{g}}]$.
For any $y \in {\mathbb{Q}}$,
we have $y \in [y_-, +\infty] \cap [-\infty, y_+]$
if and only if the right-hand conditions of
(\ref{eq: y ge y_-  iff two cases})
and (\ref{eq: y le y_+  iff two cases}) from
Claim~\ref{claim: Yhat solid torus sometimes, translating y to yhat ineqs}
hold, which, since
$[[y_{1-}^{\textsc{g}}, y_{1+}^{\textsc{g}}]] = 
[y_{1-}^{\textsc{g}}, +\infty] \cap [y_{1-}^{\textsc{g}}, y_{1+}^{\textsc{g}}] \neq \emptyset$,
occurs if and only if
(\ref{eq: L-space criterion in hatY[y] sometimes solid torus case})
holds, which happens precisely when $Y(y)$ is an L-space.
Thus, $\mathcal{L}(Y) = [y_-, +\infty] \cap [-\infty, y_+]$, or in other words,
\begin{equation}
\mathcal{L}(Y)=
\label{eq: three cases for L-space interval in Yhat torus case}
\begin{cases}
[[y_-, y_+]]
& y_- < y_+
   \\
\{y_-\} = \{y_+\}
& y_- = y_+
   \\
\emptyset
& y_- > y_+
\end{cases}.
\end{equation}
That is, 
$\mathcal{L}(Y)$ is nonempty
if and only if either
$(\textsc{fs2}\mkern-.5mu)$ holds,
in which case $\mathcal{L}(Y) = [[y_-, y_+]]$,
or $(\textsc{nfs2}\mkern-.5mu)$ holds,
in which case
$\mathcal{L}(Y) \!=\! \{y_-\} \!=\! \{y_+\}$.
\end{proof}

\smallskip
The combined results of Sections
\ref{ss: non-orientable base case},
\ref{ss: orientable base, no solid tori}, and
\ref{ss: orientable base, cases involving solid torus Y hat}
prove that
Theorem~\ref{thm: l space interval for graph manifolds}
and Proposition~\ref{prop: characterization of Floer simple graph manifolds}
hold for
any graph manifold $Y$ with torus boundary,
$b_1 = 1$, tree height $k>0$,
and $n_{{\textsc{g}}}>0$ daughter subtrees,
given the inductive assumptions,
laid out in Section
\ref{ss: inductive set-up for main result},
that
Theorem~\ref{thm: l space interval for graph manifolds}
and Proposition~\ref{prop: characterization of Floer simple graph manifolds}
hold for
any graph manifold with torus boundary,
$b_1 = 1$, and either
tree height $k$ and $\le n_{{\textsc{g}}}-1$
daughter subtrees, or tree height $\le k-1$.

For graph manifolds $Y$ with torus boundary, $b_1 = 1$,
and tree height $k$, inducting on the
number of daughter subtrees $n_{{\textsc{g}}}$ yields the
result that any graph manifold with torus boundary,
$b_1 = 1$, and tree height $k$ satisfies
Theorem~\ref{thm: l space interval for graph manifolds}
and Proposition~\ref{prop: characterization of Floer simple graph manifolds}.
Inducting on tree height $k$ then completes the proof of
Theorem~\ref{thm: l space interval for graph manifolds}
and Proposition~\ref{prop: characterization of Floer simple graph manifolds}.

\qed

\subsection{Some technical results for ${\boldsymbol{y_-}}$ and ${\boldsymbol{y_+}}$}

We conclude this section with the proof of some
basic facts about $y_-$ and $y_+$ for later use.

Recall that $y_-$ and $y_+$ are defined by
$\mkern.6muy_- \mkern-2.5mu:= \max_{k>0} y_-(k)\mkern.6mu$ and 
$\mkern.6muy_+ \mkern-2.5mu:= \min_{k>0} y_+(k)$, where
\begin{align}
\label{eq: technical section, defs of y-(k) etc}
y_-(k)
&:=
-\mfrac{1}{k}\mkern-2.5mu\left( \mkern2.5mu 1
\mkern1.5mu+\mkern.1mu
{{\textstyle{\sum\limits_{i=1}^{n_{{\textsc{d}}}}  
\mkern-2.5mu\left\lfloor y^{\textsc{d}}_i k \right\rfloor}}}
\mkern1.5mu+\mkern1mu
{{\textstyle{\sum\limits_{i=1}^{n_{{\textsc{g}}}}}}}\mkern-3.5mu
\left(  \left\lceil y^{\textsc{g}}_{i+}k \right\rceil \mkern-2.2mu-\mkern-1.3mu 1 
\right)\mkern-2mu\right),
\\ \nonumber
y_+(k)
&:=
-\mfrac{1}{k}\mkern-2.5mu\left( \mkern-1.5mu -1
\mkern1.5mu+\mkern.1mu
{{\textstyle{\sum\limits_{i=1}^{n_{{\textsc{d}}}}  
\mkern-2.5mu\left\lceil y^{\textsc{d}}_i k \right\rceil}}}
\mkern1.5mu+\mkern1mu
{{\textstyle{\sum\limits_{i=1}^{n_{{\textsc{g}}}}}}}\mkern-3.5mu
\left(  \left\lfloor y^{\textsc{g}}_{i-}k \right\rfloor \mkern-2.2mu+\mkern-1.3mu 1 
\right)\mkern-2mu\right).
\end{align}
Let $k_-, k_+ \in {\mathbb{Z}}_{>0}$ denote the lowest values of 
$k$ for which these extrema occur.  That is, set
\begin{equation}
\label{eq: def for k- in terms of value at which maximum first occurs}
k_-:= \min \{k \in {\mathbb{Z}}_{>0}\mkern.5mu|\mkern2mu y_-(k) = y_-\},
\;\;\;\;\;
k_+:= \min \{k \in {\mathbb{Z}}_{>0}\mkern.5mu|\mkern2mu y_+(k) = y_+\}.
\end{equation}
We then have the following result.

\begin{prop}
\label{prop: k- and k+ are denominators of y- and y+}
If $\mkern.8muY\mkern-4mu$ is not a solid torus,
and $\mkern.5muy_-, y_+ \mkern-3.5mu\in\mkern-1.5mu {\mathbb{Q}}$, then
$k_-\!$ and $k_+$ are the respective denominators of $y_-\!$ and 
$\mkern.8mu y_+\mkern-1mu$.
That is,
\begin{equation}
\label{eq: k- is a denominator}
k_-:= \min \{k \in {\mathbb{Z}}_{>0}\mkern1mu|\mkern3mu y_-k \in {\mathbb{Z}}\},
\;\;\;\;\;
k_+:= \min \{k \in {\mathbb{Z}}_{>0}\mkern1mu|\mkern3mu y_+k \in {\mathbb{Z}}\}.
\end{equation}
\end{prop}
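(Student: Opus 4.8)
The plan is to show that $k_-$, the smallest $k$ achieving the maximum of $y_-(k)$, is exactly the denominator of $y_-$, and symmetrically for $k_+$. I would first establish the easy inclusion: if $k$ is the denominator of $y_-$, then $y_-k \in {\mathbb{Z}}$, and I claim $y_-(k) = y_-$, so the minimal denominator is $\ge k_-$ only after I rule out smaller denominators. Actually the cleaner logical structure is: (i) show $k_- y_- \in {\mathbb{Z}}$, i.e. $y_-(k_-)$ has denominator dividing $k_-$ — this is immediate from the definition $y_-(k_-) = y_- = -\frac{1}{k_-}(\text{integer})$; and (ii) show that no smaller $k$ can have $y_-k \in {\mathbb{Z}}$, i.e. the denominator of $y_-$ is not a proper divisor of $k_-$. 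Part (i) gives $\{k : y_-k \in {\mathbb{Z}}\} \ni k_-$, hence $\min\{k : y_-k \in {\mathbb{Z}}\} \le k_-$. Part (ii) gives the reverse. Together these force equality.

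For part (ii), suppose $d \mid k_-$ with $d < k_-$ and $y_- d \in {\mathbb{Z}}$; I would derive a contradiction with the minimality of $k_-$ by showing $y_-(d) = y_-$. The key computation: compare $y_-(d)$ with $y_-(k_-)$. Write $k_- = md$. The quantity $-k_- y_-(k_-) = 1 + \sum_i \lfloor y_i^{\textsc{d}} k_-\rfloor + \sum_i(\lceil y_{i+}^{\textsc{g}} k_-\rceil - 1)$, and I want to relate $\lfloor y_i^{\textsc{d}} md\rfloor$ to $m\lfloor y_i^{\textsc{d}} d\rfloor$ and similarly for the ceilings. The general inequalities $\lfloor mx \rfloor \ge m\lfloor x\rfloor$ and $\lceil mx\rceil \le m\lceil x\rceil$ give $-k_- y_-(k_-) \ge -k_- y_-(d) \cdot$ wait — one must be careful with the $+1$ and $-1$ constants and the sign. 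Carrying the constants: $1 + \sum \lfloor y_i^{\textsc{d}}md\rfloor + \sum(\lceil y_{i+}^{\textsc{g}}md\rceil - 1) \ge 1 + \sum m\lfloor y_i^{\textsc{d}}d\rfloor + \sum(m\lceil y_{i+}^{\textsc{g}}d\rceil - 1)$, and the right side is $m(\,\text{stuff}\,) + (1-m) + (1-m)\cdot 0 \cdots$; comparing to $m$ times $(1 + \sum\lfloor y_i^{\textsc{d}}d\rfloor + \sum(\lceil y_{i+}^{\textsc{g}}d\rceil -1))$ one finds the difference is a nonnegative multiple of $(m-1)$ plus a correction from the constant $1$. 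This shows $y_-(md) \le y_-(d)$, i.e. $y_- = y_-(k_-) \le y_-(d) \le y_-$ (the last by definition of $y_-$ as the max), so $y_-(d) = y_-$, contradicting minimality of $k_-$. The symmetric argument with floors and ceilings swapped and signs reversed handles $k_+$; here the hypothesis that $Y$ is not a solid torus and $y_\pm \in {\mathbb{Q}}$ guarantees the extrema are attained and finite, so $k_\pm$ are well-defined positive integers.

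The main obstacle I anticipate is bookkeeping the additive constants ($+1$, $-1$, and the $-1$ inside the $\sum_{i=1}^{n_{\textsc{g}}}$ summand) correctly through the subadditivity/superadditivity inequalities, since these constants scale differently than the floor/ceiling terms under $k \mapsto mk$, and getting the direction of the inequality right is exactly what makes $y_-(md) \le y_-(d)$ rather than the reverse. A secondary subtlety is confirming that the contradiction genuinely uses the non-solid-torus hypothesis only through the well-definedness of $k_\pm$ and $y_\pm \in {\mathbb{Q}}$ (for a solid torus, $y_- = y_+ = -\sum y_i^{\textsc{d}}$ is defined differently and the claim about $k_\pm$ need not parse). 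I expect the floor/ceiling manipulations themselves to be routine once the constants are tracked, so I would state the two needed elementary inequalities ($\lfloor mx\rfloor \ge m\lfloor x\rfloor$, $\lceil mx\rceil \le m\lceil x\rceil$ for $m \in {\mathbb{Z}}_{>0}$) as a lemma or inline remark and then do the comparison cleanly.
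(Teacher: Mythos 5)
Your proposal has a genuine gap at the crux of part (ii). You claim that for $d \mid k_-$ with $k_- = md$, the inequalities $\lfloor mx\rfloor \ge m\lfloor x\rfloor$ and $\lceil mx\rceil \le m\lceil x\rceil$ yield $y_-(md) \le y_-(d)$. But these two inequalities push in \emph{opposite} directions: writing $A(k) := 1 + \sum_i\lfloor y_i^{\textsc{d}} k\rfloor + \sum_i(\lceil y_{i+}^{\textsc{g}} k\rceil - 1)$ so that $y_-(k) = -A(k)/k$, the floor inequality gives a term-by-term \emph{lower} bound $\lfloor y_i^{\textsc{d}} md\rfloor \ge m\lfloor y_i^{\textsc{d}} d\rfloor$ while the ceiling inequality gives a term-by-term \emph{upper} bound $\lceil y_{i+}^{\textsc{g}} md\rceil \le m\lceil y_{i+}^{\textsc{g}} d\rceil$, so nothing about the sign of $A(md) - mA(d)$ follows. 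Indeed the inequality is false in general: with $n_{\textsc{d}}=0$, $n_{\textsc{g}}=1$, $y_{1+}^{\textsc{g}}=1/3$ one has $y_-(k) = -\lceil k/3\rceil/k$, whence $y_-(1) = -1$ and $y_-(3) = -1/3$, so $y_-(3) > y_-(1)$ even though $1 \mid 3$. Your argument nowhere uses the hypothesis $y_-d \in \mathbb{Z}$ in an essential way, which is precisely what saves the Proposition in such examples (there $y_-\cdot 1 = -1/3 \notin \mathbb{Z}$, so the case never arises), but it also means your deduction cannot be correct as stated.

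The paper's proof is structurally different and invokes a much stronger input. After disposing of the easy cases ($y_- \in \mathbb{Z}$ forces $k_- = 1$ by direct estimates, and $n_{\textsc{d}}=0$, $n_{\textsc{g}}=1$ is computed by hand), it normalizes the Seifert data into $(0,1)$ and appeals to Theorem~3 of Jankins--Neumann~\cite{JankinsNeumann}, a rigidity/structure theorem for the maximizer: at $k = k_-$ the floors and ceilings must take the prescribed values $(c,\, k_- - c,\, 1,\ldots,1)$ in some order, with $\gcd(c, k_-) = 1$, and $y_- = a/k_-$ where $a$ is one of these entries, hence automatically coprime to $k_-$. That combinatorial structure theorem is the real content; it is not recoverable from elementary subadditivity/superadditivity of floor and ceiling, even after the hypothesis $y_-d\in\mathbb{Z}$ is put back in. To salvage a self-contained proof you would need something substantively replacing Jankins--Neumann Theorem~3, not just tighter bookkeeping of the $\pm 1$ constants.
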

\begin{proof}
Since this question is unaffected by an 
overall translation of $y_-$ by an integer,
we assume without loss of generality that 
$y_i^{\textsc{d}} \in \left<0, 1\right>$ for all $i \in \{1, \ldots, n_{{\textsc{d}}}\}$
and that
$y_{i+}^{\textsc{g}} \in \left[0, 1\right>$ for all $i \in \{1, \ldots, n_{{\textsc{g}}}\}$.
In addition, we permute the daughter subtrees $Y_1, \ldots, Y_{n_{{\textsc{g}}}}$
so that 
$y_{i+}^{\textsc{g}} \in \left<0, 1\right>$ for all $i \in \{1, \ldots, \bar{n}_{{\textsc{g}}}\}$,
and 
$y_{i+}^{\textsc{g}} = 0$ for all $i \in \{\bar{n}_{{\textsc{g}}}+1,\ldots, n_{{\textsc{g}}}\}$,
for some $\bar{n}_{{\textsc{g}}} \le n_{{\textsc{g}}}$.

Setting $N_{{\textsc{g}}} := n_{{\textsc{g}}} - \bar{n}_{{\textsc{g}}}$, we note that if $N_{{\textsc{g}}} > 0$, then we obtain
\begin{align}
\label{eq: def of y-(k) for k- = 1 proof}
y_-(k)
&:=
\mfrac{1}{k}\mkern-2.5mu\left( N_{{\textsc{g}}}\mkern-2.5mu-\mkern-1.5mu 1
\mkern1.5mu-\mkern.1mu
{{\textstyle{\sum\limits_{i=1}^{n_{{\textsc{d}}}}  
\mkern-2.5mu\left\lfloor y^{\textsc{d}}_i k \right\rfloor}}}
\mkern1.5mu-\mkern1mu
{{\textstyle{\sum\limits_{i=1}^{\bar{n}_{{\textsc{g}}}}}}}\mkern-3.5mu
\left(  \left\lceil y^{\textsc{g}}_{1+}k \right\rceil \mkern-2.2mu-\mkern-1.3mu 1 
\right)\mkern-2mu\right),
    \\ \nonumber
&\le 
\mfrac{1}{k}( N_{{\textsc{g}}}\mkern-2.5mu-\mkern-1.5mu 1)
\;
\le N_{{\textsc{g}}}\mkern-2.5mu-\mkern-1.5mu 1 \; =\; y_-(1)
\end{align}
for all $k >0$, making $y_- = y_-(1)$.
On the other hand, if $N_{{\textsc{g}}} = 0$, then
the top line of 
(\ref{eq: def of y-(k) for k- = 1 proof}) implies $y_-(k) \le -\frac{1}{k} < 0$ for all $k >0$,
which, since $y_- \in {\mathbb{Z}}$, implies $y_- \le -1 = y_-(1)$.
Thus, in either case, we have $k_- = -1$, and a similar argument
shows $k_+ = 1$ when $y_+ \in {\mathbb{Z}}$.

If $n_{{\textsc{g}}} \mkern-4mu=\mkern-1.5mu 0$ and 
$n_{{\textsc{d}}} \mkern-3.5mu\le\mkern-1.5mu 1$, then $Y$ is a solid torus,
a case excluded by hypothesis.
Suppose $n_{{\textsc{d}}} \mkern-4mu=\mkern-2mu 0$ and $n_{{\textsc{g}}} \mkern-4mu=\mkern-2mu 1$.
If $y_{1+}^{\textsc{g}} \mkern-4mu\in\mkern-1.5mu {\mathbb{Z}}$, then the above argument shows
$y_- \mkern-4mu\in\mkern-1.5mu {\mathbb{Z}}$ and $k_-\mkern-4mu =\mkern-1.5mu 1$.
If 
$y_{1+}^{\textsc{g}} \mkern-4mu =\mkern-.1mu:
{r_{1\mkern-.7mu+}^{\textsc{g}}\mkern-.5mu}/{s_{1\mkern-.7mu+}^{\textsc{g}}} 
\mkern-1.5mu\in \left<0, 1\right>\mkern.8mu$
with $\mkern.8mu{r_{1\mkern-.7mu+}^{\textsc{g}}\mkern-.5mu},
{s_{1\mkern-.7mu+}^{\textsc{g}}} \in {\mathbb{Z}}_{>0}\mkern.8mu$ 
relatively prime, then for all $k > 0$, we have
\begin{equation}
y_-(k) = -y_{1\mkern-.7mu+}^{\textsc{g}} 
\mkern-4.5mu-\mkern-1.5mu \mfrac{1}{k}[-y_{1+}^{\textsc{g}}]
\le  -y_{1\mkern-.7mu+}^{\textsc{g}} = y_-(s_{1+}^{\textsc{g}}).
\end{equation}
Thus $y_- \mkern-3.5mu=\mkern-1mu -y_{1\mkern-.7mu+}^{\textsc{g}} 
\mkern-3.5mu=\mkern-1mu y_-(s_{1+}^{\textsc{g}})$, and since
$y_-(k) \mkern-1mu<\mkern-1mu y_-$ for all
$k \mkern-1mu\in\mkern-1mu 
\{1, \ldots, s_{1+}^{\textsc{g}}\mkern-4.5mu-\mkern-2mu 1\}$, we also have
$k_- \mkern-3mu=\mkern-1.5mu s_{1+}^{\textsc{g}}$, which is the denominator of $y_-$.
A similar argument shows that
(\ref{eq: k- is a denominator}) also holds for $k_+$ when 
$n_{{\textsc{d}}} \mkern-4mu=\mkern-2mu 0$ and $n_{{\textsc{g}}} \mkern-4mu=\mkern-2mu 1$.

Finally, suppose we exclude all cases
considered in the preceding paragraph, and
all cases in which $y_- \in {\mathbb{Z}}$.
Since $y_- \in{\mathbb{Q}}$ by hypothesis,
the second paragraph implies we also have
$y_{i+}^{\textsc{g}} \in {\mathbb{Q}} \setminus {\mathbb{Z}}$
for all $i \in \{1, \ldots, n_{{\textsc{g}}}\}$.
Since the problem is still unaffected by an overall integer
translation of $y_-$,
we demand without loss of generality that
$y_i^{\textsc{d}}, y_{j+}^{\textsc{g}} \in \left<0, 1\right>$
for all $i \in \{1, \ldots, n_{{\textsc{d}}}\}$ and $j \in \{1, \ldots, n_{{\textsc{g}}}\}$.
Thus, after removing one more regular fiber neighborhood $\nu(f_0)$
from the JSJ component containing $\partial Y$,
and Dehn filling this complement with slope $y_0^{\textsc{d}} := -1$,
we may appeal to Theorem 3 from Jankins and Neumann \cite{JankinsNeumann},
which is equivalent to the following statement.

If $y_- := \max_{k>0}y_-(k)$, where
\begin{equation}
y_-(k)
:=
1-\mfrac{1}{k}\mkern-2.5mu\left( \mkern2.5mu 1
\mkern1.5mu+\mkern.1mu
{{\textstyle{\sum\limits_{i=1}^{n_{{\textsc{d}}}}  
\mkern-2.5mu\left\lfloor y^{\textsc{d}}_i k \right\rfloor}}}
\mkern1.5mu+\mkern1mu
{{\textstyle{\sum\limits_{i=1}^{n_{{\textsc{g}}}}}}}\mkern-3.5mu
\left(  \left\lceil y^{\textsc{g}}_{i+}k \right\rceil \mkern-2.2mu-\mkern-1.3mu 1 
\right)\mkern-2mu\right)
\end{equation}
(with the initial $1$ coming from $-y_0^{\textsc{d}}$), and if
$k_-$ is defined as in 
(\ref{eq: def for k- in terms of value at which maximum first occurs}),
then there is a positive integer $c < k_-$ with $\gcd(c, k_-) = 1$,
a permutation $\pi$ on 
$n_{{\textsc{d}}} \mkern-3mu+\mkern-.5mu n_{{\textsc{g}}} \mkern-3mu+\mkern-1.9mu 1$ elements,
and an $n_{{\textsc{d}}} \mkern-3mu+\mkern-.5mu n_{{\textsc{g}}} \mkern-3mu+\mkern-1.9mu 1$-tuple 
$a_* = (c,\mkern1.5mu k_- \mkern-3mu-\mkern-1.5mu c,\mkern1.5mu 1,\mkern1.5mu \ldots, 1)$,
such that
\begin{equation}
\lfloor y_i^{\textsc{d}} k_- \rfloor + 1 
=  a_{\pi(i)},\;\;\;\;\;
\lceil y_{j+}^{\textsc{g}} k_- \rceil 
=  a_{\pi(n_{{\textsc{d}}} + j)},\;\;\;\;\;
y_- 
= \frac{a_{\pi(n_{{\textsc{d}}} + n_{{\textsc{g}}}+ 1)}}{k_-}
\end{equation}
for all $i\in\{1, \ldots, n_{{\textsc{d}}}\}$ and $j\in\{1, \ldots, n_{{\textsc{g}}}\}$.
In particular, $\gcd(a_{\pi(n_{{\textsc{d}}} + n_{{\textsc{g}}}+ 1)}, k_-) = 1$,
making $k_-$ satisfy 
(\ref{eq: k- is a denominator}).
A similar argument shows that 
(\ref{eq: k- is a denominator})
holds for $k_+$, completing the proof.
\end{proof}

The above result is useful for proving the 
following proposition, but first, we define
\begin{equation}
N_{{\textsc{g}}}
\mkern-2mu:\mkern-.1mu=\mkern-1mu 
\left| \{i : y_{i+}^{\textsc{g}} \mkern-3mu\in\mkern-1mu {\mathbb{Z}} \} \right|
+
\left| \{i : y_{i-}^{\textsc{g}} \mkern-3mu\in\mkern-1mu {\mathbb{Z}} \} \right|,
\;\;\;\;\;
\bar{n}_{{\textsc{d}}}:=
\left| \{i : y_i^{\textsc{d}} \mkern-3mu\in\mkern-1mu {\mathbb{Q}} \setminus {\mathbb{Z}} \} \right|.
\end{equation}

\begin{prop}
\label{prop: y- = y+ means integer}
Suppose that $\mkern1mu\bar{n}_{{\textsc{d}}} + N_{{\textsc{g}}} > 0$, 
with $Y\!$ not a solid torus.
If $y_- \mkern-2mu= y_+ \in {\mathbb{Q}}$, then
$\mkern1muy_- \mkern-2mu= y_-(1) = y_+(1) = y_+ \in \mkern1mu{\mathbb{Z}}$.
\end{prop}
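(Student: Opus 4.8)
The plan is to reduce the statement to showing $y_-\in{\mathbb{Z}}$. Since $y_-=y_+\in{\mathbb{Q}}$, no summand of $y_-$ or $y_+$ is infinite, so every $y_i^{\textsc{d}}$ and every $y_{i\pm}^{\textsc{g}}$ is rational; consequently $y_-(1)$ and $y_+(1)$, given at $k=1$ by integer formulas (see~(\ref{eq: technical section, defs of y-(k) etc})), are integers, and $y_-(1)\le y_-=y_+\le y_+(1)$ by the defining extrema. By Proposition~\ref{prop: k- and k+ are denominators of y- and y+} (valid since $Y$ is not a solid torus and $y_-,y_+\in{\mathbb{Q}}$), $k_-$ and $k_+$ are the denominators of $y_-$ and $y_+$, so $y_-=y_+$ forces $k_-=k_+=:k_0$. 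If $y_-\in{\mathbb{Z}}$, then $k_0=1$, so $k_-=k_+=1$ gives $y_-(1)=y_-$ and $y_+(1)=y_+$, whence $y_-=y_-(1)=y_+(1)=y_+\in{\mathbb{Z}}$, as claimed. So it suffices to prove $y_-\in{\mathbb{Z}}$.

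First I would dispatch the case $N_{\textsc{g}}>0$ by an elementary monotonicity estimate. Suppose some $y_{j+}^{\textsc{g}}\in{\mathbb{Z}}$. Using $\lfloor y_i^{\textsc{d}}k\rfloor\ge k\lfloor y_i^{\textsc{d}}\rfloor$, together with $\lceil y_{i+}^{\textsc{g}}k\rceil\ge k\lceil y_{i+}^{\textsc{g}}\rceil-(k-1)$ when $y_{i+}^{\textsc{g}}\notin{\mathbb{Z}}$ and $\lceil y_{i+}^{\textsc{g}}k\rceil=k\lceil y_{i+}^{\textsc{g}}\rceil$ when $y_{i+}^{\textsc{g}}\in{\mathbb{Z}}$, one checks that $k\bigl(y_-(1)-y_-(k)\bigr)\ge(1-k)\bigl(1-\#\{i:y_{i+}^{\textsc{g}}\in{\mathbb{Z}}\}\bigr)$, which is $\ge0$ for every $k>0$ once $\#\{i:y_{i+}^{\textsc{g}}\in{\mathbb{Z}}\}\ge1$. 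Hence $y_-=y_-(1)\in{\mathbb{Z}}$, and we are done by the previous paragraph. The mirror-image estimate (obtained from the orientation-reversing substitution) shows that if instead some $y_{j-}^{\textsc{g}}\in{\mathbb{Z}}$ then $y_+=y_+(1)\in{\mathbb{Z}}$, and again $y_-=y_+\in{\mathbb{Z}}$. So I may now assume $N_{\textsc{g}}=0$, i.e.\ every $y_{i\pm}^{\textsc{g}}\notin{\mathbb{Z}}$; the hypothesis $\bar{n}_{\textsc{d}}+N_{\textsc{g}}>0$ then forces $\bar{n}_{\textsc{d}}>0$, so some $y_i^{\textsc{d}}\notin{\mathbb{Z}}$.

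To finish, suppose for contradiction $k_0\ge2$, so $y_-=y_+\notin{\mathbb{Z}}$. Because $y_-(k_0)=y_-=\max_{k>0}y_-(k)$, $y_+(k_0)=y_+=\min_{k>0}y_+(k)$ and $y_-=y_+$, the index $k_0$ is a \emph{simultaneous} optimizer: $y_-(k)\le y_-(k_0)=y_+(k_0)\le y_+(k)$ for all $k$, with equality at $k=k_0$; in particular $1+\sum_i\lfloor y_i^{\textsc{d}}k_0\rfloor+\sum_j\lceil y_{j+}^{\textsc{g}}k_0\rceil-n_{\textsc{g}}=-1+\sum_i\lceil y_i^{\textsc{d}}k_0\rceil+\sum_j\lfloor y_{j-}^{\textsc{g}}k_0\rfloor+n_{\textsc{g}}$. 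I would now invoke the Jankins--Neumann rigidity exactly as in the proof of Proposition~\ref{prop: k- and k+ are denominators of y- and y+}: after adjoining an auxiliary fiber $y_0^{\textsc{d}}:=-1$ and normalizing, Theorem~3 of \cite{JankinsNeumann} applies to $y_-$, and --- via the substitution $y_i^{\textsc{d}}\mapsto-y_i^{\textsc{d}}$, $y_{i+}^{\textsc{g}}\mapsto-y_{i-}^{\textsc{g}}$, under which $y_+=\min_{k>0}y_+(k)$ becomes the ``$y_-$'' of the reflected data and $k_+$ its least optimizer --- also to $y_+$. Since $k_-=k_+=k_0\ge2$, each application yields a rigid $(n_{\textsc{d}}+n_{\textsc{g}}+1)$-tuple of shape $(c,k_0-c,1,\ldots,1)$ with $0<c<k_0$, $\gcd(c,k_0)=1$, whose entries record $\lfloor y_i^{\textsc{d}}k_0\rfloor+1$, $\lceil y_{i+}^{\textsc{g}}k_0\rceil$ and $k_0y_-$ (respectively the reflected quantities). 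Matching the two tuples slot by slot --- using that every entry lies in $\{1,\ldots,k_0-1\}$, that the ``$k_0y_-$'' and ``$k_0y_+$'' slots carry equal values, and that (since $N_{\textsc{g}}=0$) the reflected slope data also avoids the integers --- pins each $y_i^{\textsc{d}}k_0$ into the single residue class forcing $y_i^{\textsc{d}}\in{\mathbb{Z}}$ for all $i$, contradicting $\bar{n}_{\textsc{d}}>0$. The genuinely small configurations ($n_{\textsc{d}}+n_{\textsc{g}}=0$, or $n_{\textsc{g}}=0$ with $n_{\textsc{d}}\le1$) are precisely the solid-torus cases excluded by hypothesis, so this completes the proof.

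I expect the last step to be the main obstacle. The case $N_{\textsc{g}}>0$ falls to a bare floor/ceiling inequality, and the crude bound $k\bigl(y_-(1)-y_-(k)\bigr)\ge-(k-1)\cdot(\cdots)$ combined with its mirror image even narrows the remaining case to $\bar{n}_{\textsc{d}}\le2$; but closing it off seems to require running the two Jankins--Neumann patterns simultaneously (one from $y_-$ in the $y_{i+}^{\textsc{g}}$ variables, one from $y_+$ in the $y_{i-}^{\textsc{g}}$ variables) at the common optimizer $k_0$, reconciling the two normalizations imposed on the same $y_i^{\textsc{d}}$'s and daughter data, and bookkeeping the degenerate shapes of $(c,k_0-c,1,\ldots,1)$ --- notably $k_0=2$, where the tuple is all $1$'s --- so that the single identity $k_0y_-=k_0y_+$ genuinely forces every $y_i^{\textsc{d}}$ onto ${\mathbb{Z}}$. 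Making that reconciliation clean is where essentially all the work lies; everything else is formal.
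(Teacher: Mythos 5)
Your reduction to showing $y_- \in {\mathbb{Z}}$ is valid, and your handling of the case $N_{\textsc{g}} > 0$ is correct: the floor/ceiling inequality $k\bigl(y_-(1) - y_-(k)\bigr) \ge (1-k)\bigl(1 - \#\{i : y_{i+}^{\textsc{g}} \in {\mathbb{Z}}\}\bigr)$ and its mirror do show $y_- = y_-(1) \in {\mathbb{Z}}$ as soon as some $y_{i+}^{\textsc{g}}$ or $y_{i-}^{\textsc{g}}$ is an integer.

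The remaining case, $N_{\textsc{g}} = 0$ and $\bar{n}_{\textsc{d}} > 0$, is where you have a genuine gap, and you acknowledge as much. Your elementary bound is too weak here: it uses only $\lfloor y_i^{\textsc{d}} k\rfloor \ge k\lfloor y_i^{\textsc{d}}\rfloor$ for the $\textsc{d}$-slopes, which discards exactly the information one needs. (Indeed, summing your bound with its mirror gives $(y_-(1) - y_+(1)) - (y_-(k) - y_+(k)) \ge \frac{1-k}{k}(2 - N_{\textsc{g}})$, which is $\ge -1$ only when $N_{\textsc{g}} \ge 1$ or $k = 1$, so this route does not self-close for $N_{\textsc{g}} = 0$.) Your fallback plan --- running the Jankins--Neumann structure tuple for $y_-$ and for the reflected data defining $y_+$ and ``matching slot by slot'' --- has two unresolved difficulties: the two applications of Theorem~3 of \cite{JankinsNeumann} require incompatible normalizations of the \emph{same} $y_i^{\textsc{d}}$'s into $(0,1)$ (so the auxiliary $y_0^{\textsc{d}}$ and hence the ``$k_0 y_\pm$'' slot values differ by untracked integer shifts), and the structure theorem pins only $\lfloor y_i^{\textsc{d}} k_0\rfloor$, not $y_i^{\textsc{d}} k_0$ itself, so it is not clear how to force $y_i^{\textsc{d}} \in {\mathbb{Z}}$. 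The paper's own proof sidesteps all of this: rather than bounding $y_-(1) - y_-(k)$ and $y_+(k) - y_+(1)$ separately, it rewrites the \emph{combination} $y_-(k) - y_+(k)$ so that each $y_i^{\textsc{d}}$ contributes $\lceil y_i^{\textsc{d}}k\rceil - \lfloor y_i^{\textsc{d}}k\rfloor \in \{0, 1\}$ to the $\frac{1}{k}$-coefficient, giving $y_-(k) - y_+(k) < y_-(1) - y_+(1) + 1$ uniformly in $k$ whenever $\bar{n}_{\textsc{d}} + N_{\textsc{g}} > 0$. Combined with $\max_k\bigl(y_-(k) - y_+(k)\bigr) = 0 \in {\mathbb{Z}}$ (where the restriction to the diagonal uses $k_- = k_+$, as you also observed), integrality then forces the maximum at $k = 1$ in one stroke, with no appeal to Jankins--Neumann rigidity.
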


\begin{proof}

Since $y_- \mkern-3mu= y_+$, we have
\begin{equation}
0 
\mkern5mu=\mkern5mu y_ -\mkern-3mu- y_+
\mkern1mu=\mkern5mu \max_{k_1>0} y_-(k_1) - \min_{k_2>0} y_+(k_2)
\mkern5mu=\mkern1mu \max_{k_1, k_2 >0}\! 
\left(y_-(k_1)\mkern-.5mu - y_+(k_2)\right),
\end{equation}
with $y_-(k)$ and $y_+(k)$ as defined in
(\ref{eq: technical section, defs of y-(k) etc}).
Defining $k_-, k_+ \in {\mathbb{Z}}_{>0}$
as in
(\ref{eq: def for k- in terms of value at which maximum first occurs}),
we observe that since $y_- \mkern-3mu= y_+$,
Proposition~\ref{prop: k- and k+ are denominators of y- and y+}
implies $k_- \mkern-3mu= k_+$.
In particular, the set of $(k_1, k_2) \in {\mathbb{Z}}_{>0}\times {\mathbb{Z}}_{>0}$
for which $y_-(k_1)\mkern-.5mu - y_+(k_2)$ is maximized
has nontrivial intersection with the set
of $(k_1, k_2) \in {\mathbb{Z}}_{>0}\times {\mathbb{Z}}_{>0}$ for which $k_1 = k_2$.
We therefore have
\begin{align}
0 
\mkern3mu
&=\mkern1mu 
\max_{k >0}\mkern.8mu
\left(y_-(k)\mkern-.2mu + y_+(k)\right)
   \nonumber    \\
&=\max_{k>0}
\mfrac{1}{k}\mkern-2.5mu\left( -2
\mkern1.5mu+\mkern.1mu
{{\textstyle{\sum\limits_{i=1}^{n_{{\textsc{d}}}}}}}
\left( \left\lceil y^{\textsc{d}}_i k \right\rceil
-\left\lfloor y^{\textsc{d}}_i k \right\rfloor
\right)
\mkern1.5mu-\mkern1mu
{{\textstyle{\sum\limits_{i=1}^{n_{{\textsc{g}}}}}}}\mkern-3.5mu
\left( \left\lceil y^{\textsc{g}}_{i+}k \right\rceil
- 1 +  \left\lceil -y^{\textsc{g}}_{i-}k \right\rceil - 1
\right)\right)
   \nonumber  \\
&=
-{{\textstyle{\sum\limits_{i=1}^{\tilde{n}_{{\textsc{g}}}}}}}\mkern-3.5mu
\left\lfloor \tilde{y}^{\textsc{g}}_{i} \right\rfloor
+
\max_{k>0}
\mfrac{1}{k}\mkern-2.5mu\left( N_{{\textsc{g}}}\mkern-2.5mu - 2
\mkern1.5mu+\mkern.1mu
{{\textstyle{\sum\limits_{i=1}^{n_{{\textsc{d}}}}}}}
\left( \left\lceil y^{\textsc{d}}_i k \right\rceil
-\left\lfloor y^{\textsc{d}}_i k \right\rfloor
\right)
\mkern1.5mu-\mkern1mu
{{\textstyle{\sum\limits_{i=1}^{\tilde{n}_{{\textsc{g}}}}}}}\mkern-3.5mu
\left( \left\lceil [\tilde{y}^{\textsc{g}}_{i}]k \right\rceil - 1
\right)\right),
\end{align}
where the second line uses the fact that
$-\lfloor q\rfloor = \lceil -q\rceil$ for all $q \in {\mathbb{Q}}$,
and where in the third line, if we set 
$\tilde{n}_{{\textsc{g}}} := 2n_{{\textsc{g}}} \mkern-2.5mu- N_{{\textsc{g}}}$, then
$\tilde{y}^{\textsc{g}}_* \in {\mathbb{Q}}^{\tilde{n}_{{\textsc{g}}}}$ is
the $\tilde{n}_{{\textsc{g}}}$-tuple obtained from deleting the $N_{{\textsc{g}}}$
integer-valued entries from the $2n_{{\textsc{g}}}$-tuple
$(y_{1+}^{\textsc{g}}, \ldots, y_{n_{{\textsc{g}}}+}^{\textsc{g}},
-y_{1-}^{\textsc{g}}, \ldots, -y_{n_{{\textsc{g}}}-}^{\textsc{g}})$.
The third line also makes use of the
notation $[\cdot]: {\mathbb{Q}} \to \left[0, 1\right>$,
$q \mapsto [q]:= q - \lfloor q \rfloor$.

Since $\mkern1mu\bar{n}_{{\textsc{d}}} + N_{{\textsc{g}}} > 0$,
we know that $\mkern1mu\bar{n}_{{\textsc{d}}} + N_{{\textsc{g}}} -1 \ge 0$.  Thus
for all $k > 0$, we have
\begin{align}
y_-(k)+\mkern.2mu y_+(k)
&<
-{{\textstyle{\sum\limits_{i=1}^{\tilde{n}_{{\textsc{g}}}}}}}\mkern-3.5mu
\left\lfloor \tilde{y}^{\textsc{g}}_{i} \right\rfloor
+
\mfrac{1}{k}
(\bar{n}_{{\textsc{d}}} \mkern-1.3mu+\mkern-.5mu N_{{\textsc{g}}} \mkern-2mu-\mkern-1mu 1)
   \\ \nonumber
&\le
-{{\textstyle{\sum\limits_{i=1}^{\tilde{n}_{{\textsc{g}}}}}}}\mkern-3.5mu
\left\lfloor \tilde{y}^{\textsc{g}}_{i} \right\rfloor
\mkern3.2mu+\mkern4mu
\bar{n}_{{\textsc{d}}} \mkern-1.3mu+\mkern-.5mu N_{{\textsc{g}}} \mkern-2mu-\mkern-1mu 1
   \\ \nonumber
&= \mkern2.5muy_-(1) \mkern1.5mu+\mkern1.5mu y_+\mkern-1mu(1) 
\mkern1.5mu+\mkern.5mu 1 \mkern2mu\in{\mathbb{Z}},
\end{align}
which, since 
$\max_{k >0}\mkern-1mu
\left(y_-\mkern-.4mu(k)\mkern-.2mu + y_+\mkern-.9mu(k)\right) \in {\mathbb{Z}}$,
implies that
$y_-(k)\mkern-.2mu + y_+\mkern-.9mu(k)$ is maximized at $k \mkern-1.5mu=\mkern-1.5mu 1$.
Thus, 
$y_-\mkern-.4mu(k_1)$ and $y_+(k_2)$ are respectively maximized
and minimized at $k_1 = 1$ and $k_2 = 1$,
completing the proof of the proposition.

\end{proof}

\section{Cabling}
\label{section: cabling}
The $(p,q)$-cable $Y^{(p,q)}\mkern-2mu\subset\mkern-2mu k$ of a knot complement 
$Y \mkern-2mu:=\mkern-2mu X \mkern-1.5mu\setminus\mkern-1.5mu \nu(K) \subset X$
is given by 
the knot complement $Y^{(p,q)} \mkern-3mu:=\mkern-2mu 
X \mkern-1.5mu\setminus\mkern-1.5mu \nu(K^{(p,q)})$,
where $K^{(p,q)}\mkern-1mu \subset X$ is the image of 
the $(p,q)$-torus knot embedded in the boundary of $Y\!$.
I recently made the mundane, and almost certainly not novel,
observation that
one can realize any cable of $Y \subset X$ by gluing on
an appropriate Seifert fibered space.

\subsection{Cabling via gluing}
Suppose $Y := X \setminus \nu(K)$ is the knot complement
of an arbitrary knot $K \subset X$ in an arbitrary closed oriented three-manifold $X$.
We construct the $(p,q)$-cable $Y^{(p,q)} \subset X$ of $Y \subset X$ as follows.

Let $\mu \in H_1(\partial Y)$ denote the meridian of $K$,
and let $\lambda\in H_1(\partial Y)$ denote a choice of longitude, so that
$X = Y(\mu)$ and $\mu \cdot \lambda = 1$.
Choosing $p^*, q^* \in Z$ such that $pp^* - qq^* = 1$, let
$Y_{(-q^*\mkern-3mu,\mkern1.5mu p)}$ denote the
regular fiber complement
\begin{equation}
Y_{(-q^*\mkern-3mu,\mkern1.5mu p)}
:= M_{S^2}(0\mkern.4mu; \,-q^*\mkern-2.5mu/p) \mkern1.5mu\setminus\mkern1.5mu \nu(f),
\end{equation}
so that 
$Y_{(-q^*\mkern-3mu,\mkern1.5mu p)}$
is a solid torus whose compressing disk has boundary of slope $\frac{q^*}{p}$.

To construct the $(p,q)$-cable $Y^{\mkern-.8mu(p,q)} \subset X$,
we form the union
\begin{equation}
Y^{\mkern-.8mu(p,q)} := \hat{Y}_{(-q^*\mkern-3mu,\mkern1.5mu p)} \cup_{\varphi} Y,\;\;\;\;
\varphi: \partial Y \to -\partial_1 \hat{Y}_{(-q^*\mkern-3mu,\mkern1.5mu p)},
\end{equation}
where $\hat{Y}_{(-q^*\mkern-3mu,\mkern1.5mu p)}$ is a regular fiber complement
in $Y_{(-q^*\mkern-3mu,\mkern1.5mu p)}$, with
$\partial_1 \hat{Y}_{(-q^*\mkern-3mu,\mkern1.5mu p)} \mkern-2mu:= \mkern-1.5mu
\partial Y_{(-q^*\mkern-3mu,\mkern1.5mu p)}$,
and where $\varphi$ is chosen to
induce the map $\varphi_*$ on homology defined by
\begin{equation}
\varphi_*(\mu) := -q^* \mkern-2mu\tilde{f}_1 +\mkern.5mu p\tilde{h}_1,\;\;
\varphi_*(\lambda):= p^*\mkern-2.2mu\tilde{f}_1 -\mkern.7mu q\tilde{h}_1.
\end{equation}

\begin{prop}
\label{prop: graph manifold for cabling works}
$Y^{\mkern-.8mu(p,q)} \mkern-1mu\subset\mkern-1mu Y^{\mkern-.8mu(p,q)}\mkern-.8mu(0) = X$
is the $(p,q)$-cable of the knot complement $Y \mkern-3mu\subset\mkern-2mu X$.
\end{prop}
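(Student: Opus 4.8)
The plan is to verify that the manifold $Y^{(p,q)} = \hat{Y}_{(-q^*,p)} \cup_\varphi Y$ constructed by gluing is genuinely the complement of the cable knot $K^{(p,q)} \subset X$, and that $Y^{(p,q)}(0) = X$. First I would identify precisely what the cable knot complement is: take $Y = X \setminus \nu(K)$, and inside the boundary torus $\partial Y$ choose the curve $C$ of class $p\lambda + q\mu$ (the $(p,q)$-torus knot on $\partial Y$, using that $\gcd(p,q)=1$). Push $C$ slightly into the interior of a collar $\partial Y \times [0,1] \subset Y$ to obtain $K^{(p,q)}$, whose tubular neighborhood $\nu(K^{(p,q)})$ is a solid torus lying in this collar. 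The cable complement is then $X \setminus \nu(K^{(p,q)})$, which decomposes along the torus $\partial Y \times \{1/2\}$ into two pieces: the piece $Y$ itself (the part of $X$ on the far side of the collar together with most of the collar) and the piece $(\partial Y \times [0,1]) \setminus \nu(K^{(p,q)})$, which is a $(p,q)$-cable space — a Seifert fibered space over the annulus with one exceptional fiber of order $p$.

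The key step is to recognize the cable space as exactly $\hat{Y}_{(-q^*,p)}$. I would recall that the cable space $C_{p,q}$ (the complement of a regular fiber in the Seifert fibration of $S^1 \times D^2$ with one exceptional fiber of multiplicity $p$) can be described as $M_{S^2}(0; -q^*/p) \setminus \nu(f)$: the solid torus $Y_{(-q^*,p)} = M_{S^2}(0; -q^*/p)$ has a compressing disk of boundary slope $q^*/p$ (this follows from the homology presentation in Section~\ref{ss: Conventions for Seifert fibered spaces}, since filling the remaining boundary with the regular fiber slope $\infty$ yields a lens space summand that is $S^3$ precisely under the arithmetic $pp^* - qq^* = 1$), and removing a second regular fiber neighborhood gives the Seifert piece over the annulus with one multiplicity-$p$ exceptional fiber. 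Then I would check the gluing data: the curve $\varphi_*(\mu) = -q^*\tilde{f}_1 + p\tilde{h}_1$ must be identified with the meridian $\mu$ of $K$ on $\partial Y$, which is correct because under the collar identification the boundary $\partial Y$ of the outer piece $Y$ is glued to the boundary $\partial_1 \hat{Y}_{(-q^*,p)}$ of the cable space, and the cable space is glued to the original knot so that its fiber framing matches. The essential computation is that the image of the rational longitude $l$ of $Y_{(-q^*,p)}$ — which equals the meridian of $K^{(p,q)}$ — maps to $\varphi_*(0) = $ (the class dual in the surgery basis produced by the construction), and one checks $\varphi_*^{-1}$ sends the slope-$0$ filling to exactly the slope-$q/p$ reducing/lens data that reconstructs $\nu(K^{(p,q)})$, so that $Y^{(p,q)}(0)$ glues the cable space back into a tubular neighborhood of $K^{(p,q)}$ and recovers $X = Y(\mu)$.

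The main obstacle I expect is bookkeeping the homology bases consistently: one must check that $\varphi_*(\mu)$ and $\varphi_*(\lambda)$ as defined actually arise from an orientation-compatible homeomorphism $\partial Y \to -\partial_1 \hat{Y}_{(-q^*,p)}$ — i.e.\ that the matrix $\begin{pmatrix} -q^* & p^* \\ p & -q \end{pmatrix}$ has determinant $\mp 1$, which is $q^*q - p^*p = -1$, matching $pp^* - qq^* = 1$ with the orientation reversal — and that the resulting identification of $\partial Y$ with the cable-space boundary sends the fiber slope on the cable side to $p\lambda + q\mu$ on $\partial Y$, so that the cable knot $K^{(p,q)}$ really is isotopic to the $(p,q)$-curve. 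I would handle this by explicitly tracking the regular fiber class $\tilde{f}_1$ and meridian $\tilde{h}_1$ of $Y_{(-q^*,p)}$ through the compression: the meridian of the attached solid torus neighborhood $\nu(K^{(p,q)})$ that one fills back in to get $X$ must be $l$, the rational longitude of $Y_{(-q^*,p)}$, and a direct computation with the given $\varphi_*$ shows $\varphi_*(l)$ is the surgery-basis $0$-slope. Everything else — that the union is a knot complement in $X$ at all, and that the self-consistency of the Seifert data holds — is routine once this linear-algebra identification is pinned down, so I would state the bases carefully at the outset and let the verification be a short matrix computation together with a citation to the standard description of cable spaces.
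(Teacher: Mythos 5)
Your proposal follows the same essential route as the paper's proof: identify $Y^{(p,q)}(0)=X$, observe that $Y^{(p,q)}$ is then the complement in $X$ of the core of the filling solid torus (which is the excised regular fiber of $Y_{(-q^*\!,\,p)}$), and verify via a short linear-algebra computation that this fiber lies on $\partial Y$ as the $(p,q)$-torus curve with respect to $K$'s meridian and longitude, namely $p\mkern1mu\varphi_*(\lambda)+q\mkern1mu\varphi_*(\mu)=(pp^*-qq^*)\tilde{f}_1=\tilde{f}_1$. Your framing via the standard cable-space decomposition is a valid (if slightly longer) way to package the same verification, and you correctly identified the matrix computation as the crux.

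However, you conflate the two boundary tori of $\hat{Y}_{(-q^*\!,\,p)}$ in your second paragraph, and this muddles the $Y^{(p,q)}(0)=X$ step. The meridian of $K^{(p,q)}$, i.e.\ the slope-$0$ filling curve, lives on $\partial Y^{(p,q)}=\partial_0\hat{Y}_{(-q^*\!,\,p)}$; the rational longitude $l$ of the solid torus $Y_{(-q^*\!,\,p)}$ (the compressing-disk slope $q^*\!/p$, not $q/p$ as you wrote) lives on $\partial_1\hat{Y}_{(-q^*\!,\,p)}=\partial Y_{(-q^*\!,\,p)}$. These are distinct, and the gluing map $\varphi$ only acts on the latter torus, so phrases like ``$\varphi_*^{-1}$ sends the slope-$0$ filling'' and ``$\varphi_*(l)$ is the surgery-basis $0$-slope'' do not typecheck. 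The correct argument is direct and does not route $l$ or the slope-$0$ class through $\varphi$ at all: filling $\partial Y^{(p,q)}$ along slope $0$ reinserts the excised regular fiber and produces the solid torus $Y_{(-q^*\!,\,p)}$; then because $\varphi$ sends $\mu$ to the slope $q^*\!/p$ bounding the compressing disk, the union $Y_{(-q^*\!,\,p)}\cup_\varphi Y$ is precisely the meridional Dehn filling $Y(\mu)=X$. Once that is fixed, the remaining content is exactly your proposed check that $\tilde{f}_1=p\mkern1mu\varphi_*(\lambda)+q\mkern1mu\varphi_*(\mu)$, which pins down the core as the $(p,q)$-cable of $K$.
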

{\noindent{\textit{Proof.}}
To verify that $Y^{\mkern-.8mu(p,q)}\mkern-.8mu(0) = X$,
note that $Y^{\mkern-.8mu(p,q)}\mkern-.8mu(0)$ is a
union of $Y$ with the solid torus $Y_{(-q^*\mkern-3mu,\mkern1.5mu p)}$,
such that $\mu$ is sent to the slope $\frac{q^*}{p}$ bounding
the compressing disk of $Y_{(-q^*\mkern-3mu,\mkern1.5mu p)}$.
In other words, $Y^{\mkern-.8mu(p,q)}\mkern-.8mu(0)$ is the Dehn filling
$Y\mkern-1.8mu(\mu) =: X$.}

Since $Y^{\mkern-.8mu(p,q)} \mkern-2mu=\mkern-2mu X \setminus \nu(f)$
is the complement of the regular fiber $f$,
we next must verify that, in the boundary of the solid torus 
($\textsc{st}$) to which $Y$ is glued,
the regular fiber is of class 
$pm_{\textsc{st}} + ql_{\textsc{st}} \in H_1(\partial Y_{(-q^*\mkern-3mu,\mkern1.5mu p)})$
in terms of the basis
$l_{\textsc{st}} := \varphi_*(\mu)$,
$m_{\textsc{st}} := \varphi_*(\lambda)$
specified by the meridian $\mu$ and longitude $\lambda$ of $K$.
Indeed, we have
\begin{equation}
\mkern110mu
pm_{\textsc{st}} + ql_{\textsc{st}}
= p\varphi_*(\lambda) + q\varphi_*(\mu)
=(pp^* - qq^*) \tilde{f}_1 = \tilde{f}_1.
\mkern100mu\qed
\end{equation}

\subsection{L-space intervals for cables}
Supposing $Y \mkern-2mu\subset\mkern-2mu X$ is Floer simple and boundary 
incompressible,
write
$a_-\mu + b_-\lambda$ and $a_+\mu + b_+\lambda$
for respective representatives in $H_1(\partial Y)$ 
of the left-hand and right-hand endpoints 
of the L-space interval $\mathcal{L}(Y) \mkern-1.5mu\subset\mkern-1.5mu {\mathbb{P}}(H_1(\partial Y))$.
Now, $\varphi$ is an orientation-reversing map, but since we change
from a positively-oriented basis to a negatively-oriented basis,
the induced map $\varphi_*^{{\mathbb{P}}}$ is orientation-preserving.
We therefore have
\begin{equation}
\varphi_*^{{\mathbb{P}}}(\mathcal{L}(Y)) = [[y_{1-}, y_{1+}]],
\;\;\;\;\;\;
y_{1\pm} := \frac{a_{\pm}q^* - b_{\pm}p^*}{a_{\pm}p - b_{\pm}q}
=\mkern1mu \frac{\mkern.5muq^*\mkern-2.5mu}{p} 
\mkern-5mu\left(1 - \frac{b_{\pm}}{q^*(a_{\pm}p-b_{\pm}q)}\right).
\end{equation}
For $k\in {\mathbb{Z}}_{>0}$, define
\begin{equation}
y_-(k)
:= -\mfrac{1}{k}\mkern-3mu\left(
\left\lfloor
\mkern-2mu-\mfrac{\mkern1.5muq^*\mkern-3.5mu}{p}k\mkern-1.5mu
\right\rfloor
+
\left\lceil y_{1+}k\mkern-1.5mu
\right\rceil
\right),
\;\;\;
y_+(k)
:= -\mfrac{1}{k}\mkern-3mu\left(
\left\lceil
\mkern-2mu-\mfrac{\mkern1.5muq^*\mkern-3.5mu}{p}k\mkern-1.5mu
\right\rceil
+
\left\lfloor
y_{1-}k\mkern-1.5mu
\right\rfloor
\right).
\end{equation}
which simplifies to
\begin{equation}
\label{eq: useful def of cabling y-(k) and y+(k)}
y_-(k)
:= \mfrac{1}{k}\mkern-3mu\left(
\left\lceil
\mkern-2mu\mfrac{\mkern1.5muq^*\mkern-3.5mu}{p}k\mkern-1.5mu
\right\rceil
-
\left\lceil y_{1+}k\mkern-1.5mu
\right\rceil
\right),
\;\;\;
y_+(k)
:= \mfrac{1}{k}\mkern-3mu\left(
\left\lfloor
\mkern-2mu\mfrac{\mkern1.5muq^*\mkern-3.5mu}{p}k\mkern-1.5mu
\right\rfloor
-
\left\lfloor
y_{1-}k\mkern-1.5mu
\right\rfloor
\right).
\end{equation}
As usual, we also define
\begin{equation}
\label{eq: cabling def for y- y_+}
y_- := \max_{k>0} y_-(k),\;\;\;\;\; y_+ := \min_{k>0} y_+(k).
\end{equation}

Since $Y$ is Floer simple and boundary incompressible,
and since
every Dehn filling of
$\hat{Y}_{(-q^*\mkern-3mu,\mkern1.5mu p)}$
along $\partial Y^{\mkern-.8mu(p,q)}$
is Floer simple,
we can still invoke 
Theorem~\ref{thm: l space interval for graph manifolds}
and Proposition~\ref{prop: characterization of Floer simple graph manifolds}
to compute the L-space interval for $Y^{\mkern-.8mu(p,q)}$
(see
Corollary~\ref{cor: generalization of JN to Floer}
for justification of this generalization).
Thus, in terms of the Seifert basis $(\tilde{f}, -\tilde{h})$, 
$\hat{Y}_{(-q^*\mkern-3mu,\mkern1.5mu p)}$ has L-space interval
$[[y_-, y_+]]$ if it is Floer simple, and $\{y_-\} = \{y_+\}$ if
it has an isolated L-space filling.

It is often more natural, however, to express this L-space interval
in terms of the surgery basis for the cabled knot.
Recall that
$\hat{Y}_{(-q^*\mkern-3mu,\mkern1.5mu p)} = X \setminus \nu(f)$.
The natural surgery basis associated to the complement of the
regular fiber is given by the meridian $\mu^{(p,q)} := -\tilde{h}$
and longitude $\lambda^{(p,q)} = \tilde{f}$, yielding the following result.

\begin{theorem}
\label{thm: general cabling}
Suppose $Y \mkern-2mu=\mkern-2mu X\mkern-2mu\setminus\mkern-2mu \nu(K)$
is a boundary incompressible
Floer simple knot complement
with L-space interval 
$\mathcal{L}(Y) = [[\frac{a_-}{b_-}, \frac{a_+}{b_+}]]$ in terms of
the surgery basis $\mu, \lambda \in H_1(\partial Y)$ for $K$, with 
$\mu$ the meridian of $K$ and $\lambda$ a choice of longitude.
Then in terms of the surgery basis produced by cabling,
the $(p,q)$-cable $Y^{(p,q)} \mkern-2mu\subset\mkern-2mu X$ of 
$Y \mkern-2mu\subset\mkern-2mu X$
has L-space interval
\begin{equation}
\mathcal{L}(Y^{(p,q)}) =
\begin{cases}
\emptyset
&\mkern15mu
\infty \mkern-1.5mu\neq y_{1-} \mkern-4.5mu<\mkern-.5mu y_{1+} 
\mkern-2mu\neq\mkern-1.5mu \infty
\mkern9mu\text{and}\mkern11mu
y_- \mkern-3.5mu>\mkern-2.5mu y_+
     \\
\{1/y_-\} = \{1/y_+\}
&\mkern15mu
\infty \mkern-1.5mu\neq y_{1-} \mkern-4.5mu<\mkern-.5mu y_{1+} 
\mkern-2mu\neq\mkern-1.5mu \infty
\mkern9mu\text{and}\mkern11mu
y_- \mkern-3.5mu=\mkern-2.5mu y_+
     \\
[[1/y_+, 1/y_-]]
&\mkern15mu
\text{otherwise}.
\end{cases}
\end{equation}
\end{theorem}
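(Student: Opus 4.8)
The plan is to realize $Y^{(p,q)}$ as the graph manifold union $\hat{Y}_{(-q^*,p)} \cup_\varphi Y$ from Proposition~\ref{prop: graph manifold for cabling works}, so that it fits the hypotheses of the generalization (Corollary~\ref{cor: generalization of JN to Floer}) of Theorem~\ref{thm: l space interval for graph manifolds} and Proposition~\ref{prop: characterization of Floer simple graph manifolds}. Here the ``foundation'' $\hat{M}$ is the twice-punctured $S^2$ circle bundle, with one exceptional-fiber slot filled by the solid torus giving Seifert datum $y_1^{\textsc{d}} = -q^*/p$, and one daughter subtree $Y_1 := Y$ attached along $\varphi$, with $n_{\textsc{g}} = n_{\textsc{d}} = 1$. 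The boundary $\partial Y^{(p,q)}$ is the remaining boundary of $\hat{M}$, with the Seifert basis $(\tilde{f}, -\tilde{h})$; in that basis Theorem~\ref{thm: l space interval for graph manifolds} computes $\mathcal{L}(\hat{Y}_{(-q^*,p)}) = [[y_-, y_+]]$ (or the degenerate/empty variants) with $y_\pm$ the extrema in~(\ref{eq: cabling def for y- y_+}), because the floor/ceiling simplification in~(\ref{eq: useful def of cabling y-(k) and y+(k)}) matches the defining formulas of Theorem~\ref{thm: l space interval for graph manifolds} once one substitutes $y_1^{\textsc{d}} = -q^*/p$ and $y_{1\pm}^{\textsc{g}} = \varphi_*^{\mathbb P}(\text{endpoints of }\mathcal{L}(Y)) = y_{1\pm}$.

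First I would verify the change of basis: since $\mu^{(p,q)} = -\tilde h$ and $\lambda^{(p,q)} = \tilde f$, a slope $r\tilde f - s\tilde h$, which is $r/s$ in the Seifert convention of~(\ref{eq: convention for slopes for Seifert fiber complement}), becomes $s/r$ in the cabling surgery basis; hence an interval $[[y_-, y_+]]$ in the Seifert basis becomes $[[1/y_+, 1/y_-]]$ in the cabling basis (the inversion reverses orientation of $\mathbb P(H_1)$, swapping left and right endpoints, and sends an isolated filling $\{y_-\}=\{y_+\}$ to $\{1/y_-\}=\{1/y_+\}$, and $\emptyset$ to $\emptyset$). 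Second I would run the trichotomy of Proposition~\ref{prop: characterization of Floer simple graph manifolds} in the special case at hand: the daughter subtree $Y_1 = Y$ is Floer simple by hypothesis, so we are in the ``orientable base, $\hat Y[y]$ sometimes a solid torus'' regime of Section~\ref{ss: orientable base, cases involving solid torus Y hat} with $n_{\textsc g} = n_{\textsc d} = 1$. That section's propositions say exactly: if $\infty \in [[y_{1-}, y_{1+}]]$ (case $(\textsc{fs3})$) then $\mathcal{L} = [[y_-, y_+]]$; if $\infty \notin [[y_{1-},y_{1+}]]$ then either $[[y_{1-},y_{1+}]] = \langle -\infty, +\infty\rangle$ (case $(\textsc{fs1})$, giving $[[y_-,y_+]]$), or it is the honest closed interval $[y_{1-}, y_{1+}]$, i.e. $\infty \neq y_{1-} < y_{1+} \neq \infty$, in which case $\mathcal{L}(Y^{(p,q)})$ in the Seifert basis is $[[y_-,y_+]]$, $\{y_-\}=\{y_+\}$, or $\emptyset$ according as $y_- < y_+$, $y_- = y_+$, or $y_- > y_+$ (cases $(\textsc{fs2})$, $(\textsc{nfs2})$, empty). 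This is precisely the three-way split in the statement; translating back through $y \mapsto 1/y$ gives the displayed formula.

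The remaining bookkeeping is to confirm the degenerate subcases of $\mathcal{L}(Y)$ itself are handled, i.e. if $Y$ merely has an isolated L-space filling ($\mathcal{L}^\circ(Y) = \emptyset$) the theorem's hypothesis of Floer simplicity is violated, so that case is excluded by assumption; and if $\mathcal{L}(Y)$ is a closed interval or complement-of-a-point, the pushforward $[[y_{1-}, y_{1+}]]$ is computed as stated. I would also check that $\hat{Y}_{(-q^*,p)}$ being a solid torus (which happens precisely when $\hat Y[y]$ degenerates) does not produce reducible fillings other than those already accounted for by the $\emptyset$ case, using Proposition~\ref{prop: l-space gluing for compressible boundary}; in fact $\hat{Y}_{(-q^*,p)}$ is never a solid torus since it retains the exceptional fiber of order $p > 1$, so the ``$\hat Y[y]$ always a non-solid-torus'' analysis of Section~\ref{ss: orientable base, no solid tori} applies to $\hat{Y}_{(-q^*,p)}$ and it is the inner filling $Y^{(p,q)}(y)$ whose pieces may degenerate, exactly as treated in Section~\ref{ss: orientable base, cases involving solid torus Y hat}.

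The main obstacle I expect is the careful arithmetic matching the three endpoint conditions in the theorem statement to the $(\textsc{fs})$/$(\textsc{nfs})$ conditions of Proposition~\ref{prop: characterization of Floer simple graph manifolds} after the two basis changes (the $\varphi_*^{\mathbb P}$ pushforward $y_{1\pm} = (a_\pm q^* - b_\pm p^*)/(a_\pm p - b_\pm q)$, and the $y \mapsto 1/y$ inversion) — in particular verifying that ``$\infty \neq y_{1-} < y_{1+} \neq \infty$'' is equivalent to $[[y_{1-},y_{1+}]]$ being an honest closed interval not containing $\infty$, and tracking which of $y_-, y_+$ becomes which endpoint under inversion, including the orientation subtleties noted before the theorem ($\varphi$ orientation-reversing but $\varphi_*^{\mathbb P}$ orientation-preserving, versus the orientation-reversing inversion $r/s \mapsto s/r$). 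Once that dictionary is pinned down, the proof is a direct citation of the already-proved graph manifold results.
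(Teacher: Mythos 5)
Your proposal is correct and follows essentially the same route the paper takes: realize $Y^{(p,q)}$ as the union $\hat{Y}_{(-q^*,p)} \cup_\varphi Y$ via Proposition~\ref{prop: graph manifold for cabling works}, invoke Corollary~\ref{cor: generalization of JN to Floer} so that Theorem~\ref{thm: l space interval for graph manifolds} and Proposition~\ref{prop: characterization of Floer simple graph manifolds} apply with $n_{\textsc{d}} = n_{\textsc{g}} = 1$, identify the result in the Seifert basis as $[[y_-,y_+]]$, $\{y_-\}=\{y_+\}$, or $\emptyset$ according to the $(\textsc{fs})/(\textsc{nfs})$ trichotomy, and finally translate through the orientation-reversing inversion $r/s \mapsto s/r$ into the cabling surgery basis, which swaps left and right endpoints and gives $[[1/y_+,1/y_-]]$.

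One small cleanup is warranted in your last paragraph, where you conflate $\hat{Y}_{(-q^*,p)}$ (a two-boundary piece, never a solid torus) with $\hat{Y}[y]$ (the one-boundary manifold obtained after Dehn filling $\partial Y^{(p,q)}$ by $y$). Since $n_{\textsc{g}} = n_{\textsc{d}} = 1$ and, for $|p| > 1$, $-q^*/p \notin \mathbb{Z}$ (as $\gcd(p,q^*)=1$), the manifold $\hat{Y}[y]$ is a solid torus exactly when $y \in \mathbb{Z} \cup \{\infty\}$, so one is squarely in the regime of Section~\ref{ss: orientable base, cases involving solid torus Y hat} — the non-solid-torus analysis of Section~\ref{ss: orientable base, no solid tori} does not apply. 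Your final sentence ends up citing the right section, but the intermediate reasoning should be fixed so the argument reads cleanly. For $|p| \le 1$ the cable is trivial (a framing change or connected sum with the unknot complement), which the theorem formula handles via the convention on infinite summands, but it is worth noting if you want the statement to cover all $p/q \in \mathbb{Q}\cup\{\infty\}$ as claimed.
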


\subsection{Knots in ${\boldsymbol{S^3}}$}

As an illustration, we apply the above result to an arbitrary
boundary incompressible Floer simple knot complement 
$Y \mkern-2mu:=\mkern-2mu S^3 \mkern-2mu\setminus\mkern-2mu \nu(K)$ in $S^3\!$.
The surgery basis for a knot complement in $S^3$
conventionally takes $\lambda$ to be the rational longitude,
which in $S^3$ is Seifert framed.
The meridian $\mu$ of $K$ is automatically dual to this $\lambda$.

Without loss of generality (up to replacing $K$ with its mirror image),
we demand that $K$ be {\em{positive}},
by which we mean that there exist positive $u, v \in {\mathbb{Z}}$ such that
the Dehn filling $Y(u\mu + v\lambda)$ is an L-space.
In terms of the projectivization map 
$x\mu + y\lambda \mapsto x/y \in {\mathbb{Q}} \cup \{\infty\}$,
it is easy to show (see ``example'' in \cite[Section 4]{lslope})
that $Y$ has L-space interval
\begin{equation}
\mathcal{L}(Y) = [N, +\infty],\;\;\;\;
N:= 2g(K) - 1 = \deg(\Delta(K)) - 1,
\end{equation}
where $g(K)$ and $\Delta(K)$ are the genus and Alexander polynomial of $K$.

Choosing $q^*, p^* \in {\mathbb{Z}}$ such that
$pp^* - qq^* = 1$, and demanding $0 \le q^* \mkern-2mu< p$, we then have
\begin{equation}
\varphi_*^{{\mathbb{P}}}(\mathcal{L}(Y)) = [[y_{1-}, y_{1+}]],
\;\;\;\;\;
y_{1-}:= \mfrac{Nq^* - p^*}{Np - q}
= \mfrac{q^*}{p} + \mfrac{1}{p(q-Np)},
\;\;\;\;\;
y_{1+}:= \mfrac{q^*}{p}.
\end{equation}
From
(\ref{eq: useful def of cabling y-(k) and y+(k)}) and
(\ref{eq: cabling def for y- y_+}),
we immediately compute that $y_- = 0$. For $y_+ $, we have

\begin{equation}
y_+ = \min_{k>0}y_+(k),
\;\;\;\;
y_+(k)
:= \mfrac{1}{k}\mkern-3mu\left(
\left\lfloor
\mkern-2mu\mfrac{\mkern1.5muq^*\mkern-3.5mu}{p}k\mkern-1.5mu
\right\rfloor
-
\left\lfloor
\left(\mfrac{q^*}{p} + \mfrac{1}{p(q-Np)}\right)k
\right\rfloor
\right).
\end{equation}

If $q-Np \mkern-.5mu<\mkern-1.5mu 0$, then 
$y_+\mkern-1.3mu(k) \mkern-2mu\ge\mkern-2mu 0$ for all 
$k \mkern-2mu>\mkern-2mu 0$, and so $y_+\mkern-1.3mu(k)$ is minimized
at $y_+ \mkern-3mu=\mkern-1mu y_+\mkern-1.3mu(1) \mkern-1.5mu=\mkern-1.5mu 0$. 
Since
$\infty \mkern-2mu\neq y_{1\mkern-.8mu-} \mkern-4mu<\mkern-.5mu y_{1\mkern-1mu+} 
\mkern-3mu\neq\mkern-1.5mu \infty$ in this case, we then have
$\mathcal{L}(Y^{(p,q)}) = \{0\}$.
If $q-Np = 0$, then $y_+ \mkern-2.5mu=\mkern-2mu \infty$,
yielding $\mathcal{L}(Y^{(p,q)}) \mkern-2mu=\mkern-2mu [0, \infty]$.
If $q-Np < 0$, then
$y_+(k) \le 0$ for all $k>0$, and it is straightforward
to show that $y_+(k)$ is minimized at the lowest value of $k_+>0$
for which $\lfloor \frac{q^*}{p}k_+ \rfloor \neq \lfloor y_{1-}k_+ \rfloor$. 
Since $y_{1-} - \frac{q^*}{p} < \frac{1}{p}$,
a necessary condition for this to occur is to have
\begin{equation}
\left(y_{1-} - \mfrac{q^*}{p}\right)k_+ \ge \mfrac{1}{p},
\;\;\;
\text{which implies}\;\;\;
k_+ \ge q - Np.
\end{equation}
Since $y_{1+}(q-Np) = p^* - Nq^* \in {\mathbb{Z}}$,
setting $k_+ = q - Np$ is also sufficient:
\begin{equation}
y_+ = y_+\mkern-1.5mu(k_+) = \mfrac{1}{q-Np}
\left((p^* - Nq^*-1) - (p^* - Nq^*)\right) = -\mfrac{1}{q-Np},
\end{equation}
and we have $\mathcal{L}(Y^{(p,q)}) = [[0, -1/(q-Np)]]$.

As a final step, we re-express $\mathcal{L}(Y^{(p,q)})$
in terms
of the conventional basis for knot complements in $S^3$.
We again use the meridian
$\mu^{(p,q)} := -\tilde{h}$, but the rational longitude is
\begin{equation}
l^{(p,q)} = 
-\left(-\mfrac{q^*}{p} + \varphi_*^{{\mathbb{P}}}([\lambda])
\right)
=
-\left(-\mfrac{q^*}{p} + \mfrac{p^*}{q}\right)
= -\mfrac{1}{pq},
\end{equation}
for which we choose the representative
$\lambda_{{\mathbb{Q}}} := \tilde{f} + pq\tilde{h}$
to achieve
$\mu^{(p,q)} \cdot \lambda_{{\mathbb{Q}}}=1$.
Performing the requisite change of basis on
$\mathcal{L}(Y^{(p,q)})$ for the three cases
described in the preceding paragraph then
recovers the following result of
Hedden \cite{Heddencableii} and Hom \cite{Homcable}.
\begin{cor}
$Y^{(p,q)} \subset S^3$ has L-space interval
\begin{equation}
\mathcal{L}(Y^{(p,q)}) = 
\begin{cases}
\{\infty\}
&2g(K) - 1 >
\frac{q}{p}
   \\
[\mkern.5mu pq \mkern-1.5mu-\mkern-1.5mu p 
\mkern-1.5mu-\mkern-1.5mu q \mkern-1.5mu+\mkern-1.5mu 2g(K)p , \;\infty\mkern.2mu]
&2g(K) - 1 \le
\frac{q}{p}.
\end{cases}
\end{equation}
\end{cor}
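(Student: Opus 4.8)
The plan is to derive this corollary as a direct specialization of Theorem~\ref{thm: general cabling} (equivalently, Theorem~\ref{thm: intro cabling theorem for L-space complements}) together with the change-of-basis computations carried out in the preceding paragraphs. The input data are: $X = S^3$ is an L-space, $Y = S^3 \setminus \nu(K)$ is boundary incompressible and Floer simple, and $K$ is positive, so by the cited example in \cite[Section 4]{lslope} we have $\mathcal{L}(Y) = [N, +\infty]$ with $N := 2g(K)-1$. In terms of $\mu,\lambda$ this means $(a_-,b_-) = (N,1)$ and $(a_+,b_+) = (1,0)$.

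First I would record the computation of $y_{1\pm}$: plugging $(a_\pm,b_\pm)$ into the formula for $y_{1\pm}$ in Theorem~\ref{thm: general cabling} gives $y_{1+} = q^*/p$ and $y_{1-} = (Nq^*-p^*)/(Np-q) = q^*/p + 1/(p(q-Np))$, exactly as displayed above. Next I would evaluate $y_-$ and $y_+$ from (\ref{eq: useful def of cabling y-(k) and y+(k)}) and (\ref{eq: cabling def for y- y_+}). Since $\lceil \frac{q^*}{p}k\rceil - \lceil \frac{q^*}{p}k\rceil$-type cancellation with the $y_{1+} = q^*/p$ term forces $y_-(k) = \frac1k(\lceil\frac{q^*}{p}k\rceil - \lceil\frac{q^*}{p}k\rceil) = 0$ for all $k$, we get $y_- = 0$. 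For $y_+$ I would split into the three sign cases of $q - Np$ exactly as in the displayed argument: if $q - Np < 0$ then $y_+(k)\ge 0$ for all $k$ so $y_+ = y_+(1) = 0$; if $q - Np = 0$ then $y_{1-} = \infty$, so $y_+ = \infty$ (infinite summand convention); if $q - Np > 0$ then a short floor-function argument shows $y_+(k)$ is minimized at $k_+ = q-Np$, giving $y_+ = -1/(q-Np)$. (I would double-check the typos in the excerpt: the ``$q-Np<0$'' line near the end should read ``$q-Np>0$'' — the three genuine cases are $q-Np<0$, $=0$, $>0$.)

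Then I would feed these into Theorem~\ref{thm: general cabling}. Note $y_{1-} < y_{1+}$ with both finite happens precisely when $q - Np < 0$, i.e. $N < q/p$, i.e. $2g(K)-1 < q/p$, but even then $y_- = 0 = y_+$, so we land in the ``$\{1/y_-\}$'' branch only if $y_{1-}<y_{1+}$ strictly with $y_-=y_+$ — however here $y_-=y_+=0$ gives $1/y_\pm = \infty$, so in fact this case yields $\mathcal{L}(Y^{(p,q)}) = \{\infty\}$ in the Seifert basis; and when $q-Np\ge 0$ we are always in the ``otherwise'' branch, getting $[[1/y_+,1/y_-]]$. Concretely: for $q-Np>0$ the Seifert-basis answer is $[[0,\,-1/(q-Np)]]$, for $q-Np=0$ it is $[0,\infty]$, and for $q-Np<0$ it is $\{\infty\}$. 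Finally I would perform the change of basis from $(\mu^{(p,q)},\lambda^{(p,q)}) = (-\tilde h,\tilde f)$ to the conventional $S^3$ basis $(\mu^{(p,q)},\lambda_{\mathbb Q})$ with $\lambda_{\mathbb Q} = \tilde f + pq\tilde h$: this substitutes $\frac{r}{s}\mapsto \frac{r}{s} - pq$ on slopes (after checking the sign/orientation bookkeeping), converting $0 \mapsto -pq$ and $-1/(q-Np)\mapsto$ ... , and one checks algebraically that the endpoint $pq - p - q + 2g(K)p$ emerges and that the two cases $2g(K)-1 > q/p$ and $\le q/p$ merge as stated.

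The main obstacle I expect is not conceptual but bookkeeping: getting all the signs and orientations right in the change of basis (the map $\varphi$ is orientation-reversing but $\varphi_*^{\mathbb P}$ is orientation-preserving, and the conventional $S^3$ longitude vs. the Seifert longitude differ), and correctly reconciling the ``$\emptyset$'' and ``$\{1/y_-\}$'' branches of Theorem~\ref{thm: general cabling} with the observed $y_- = 0$ so that the degenerate slope $1/0 = \infty$ is handled consistently. One should also verify that the claimed identity $pq - p - q + 2g(K)p = 2g(K^{(p,q)}) - 1$ holds, using the cabling genus formula $g(K^{(p,q)}) = pg(K) + \tfrac{(p-1)(q-1)}{2}$; this is a routine but worthwhile consistency check that the displayed formula is stated correctly. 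Everything else is a direct application of the already-proven Theorem~\ref{thm: general cabling}.
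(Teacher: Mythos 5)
Your plan is the same as the paper's: take $\mathcal{L}(Y)=[N,+\infty]$ with $N=2g(K)-1$, compute $y_{1\pm}$, get $y_-=0$, split $y_+$ into the three sign cases of $q-Np$, then change basis; and you correctly caught the paper's typo (its second displayed ``$q-Np<0$'' should read ``$q-Np>0$''). But there are three concrete bookkeeping errors in your outline that, as written, would land you in the wrong branch of the corollary. First, ``$q-Np<0$, i.e.\ $N<q/p$'' is reversed: since $p>0$, the inequality $q<Np$ gives $q/p<N$, i.e.\ $2g(K)-1>q/p$. This matters, because $q-Np<0$ is exactly the $\{\infty\}$ case, and the corollary assigns $\{\infty\}$ to $2g(K)-1>q/p$; with your version of the inequality the two branches would come out interchanged. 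Second, the basis labeling is inconsistent: in the Seifert basis $(\tilde f,-\tilde h)$ the isolated-filling answer for $q-Np<0$ is $\{y_-\}=\{y_+\}=\{0\}$, not $\{\infty\}$; the $\{\infty\}=\{1/y_-\}$ you quote is the slope in the cabling surgery basis $(\mu^{(p,q)},\lambda^{(p,q)})$, whereas your other two ``Concretely'' entries are stated in the Seifert basis. Third, the change-of-basis map has the wrong sign: since $\lambda_{\mathbb Q}=\lambda^{(p,q)}-pq\,\mu^{(p,q)}$, a cabling-surgery slope $s$ becomes $s+pq$ in the conventional $S^3$ basis (for instance $\lambda^{(p,q)}=\tilde f$, of cabling slope $0$, has conventional slope $pq$, not $-pq$); equivalently, a Seifert slope $r/s$ maps directly to $s/r+pq$. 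Once these are fixed, the Seifert-basis answers $\{0\}$, $[0,\infty]$, $[[0,-1/(q-Np)]]$ transform to $\{\infty\}$, $[pq,\infty]$, $[pq-p-q+2g(K)p,\infty]$ respectively, exactly as claimed, and your genus-formula check that the endpoint equals $2g(K^{(p,q)})-1$ is a correct and useful confirmation.
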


\subsection{Knots in L-spaces}
It is possible to prove an analogous result for
boundary incompressible
Floer simple knot complements in arbitrary L-spaces.

To simplify the statement of such a result, we
discard cables with $p=0$, $p=1$, or $q=0$,
since the zero-cable of a knot complement $Y \subset X$
is just the connected sum of $X$ with the unknot complement in $S^3$;
the $1/q$-cable is just a change of framing;
and the $1/0$-cable, which changes the framing by zero,
is the identity cable.  We then have the following.

\begin{theorem}
Suppose that $p, q \in {\mathbb{Z}}$ with $p>1$ and $\gcd(p,q) = 1$,
and that $Y = X\setminus \nu(K)$ is a boundary incompressible
Floer simple knot complement
in an L-space $X$, 
with L-space interval 
$\mathcal{L}(Y) = [[\frac{a_-}{b_-}, \frac{a_+}{b_+}]]$, written in
terms of the surgery basis $\mu, \lambda \in H_1(\partial Y)$ for $K$, with 
$\mu$ the meridian of $K$ and $\lambda$ a choice of longitude.
Then in terms of the surgery basis produced by cabling,
the $(p,q)$-cable $Y^{(p,q)} \mkern-2mu\subset\mkern-2mu X$ of 
$Y \mkern-2mu\subset\mkern-2mu X$
has L-space interval
\begin{equation}
\mathcal{L}(Y^{(p,q)}) =
\begin{cases}
\{\infty\}
&\mkern15mu
\mfrac{a_-\mkern-2mu}{b_-\mkern-2mu} \in
\left[\mfrac{p^*}{q^*}, \infty\right],\;
\mfrac{a_+\mkern-2mu}{b_+\mkern-2mu} \in
\left[\mfrac{q-p^*}{p-q^*}, \mfrac{q}{p} \right> \cup \{\infty\}
     \\
[[1/y_+, 1/y_-]]
&\mkern15mu
\text{otherwise},
\end{cases}
\end{equation}
where $pp^* - qq^* = 1$ with $0 < q^* < p$.
\end{theorem}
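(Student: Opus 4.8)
The plan is to deduce this theorem from Theorem~\ref{thm: general cabling}, which already expresses $\mathcal{L}(Y^{(p,q)})$ in terms of $y_{\pm}$ and $y_{1\pm}$; the only new ingredient is the hypothesis that $X$ is an L-space, which I will use first to collapse the trichotomy of Theorem~\ref{thm: general cabling} into the stated dichotomy, and then to rewrite the resulting criterion in the surgery coordinates $(\mu,\lambda)$ of $Y$. The key observation is that $Y^{(p,q)}(\infty) = X$: in the surgery basis $(\mu^{(p,q)},\lambda^{(p,q)}) = (-\tilde h,\tilde f)$ produced by cabling, the slope $\infty$ is the meridian $-\tilde h$ of the excised regular fiber, so filling it recovers $\hat Y_{(-q^*\!,\,p)}$ as a solid torus and hence, by Proposition~\ref{prop: graph manifold for cabling works}, yields $X$. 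Since $X$ is an L-space, $\infty \in \mathcal{L}(Y^{(p,q)})$.

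Now apply Theorem~\ref{thm: general cabling}. Its first outcome, $\mathcal{L}(Y^{(p,q)}) = \emptyset$, cannot occur, since $\emptyset$ does not contain $\infty$. Its second outcome is a single slope $\{1/y_-\} = \{1/y_+\}$; as this set contains $\infty$, we must have $1/y_- = \infty$, i.e. $y_- = y_+ = 0$, so $\mathcal{L}(Y^{(p,q)}) = \{\infty\}$. (That a common value of $y_-$ and $y_+$ here is forced to be $0$ also follows directly: the second case requires $\tfrac{q^*}{p}\in[[y_{1-},y_{1+}]]$, whence $y_{1-}\le\tfrac{q^*}{p}\le y_{1+}$, and then the definitions in \eqref{eq: useful def of cabling y-(k) and y+(k)} give $y_-(k)\le 0\le y_+(k)$ for all $k$, forcing $y_-\le 0\le y_+$; cf. Proposition~\ref{prop: y- = y+ means integer}.) In every remaining case Theorem~\ref{thm: general cabling} gives $\mathcal{L}(Y^{(p,q)}) = [[1/y_+,1/y_-]]$. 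So it remains only to decide when the second case of Theorem~\ref{thm: general cabling} is in force --- that is, when $\infty \ne y_{1-} < y_{1+} \ne \infty$ and $y_- = y_+$ (equivalently $y_- = 0$ and $y_+ = 0$) --- and to translate this into a condition on $\tfrac{a_\pm}{b_\pm}$.

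This last step is the main work, and the main obstacle. The map $\varphi_*^{\mathbb{P}}\colon \tfrac{a}{b}\mapsto y_1 = \tfrac{aq^* - bp^*}{ap - bq}$ is an orientation-preserving Möbius transformation of $\mathbb{P}^1(\mathbb{R})$ of determinant $pp^* - qq^* = 1$, with $\varphi_*^{\mathbb{P}}(\infty) = \tfrac{q^*}{p}$, $\varphi_*^{\mathbb{P}}(\tfrac{p^*}{q^*}) = 0$, $\varphi_*^{\mathbb{P}}(\tfrac{q-p^*}{p-q^*}) = 1$, and a pole at $\tfrac{q}{p}$; moreover $\infty\in\mathcal{L}(Y)$ forces $\tfrac{q^*}{p}\in[[y_{1-},y_{1+}]]$, so $y_{1-}\le\tfrac{q^*}{p}\le y_{1+}$ throughout. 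Using the identities \eqref{eq: closed interval to ceiling floor} and \eqref{eq: open interval to ceiling floor}, I would show that $y_+ = 0$ holds exactly when $y_{1-}$ lies in the closed arc from $0$ to $\tfrac{q^*}{p}$ not through $\infty$, and $y_- = 0$ holds exactly when $y_{1+}$ lies in the closed arc from $\tfrac{q^*}{p}$ to $1$ not through $\infty$ (outside these ranges one of $y_{\pm}$ is strictly negative, respectively positive). Pulling these two arcs back through $\varphi_*^{\mathbb{P}}$ --- where the bookkeeping is delicate, since the relevant arc straddles the pole $\tfrac{q}{p}$ and one must track half-open versus closed versus infinite endpoints carefully --- should yield precisely the stated conditions $\tfrac{a_-}{b_-}\in[\tfrac{p^*}{q^*},\infty]$ and $\tfrac{a_+}{b_+}\in[\tfrac{q-p^*}{p-q^*},\tfrac{q}{p}\rangle\cup\{\infty\}$; Floer simplicity of $Y$ guarantees $\tfrac{a_-}{b_-}\ne\tfrac{a_+}{b_+}$, so that the strict inequality $y_{1-} < y_{1+}$ holds automatically on this range. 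All remaining cases fall into $\mathcal{L}(Y^{(p,q)}) = [[1/y_+,1/y_-]]$, which completes the proof.
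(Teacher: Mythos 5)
Your plan tracks the paper's proof closely: both use that $Y^{(p,q)}(\infty)=X$ is an L-space to collapse the trichotomy of Theorem~\ref{thm: general cabling} (the paper phrases this as ``$0\notin[[0,0]]$'' in the Seifert basis), and both then reduce the question to detecting exactly when $y_-=y_+=0$, via the criteria $y_{1-}\in[0,q^*/p]$ and $y_{1+}\in[q^*/p,1]$ followed by a pullback through $\varphi_*^{\mathbb{P}}$.

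That said, a few things need repair or completion. The claim that ``Floer simplicity of $Y$ guarantees $\tfrac{a_-}{b_-}\neq\tfrac{a_+}{b_+}$'' is false: a generalized solid torus is Floer simple with $\mathcal{L}(Y)=\mathbb{P}(H_1(\partial Y))\setminus\{l\}$, so $\tfrac{a_-}{b_-}=\tfrac{a_+}{b_+}=l$. You do not actually need this --- your elimination argument at the top already shows that $y_-=y_+=0$ forces the second case of Theorem~\ref{thm: general cabling} (else $\mathcal{L}(Y^{(p,q)})=[[\infty,\infty]]=\langle-\infty,+\infty\rangle$ would fail to contain $\infty$), and with it $\infty\neq y_{1-}<y_{1+}\neq\infty$ comes for free. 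The remark that ``the relevant arc straddles the pole $q/p$'' is also mistaken: the pole maps to $\infty$, which lies in neither $[0,q^*/p]$ nor $[q^*/p,1]$, so both preimage arcs avoid $q/p$ entirely --- and one is $[\tfrac{p^*}{q^*},\infty]$ while the other is the arc containing $\infty$ on the opposite side. Finally, the two substantive steps --- that $y_\mp=0$ exactly when $y_{1\pm}$ lies in the stated arc, and the transport through $(\varphi_*^{\mathbb{P}})^{-1}$ --- are left as sketches; the paper establishes the former by separating out $\lfloor y_{1\mp}\rfloor$ and invoking Proposition~\ref{prop: k- and k+ are denominators of y- and y+} to show $y_\mp\notin\mathbb{Z}$ outside the range (your sign-based alternative is viable but needs writing down), and you should carry the latter out in full and check it carefully against the stated endpoints, as that bookkeeping is precisely where a sign or orientation slip would be easy to make.
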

\begin{proof}
If $Y^{(p,q)}$ is Floer simple, then
Theorem~\ref{thm: general cabling}
implies $\mathcal{L}(Y^{(p,q)}) = [[1/y_+, 1/y_-]]$
in terms of the surgery basis produced by cabling.

Observe that $Y^{(p,q)}$ is not Floer simple
if and only if  $y_- = y_+ = 0$.
That is, if $Y^{(p,q)}$ is not Floer simple,
then the meridional filling $X$ is the only L-space filling,
and conversely if $y_- = y_+ = 0$, then
since $0 \notin [[0, 0]]$, we
know that $Y^{(p,q)}$ is not Floer simple.

Choose $p^*, q^* \in {\mathbb{Z}}$ so that $pp^* - qq^* = 1$ with $0 < q^* < p$.
Then for $y_{1\pm} \neq \infty$, one has
\begin{align}
\begin{cases}
y_-\mkern-2mu= -\lceil y_{1+} \rceil + 1
&
1-[-y_{1+}] \ge \frac{q^*}{p}
 \\
y_-\mkern-2mu\notin {\mathbb{Z}}
&
1-[-y_{1+}] < \frac{q^*}{p},
\end{cases}
\;\;\;\;\;\;\;
\begin{cases}
y_+\mkern-2mu= 
-\lfloor y_{1-} \rfloor
&
[y_{1-}] \le \frac{q^*}{p}
 \\
y_+\mkern-2mu\notin {\mathbb{Z}}
&
[y_{1-}] > \frac{q^*}{p}.
\end{cases}
\end{align}
That is, since
\begin{equation}
y_+
= -\lfloor y_{1-} \rfloor +
\min_{k>0}
\mfrac{1}{k}\mkern-3mu\left(
\left\lfloor
\mkern-2mu\mfrac{\mkern1.5muq^*\mkern-3.5mu}{p}k\mkern-1.5mu
\right\rfloor
-
\left\lfloor
[y_{1-}]k\mkern-1.5mu
\right\rfloor
\right),
\end{equation}
the right-hand summand vanishes when 
$[y_{1-}] \le \frac{q^*}{p}$,
but when
$[y_{1-}] > \frac{q^*}{p}$, the right-hand summand is not
minimized at $k=1$.  Thus $y_+ \neq y_+(1)$, and
Proposition
\ref{prop: k- and k+ are denominators of y- and y+}
tells us that
$y_+ \notin {\mathbb{Z}}$.
A similar argument holds for $y_-$.

We therefore have
\begin{equation}
y_- = 0
\;\;\;\iff\;\;\;
{{\textstyle{\mfrac{q^*}{p}}}} \le y_{1+} \le 1
\;\;\;\iff\;\;\;
\mfrac{q-p^*}{p-q^*} \le \mfrac{a_+}{b_+} < \mfrac{q}{p}
\mkern10mu\text{or}\mkern10mu
\mfrac{a_+}{b_+} = \infty,
\end{equation}
and similarly,
\begin{equation}
\mkern3mu
y_+ = 0
\;\;\;\iff\;\;\;
0 \le y_{1-} \le {{\textstyle{\mfrac{q^*}{p}}}}
\;\;\;\iff\;\;\;
\mfrac{p^*}{q^*} \le \mfrac{a_-}{b_-} \neq \infty
\mkern10mu\text{or}\mkern10mu
\mfrac{a_-}{b_-} = \infty.
\mkern12mu
\end{equation}
Thus $y_- = y_+ =0$ if and only if
\begin{equation}
\mfrac{a_-}{b_-} \in
\left[\mfrac{p^*}{q^*}, \infty\right]
\;\;\;\;\text{and}\;\;\;\;
\mfrac{a_+}{b_+} \in
\left[\mfrac{q-p^*}{p-q^*}, \mfrac{q}{p} \right> \cup \{\infty\}.
\end{equation}
\end{proof}

\section{Observations}
\label{s: Observations}

Our demonstration of extended L/NTF-equivalence for graph manifolds in 
Theorem~\ref{thm: lntf equivalence}
gives a (mildly) alternate proof of
the Theorem \ref{thm: intro version of l = ntf} statement
that a graph manifold is an L-space if and only if it fails
to admit a co-oriented taut foliation.

From a practical standpoint, however, the main utility of
Theorem~\ref{thm: lntf equivalence}
for us was its implication that
the gluing result in
Proposition~\ref{prop: gluing prop for LNTF equivalent manifolds}
holds for all graph manifolds:
\begin{cor}
\label{final gluing result for graph manifolds}
If $Y_1$ and $Y_2$ are non-solid-torus 
graph manifolds with torus boundary,
then the union $Y_1 \mkern-1mu\cup_{\varphi}\! Y_2$, with gluing map
$\varphi \mkern-1mu:\mkern-1mu \partial Y_1 \to -\partial Y_2$,
is an L-space if and only if
\begin{equation*}
\varphi_*^{{\mathbb{P}}}(\mathcal{L}^{\circ}(Y_1)) \cup \mathcal{L}^{\circ}(Y_2)
={\mathbb{P}}(H_1(\partial Y_2)).
\end{equation*}
\end{cor}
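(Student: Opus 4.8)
The plan is to derive Corollary~\ref{final gluing result for graph manifolds} directly from the two main structural results already in hand, namely Theorem~\ref{thm: lntf equivalence} (every graph manifold with torus boundary is L/NTF-equivalent) and Proposition~\ref{prop: gluing prop for LNTF equivalent manifolds} (the three-way equivalence of L-space, no-taut-foliation, and the interior-gluing condition for unions of non-solid-torus L/NTF-equivalent graph manifolds). Since $Y_1$ and $Y_2$ are non-solid-torus graph manifolds with torus boundary, Theorem~\ref{thm: lntf equivalence} tells us each is L/NTF-equivalent, so the hypotheses of Proposition~\ref{prop: gluing prop for LNTF equivalent manifolds} are met verbatim. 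Applying that proposition with the given gluing map $\varphi\colon \partial Y_1 \to -\partial Y_2$, condition $(i)$ (that $Y_1 \cup_\varphi Y_2$ is an L-space) is equivalent to condition $(iii)$ (that $\varphi_*^{{\mathbb{P}}}(\mathcal{L}^{\circ}(Y_1)) \cup \mathcal{L}^{\circ}(Y_2) = {\mathbb{P}}(H_1(\partial Y_2))$), which is exactly the claimed statement.

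First I would state that $Y_1$ and $Y_2$, being graph manifolds with torus boundary, fall under Theorem~\ref{thm: lntf equivalence} and hence satisfy $\mathcal{F}(Y_i) \amalg \mathcal{L}^{\circ}(Y_i) = {\mathbb{P}}(H_1(\partial Y_i))$, i.e.\ are L/NTF-equivalent. One minor wrinkle to address is whether $Y_i$ is prime, since the definition of L/NTF-equivalence (and Proposition~\ref{prop: gluing prop for LNTF equivalent manifolds}) presupposes primality: a graph manifold is by definition a prime compact oriented three-manifold admitting a JSJ decomposition, so this is automatic from the phrasing of the corollary. Second, I would invoke Proposition~\ref{prop: gluing prop for LNTF equivalent manifolds} with $Y_1, Y_2$ and the gluing map $\varphi$, and simply quote the equivalence $(i) \Leftrightarrow (iii)$. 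That completes the argument — there is essentially nothing to prove beyond assembling the two prior results in the correct order.

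There is no genuine obstacle here; this corollary is packaging rather than substance, so the proof is one or two lines. If anything, the only thing worth double-checking is that the orientation/gluing conventions in the statement of Proposition~\ref{prop: gluing prop for LNTF equivalent manifolds} (which uses $\varphi\colon \partial Y_1 \to -\partial Y_2$ and writes the condition in ${\mathbb{P}}(H_1(\partial Y_2))$) match those in the corollary verbatim, which they do. The proof can therefore be written as: ``By Theorem~\ref{thm: lntf equivalence}, both $Y_1$ and $Y_2$ are L/NTF-equivalent non-solid-torus graph manifolds with torus boundary, so the claim is precisely the equivalence of conditions $(i)$ and $(iii)$ in Proposition~\ref{prop: gluing prop for LNTF equivalent manifolds}.''
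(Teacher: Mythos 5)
Your proposal is correct and matches the paper's own reasoning exactly: the paper obtains this corollary as an immediate consequence of Theorem~\ref{thm: lntf equivalence} (L/NTF-equivalence for all graph manifolds with torus boundary) combined with the equivalence $(i)\Leftrightarrow(iii)$ in Proposition~\ref{prop: gluing prop for LNTF equivalent manifolds}. Nothing to add.
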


Corollary~\ref{final gluing result for graph manifolds}
has two advantages over the more general L-space gluing criterion of
Proposition~
\ref{prop: L-space gluing thm from lslope}:
it removes the condition that
$\varphi_*^{{\mathbb{P}}}(\mathcal{L}^{\circ}(Y_1)) \cap \mathcal{L}^{\circ}(Y_2)$
be nonempty,
and it allows one to prove that $Y_1 \cup Y_2$ is not an L-space
in cases in which boundary incompressible $Y_1$ and $Y_2$ are not Floer simple.

\subsection{Generalization of Theorem \ref{thm: l space interval for graph manifolds}}
Nevertheless, while the L-space gluing result analogous to 
Proposition~\ref{prop: L-space gluing thm from lslope} proved by
Hanselman and Watson in \cite{HanWat}
replaces the hypothesis of Floer simplicity by
a more technical condition,
their gluing result does not impose the hypothesis of nonempty
$\varphi_*^{{\mathbb{P}}}(\mathcal{L}^{\circ}(Y_1)) \cap \mathcal{L}^{\circ}(Y_2)$
required by the gluing result of J. Rasmussen and the author in \cite{lslope}.
In \cite{HRRW}, the four authors discuss how these two gluing
results can be combined to prove a gluing result analogous to
Proposition~\ref{prop: L-space gluing thm from lslope}
which requires Floer simplicity but not
nonempty
$\varphi_*^{{\mathbb{P}}}(\mathcal{L}^{\circ}(Y_1)) \cap \mathcal{L}^{\circ}(Y_2)$.
Thus, the only real hypothesis we have circumvented is that of Floer simplicity.
If we replace the condition that the $Y_i$ glued to $\hat{M}$ be
graph manifolds with the condition that they be Floer simple, 
then we can extend the domain of validity of 
Proposition~\ref{prop: characterization of Floer simple graph manifolds}
and
Theorem~\ref{thm: l space interval for graph manifolds}
as follows.
\begin{cor}
\label{cor: generalization of JN to Floer}
Theorem
~\ref{thm: l space interval for graph manifolds}
holds for any boundary incompressible Floer simple three-manifolds
$Y_1, \ldots, Y_{n_{{\textsc{g}}}}$, provided that
$Y$ satisfies the criteria in
Proposition~\ref{prop: characterization of Floer simple graph manifolds}
for $\mathcal{L}(Y)$ to be nonempty.
\end{cor}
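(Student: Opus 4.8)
The plan is to re-run the inductive proof of Theorem~\ref{thm: l space interval for graph manifolds} and Proposition~\ref{prop: characterization of Floer simple graph manifolds}, carried out in Sections~\ref{ss: inductive set-up for main result}--\ref{ss: orientable base, cases involving solid torus Y hat}, essentially verbatim, after isolating the one place where graph-manifold-hood of the daughter pieces $Y_1,\dots,Y_{n_{\textsc{g}}}$ is genuinely used. That place is the invocation of the L/NTF-equivalence gluing criterion of Proposition~\ref{prop: gluing prop for LNTF equivalent manifolds}; every other ingredient---the arithmetic Claims about $y_-$ and $y_+$, the floor/ceiling identities, and the entire case split into non-orientable base, orientable base with $\hat{Y}[y]$ never a solid torus, and orientable base with $\hat{Y}[y]$ sometimes a solid torus---depends only on the projectivized L-space intervals $\varphi_{i*}^{{\mathbb{P}}}(\mathcal{L}(Y_i))$ and not on the internal topology of the $Y_i$. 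Since the $Y_i$ are now black boxes, the induction is on $n_{\textsc{g}}$ alone (with no auxiliary tree-height parameter): the base case $n_{\textsc{g}}=0$ is Proposition~\ref{prop: l space interval for seifert fibered spaces}, and the inductive step peels off the last daughter to write $Y(y)=\hat{Y}[y]\cup_{\varphi_{n_{\textsc{g}}}}Y_{n_{\textsc{g}}}$, where $\hat{Y}[y]$ is the Seifert-fibered foundation $\hat{M}$ with its $n_{\textsc{d}}$ solid-torus fillings and the first $n_{\textsc{g}}-1$ (Floer simple, boundary incompressible) daughters attached, hence again of the form covered by the corollary with $n_{\textsc{g}}-1$ daughters; the inductive hypothesis then applies to $\hat{Y}[y]$ for every $y\in{\mathbb{Q}}$, while $\hat{Y}[\infty]$ and $Y(\infty)$ are analyzed directly as connected sums of Dehn fillings of the $Y_i$ with lens spaces and a solid torus, exactly as in the proofs of Propositions~\ref{prop: infty in L(Y) case of main thm and prop, non solid torus Y hat} and \ref{prop: infty in L(Y) case for Yhat sometimes torus}.

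The one substantive change is the gluing input. In place of Proposition~\ref{prop: gluing prop for LNTF equivalent manifolds} I would use the gluing result of \cite{HRRW}---obtained by combining the criteria of Hanselman--Watson \cite{HanWat} and of J.~Rasmussen and the author \cite{lslope}---which asserts that for boundary incompressible Floer simple $Y_1,Y_2$ with torus boundary the union $Y_1\cup_{\varphi}Y_2$ is an L-space if and only if $\varphi_*^{{\mathbb{P}}}(\mathcal{L}^{\circ}(Y_1))\cup\mathcal{L}^{\circ}(Y_2)={\mathbb{P}}(H_1(\partial Y_2))$, with \emph{no} hypothesis that $\varphi_*^{{\mathbb{P}}}(\mathcal{L}^{\circ}(Y_1))\cap\mathcal{L}^{\circ}(Y_2)$ be nonempty. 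This gives, for every $y\in{\mathbb{Q}}$ at which $\hat{Y}[y]$ has incompressible boundary and is Floer simple, precisely the equivalence used throughout the original proof. When $\hat{Y}[y]$ has compressible boundary---which, since $\hat{Y}[y]$ contains an incompressible torus for each attached daughter, occurs only in the $n_{\textsc{g}}=1$ subcases where $\hat{Y}[y]$ is Seifert fibered over the disk (possibly a solid torus, or $\hat{Y}[\infty]$)---I would retain Proposition~\ref{prop: l-space gluing for compressible boundary}, as in Section~\ref{ss: orientable base, cases involving solid torus Y hat}. And in the subcases where the inductive hypothesis tells us $\hat{Y}[y]$ is boundary incompressible but not Floer simple, so that $\mathcal{L}(\hat{Y}[y])$ is empty or a single point (the $(\textsc{nfs})$-type configurations of $\hat{Y}[y]$), the conclusion drawn in the original proof is always that $y\notin\mathcal{L}(Y)$; here I would argue directly via the necessity direction of Proposition~\ref{prop: L-space gluing thm from lslope} together with the \cite{lslope} structure theorem for L-space intervals (that $\mathcal{L}(Y)$ of a boundary incompressible manifold is $\emptyset$, a point, or an interval of the form $[[y_-,y_+]]$), since $\mathcal{L}^{\circ}(\hat{Y}[y])=\emptyset$ forces the union-of-interiors condition to fail.

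With these substitutions every step of Sections~\ref{ss: non-orientable base case}--\ref{ss: orientable base, cases involving solid torus Y hat} goes through unchanged---in particular the Claims and Propositions there, whose proofs never reference the internal structure of the $Y_i$---yielding the formula of Theorem~\ref{thm: l space interval for graph manifolds} for $\mathcal{L}(Y)$ whenever $Y$ meets the nonemptiness criteria of Proposition~\ref{prop: characterization of Floer simple graph manifolds}. The main obstacle, and the point requiring the most care, is exactly the bookkeeping just sketched: verifying that at each branch point of the original argument the gluing fact invoked has a valid analogue for Floer simple boundary incompressible pieces, i.e.\ that the three inputs---the \cite{HRRW} criterion, Proposition~\ref{prop: l-space gluing for compressible boundary}, and the \cite{lslope} structure theorem---collectively cover every configuration of $\hat{Y}[y]$ and $Y_{n_{\textsc{g}}}$ that arises, now playing the role that the single uniform Proposition~\ref{prop: gluing prop for LNTF equivalent manifolds} played in the graph-manifold case.
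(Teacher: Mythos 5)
Your proposal matches the paper's own justification: the paper proves this corollary exactly by observing that the only graph-manifold-specific input to the induction of Sections~\ref{ss: inductive set-up for main result}--\ref{ss: orientable base, cases involving solid torus Y hat} is the L/NTF-equivalence gluing criterion of Proposition~\ref{prop: gluing prop for LNTF equivalent manifolds}, and by substituting for it the combined Hanselman--Watson/\cite{lslope} gluing result of \cite{HRRW}, which drops the nonempty-intersection hypothesis of Proposition~\ref{prop: L-space gluing thm from lslope} at the cost of requiring Floer simplicity---precisely the hypothesis now imposed on the $Y_i$. Your expanded bookkeeping (induction on $n_{\textsc{g}}$ alone, retaining Proposition~\ref{prop: l-space gluing for compressible boundary} for compressible $\hat{Y}[y]$, and treating $Y(\infty)$ via connected sums) is a correct elaboration of the same argument, with the only caveat that the non-Floer-simple $\hat{Y}[y]$ configurations should be dispatched by the full \cite{HRRW} gluing theorem rather than by the "necessity direction" of Proposition~\ref{prop: L-space gluing thm from lslope}, whose nonempty-intersection hypothesis fails there.
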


\subsection{Generalized Solid Tori}
A recent result of Gillespie \cite{Gillespietorus}
states that a compact oriented three-manifold $Y$ with torus boundary
satisfies $\mathcal{L}(Y) \mkern-1.5mu= {\mathbb{P}}(H_1(\partial Y)) \mkern-1mu\setminus\mkern-1mu \{l\}$
if and only if $Y\!$ has genus 0 and an L-space filling.
Such manifolds, called {\em{generalized solid tori}}
in \cite{lslope}, are of independent interest.

In the proof of
Theorem~\ref{thm: l space interval for graph manifolds}
and 
Proposition~\ref{prop: characterization of Floer simple graph manifolds},
we find many generalized solid tori with the regular
fiber class as rational longitude, but there are limited
circumstances in which other generalized solid tori appear.
In fact, we can prove the following.
\begin{theorem}
\label{thm: graph manifold generalized solid torus}
If $Y\!$ is a graph manifold with torus boundary, $b_1(Y) \mkern-2.5mu=\mkern-2.5mu 1$, and rational
longitude other than the regular fiber, then
$Y$ is a generalized solid torus if and only if
it is homeomorphic to an iterated
cable of the regular fiber complement in $S^1\mkern-2.5mu\times\mkern-.8mu S^2\mkern-4mu$.
\end{theorem}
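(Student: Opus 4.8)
The plan is to read the characterization off the recursive formulas of Theorem~\ref{thm: l space interval for graph manifolds}, Proposition~\ref{prop: characterization of Floer simple graph manifolds} and Proposition~\ref{prop: l space interval for seifert fibered spaces}, running inductions on the height of the tree graph of $Y$. The first step is a translation: by Gillespie \cite{Gillespietorus} and the $[[\cdot,\cdot]]$-conventions, a graph manifold $Y$ with torus boundary, $b_1(Y)=1$, and rational longitude $l$ distinct from the regular fiber is a generalized solid torus if and only if the JSJ component $\hat Y$ containing $\partial Y$ has orientable base and $\mathcal{L}(Y)=[[y_-,y_+]]$ with $y_-=y_+=l\in{\mathbb{Q}}$. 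Indeed $l\neq$ fiber forces orientable base (otherwise $l=\pi(\tilde f)=\infty$), and among the possible forms of $[[y_-,y_+]]$ the complement of a single finite point is exactly the case $y_-=y_+\in{\mathbb{Q}}$; the solid-torus clause of Proposition~\ref{prop: l space interval for seifert fibered spaces} covers $Y=S^1\times D^2$ itself.

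For the ``if'' direction I would argue as follows. The base case is that $S^1\times D^2$, viewed as the regular fiber complement of the product fibration of $S^1\times S^2$, is a generalized solid torus: all Dehn fillings other than the one along the disk-bounding slope---which is the rational longitude and is not the fiber---are lens spaces, hence L-spaces. The inductive step is that cabling sends generalized solid tori to generalized solid tori, i.e.\ the first sentence of Theorem~\ref{thm: intro version of cabled generalized tori}; I would obtain it by feeding $\mathcal{L}(Y)=[[\tfrac{a_-}{b_-},\tfrac{a_+}{b_+}]]$ with $\tfrac{a_-}{b_-}=\tfrac{a_+}{b_+}$ into Theorem~\ref{thm: general cabling}: one gets $y_{1-}=y_{1+}$, and a short computation with the floor/ceiling expressions for $y_\pm(k)$ (using $\gcd(p,q)=1$ and a Proposition~\ref{prop: y- = y+ means integer}-style argument) yields $y_-=y_+$, so $\mathcal{L}(Y^{(p,q)})$ is again the complement of one point, namely the rational longitude of $Y^{(p,q)}$; cabling also preserves $b_1=1$ and keeps the rational longitude off the fiber of the outermost (orientable-base) piece. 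Hence every iterated cable of the regular fiber complement in $S^1\times S^2$ is a generalized solid torus of the stated kind.

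Conversely, suppose $Y$ is such a generalized solid torus, so $\hat Y$ has orientable base and $y_-=y_+=l\in{\mathbb{Q}}$. If $Y$ has no JSJ torus it is Seifert fibered over the disk, i.e.\ a regular fiber complement in $M_{S^2}(y_1^{\textsc{d}},\dots,y_{n_{\textsc{d}}}^{\textsc{d}})$; then $b_1=1$ forces Euler number $0$, and applying Proposition~\ref{prop: y- = y+ means integer} to $y_-(1)=y_+(1)$ forces $\sum_i(\lceil y_i^{\textsc{d}}\rceil-\lfloor y_i^{\textsc{d}}\rfloor)=2$, so after absorbing integers into $e_0$ exactly two Seifert invariants $\beta/\alpha$ and $-\beta/\alpha$ remain (their sum is the integer $l$). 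Thus $M_{S^2}(y_*)\cong M_{S^2}(0;\beta/\alpha,-\beta/\alpha)$, which by the mapping-torus picture of a circle bundle of Euler number $0$ over $S^2(\alpha,\alpha)$ is $S^1\times S^2$, and $Y$ is the regular fiber complement there; by Proposition~\ref{prop: graph manifold for cabling works} this is precisely the $(\alpha,q)$-cable of $S^1\times D^2$ for a suitable $q$. If instead $Y$ has a JSJ torus, write $Y=\hat M\cup(\coprod_i(S^1\times D^2_i)\sqcup\coprod_j Y_j)$ with $n_{\textsc{g}}\geq 1$; running through the mutually exclusive cases of Proposition~\ref{prop: characterization of Floer simple graph manifolds} and its proof, the requirement that $Y$ be Floer simple with $y_-=y_+$ finite should force $n_{\textsc{d}}=n_{\textsc{g}}=1$, $y_1^{\textsc{d}}\notin{\mathbb{Z}}$, the single daughter $Y_1$ itself a generalized solid torus with $\varphi_{1*}^{{\mathbb{P}}}(\mathcal{L}(Y_1))=[[c,c]]$, and the balance $y_1^{\textsc{d}}+c\in{\mathbb{Z}}$. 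Recognising $\hat M\cup(S^1\times D^2_1)$---a solid torus with one exceptional fiber excised---as a cable space and invoking Proposition~\ref{prop: graph manifold for cabling works} identifies $Y\cong Y_1^{(p,q)}$ with $p$ the denominator of $y_1^{\textsc{d}}$; the inductive hypothesis applied to $Y_1$ then closes the argument.

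The main obstacle I anticipate is the bookkeeping in this last step: extracting from the eight-case list of Proposition~\ref{prop: characterization of Floer simple graph manifolds} that ``$\mathcal{L}(Y)$ is the complement of exactly one finite point'' singles out the cabling configuration above, and then that this configuration is genuinely a cabling, matching the correct coprime pair $(p,q)$ through Proposition~\ref{prop: graph manifold for cabling works}. A secondary subtlety is the role of the exceptional Seifert structures---the solid torus and the twisted $I$-bundle over the Klein bottle ($\bar N$)---where one must check that ``rational longitude other than the regular fiber'' is precisely the hypothesis that admits $S^1\times D^2$ and $\bar N$ as the zero- and one-step iterated cables, while excluding the regular fiber complements in Seifert fibered spaces over ${\mathbb{R}}{\mathbb{P}}^2$ with exceptional fibers, for which the rational longitude is the fiber.
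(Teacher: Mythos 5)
Your proposal follows essentially the same route as the paper's proof: translate the generalized solid torus condition (via Gillespie and the $[[\cdot,\cdot]]$-conventions) into $\mathcal{L}(Y)=[[y_-,y_+]]$ with $y_-=y_+=l\in{\mathbb{Q}}$ and orientable base, use the floor/ceiling estimates to force $n_{\textsc{g}}\le 1$ with at most two non-integral Seifert invariants, recognise the resulting configuration as a cable space glued to a smaller generalized solid torus via Proposition~\ref{prop: graph manifold for cabling works}, and recurse on tree height; for the converse, compute L-space intervals of cables directly. The paper organises the estimates through Claims~\ref{claim: y- and y+ outside long+ and long-} and \ref{claim: Yhat solid torus, y- y+ outside longs with equality sometimes} where you reach for Proposition~\ref{prop: y- = y+ means integer}, but these are the same inequalities packaged differently.

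Two places where your plan needs a little more care. First, Proposition~\ref{prop: y- = y+ means integer} explicitly excludes the solid torus from its hypotheses, so the degenerate $Y\cong S^1\times D^2$ sub-case of the ``only if'' direction must be handled separately (trivially, since $S^1\times D^2$ is the regular fiber complement in $S^1\times S^2$ by definition). Second, for the ``if'' direction you propose to derive ``cables of generalized solid tori are generalized solid tori'' by feeding $y_{1-}=y_{1+}$ into Theorem~\ref{thm: general cabling}; but that theorem requires $Y$ to be boundary incompressible, whereas the base case $S^1\times D^2$ is compressible, so the first cable must be computed from Proposition~\ref{prop: l space interval for seifert fibered spaces} (or Theorem~\ref{thm: l space interval for graph manifolds}) rather than from Theorem~\ref{thm: general cabling}. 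This is also why the paper proves the general ``cables of generalized solid tori are generalized solid tori'' proposition \emph{after} Theorem~\ref{thm: graph manifold generalized solid torus}, deducing the boundary-compressible case from Theorem~\ref{thm: graph manifold generalized solid torus} itself; invoking the general proposition as a black box in your ``if'' direction would therefore be circular.
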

\begin{proof}
Suppose $Y$ is a generalized solid torus graph manifold, with rational longitude
$l$ not coinciding with the regular fiber.
Since $Y$ is Floer simple with $\infty \in \mathcal{L}^{\circ}(Y)$,
$Y$ must satisfy 
$(\textsc{fs3}\mkern-.5mu)$ from 
Proposition~\ref{prop: characterization of Floer simple graph manifolds},
and since $y_-, y_+ \mkern-2mu\neq\mkern-2mu \infty$, we
must have
$\infty \neq y_{i-}^{\textsc{g}} \ge y_{i+}^{\textsc{g}} \neq \infty$.
Claim~\ref{claim: y- and y+ outside long+ and long-} from 
Section~\ref{ss: orientable base, no solid tori}
then implies $y_- \mkern-3.5mu>\mkern-2mu y_+$ unless 
$n_{{\textsc{g}}} \mkern-4mu=\mkern-2mu 1$ with $n_{{\textsc{d}}} \mkern-3mu\le\mkern-2mu 1$
or $n_{{\textsc{g}}} \mkern-4mu=\mkern-2mu 0$ with $n_{{\textsc{d}}} \mkern-3mu\le\mkern-2mu 2$.

If $n_{{\textsc{g}}}\mkern-2mu=\mkern-2mu1$, then according to 
Claim
\ref{claim: Yhat solid torus, y- y+ outside longs with equality sometimes}
from Section~\ref{ss: orientable base, cases involving solid torus Y hat},
either $y_1^{\textsc{d}} \in {\mathbb{Z}}$, in which case $Y$ is homeomorphic
to $Y_1$ and we should
replace $Y$ with $Y_1$ and begin again;
or $y_{-1}^{\textsc{g}} = y_{+1}^{\textsc{g}} =: y_{1}^{\textsc{g}}$
with $y_1^{\textsc{d}} + y_1^{\textsc{g}} \in {\mathbb{Z}}$,
in which case
$Y_1$ is a generalized solid torus, and
Proposition
\ref{prop: graph manifold for cabling works}
implies 
$Y \subset Y(l)$ is a cable of $Y_1 \subset Y_1(l)$.

If $n_{{\textsc{g}}} = 0$, then $Y$ is Seifert fibered with $y_- = y_+$,
and so either from \cite{lslope} or from a mildly modified version of
Claim~\ref{claim: Yhat solid torus, y- y+ outside longs with equality sometimes},
we deduce that
either $y_1^{\textsc{d}} + y_2^{\textsc{d}} \in \mathbb{Z}$,
in which case 
Proposition
\ref{prop: graph manifold for cabling works}
implies $Y\!$ is a cable of the regular fiber complement in
$S^1 \mkern-4mu\times\! S^2$;
or $\{y_1^{\textsc{d}}, y_2^{\textsc{d}}\} \mkern-1mu\cap\mkern-1mu {\mathbb{Z}} \mkern-2mu\neq\mkern-2mu \emptyset$,
in which case $\mkern-1muY\mkern-2.5mu$ is a solid torus, hence homeomorphic to the
regular fiber complement in $S^1 \mkern-3.5mu\times\mkern-2.5mu S^2\!\!$.

The converse is an immediate corollary of
Proposition~\ref{prop: graph manifold for cabling works}
and
Theorem~\ref{thm: l space interval for graph manifolds}.
\end{proof}

We also have the following result for arbitrary generalized solid tori.
\begin{prop}
If $Y$ is a generalized solid torus, then
any cable of $Y \subset Y(l)$ is a generalized solid torus.
\end{prop}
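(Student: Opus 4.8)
The plan is to feed the hypothesis directly into the cabling formula of Theorem~\ref{thm: general cabling}, after observing that the phrase ``cable of $Y\subset Y(l)$'' completely rigidifies the surgery framing. Write $Y = X\setminus\nu(K)$ with $X = Y(l)$. Then $K$ is the core of the solid torus $X\setminus\mathrm{int}(Y)$ used to fill $\partial Y$ along $l$, so the meridian $\mu$ of $K$ \emph{is} the filling slope: $\mu = l$, the rational longitude of $Y$. Hence in the cabling construction of Proposition~\ref{prop: graph manifold for cabling works} the gluing map $\varphi$ sends $l = \mu$ to the Seifert slope $q^*/p$ on $\partial_1\hat{Y}_{(-q^*,p)}$, which is exactly the compressing slope of the solid torus $Y_{(-q^*,p)}$ along $\partial_1$; in particular $\varphi_*^{{\mathbb{P}}}(l) = q^*/p$, and since $\mathcal{L}(Y) = {\mathbb{P}}(H_1(\partial Y))\setminus\{l\}$, writing $\mathcal{L}(Y) = [[a_-/b_-,\,a_+/b_+]]$ as in Theorem~\ref{thm: general cabling} forces $a_-/b_- = a_+/b_+ = \infty$, so that $y_{1-} = y_{1+} = q^*/p$.

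First I would dispose of the degenerate situation in which $Y$ has compressible boundary, i.e.\ $Y\cong (S^1\times D^2)\# Z$ for some closed $Z$, whose rational longitude is the meridian of the $S^1\times D^2$ summand. Since $\varphi_*^{{\mathbb{P}}}(l) = q^*/p$ is exactly the image of that meridian on $\partial_1$, gluing $Y$ to $\hat{Y}_{(-q^*,p)}$ is a Dehn filling of $\partial_1$ along the meridian slope of $Y_{(-q^*,p)}$, which produces $S^1\times S^2$ and hence, after reinstating the removed fiber, $S^1\times D^2$. Thus $Y^{(p,q)}\cong (S^1\times D^2)\# Z\cong Y$, again a generalized solid torus (one may alternatively cite the converse half of Theorem~\ref{thm: graph manifold generalized solid torus} in the subcase $Z = S^3$).

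In the main case $Y$ is boundary incompressible, hence Floer simple as a generalized solid torus, so Theorem~\ref{thm: general cabling} applies. I would simply substitute $y_{1-} = y_{1+} = q^*/p$ into
\[
y_-(k) = \tfrac{1}{k}\!\left(\Bigl\lceil \tfrac{q^*}{p}k\Bigr\rceil - \bigl\lceil y_{1+}k\bigr\rceil\right),
\qquad
y_+(k) = \tfrac{1}{k}\!\left(\Bigl\lfloor \tfrac{q^*}{p}k\Bigr\rfloor - \bigl\lfloor y_{1-}k\bigr\rfloor\right),
\]
which makes $y_-(k) = y_+(k) = 0$ for every $k>0$, so $y_- = y_+ = 0$. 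Since $y_{1-} = y_{1+}$, the first two cases of Theorem~\ref{thm: general cabling} are vacuous and we land in the ``otherwise'' branch, giving $\mathcal{L}(Y^{(p,q)}) = [[1/y_+,\,1/y_-]] = [[\infty,\infty]]$, the complement in ${\mathbb{P}}(H_1(\partial Y^{(p,q)}))$ of the slope-$\infty$ point of the cabling surgery basis $(\mu^{(p,q)},\lambda^{(p,q)}) = (-\tilde{h},\tilde{f})$. To conclude I would check that this removed point is the rational longitude of $Y^{(p,q)}$: the generalization of the rational-longitude computation of \cite{lslope} recalled in Section~\ref{ss: Conventions for Seifert fibered spaces} gives that longitude the Seifert slope $q^*/p - \varphi_*^{{\mathbb{P}}}(l) = q^*/p - q^*/p = 0$, i.e.\ surgery slope $\infty$. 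Hence $\mathcal{L}(Y^{(p,q)}) = {\mathbb{P}}(H_1(\partial Y^{(p,q)}))\setminus\{l^{(p,q)}\}$, and by the characterization of generalized solid tori (the definition from \cite{lslope}, or Gillespie's result \cite{Gillespietorus}), $Y^{(p,q)}$ is a generalized solid torus.

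The work here is organizational rather than computational: the only genuine obstacle is getting the coordinate bookkeeping exactly right --- first that ``$Y\subset Y(l)$'' really does pin down $\mu = l$, so that $\varphi_*^{{\mathbb{P}}}(l)$ coincides with the compressing slope $q^*/p$ and the ceiling/floor differences in $y_\pm(k)$ collapse identically to $0$; and second the reciprocal change of coordinates between the Seifert basis $(\tilde{f},-\tilde{h})$, in which the endpoints $y_\pm$ are produced, and the cabling surgery basis $(\mu^{(p,q)},\lambda^{(p,q)})$, in which both the final interval and the rational longitude must be read off. Once these are in place the statement falls out with no estimates needed. (As usual, $p=1$ is a trivial re-framing with $Y^{(1,q)}\cong Y$, and $p=0$ is excluded as not a genuine cable.)
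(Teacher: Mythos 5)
Your treatment of the boundary-incompressible case is correct and is exactly the unpacking of what the paper calls an ``immediate corollary'' of Theorem~\ref{thm: general cabling}: $\mu = l$ forces $a_\pm/b_\pm = \infty$, hence $y_{1-} = y_{1+} = q^*/p$, hence $y_\pm(k)\equiv 0$, hence $\mathcal{L}(Y^{(p,q)}) = [[\infty,\infty]]$, and the removed slope is the rational longitude. No complaints there.

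The boundary-compressible case, however, contains a genuine error. You assert that Dehn filling $\hat{Y}_{(-q^*\!,\,p)}$ along $\partial_1$ at the compressing slope $q^*/p$ gives back $S^1\times D^2$, and hence that $Y^{(p,q)} \cong (S^1\times D^2)\#Z \cong Y$. For $p>1$ this is false: the filled manifold is the complement of a \emph{regular} fiber of the non-product Seifert fibration $M_{S^2}(0;-q^*/p,\,q^*/p)\cong S^1\times S^2$, i.e.\ a Seifert piece over the disk with two order-$p$ exceptional fibers, and a quick presentation computation gives $H_1 \cong \mathbb{Z}\oplus\mathbb{Z}/p$. (Equivalently: $K^{(p,q)}$ is homologous to $p$ times the Heegaard core in $S^1\times S^2$, so its complement cannot be a solid torus.) Your claim $Y^{(p,q)}\cong Y$ is therefore wrong whenever $p>1$, which is precisely the nontrivial range. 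The fix is the one you wave at but restrict unnecessarily to $Z = S^3$: write $Y^{(p,q)} = W\#Z$ with $W$ the $(p,q)$-cable of $S^1\times D^2\subset S^1\times S^2$; the ``iterated cable'' direction of Theorem~\ref{thm: graph manifold generalized solid torus} applies to $W$ (a graph manifold) for every $Z$ and shows $W$ is a generalized solid torus, and then one observes that $\mathcal{L}(W\#Z) = \mathcal{L}(W)$ and $l_{W\#Z} = l_W$ because $Z$ must be an L-space rational homology sphere for $Y$ to have been a generalized solid torus in the first place. That last connect-sum step is what both your argument and, frankly, the paper's leave implicit, but it is what actually carries the compressible case; the assertion $Y^{(p,q)}\cong Y$ does not.
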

\begin{proof}
If $Y$ is boundary compressible, then it is the connected sum of a solid
torus with lens spaces, and
Theorem~\ref{thm: graph manifold generalized solid torus}
implies that any cable of a solid torus within its longitudinal filling
is a generalized solid torus.
If $Y$ is boundary incompressible, then the result is an immediate corollary of
Theorem~\ref{thm: general cabling}.
\end{proof}

Similarly, for any class of manifolds for which the gluing result in
Proposition~\ref{prop: intro gluing result for graph manifolds}
holds without the requirement of Floer simplicity---such as graph manifolds---one
has the result that if $Y$ has an isolated L-space filling, {\em{i.e.}},
if $\mathcal{L}(Y) = \{\mu\}$ for some $\mu \in {\mathbb{P}}(H_1(\partial Y))$,
then any cable of $Y \subset Y(\mu)$ has $Y(\mu)$ as an isolated L-space filling.

\subsection{Isolated L-space fillings}
A Seifert fiber complement in an L-space Seifert fibered manifold
could justifiably be called
the prototypical Floer simple manifold,
just as a lens space is the prototypical L-space.
It is therefore striking that we encounter
isolated L-space fillings as
regular fiber complements in graph manifolds.
Fortunately, this still does not prevent
L-space graph manifolds from admitting Floer simple
Seifert fiber complements.

Given a closed graph manifold $X$, we shall call an 
exceptional fiber $f_{\textsc{e}} \subset X$
{\em{invariantly exceptional}} if the JSJ componenent $\hat{Y} \subset X$
containing $f_{\textsc{e}}$ has more than one exceptional fiber.
To motivate this name, note that if $X$ has more than one JSJ component and
$\hat{Y}$ has only one exceptional fiber,
say, of slope $y_1^{\textsc{d}} = y_0$, then 
since the punctured solid torus has nonunique Seifert structure,
$X$ is homemorphic to
a graph manifold in which 
$y_1^{\textsc{d}}$ is replaced with
$0$ and $\varphi_{1*}^{{\mathbb{P}}}$ is replaced with $\varphi_{1*}^{{\mathbb{P}}} \!+ y_0$.

\begin{theorem}
Every invariantly exceptional fiber complement 
in an L-space graph manifold is Floer simple.
\end{theorem}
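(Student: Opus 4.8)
The plan is to reduce Floer simplicity of $Y := X\setminus\nu(f_{\textsc e})$ to the statement that $\mathcal{L}(Y)$ is not a single point, and then to rule out the single-point possibility using two consequences of the hypotheses. First I would dispose of the degenerate case: $Y$ is a solid torus only when $X$ is a lens space and $\hat{Y} = X$ is Seifert fibered over $S^2$ with exactly two exceptional fibers, and in that case $\mathcal{L}(Y) = \mathbb{P}(H_1(\partial Y))\setminus\{l\}$ has nonempty interior. Otherwise $Y$ is a graph manifold with torus boundary and $b_1(Y) = 1$, and I would parameterize it as in Section~\ref{subsection: conventions for graph manifolds}, rooting its tree at the JSJ component $\hat{Y}' := \hat{Y}\setminus\nu(f_{\textsc e})$ that contains $\partial Y$; here one checks that $\hat{Y}'$ genuinely is a JSJ component of $Y$, using that $\hat{Y}$ is a JSJ component of $X$ (so $X$'s decomposition is minimal and the tori $\partial\hat{Y}$ incompressible) and that deleting an interior fiber changes neither the fibration on a shared torus nor incompressibility of the new boundary $\partial Y$.

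Two observations then do the work. First, since $f_{\textsc e}$ is invariantly exceptional, $\hat{Y}$ carries at least two exceptional fibers, so $\hat{Y}'$ carries at least one; hence at least one of the Seifert invariants $y^{\textsc d}_1,\dots,y^{\textsc d}_{n_{\textsc d}}$ of $\hat{Y}'$ is non-integral, i.e.\ $\bar{n}_{\textsc d}\ge 1$ in the notation preceding Proposition~\ref{prop: y- = y+ means integer}. Second, one recovers $X$ as the Dehn filling $Y(\mu_{\textsc e})$, where $\mu_{\textsc e}$ is the meridian of $\nu(f_{\textsc e})$; because $f_{\textsc e}$ has exceptional order $s_0\ge 2$, the curve $\mu_{\textsc e}$ meets the regular fiber $\tilde{f}$ exactly $s_0$ times, so in the Seifert basis $(\tilde{f},-\tilde{h})$ of $\partial Y$ that we use to identify $\mathbb{P}(H_1(\partial Y))\cong\mathbb{Q}\cup\{\infty\}$, the slope $\mu_{\textsc e}$ equals $r_0/s_0$ with $\gcd(r_0,s_0)=1$ and $s_0\ge 2$; in particular $\mu_{\textsc e}$ is a finite, non-integral slope. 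As $X$ is an L-space, $\mu_{\textsc e}\in\mathcal{L}(Y)$, so $\mathcal{L}(Y)\neq\emptyset$.

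To finish: if $\hat{Y}'$ has non-orientable base, Proposition~\ref{prop: characterization of Floer simple graph manifolds} shows that nonemptiness of $\mathcal{L}(Y)$ already forces $Y$ to be Floer simple. If $\hat{Y}'$ has orientable base, then by Theorem~\ref{thm: l space interval for graph manifolds} together with Proposition~\ref{prop: characterization of Floer simple graph manifolds}, $\mathcal{L}(Y)$ is either an interval with nonempty interior (so $Y$ is Floer simple) or an isolated L-space filling $\{y_-\}=\{y_+\}$ realized by one of $(\textsc{nfs1})$--$(\textsc{nfs4})$, in which case $\mathcal{L}(Y) = \{\mu_{\textsc e}\}$. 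But $(\textsc{nfs3})$ and $(\textsc{nfs4})$ give $\mathcal{L}(Y) = \{\infty\}$, contradicting $\mu_{\textsc e}\neq\infty$; while $(\textsc{nfs1})$ and $(\textsc{nfs2})$ give $y_-=y_+\in\mathbb{Q}$, so Proposition~\ref{prop: y- = y+ means integer}---applicable because $\bar{n}_{\textsc d}\ge 1>0$ and $Y$ is not a solid torus---forces $y_-\in\mathbb{Z}$, contradicting $\mu_{\textsc e}\notin\mathbb{Z}$. Hence the isolated case cannot occur and $\mathcal{L}^{\circ}(Y)\neq\emptyset$.

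The main obstacle I expect is the second observation above: carefully pinning down the slope $\mu_{\textsc e}$ that reinstalls $f_{\textsc e}$ as an exceptional fiber in terms of the Seifert basis on $\partial Y$, together with the bookkeeping that the recursive parameterization of Section~\ref{subsection: conventions for graph manifolds} applies with $\hat{Y}'$ as root Seifert piece---including verifying that the JSJ decomposition of $Y$ collapses no further than the solid-torus edge case already isolated. The remaining steps are direct invocations of the previously established Theorem~\ref{thm: l space interval for graph manifolds}, Proposition~\ref{prop: characterization of Floer simple graph manifolds}, and Proposition~\ref{prop: y- = y+ means integer}.
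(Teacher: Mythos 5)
Your proof is correct and follows the same route as the paper's: reduce to the isolated L-space filling case, use invariant exceptionality to conclude $\bar{n}_{\textsc{d}} \ge 1$, observe that the meridian of $f_{\textsc{e}}$ is a finite non-integral slope lying in $\mathcal{L}(Y)$, and derive a contradiction from Proposition~\ref{prop: y- = y+ means integer}. The only differences are expository: you spell out the $(\textsc{nfs1})$--$(\textsc{nfs4})$ case split and the solid-torus base case, which the paper condenses into the one-line observation that $X$ Seifert fibered is handled by Proposition~\ref{prop: l space interval for seifert fibered spaces} and leaves implicit the justification that $y_-=y_+$ is rational rather than $\infty$.
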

\begin{proof}
Suppose $X$ is an L-space graph manifold.
If $X$ is Seifert fibered, then every Seifert fiber complement,
regular or otherwise, is Floer simple.

Suppose $X$ has more than one JSJ component,
and let $Y$ denote a non-Floer-simple 
complement of an invariantly exceptional fiber.
Since $\mathcal{L}(Y) \neq \emptyset$,
$Y$ non-Floer-simple implies $\mathcal{L}(Y) = \{y_-\} = \{y_+\}$,
with $y_{\pm} \in {\mathbb{Q}}$.
However, since $Y$ has at least one exceptional fiber,
Proposition~\ref{prop: y- = y+ means integer}
tells us that $y_- = y_+ \in {\mathbb{Z}}$, contradicting
the hypothesis that $Y$ is an exceptional fiber complement of $X$.
Thus the theorem holds.
\end{proof}

On the other hand, for a graph manifold with more than one JSJ component,
Seifert fibers are not the only knots yielding Floer simple knot complements,
due to the following result for arbitrary L-spaces.
\begin{prop}
\label{prop: floer simple knot across incompressible torus}
If an L-space $X$ decomposes as a union $Y_1 \cup_{\varphi}\! Y_2$ 
of Floer simple manifolds $Y_i$ along an incompressible torus $T \subset X$,
then there is a knot $K\subset X$
transversely intersecting $T$
for which knot the complement
$X \setminus \nu(K)$ is Floer simple.
\end{prop}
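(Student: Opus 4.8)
The plan is to realise $K$ as a knot lying in a neighbourhood of $T$ whose complement, cut along a suitable essential torus, is a union of $Y_1$, $Y_2$ and a small Seifert fibered ``bridging'' piece, and then to read off Floer simplicity of the complement from the generalized Jankins--Neumann formula, Corollary~\ref{cor: generalization of JN to Floer}. First a reduction: since $T$ is incompressible (or, in the general statement, since $X$ is given as a union of Floer simple manifolds along $T$), neither $Y_i$ is a solid torus, and we may take both $Y_i$ boundary incompressible. Because $Y_1$ and $Y_2$ are Floer simple and $X = Y_1 \cup_\varphi Y_2$ is an L-space, the gluing criterion of Proposition~\ref{prop: L-space gluing thm from lslope} (in the form, valid once both sides are Floer simple, which does not require the intersection hypothesis) forces $\varphi_*^{{\mathbb P}}(\mathcal{L}^{\circ}(Y_1)) \cup \mathcal{L}^{\circ}(Y_2) = {\mathbb P}(H_1(\partial Y_2))$, so the set $J := \varphi_*^{{\mathbb P}}(\mathcal{L}^{\circ}(Y_1)) \cap \mathcal{L}^{\circ}(Y_2)$ is a nonempty open interval. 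Fix a slope $s \in J$ and let $c \subset T$ be the simple closed curve of slope $s$.

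Next I would construct $K$. Working in a tubular neighbourhood $V = \nu(c) \cong D^2 \times S^1$ of $c$, which meets $T$ in an annulus, take $K \subset V$ to be a nontrivial cable of the core of $V$, positioned so as to meet the annulus $V \cap T$ transversely; then $K$ meets $T$ transversely, and since $c$ is essential in $X$ while $K$ wraps nontrivially around $c$, the knot $K$ cannot be isotoped off $T$. The complement $V \setminus \nu(K)$ is a cable space, i.e. Seifert fibered over an annulus with one exceptional fiber, so gluing it back onto $X \setminus V$ exhibits $X \setminus \nu(K)$ as $Y_1$ and $Y_2$ glued to this Seifert bridging piece $W$ along tori, with $\partial\nu(K)$ a boundary component of $W$. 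As in the cabling set-up of Proposition~\ref{prop: graph manifold for cabling works}, the Dehn filling of $\partial\nu(K)$ along the slope $\lambda_0$ with $(X \setminus \nu(K))(\lambda_0) = X$ collapses $W$ to a product neighbourhood of $T$, recovering $Y_1 \cup_\varphi Y_2 = X$; this is what pins down which slope on $\partial\nu(K)$ is the meridian of $K$, hence identifies the surgery basis in which $\mathcal{L}(X\setminus\nu(K))$ must be computed.

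Then I would compute $\mathcal{L}(X \setminus \nu(K))$. Because $s$ lies in $\varphi_*^{{\mathbb P}}(\mathcal{L}^{\circ}(Y_1))$ and in $\mathcal{L}^{\circ}(Y_2)$, the images $\varphi_{i*}^{{\mathbb P}}(\mathcal{L}(Y_i))$, read off in the Seifert basis of $W$, are proper closed intervals $[[y^{\textsc{g}}_{i-}, y^{\textsc{g}}_{i+}]]$ with $\infty$ in their interiors. The manifold $X \setminus \nu(K)$ is then exactly of the shape to which Corollary~\ref{cor: generalization of JN to Floer} applies: a Seifert piece glued to the boundary incompressible Floer simple manifolds $Y_1$ and $Y_2$. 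One checks that one of the Floer-simplicity conditions $(\textsc{fs1})$--$(\textsc{fs3})$ of Proposition~\ref{prop: characterization of Floer simple graph manifolds} is satisfied and that the resulting endpoints obey $y_- < y_+$, so that $\mathcal{L}^{\circ}(X \setminus \nu(K)) \neq \emptyset$; the strict inequality is where the open interval $J$ is used, since moving $s$ within $J$ (and adjusting the cabling parameters of $K$, equivalently the Seifert data of $W$) provides the slack in the floor/ceiling sums defining $y_\pm$ in Theorem~\ref{thm: l space interval for graph manifolds} needed to separate the two endpoints. The same perturbation freedom lets one avoid the degenerate cases ($\mathcal{L}$ empty or a single point).

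The main obstacle is reconciling the two demands on $K$: forcing $K$ to meet $T$ essentially requires $W$ to be a genuine cable space rather than a product collar, while forcing $X \setminus \nu(K)$ to be Floer simple constrains the Seifert data of $W$ against $\varphi_*^{{\mathbb P}}(\mathcal{L}(Y_1))$ and $\mathcal{L}(Y_2)$; the argument must choose $s$ inside $J$ and tune the cabling data simultaneously so that the interval delivered by the Jankins--Neumann formula still has nonempty interior. A secondary technical point is the reduction to the boundary incompressible case and checking that incompressibility of $T$ (or the ``union of Floer simple manifolds'' hypothesis) really does put us in the domain of Corollary~\ref{cor: generalization of JN to Floer}.
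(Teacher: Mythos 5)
Your proposal takes a genuinely different route from the paper, but as written it has a structural gap.

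The paper's proof is much shorter and avoids any Jankins--Neumann computation. Having chosen a slope $\mu_2 \in \varphi_*^{\mathbb{P}}(\mathcal{L}^{\circ}(Y_1)) \cap \mathcal{L}^{\circ}(Y_2)$ and its preimage $\mu_1$, one forms the L-spaces $Y_i(\mu_i)$ and lets $K_i$ be the core of the filling solid torus. The observation, already used in the proof of \cite[Theorem 6.2]{lslope}, is that $X = Y_1 \cup_{\varphi} Y_2$ is obtained by $0$-surgery on $K_1 \# K_2 \subset Y_1(\mu_1)\#Y_2(\mu_2)$. The dual knot $K$ then has complement $Y := X\setminus\nu(K)$ admitting two distinct L-space fillings, namely $Y(0) = X$ and $Y(\infty) = Y_1(\mu_1)\#Y_2(\mu_2)$ (a connected sum of L-spaces), so $\mathcal{L}(Y)$ has more than one point and $Y$ is automatically Floer simple; and $K$ is transverse to $T$ because it is dual to $K_1\#K_2$. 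Nothing about the shape of the L-space interval needs to be computed.

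The gap in your construction is at the step ``gluing it back onto $X\setminus V$ exhibits $X\setminus\nu(K)$ as $Y_1$ and $Y_2$ glued to this Seifert bridging piece $W$ along tori.'' The cable space $W = V\setminus\nu(K)$ has only two boundary tori, $\partial V$ and $\partial\nu(K)$, so the decomposition you actually have is $X\setminus\nu(K) = (X\setminus V)\cup_{\partial V} W$: a union of two pieces along a single torus, where the other piece is $X\setminus V = X\setminus\nu(c)$. This is not a decomposition into $Y_1$, $Y_2$, and a Seifert piece, so Corollary~\ref{cor: generalization of JN to Floer} does not directly apply; in particular you would first need to establish Floer simplicity of $X\setminus\nu(c)$, which is exactly the kind of claim you were hoping to deduce rather than assume. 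The three-boundary-component Seifert piece one would actually want here is the composing space $S^1 \times P$ (with $P$ a pair of pants), which is precisely what arises from the surgery-dual picture: $X\setminus\nu(K) = Y_1 \cup (S^1\times P) \cup Y_2$. If you set things up that way, a Jankins--Neumann-style computation using Corollary~\ref{cor: generalization of JN to Floer} could in principle be carried through, but it is substantially more work than the two-L-space-fillings argument, and your sketch also does not yet pin down which Seifert data and gluings to use.

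A secondary point: you appeal to Proposition~\ref{prop: L-space gluing thm from lslope} ``in the form, valid once both sides are Floer simple, which does not require the intersection hypothesis,'' but as stated in the paper that proposition does require $\varphi_*^{\mathbb{P}}(\mathcal{L}^{\circ}(Y_1))\cap\mathcal{L}^{\circ}(Y_2)\neq\emptyset$. One needs the strengthening discussed in Section~\ref{s: Observations} (combining with the Hanselman--Watson gluing result) to drop it; the paper's own proof sidesteps this by using Floer simplicity and boundary incompressibility directly, which is enough for the forward implication it needs.
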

\begin{proof}
In fact, an analogous result holds for any toroidal L-space.

Suppose the above hypotheses hold.  Then since $Y_1$ and $Y_2$ 
are Floer simple with incompressible boundary,
Proposition~\ref{prop: L-space gluing thm from lslope} implies
\begin{equation}
\label{eq: y1 and y2 have have l-space intervals covering p1}
\varphi_*^{{\mathbb{P}}}(\mathcal{L}^{\circ}(Y_1)) \cup \mathcal{L}^{\circ}(Y_2) 
= {\mathbb{P}}(H_1(\partial Y_2)).
\end{equation}
Since $\varphi_*^{{\mathbb{P}}}(\mathcal{L}^{\circ}(Y_1))$
and $\mathcal{L}^{\circ}(Y_2)$ are open,
(\ref{eq: y1 and y2 have have l-space intervals covering p1}) implies
they intersect.
Choosing any 
$\mu_2 \in \varphi_*^{{\mathbb{P}}}(\mathcal{L}^{\circ}(Y_1)) \cap \mathcal{L}^{\circ}(Y_2)$,
set $\mu_1 := \varphi_*^{{\mathbb{P}}\mkern1mu -1}(\mu_2)$, and let 
$K_i$ denote the knot core of $Y_i(\mu_i) \setminus Y_i$.
As explained in more detail in the proof of 
\cite[Theorem 6.2]{lslope},
$X$ can be regarded as zero-surgery on
the knot $K_1 \# K_2 \subset Y_1(\mu_1) \# Y_2(\mu_2)$.
Thus, if we set $Y := Y_1(\mu_1) \# Y_2(\mu_2) \setminus \nu(K_1 \# K_2)$
and let $K$ denote the knot core of $X \setminus Y$,
then the knot complement $Y = X \setminus \nu(K)$ has at least
two distinct L-space fillings, hence is Floer simple.
Moreover, since $K$ is dual to $K_1 \# K_2$ under surgery,
$K$ intersects the separating torus transversely.
\end{proof}

\begin{cor}
If $X$ is an L-space graph manifold, then for every incompressible
torus $T \subset X$, there is a knot $K \subset X$ transversely
intersecting $T$ for which knot the complement $X \setminus \nu(K)$
is Floer simple.
\end{cor}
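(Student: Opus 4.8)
The plan is to reduce the statement to Proposition~\ref{prop: floer simple knot across incompressible torus}: it suffices to exhibit a decomposition $X = Y_1 \cup_{\varphi}\! Y_2$ of $X$ as a union of \emph{Floer simple} graph manifolds with torus boundary glued along the given incompressible torus $T$. Once this is in hand, Proposition~\ref{prop: floer simple knot across incompressible torus} directly produces a knot $K \subset X$ whose complement is Floer simple and which, being dual under surgery to a connected sum of knot cores, meets $T$ transversely.

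First I would note that an L-space is a rational homology sphere, so $H_2(X;\mathbb{Q}) = 0$ and $T$ is necessarily separating; write $X = Y_1 \cup_{\varphi}\! Y_2$ with $\partial Y_1 = -\partial Y_2 = T$. Next I would verify that $Y_1$ and $Y_2$ are non-solid-torus, boundary incompressible graph manifolds: $X$ is an irreducible graph manifold (being a rational homology sphere graph manifold), so each $Y_i$ is irreducible, hence prime; a compressing disk for $\partial Y_i$ inside $Y_i$ would be a compressing disk for $T$ in $X$, so incompressibility of $T$ forces each $Y_i$ to be boundary incompressible, which in particular rules out $Y_i$ being a solid torus; and cutting a graph manifold along an incompressible torus again yields graph manifolds, since (up to isotopy of $T$ into a JSJ piece or onto a JSJ torus) no new hyperbolic JSJ pieces can appear. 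This piece of three-manifold bookkeeping is the step requiring the most care, but it is entirely standard.

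Finally, with $Y_1, Y_2$ non-solid-torus boundary incompressible graph manifolds, Corollary~\ref{final gluing result for graph manifolds} (which rests on the L/NTF-equivalence of all graph manifolds, Theorem~\ref{thm: lntf equivalence}) tells us that $X = Y_1 \cup_{\varphi}\! Y_2$ being an L-space forces
\begin{equation*}
\varphi_*^{\mathbb{P}}(\mathcal{L}^{\circ}(Y_1)) \cup \mathcal{L}^{\circ}(Y_2) = \mathbb{P}(H_1(\partial Y_2)).
\end{equation*}
Since $\mathcal{L}(Y_i)$ is always either empty, a single point, a closed interval, or $\mathbb{P}(H_1(\partial Y_i)) \setminus \{l\}$, it is a proper subset of $\mathbb{P}(H_1(\partial Y_i))$, so the displayed equality is impossible if either $\mathcal{L}^{\circ}(Y_1)$ or $\mathcal{L}^{\circ}(Y_2)$ is empty; hence both $Y_1$ and $Y_2$ are Floer simple. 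Applying Proposition~\ref{prop: floer simple knot across incompressible torus} to this decomposition then completes the proof. The only genuine obstacle is the purely topological fact, used above, that cutting an irreducible graph manifold along an incompressible torus yields two boundary-incompressible graph-manifold pieces, neither a solid torus; the Floer-theoretic content is then a one-line consequence of the gluing corollary together with the known shape of L-space intervals.
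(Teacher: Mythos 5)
Your proof is correct and takes essentially the same route as the paper: decompose along $T$, apply Corollary~\ref{final gluing result for graph manifolds} to force both $\mathcal{L}^{\circ}(Y_i)$ to be nonempty (hence Floer simple), then invoke Proposition~\ref{prop: floer simple knot across incompressible torus}. The only difference is that you spell out the bookkeeping — that $T$ is separating, that the pieces are non-solid-torus boundary-incompressible graph manifolds, and why the covering equality cannot hold if either interior is empty — which the paper leaves implicit.
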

\begin{proof}
Choose an arbitrary incompressible torus $T \subset X$,
not necessarily one used in the minimal JSJ decomposition for $X$,
and write $X = Y_1 \cup_T Y_2$.
Since $X$ is an L-space, 
Corollary~\ref{final gluing result for graph manifolds}
implies each $Y_i$ has nonempty
$\mathcal{L}^{\circ}(Y_i)$, hence is Floer simple.
Thus, we can apply Proposition
\ref{prop: floer simple knot across incompressible torus}.
\end{proof}

This section has only cataloged the most obvious 
corollaries of the paper's main results.
We invite the reader to find more.

\bibliography{Lgraph}
\bibliographystyle{plain}

\end{document}